\newcommand{\ignore}[1]{}
\newcommand{\nobibentry}[1]{[{\let\nocite\ignore\bibentry{#1}}]}
\numberwithin{equation}{section} 
\theoremstyle{plain}
\newtheorem{theorem}{Theorem}[section] 
\newtheorem{lemma}[theorem]{Lemma} 
\newtheorem{corollary}[theorem]{Corollary}
\theoremstyle{definition}
\newtheorem{definition}[theorem]{Definition}
\newtheorem{assumption}[theorem]{Assumption}
\theoremstyle{remark}
\newtheorem{remark}[theorem]{Remark}
\newcommand{\appref}[1]{Appendix~\ref{#1}}
\newcommand{\secref}[1]{Section~\ref{#1}}
\newcommand{\defref}[1]{Definition~\ref{#1}}
\newcommand{\thmref}[1]{Theorem~\ref{#1}}
\newcommand{\lemref}[1]{Lemma~\ref{#1}}
\newcommand{\corref}[1]{Corollary~\ref{#1}}
\renewcommand{\algref}[1]{Algorithm~\ref{#1}}
\newcommand{\assref}[1]{Assumption~\ref{#1}}
\newcommand{\figref}[1]{Figure~\ref{#1}}
\newcommand{\subfigref}[1]{Figure~\ref{#1}}
\newcommand{\remref}[1]{Remark~\ref{#1}}
\newcommand{\tabref}[1]{Table~\ref{#1}}
\newcommand{\smartqed}{\hfill\qed}
\newcommand{\R}{\mathbb{R}} 
\newcommand{\bigO}{\mathcal{O}} 
\DeclareMathOperator*{\argmin}{arg\,min} 
\DeclareMathOperator*{\argmax}{arg\,max} 
\newcommand{\pd}[2]{\frac{\partial #1}{\partial #2}} 
\newcommand{\defeq}{:=} 
\newcommand{\grad}{\nabla} 
\def\be{\begin{equation}}
\def\ee{\end{equation}}
\renewcommand{\b}[1]{\mathbf{#1}} 
\renewcommand{\t}[1]{\widetilde{#1}} 
\newcommand{\bx}{\b{x}}
\newcommand{\by}{\b{y}}
\newcommand{\bs}{\b{s}}
\newcommand{\br}{\b{r}}
\newcommand{\bee}{\b{e}}
\newcommand{\bg}{\b{g}}
\newcommand{\bem}{\b{m}}
\algrenewcommand\algorithmicrequire{\textbf{Input:}}
\algrenewcommand\algorithmicensure{\textbf{Output:}}
\begin{document}
\title{A Derivative-Free Gauss-Newton Method}
\author{
	Coralia Cartis \thanks{Mathematical Institute, University of Oxford, Radcliffe Observatory Quarter, Woodstock Road, Oxford, OX2 6GG, United Kingdom (\texttt{cartis@maths.ox.ac.uk}).}
	\and
	Lindon Roberts \thanks{Mathematical Institute, University of
          Oxford, Radcliffe Observatory Quarter, Woodstock Road,
          Oxford, OX2 6GG, United Kingdom
          (\texttt{robertsl@maths.ox.ac.uk}). This work was supported
          by the EPSRC Centre For Doctoral Training in Industrially
          Focused Mathematical Modelling (EP/L015803/1) in
          collaboration with the Numerical Algorithms Group Ltd.}
}
\date{\today}
\maketitle

\begin{abstract}
We present DFO-GN, a derivative-free version of the Gauss-Newton method for solving nonlinear least-squares problems.
As is common in derivative-free optimization, DFO-GN uses
interpolation of function values to build a model of the objective,
which is then used within a trust-region framework to give a
globally-convergent algorithm requiring $\bigO(\epsilon^{-2})$
iterations to reach approximate first-order criticality within tolerance $\epsilon$.
This algorithm is a simplification of the method from [H. Zhang, A. R. Conn, and K. Scheinberg, A Derivative-Free Algorithm for Least-Squares Minimization, SIAM J. Optim., 20 (2010), pp. 3555--3576], where we replace quadratic models for each residual with linear models.
We demonstrate that DFO-GN performs comparably to the method of Zhang et al.~in terms of objective evaluations, as well as having a substantially faster runtime and improved scalability.
\end{abstract}

\textbf{Keywords:} derivative-free optimization, least-squares, Gauss-Newton method, trust region methods, global convergence, worst-case complexity.
\\

\textbf{Mathematics Subject Classification:} 65K05, 90C30, 90C56 


\section{Introduction}
Over the last 15--20 years, there has been a resurgence and increased effort devoted to developing efficient methods for derivative-free optimization (DFO) --- that is, optimizing an objective using only function values.
These methods are useful to many applications \cite{Conn2009}, for instance, when the objective function is a black-box function or legacy code (meaning manual computation of derivatives or algorithmic differentiation is impractical), has stochastic noise (so finite differencing is inaccurate) or expensive to compute (so the evaluation of a full $n$-dimensional gradient is intractable).
There are several popular classes of DFO methods, such as direct and pattern search, model-based and evolutionary algorithms \cite{Powell1998, Kolda2003,Powell2007a,Custodio2017}.
Here, we consider model-based methods, which capture curvature in the objective well \cite{Custodio2017} and have been shown to have good practical performance \cite{More2009}.

Model-based methods typically use a trust-region framework for selecting new iterates, which ensures global convergence, provided we can build a sufficiently accurate model for the objective \cite{Conn2000}. 
The model-building process most commonly uses interpolation of quadratic functions, as originally proposed by Winfield \cite{Winfield1973} and later developed by Conn, Scheinberg and Toint \cite{Conn1996,Conn1997} and Powell \cite{Powell1994,Powell2002}. 
Another common choice for model-building is to use radial basis functions \cite{Wild2008,Oeuvray2009}.
Global convergence results exist in both cases \cite{Conn2009a,Conn2009,Wild2013}.
Several codes for model-based DFO are available, including those by Powell \cite{Zhang_URL} and others (see e.g.~\cite{Conn2009,Custodio2017} and references therein).

{\bf Summary of contributions} In this work, we consider nonlinear least-squares minimization, without constraints in the theoretical developments but allowing bounds in the implementation.
Model-based DFO is naturally suited to exploiting problem structure, and in this work we propose a method inspired by the classical Gauss-Newton method for derivative-based optimization (e.g.~\cite[Chapter 10]{Nocedal2006}). This method, which we call DFO-GN (Derivative-Free Optimization using Gauss-Newton), is a simplification of the method by Zhang, Conn and Scheinberg \cite{Zhang2010}. It constructs linear interpolants for each residual, requiring exactly $n+1$ points on each iteration, which is less than common proposals that generally insist on (partial or full)
quadratic local models for each residual. In addition to proving theoretical guarantees for DFO-GN in terms of global convergence and worst-case complexity, we provide an implementation that is
a modification of Powell's BOBYQA and that we extensively test and compare with existing state of the art DFO solvers. We show that little to nothing is lost by our simplified approach in terms of
algorithm performance on a given evaluation budget, when applied to smooth and noisy,  zero- and non-zero residual problems. Furthermore, significant gains are made in terms of reduced computational cost of the interpolation
problem (leading to a runtime reduction of at least a factor of $7$) and memory costs of storing the models. These savings result in a substantially faster runtime and improved scalability of DFO-GN compared to the implementation DFBOLS  in \cite{Zhang2010}.


{\bf Relevant existing literature.} In \cite{Zhang2010}, each residual function is approximated by a quadratic interpolating model, using function values from $p\in[n+1, (n+1)(n+2)/2]$ points.
A quadratic (or higher-order) model for the overall least-squares objective is built from the models for each residual function, that takes into account full
quadratic terms in the models asymptotically but allows the use of simpler models early on in the run of the algorithm. The DFBOLS implementation in \cite{Zhang2010}
is shown to perform better than Powell's BOBYQA on a standard least-squares test set.
A similar derivative-free framework for nonlinear least-squares problems is POUNDERS by Wild \cite{Wild2017}:
it also constructs quadratic interpolation models for each residual, but takes them all into account in the objective model construction on each iteration.
In its implementation, it allows parallel computation of each residual component, and accepts previously-computed evaluations as an input providing extra information for the solver.
We also note the connection to \cite{Bergou2016}, which considers a Levenberg-Marquardt method for nonlinear least-squares when gradient evaluations are noisy; the framework is that
of probabilistic local models, and it uses a regularization parameter rather than trust region to ensure global convergence. The algorithm is applied and further developed for data assimilation problems,
with careful quantification of noise and algorithm parameters. Using linear vector models for objectives which are a composition of a (possibly nonconvex) vector function with a (possibly nonsmooth) convex function, such as a sum of squares, was also considered in \cite{Garmanjani2016}. There, worst-case complexity bounds for a general model-based trust-region DFO method applied to such objectives are established. Our approach differs in that it is designed specifically for nonlinear least-squares, and 
uses an algorithmic framework that is much closer to the software of Powell \cite{Powell2009}. Finally, we note a mild connection to the approach in \cite{AokiEtAl2014}, 
where multiple solutions to nonlinear inverse problems are sought by means of a two-phase method, where in the first phase,  low accuracy solutions are
obtained by building a linear regression model from a (large) cloud of points and moving each point to its corresponding, slightly perturbed, Gauss-Newton step.


{\bf Further details of contributions.} 
In terms of theoretical guarantees, 
we extend the global convergence results in \cite{Zhang2010}  to allow inexact solutions to the trust-region subproblem (given by the usual Cauchy decrease condition), a simplification of the so-called `criticality phase' and `safety step', and prove first-order convergence of the whole sequence of iterates $\bx_k$ rather than a subsequence. We also provide a worst-case complexity analysis and show an iteration count which matches that of Garmanjani, J\'{u}dice and Vicente \cite{Garmanjani2016}, but with problem constants that correspond to second-order methods.
This reflects the fact that we capture some of the curvature in the objective (since linear models for residuals still give an approximate quadratic model for the least-squares objective), and so the complexity of DFO-GN sits between first- and second-order methods.

In the DFO-GN implementation, which is very much the focus of this work, the simplification from quadratic to linear models leads to a confluence of two approaches for analysing and improving the geometry of the interpolation set.
We compare DFO-GN  to Powell's general DFO solver BOBYQA and to  least-squares DFO solvers DFBOLS \cite{Zhang2010} (Fortran),
POUNDERS \cite{Wild2017} and our Python DFBOLS re-implementation Py-DFBOLS.
The primary test set is Mor\'e \& Wild \cite{More2009} where additionally, we also consider  noisy variants for each problem, perturbing the test set appropriately with
 unbiased Gaussian (multiplicative and additive),  and with additive $\chi^2$ noise;
we solve to low as well as high accuracy requirements for a given evaluation budget.
 We find --- and show by means of performance and data profiles --- that 
DFO-GN performs comparably well in terms of objective evaluations to the best of solvers, albeit with a small penalty for objectives with additive stochastic noise
and an even less penalty for nonzero-residuals. We then do a runtime comparison between DFO-GN and Py-DFBOLS on the same test set and settings, 
comparing like for like, and find that DFO-GN is at least $7$ times faster; see \tabref{tab_runtime} for details.  We further investigate scalability features of DFO-GN.
We compare memory requirements and runtime for DFO-GN and DFBOLS on a particular nonlinear equation problem from CUTEst with growing problem dimension $n$; we find that
both  of these increase much more rapidly for the latter than the former (for example, for $n=2500$ DFO-GN's runtime is 2.5 times faster than the Fortran DFBOLS' for $n=1400$). To further
illustrate that the improved scalability of DFO-GN does not come at the cost of performance, we compare evaluation performance of DFO-GN and DFBOLS on 60 medium-size least-squares problems 
from CUTEst and find similarly good behaviour of DFO-GN as on the Mor\'e \& Wild set.


{\bf Implementation.} 
Our Python implementation of DFO-GN is available on GitHub\footnote{\url{https://github.com/numericalalgorithmsgroup/dfogn}}, and is released under the open-source GNU General Public License.

{\bf Structure of paper.} 
In \secref{sec_algo} we state the DFO-GN algorithm.
We prove its global convergence to first-order critical points and worst case complexity in \secref{sec_convergence}.
Then we discuss the differences between this algorithm and its software implementation in \secref{sec_implementation}.
Lastly, in \secref{sec_numerics}, we compare DFO-GN to other model-based derivative-free least-squares solvers on a selection of test problems, including noisy and higher-dimensional problems.
We draw our conclusions in \secref{sec_conclusion}.

\section{DFO-GN Algorithm} \label{sec_algo}
Here, our focus is unconstrained nonlinear least-squares minimization
\be \min_{\bx\in\R^n}f(\bx) \defeq \frac{1}{2}\|\br(\bx)\|^2 = \frac{1}{2}\sum_{i=1}^{m}r_i(\bx)^2, \label{eq_ls_definition} \ee
where $\br(\bx) \defeq \begin{bmatrix}r_1(\bx) & \cdots & r_m(\bx)\end{bmatrix}^{\top}$ maps $\R^n\to\R^m$ and is continuously differentiable with $m\times n$ Jacobian matrix $[J(\bx)]_{i,j}=\pd{r_i(\bx)}{x_j}$, although these derivatives are not available.
Typically $m\geq n$ in practice, but we do not require this for our method.
Throughout, $\|\cdot\|$ refers to the Euclidean norm for vectors or largest singular value for matrices, unless otherwise stated, and we define $B(\bx,\Delta)\defeq\{\by\in\R^n : \|\by-\bx\|\leq\Delta\}$ to be the closed ball of radius $\Delta>0$ about $\bx\in\R^n$.

In this section, we introduce the DFO-GN algorithm for solving \eqref{eq_ls_definition} using linear interpolating models for $\br$.

\subsection{Linear Residual Models} \label{sec_linear_models}
In the classical Gauss-Newton method, we approximate $\br$ in the neighbourhood of an iterate $\bx_k$ by its linearization: $\br(\by) \approx \br(\bx_k) + J(\bx_k)(\by-\bx_k)$, where $J(\bx)\in\R^{m\times n}$ is the Jacobian matrix of first derivatives of $\br$.
For DFO-GN, we use a similar approximation, but replace the Jacobian with an approximation to it calculated by interpolation.

Assume at iteration $k$ we have a set of $n+1$ interpolation points $Y_k\defeq \{\by_0,\ldots,\by_n\}$ in $\R^n$ at which we have evaluated $\br$.
This set always includes the current iterate; for simplicity of notation, we assume $\by_0= \bx_k$.
We then build the model
\be \br(\bx_k+\bs) \approx \bem_k(\bs) \defeq \br(\bx_k) + J_k\bs, \label{eq_linear_models} \ee
by finding the unique $J_k\in\R^{m\times n}$ satisfying the interpolation conditions
\be \bem_k(\by_t-\bx_k) = \br(\by_t), \qquad \text{for $t=1,\ldots,n$,} \label{eq_interp_conditions} \ee
noting that the other interpolation condition $\bem_k(\b{0})=\br(\bx_k)$ is automatically satisfied by \eqref{eq_linear_models}\footnote{We could have formulated a linear system to solve for the constant term in \eqref{eq_linear_models} as well as $J_k$, but this system becomes poorly conditioned as the algorithm progresses and the points $Y_k$ get closer together.}.
We can find $J_k$ by solving the $n\times n$ system
\be \begin{bmatrix} (\by_1-\bx_k)^{\top} \\ \vdots \\ (\by_n-\bx_k)^{\top}\end{bmatrix} \b{j}_{k,i} = \begin{bmatrix} r_i(\by_1) - r_i(\bx_k) \\ \vdots \\ r_i(\by_n) - r_i(\bx_k) \end{bmatrix}, \label{eq_linear_interp_system_Jonly} \ee
for each $i=1,\ldots,m$, where the rows of $J_k$ are $\b{j}_{k,i}^{\top}$.
This system is invertible whenever the set of vectors $\{\by_1-\bx_k,\ldots,\by_n-\bx_k\}$ is linearly independent.
We ensure this in the algorithm by routines which improve the geometry of $Y_k$ (in a specific sense to be discussed in \secref{sec_geometry}).


Having constructed the linear models for each residual \eqref{eq_ls_definition}, we need to construct a model for the full objective $f$. 
To do this we simply take the sum of squares of the residual models, namely,
\be f(\bx_k+\bs) \approx m_k(\bs) \defeq \frac{1}{2}\|\bem_k(\bs)\|^2 = f(\bx_k) + \bg_k^{\top}\bs + \frac{1}{2}\bs^{\top} H_k \bs, \label{eq_gn_full_model_dfo} \ee
where $\bg_k \defeq J_k^{\top}\br(\bx_k)$ and $H_k\defeq J_k^{\top}J_k$.

\subsection{Trust Region Framework}
The DFO-GN algorithm is based on a trust-region framework \cite{Conn2000}. 
In such a framework, we use our model for the objective \eqref{eq_gn_full_model_dfo}, and maintain a parameter $\Delta_k>0$ which characterizes the region in which we `trust' our model to be a good approximation to the objective; the resulting `trust region' is $B(\bx_k,\Delta_k)$.
At each iteration, we use our model to find a new point where we expect the objective to decrease, by (approximately)
solving the `trust region subproblem'
\be \bs_k \approx \argmin_{\|\bs\|\leq\Delta_k} m_k(\bs). \label{eq_tr_subproblem} \ee
If this new point $\bx_k + \bs_k$ gives a sufficient objective reduction, we accept the step ($\bx_{k+1}\gets\bx_k+\bs_k$), otherwise we reject the step ($\bx_{k+1}\gets\bx_k$).
We also use this information to update the trust region radius $\Delta_k$.
The measure of `sufficient objective reduction' is the ratio
\be r_k = \frac{\text{actual reduction}}{\text{predicted reduction}} \defeq \frac{f(\bx_k) - f(\bx_k+\bs_k)}{m_k(\b{0}) - m_k(\bs_k)}. \label{eq_tr_ratio} \ee

This framework applies to both derivative-based and derivative-free settings.
However in a DFO setting, we also need to update the interpolation set $Y_k$ to incorporate the new point $\bx_k+\bs_k$, and have steps to ensure the geometry of $Y_k$ does not become degenerate (see \secref{sec_geometry}).

A minimal requirement on  the calculation of $\bs_k$ to ensure global convergence is the following.

\begin{assumption} \label{ass_cauchy_decrease}
	Our method for solving \eqref{eq_tr_subproblem} gives a step $\bs_k$ satisfying the sufficient (`Cauchy') decrease condition
	\be m_k(\b{0}) - m_k(\bs_k) \geq c_1 \|\bg_k\| \min\left(\Delta_k, \frac{\|\bg_k\|}{\max(\|H_k\|,1)}\right), \label{eq_cauchy_decrease} \ee
	for some $c_1\in[1/2, 1]$ independent of $k$.
\end{assumption}

This standard condition is not onerous, and can be achieved with $c_1=1/2$ by one iteration of steepest descent with exact linesearch applied to the model $m_k$ \cite{Conn2000}.
In Zhang et al.~\cite{Zhang2010}, convergence is only proven when \eqref{eq_tr_subproblem} is solved to full optimality; here we use this weaker assumption.

\subsection{Geometry Considerations} \label{sec_geometry}
It is crucial that model-based DFO algorithms ensure the geometry of $Y_k$ does not become degenerate; an example where ignoring geometry causes algorithm failure is given by Scheinberg and Toint \cite{Scheinberg2010}.

To describe the notion of `good' geometry, we need the Lagrange polynomials of $Y_k$.
In our context of linear approximation, the Lagrange polynomials are the basis $\{\Lambda_0(\bx), \ldots, \Lambda_n(\bx)\}$ for the $(n+1)$-dimensional space of linear functions on $\R^n$ defined by
\be \Lambda_l(\by_t) = \delta_{l,t}, \qquad \text{for all $l,t\in\{0,\ldots,n\}$.} \label{eq_lagrange_defn} \ee
Such polynomials exist whenever the matrix in \eqref{eq_linear_interp_system_Jonly} is invertible \cite[Lemma 3.2]{Conn2009}; when this condition holds, we say that $Y_k$ is \emph{poised} for linear interpolation.

The notion of geometry quality is then given by the following \cite{Conn2007}.

\begin{definition}[$\Lambda$-poised]
	Suppose $Y_k$ is poised for linear interpolation.
	Let $B\subset\R^n$ be some set, and $\Lambda\geq 1$.
	Then we say that $Y_k$ is $\Lambda$-poised in $B$ if $Y_k\subset B$  and
	\be \max_{t=0,\ldots,n} \: \max_{\bx\in B} |\Lambda_t(\bx)| \leq \Lambda, \label{eq_lambda_poised_definition} \ee
	where $\{\Lambda_0(\bx),\ldots,\Lambda_n(\bx)\}$ are the Lagrange polynomials for $Y_k$.
\end{definition}

In general, if $Y_k$ is $\Lambda$-poised with a small $\Lambda$, then $Y_k$ has `good' geometry, in the sense that linear interpolation using points $Y_k$ produces a more accurate model.
The notion of model accuracy we use is given in \cite{Conn2007,Conn2009a}:

\begin{definition}[Fully linear, scalar function] \label{def_fully_linear_scalar}
	A model $m_k\in C^1$ for $f\in C^1$ is \emph{fully linear} in $B(\bx_k,\Delta_k)$ if 
	\begin{align}
		|f(\bx_k+\bs) - m_k(\bs)| &\leq \kappa_{ef}\Delta_k^2, \label{eq_fully_linear_scalar_f} \\
		\|\grad f(\bx_k+\bs) - \grad m_k(\bs)\| &\leq \kappa_{eg}\Delta_k, \label{eq_fully_linear_scalar_g}
	\end{align}
	for all $\|\bs\|\leq\Delta_k$, where $\kappa_{ef}$ and $\kappa_{eg}$ are independent of $\bs$, $\bx_k$ and $\Delta_k$.
\end{definition}

In the case of a vector model, such as \eqref{eq_ls_definition}, we use an analogous definition as in \cite{Grapiglia2016}, which is equivalent, up to a change in constants, to the definition in \cite{Garmanjani2016}.

\begin{definition}[Fully linear, vector function] \label{def_fully_linear_vector}
	A vector model $\bem_k\in C^1$ for $\br\in C^1$ is fully linear in $B(\bx_k,\Delta_k)$ if
	\begin{align}
		\|\br(\bx_k+\bs) - \bem_k(\bs)\| &\leq \kappa_{ef}^r\Delta_k^2, \label{eq_fully_linear_vector_f} \\
		\|J(\bx_k+\bs) - J^m(\bs)\| &\leq \kappa_{eg}^r\Delta_k, \label{eq_fully_linear_vector_g}
	\end{align}
	for all $\|\bs\|\leq\Delta_k$, where $J^m$ is the Jacobian of $\bem_k$, and $\kappa_{ef}^r$ and $\kappa_{eg}^r$ are independent of $\bs$, $\bx_k$ and $\Delta_k$.
\end{definition}

In \secref{sec_interp_fully_linear}, we show that if $Y_k$ is $\Lambda$-poised, then $\bem_k$ \eqref{eq_ls_definition} and $m_k$ \eqref{eq_gn_full_model_dfo} are fully linear in $B(\bx_k,\Delta_k)$, with constants that depend on $\Lambda$.

\subsection{Full Algorithm Specification}
A full description of the DFO-GN algorithm is provided in \algref{alg_dfogn}.

In each iteration, if $\bg_k$ is small, we apply a `criticality phase'.
This ensures that $\Delta_k$ is comparable in size to $\|\bg_k\|$, which makes $\Delta_k$, as well as $\|\bg_k\|$,
 a good measure of progress towards optimality.
After computing the trust region step $\bs_k$, we then apply a `safety phase', also originally from Powell \cite{Powell2003}.
In this phase, we check if $\|\bs_k\|$ is too small compared to the lower bound $\rho_k$ on the trust-region radius, and if so we reduce $\Delta_k$ and improve the geometry of $Y_k$, without evaluating $\br(\bx_k+\bs_k)$.
The intention of this step is to detect situations where our trust region step will likely not provide sufficient function decrease without evaluating the objective, which would be wasteful.
If the safety phase is not called, we evaluate $\br(\bx_k+\bs_k)$ and determine how good the trust region step was, accepting any point which achieved sufficient objective decrease.
There are two possible causes for the situation $r_k<\eta_1$ (i.e.~the trust region step was `bad'): the interpolation set is not good enough, or $\Delta_k$ is too large.
We first check the quality of the interpolation set, and only reduce $\Delta_k$ if necessary.

An important feature of DFO-GN, due to Powell \cite{Powell2003}, is that it maintains not only the (usual)  trust region radius  $\Delta_k$ (used in \eqref{eq_tr_subproblem} and in checking $\Lambda$-poisedness), but also a lower bound on it, $\rho_k$.
This mechanism is useful when we reject the trust region step, but the geometry of $Y_k$ is not good (the `Model Improvement Phase').
In this situation, we do not want to shrink $\Delta_k$ too much, because it is likely that the step was rejected because of the poor geometry of $Y_k$, not because the trust region was too large.
The algorithm floors $\Delta_k$ at $\rho_k$, and only shrinks $\Delta_k$ when we reject the trust region step \emph{and} the geometry of $Y_k$ is good (so the model $m_k$ is accurate) --- in this situation, we know that reducing $\Delta_k$ will actually be useful.

\begin{algorithm}
	\small{
	\begin{algorithmic}[1]
		\Require Starting point $\bx_0\in\R^n$ and initial trust region radius $\Delta_0^{init}>0$. 
		\Statex Parameters are $\Delta_{max}\geq\Delta_0^{init}$, criticality threshold $\epsilon_C>0$, criticality scaling $\mu>0$, trust region radius scalings $0<\gamma_{dec}<1<\gamma_{inc}\leq\overline{\gamma}_{inc}$ and $0<\alpha_1<\alpha_2<1$, acceptance thresholds $0 < \eta_1 \leq \eta_2 < 1$, safety reduction factor $0 < \omega_S < 1$, safety step threshold $0 < \gamma_S < 2c_1/(1+\sqrt{1+2c_1})$, poisedness constant $\Lambda\geq1$.
		\State Build an initial interpolation set $Y_0$ of size $n+1$, with $\bx_0\in Y_0$. Set $\rho_0^{init}=\Delta_0^{init}$.
		\For{$k=0,1,2,\ldots$} 
			\State Given $\bx_k$ and $Y_k$, solve the interpolation problem \eqref{eq_linear_interp_system_Jonly} and form $m_k^{init}$ \eqref{eq_gn_full_model_dfo}. \label{ln_loop}
			\If{$\|\bg_k^{init}\| \leq \epsilon_C$}
				\State \underline{Criticality Phase}: using \algref{alg_criticality} (Appendix B), modify $Y_k$ and find $\Delta_k\leq\Delta_k^{init}$ such that $Y_k$ is $\Lambda$-poised in $B(\bx_k,\Delta_k)$ and $\Delta_k\leq\mu\|\bg_k\|$, where $\bg_k$ is the gradient of the new $m_k$. Set $\rho_k= \min(\rho_k^{init}, \Delta_k)$.
			\Else
				\State Set $m_k=m_k^{init}$, $\Delta_k=\Delta_k^{init}$ and $\rho_k=\rho_k^{init}$.
			\EndIf
			\State Approximately solve the trust region subproblem \eqref{eq_tr_subproblem} to get step $\bs_k$ satisfying \assref{ass_cauchy_decrease}.
			\If{$\|\bs_k\| < \gamma_S \rho_k$}
				\State \underline{Safety Phase}: Set $\bx_{k+1}=\bx_k$ and $\Delta_{k+1}^{init} = \max(\rho_k, \omega_S \Delta_k)$, and form $Y_{k+1}$ by making $Y_k$ $\Lambda$-poised  in $B(\bx_{k+1},\Delta_{k+1}^{init})$.
				\State If $\Delta_{k+1}^{init} = \rho_k$, set $(\rho_{k+1}^{init}, \Delta_{k+1}^{init}) = (\alpha_1\rho_k, \alpha_2\rho_k)$, otherwise set $\rho_{k+1}^{init}=\rho_k$.
				\State \textbf{goto} line \ref{ln_loop}.
			\EndIf
			\State Calculate ratio $r_k$ \eqref{eq_tr_ratio}.
			\State Accept/reject step and update trust region radius: set
			\be \bx_{k+1} = \begin{cases}\bx_k + \bs_k, & r_k \geq \eta_1, \\ \bx_k, & r_k < \eta_1, \end{cases} \quad \text{and} \quad \Delta_{k+1}^{init} = \begin{cases}\min(\max(\gamma_{inc}\Delta_k, \overline{\gamma}_{inc}\|\bs_k\|), \Delta_{max}), & r_k \geq \eta_2, \\ \max(\gamma_{dec}\Delta_k, \|\bs_k\|, \rho_k), & \eta_1 \leq r_k < \eta_2, \\ \max(\min(\gamma_{dec}\Delta_k, \|\bs_k\|), \rho_k), & r_k < \eta_1. \end{cases} \ee
			\If{$r_k \geq \eta_1$}
				\State Form $Y_{k+1}=Y_k\cup\{\bx_{k+1}\}\setminus\{\by_t\}$ for some $\by_t\in Y_k$ and set $\rho_{k+1}^{init}=\rho_k$.
			\ElsIf{$Y_k$ is not $\Lambda$-poised in $B(\bx_k,\Delta_k)$}
				\State \underline{Model Improvement Phase}: Form $Y_{k+1}$ by making $Y_k$ $\Lambda$-poised in $B(\bx_{k+1}, \Delta_{k+1}^{init})$ and set $\rho_{k+1}^{init}=\rho_k$.
			\Else\:\:[$r_k < \eta_1$ \textbf{and} $Y_k$ is $\Lambda$-poised in $B(\bx_k,\Delta_k)$]
				\State \underline{Unsuccessful Phase}: Set $Y_{k+1}=Y_k$, and if $\Delta_{k+1}^{init} = \rho_k$, set $(\rho_{k+1}^{init}, \Delta_{k+1}^{init}) = (\alpha_1\rho_k, \alpha_2\rho_k)$, otherwise set $\rho_{k+1}^{init}=\rho_k$. \label{ln_rho_redn}
			\EndIf
		\EndFor
	\end{algorithmic}
	} 
	\caption{DFO-GN: Derivative-Free Optimization using Gauss-Newton.}
	\label{alg_dfogn}
\end{algorithm}

\begin{remark} \label{rem_geom}
	There are two different geometry-improving phases in \algref{alg_dfogn}.
	The first modifies $Y_k$ to ensure it is $\Lambda$-poised in $B(\bx_k,\Delta_k)$, and is called in the safety and model improvement phases.
	This can be achieved by \cite[Algorithm 6.3]{Conn2009}, for instance, where the number of interpolation systems \eqref{eq_linear_interp_system_Jonly} to be solved depends only on $\Lambda$ and $n$ \cite[Theorem 6.3]{Conn2009}. 
	
	The second, called in the criticality phase, also ensures $Y_k$ is $\Lambda$-poised, but it also modifies $\Delta_k$ to ensure $\Delta_k\leq\mu\|\bg_k\|$. 
	This is a more complicated procedure \cite[Algorithm 10.2]{Conn2009}, as we have a coupling between $\Delta_k$ and $Y_k$: ensuring $\Lambda$-poisedness in $B(\bx_k, \Delta_k)$ depends on $\Delta_k$, but since $\bg_k$ depends on $Y_k$, there is a dependency of $\Delta_k$ on $Y_k$.
	Full details of how to achieve this are given in \appref{sec_criticality_geom} --- we show that this procedure terminates as long as $\|\grad f(\bx_k)\|\neq 0$.
		In addition, there, we also prove the bound
	\be \min\left(\Delta_k^{init}, \text{const}\cdot\|\grad f(\bx_k)\|\right) \leq \Delta_k \leq \Delta_k^{init}. \label{eq_criticality_delta_bound_main_text} \ee
	If the procedure terminates in one iteration, then $\Delta_k=\Delta_k^{init}$, and we have simply made $Y_k$ $\Lambda$-poised, just as in the model-improving phase. 
	Otherwise, we do one of these model-improving iterations, then several iterations where both $\Delta_k$ is reduced and $Y_k$ is made $\Lambda$-poised.
	The bound \eqref{eq_criticality_delta_bound_main_text} tells us that these unsuccessful-type iterations do not occur when $\Delta_k^{init}$ (but not $\nabla f(\bx_k)$) is sufficiently small.\footnote{The more common approach in the criticality phase (e.g.~\cite{Conn2009,Garmanjani2016,Zhang2010}) is to use an extra parameter $0<\beta<\mu$ and floor $\Delta_k$ at $\beta\|\bg_k\|$, maintaining full linearity with extra assumptions on $\kappa_{ef}$ and $\kappa_{eg}$ \cite[Lemma 3.2]{Conn2009a}, and requiring all fully linear models have Lipschitz continuous gradient with uniformly bounded Lipschitz constant.}
\end{remark}

\begin{remark}
	In \lemref{lem_fully_linear}, we show that if $Y_k$ is $\Lambda$-poised, then $m_k$ is fully linear with constants that depend on $\Lambda$. For the highest level of generality, one may replace `make $Y_k$ $\Lambda$-poised' with `make $m_k$ fully linear' throughout \algref{alg_dfogn}.
	Any strategy which achieves fully linear models would be sufficient for the convergence results in \secref{lem_fully_linear}.
\end{remark}

\begin{remark}
There are several differences between \algref{alg_dfogn} and its implementation, which we fully detail in \secref{sec_implementation}.
In particular, there is no criticality phase in the DFO-GN implementation as we found it is not needed, but the safety step is preserved to keep continuity with the BOBYQA framework\footnote{Note that the criticality and safety phases have similar aims, namely, to keep the  approximate gradient and the step proportional to $\Delta_k$. However, showing global convergence/complexity without a criticality step is unprecedented in the literature, and left for future work.};  
also,  the geometry-improving phases are replaced by a simplified calculation. 
\end{remark}

\section{Convergence and complexity results} \label{sec_convergence}
We first outline the connection between $\Lambda$-poisedness of $Y_k$ and fully linear models.
We then prove global convergence of \algref{alg_dfogn} (i.e.~convergence from any starting point $\bx_0$) to first-order critical points, and determine its worst-case complexity.

\subsection{Interpolation Models are Fully Linear} \label{sec_interp_fully_linear}
To begin, we require some assumptions on the smoothness of $\br$.

\begin{assumption} \label{ass_smoothness}
	The function $\br$ is $C^1$ and its Jacobian $J(\bx)$ is Lipschitz continuous in $\mathcal{B}$, the convex hull of $\cup_k B(\bx_k,\Delta_{max})$, with constant $L_J$. 
	We also assume that $\br(\bx)$ and $J(\bx)$ are uniformly bounded in the same region; i.e.~$\|\br(\bx)\|\leq r_{max}$ and $\|J(\bx)\| \leq J_{max}$ for all $\bx\in\mathcal{B}$.
\end{assumption}

\begin{remark}
	If the level set $\mathcal{L}\defeq\{\bx : f(\bx) \leq f(\bx_0)\}$ is bounded, which is assumed in \cite{Zhang2010}, then $\bx_k\in\mathcal{L}$ for all $k$, so $\mathcal{B}$ is compact, from which \assref{ass_smoothness} follows. 
\end{remark}

\begin{lemma}
	If \assref{ass_smoothness} holds, then $\grad f$ is Lipschitz continuous in $\mathcal{B}$ with constant
	\be L_{\grad f} \defeq r_{max}L_J + J_{max}^2. \label{eq_ls_lipschitz} \ee
\end{lemma}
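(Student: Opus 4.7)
The plan is to use the standard chain rule identity $\grad f(\bx) = J(\bx)^\top \br(\bx)$, which follows directly from differentiating $f(\bx) = \frac{1}{2}\|\br(\bx)\|^2$, and then bound the difference $\grad f(\bx) - \grad f(\by)$ for arbitrary $\bx, \by \in \mathcal{B}$.

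The main trick is a standard add-and-subtract step: write
\[ J(\bx)^\top \br(\bx) - J(\by)^\top \br(\by) = J(\bx)^\top\bigl(\br(\bx) - \br(\by)\bigr) + \bigl(J(\bx) - J(\by)\bigr)^\top \br(\by), \]
then apply the triangle inequality and submultiplicativity of $\|\cdot\|$. The second term is immediately bounded by $L_J \|\bx-\by\| \cdot r_{max}$ using the Lipschitz hypothesis on $J$ and the uniform bound on $\br$. For the first term, I would use $\|J(\bx)^\top\| = \|J(\bx)\| \leq J_{max}$, leaving the quantity $\|\br(\bx) - \br(\by)\|$ to be controlled.

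To bound $\|\br(\bx) - \br(\by)\|$, I would invoke the mean value inequality on the convex set $\mathcal{B}$: since $\br \in C^1$ and $\|J(\bz)\| \leq J_{max}$ for all $\bz \in \mathcal{B}$, and the line segment from $\bx$ to $\by$ lies in $\mathcal{B}$ (by convexity), we get $\|\br(\bx) - \br(\by)\| \leq J_{max}\|\bx-\by\|$. This is the one place where I am using that $\mathcal{B}$ was taken to be the convex hull rather than just the union of balls --- without convexity the line-segment argument fails. Combining these two bounds yields $\|\grad f(\bx) - \grad f(\by)\| \leq (J_{max}^2 + r_{max} L_J) \|\bx-\by\|$, which is exactly $L_{\grad f}$ as defined in \eqref{eq_ls_lipschitz}. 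There is no real obstacle here; the result is a routine product-rule Lipschitz estimate, and the only subtlety is to make sure the domain of integration/mean-value argument is convex, which the assumption has been carefully set up to guarantee.
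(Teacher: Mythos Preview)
Your proposal is correct and follows essentially the same approach as the paper: the paper also writes $\grad f = J^\top \br$, applies the same add-and-subtract decomposition, bounds $\|\br(\by)-\br(\bx)\|$ by $J_{max}\|\by-\bx\|$ via an integral/mean-value argument along the segment in $\mathcal{B}$, and combines the two resulting bounds. Your observation about needing convexity of $\mathcal{B}$ for the line-segment argument is exactly the point of defining $\mathcal{B}$ as a convex hull.
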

\begin{proof}
	We choose $\bx,\by\in\mathcal{B}$ and use the Fundamental Theorem of Calculus to compute
	\be \|\br(\by)-\br(\bx)\| = \left\|\int_{0}^{1}J(\bx + \alpha(\by-\bx))(\by-\bx)d\alpha\right\| \leq J_{max}\|\by-\bx\|. \ee
	Now we use this, the identity $\|A\|=\|A^{\top}\|$, and $\grad f(\bx) = J(\bx)^{\top}\br(\bx)$ to compute
	\begin{align}
		\|\grad f(\by)-\grad f(\bx)\| &\leq \|(J(\by)-J(\bx))^{\top}\br(\by)\| + \|J(\bx)^{\top}(\br(\by)-\br(\bx))\|, \\
		&\leq \|J(\by)-J(\bx)\|\cdot\|\br(\by)\| + \|J(\bx)\|\cdot\|\br(\by)-\br(\bx)\|, \\
		&\leq L_J \|\by-\bx\| \cdot r_{max} + J_{max} \cdot J_{max} \|\by-\bx\|,
	\end{align}
	from which we recover \eqref{eq_ls_lipschitz}.
\end{proof}

We now state the connection between $\Lambda$-poisedness of $Y_k$ and full linearity of the models $\bem_k$ \eqref{eq_ls_definition} and $m_k$ \eqref{eq_gn_full_model_dfo}.

\begin{lemma} \label{lem_fully_linear}
	Suppose \assref{ass_smoothness} holds and $Y_k$ is $\Lambda$-poised in $B(\bx_k, \Delta_k)$.
	Then $\bem_k$ \eqref{eq_ls_definition} is a fully linear model for $\br$ in $B(\bx_k,\Delta_k)$ in the sense of \defref{def_fully_linear_vector} with constants
	\be \kappa_{ef}^r = \kappa_{eg}^r + \frac{L_J}{2} \qquad \text{and} \qquad 
\kappa_{eg}^r = \frac{1}{2}L_J\left(\sqrt{n}C+2\right), 
\label{eq_fully_linear_vector_consts} \ee
	in \eqref{eq_fully_linear_vector_f} and \eqref{eq_fully_linear_vector_g}, where $C=\bigO(\Lambda)$.
	Under the same hypotheses, $m_k$ \eqref{eq_gn_full_model_dfo} is a fully linear model for $f$ in $B(\bx_k,\Delta_k)$ in the sense of \defref{def_fully_linear_scalar} with constants
	\be \kappa_{ef} = \kappa_{eg} + \frac{L_{\grad f} + (\kappa_{eg}^r\Delta_{max} + J_{max})^2}{2} \:\: \text{and} \:\: \kappa_{eg} = L_{\grad f} + \kappa_{eg}^r r_{max} + (\kappa_{eg}^r\Delta_{max}+J_{max})^2, \label{eq_fully_linear_scalar_consts} \ee
	in \eqref{eq_fully_linear_scalar_f} and \eqref{eq_fully_linear_scalar_g}, where $L_{\grad f}$ is from \eqref{eq_ls_lipschitz}.
	We also have the bound $\|H_k\|\leq (\kappa_{eg}^r\Delta_{max} + J_{max})^2$, independent of $\bx_k$, $Y_k$ and $\Delta_k$.
\end{lemma}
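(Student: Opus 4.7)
My plan is to prove the vector-model bounds first, then derive the scalar-model bounds from them, and finally handle $\|H_k\|$ as a by-product. The key observation is that the interpolation conditions can be rewritten as a matrix equation in which $\Lambda$-poisedness gives a bound on the inverse of the (scaled) sample matrix. Concretely, let $W$ be the $n \times n$ matrix with rows $(\by_t - \bx_k)^{\top}$, $t=1,\ldots,n$, so that the rescaled matrix $\hat W \defeq W/\Delta_k$ has $\|\hat W^{-1}\| \leq C$ for some $C=\bigO(\Lambda)$; this is the standard translation of $\Lambda$-poisedness for linear interpolation (cf.\ \cite[Theorem 3.14]{Conn2009}). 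Stacking the interpolation residuals into the $n \times m$ matrix $E$ whose $t$-th row is $\br(\by_t) - \br(\bx_k) - J(\bx_k)(\by_t - \bx_k)$, Taylor's theorem together with \assref{ass_smoothness} gives $\|E\|_F \leq \sqrt{n}\tfrac{L_J}{2}\Delta_k^2$, and \eqref{eq_linear_interp_system_Jonly} yields $J_k^{\top} - J(\bx_k)^{\top} = W^{-1}E$, whence
\be \|J_k - J(\bx_k)\| \leq \|W^{-1}\| \cdot \|E\|_F \leq \tfrac{L_J}{2}\sqrt{n}\,C\,\Delta_k. \ee
Adding $\|J(\bx_k+\bs) - J(\bx_k)\| \leq L_J\Delta_k$ via the triangle inequality gives \eqref{eq_fully_linear_vector_g} with $\kappa_{eg}^r = \tfrac{L_J}{2}(\sqrt{n}C + 2)$.

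For the value bound \eqref{eq_fully_linear_vector_f}, I would write $\br(\bx_k+\bs) - \bem_k(\bs) = [\br(\bx_k+\bs) - \br(\bx_k) - J(\bx_k)\bs] + (J(\bx_k) - J_k)\bs$. The first bracket is bounded by $\tfrac{L_J}{2}\|\bs\|^2 \leq \tfrac{L_J}{2}\Delta_k^2$ by Lipschitz continuity of $J$, and the second by $\kappa_{eg}^r \Delta_k \cdot \|\bs\| \leq \kappa_{eg}^r \Delta_k^2$ using the just-derived gradient bound at $\bs = 0$ (which is $\leq \kappa_{eg}^r \Delta_k$ as well), giving $\kappa_{ef}^r = \kappa_{eg}^r + L_J/2$. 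The $\|H_k\|$ bound then follows immediately from $\|H_k\| = \|J_k\|^2$ and $\|J_k\| \leq \|J(\bx_k)\| + \|J_k - J(\bx_k)\| \leq J_{max} + \kappa_{eg}^r \Delta_{max}$.

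For the scalar model, use $\grad m_k(\bs) = J_k^{\top}\bem_k(\bs)$ and $\grad f(\bx_k) = J(\bx_k)^{\top}\br(\bx_k)$. Decompose
\be \grad f(\bx_k + \bs) - \grad m_k(\bs) = [\grad f(\bx_k + \bs) - \grad f(\bx_k)] + (J(\bx_k) - J_k)^{\top}\br(\bx_k) - J_k^{\top}J_k \bs, \ee
and estimate the three terms by $L_{\grad f}\Delta_k$, $\kappa_{eg}^r r_{max}\Delta_k$, and $\|H_k\|\Delta_k$, respectively, yielding exactly the stated $\kappa_{eg}$. For the value bound, combine the scalar Taylor estimate $|f(\bx_k+\bs) - f(\bx_k) - \grad f(\bx_k)^{\top}\bs| \leq \tfrac{L_{\grad f}}{2}\Delta_k^2$ with $|( \grad f(\bx_k) - \bg_k)^{\top}\bs| \leq \kappa_{eg}\Delta_k^2$ (applying the previous step at $\bs = 0$) and $|\tfrac12 \bs^{\top}H_k\bs| \leq \tfrac12\|H_k\|\Delta_k^2$, giving the stated $\kappa_{ef}$.

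The main conceptual obstacle is getting the correct $\sqrt{n}$ (rather than $\sqrt{m}$) factor in $\kappa_{eg}^r$: a naive residual-by-residual argument would produce $\sqrt{m}$ after summing $\|\b{j}_{k,i} - \grad r_i(\bx_k)\|^2$, which is undesirable when $m \gg n$. The matrix-valued formulation above sidesteps this because $E$ is $n \times m$ and we bound $\|E\|_F$ row-by-row (there are only $n$ rows), while $\|W^{-1}\|$ is intrinsically an $n$-dimensional quantity controlled by $\Lambda$-poisedness. Everything else is triangle-inequality bookkeeping, with the only subtlety being to carefully exchange $\Delta_k^2$ for $\Delta_{max}\Delta_k$ (and $\Delta_k$ for $\Delta_{max}$ where needed) so that the constants are genuinely uniform in $k$.
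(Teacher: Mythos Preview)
Your proposal is correct and follows essentially the same route as the paper's proof in \appref{sec_fully_linear_pf}: both arguments bound $\|J_k - J(\bx_k)\|$ by rewriting the interpolation conditions as a matrix equation involving the scaled sample matrix $\hat W$ and a Taylor-remainder matrix with $n$ rows of size $\leq \tfrac{L_J}{2}\Delta_k^2$ (your $W^{-1}E$ formulation is the transpose of the paper's $[J_k-J(\bx_k)]\hat W_k^{\top}$), then use the same triangle-inequality decompositions for $\kappa_{ef}^r$, $\|H_k\|$, $\kappa_{eg}$ and $\kappa_{ef}$. Your treatment of the scalar value bound (evaluating the gradient estimate at $\bs=0$ to control $\|\grad f(\bx_k)-\bg_k\|$, then handling the $\tfrac12\bs^{\top}H_k\bs$ term separately) is in fact slightly cleaner than the paper's, which routes the same inequality through $\|\grad f(\bx_k)-\grad m_k(\by-\bx_k)\|$.
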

\begin{proof}
	See \appref{sec_fully_linear_pf}.
\end{proof}

\subsection{Global Convergence of DFO-GN} \label{sec_algo_convergence}
We begin with some nomenclature to describe certain iterations: we call an iteration (for which the safety phase is not called)
\begin{itemize}
	\item `Successful' if $\bx_{k+1}=\bx_k+\bs_k$ (i.e.~$r_k\geq\eta_1$), and `very successful' if $r_k\geq\eta_2$. Let $\mathcal{S}$ be the set of successful iterations $k$;
	\item `Model-Improving' if $r_k<\eta_1$ and the model-improvement phase is called (i.e.~$Y_k$ is not $\Lambda$-poised in $B(\bx_k,\Delta_k)$); and
	\item `Unsuccessful' if $r_k<\eta_1$ and the model-improvement phase is not called.
\end{itemize}
The results below are largely based on corresponding results in \cite{Zhang2010,Conn2009}.

\begin{assumption} \label{ass_bdd_hess}
	We assume that $\|H_k\|\leq\kappa_H$ for all $k$, for some $\kappa_H\geq1$\footnote{\lemref{lem_fully_linear} ensures \assref{ass_bdd_hess} holds whenever $Y_k$ is $\Lambda$-poised in $B(\bx_k,\Delta_k)$, but we need it to hold on all iterations. However, most of our analysis holds if this assumption is removed --- see \secref{sec_bdd_hess} for details.}.
\end{assumption}

\begin{lemma} \label{lem_min_delta_successful_dfo}
	Suppose \assref{ass_cauchy_decrease} holds.
	If the model $m_k$ is fully linear in $B(\bx_k,\Delta_k)$ and 
	\be \Delta_k \leq \min\left(\frac{c_1(1-\eta_2)\|\bg_k\|}{2\kappa_{ef}}, \frac{\|\bg_k\|}{\max(\|H_k\|,1)}\right), \label{eq_delta_lower_bound_dfo} \ee
	then either the $k$-th iteration is very successful or the safety phase is called.
\end{lemma}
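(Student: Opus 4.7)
The plan is to show the contrapositive-style statement: if the safety phase is not called at iteration $k$, then $r_k \geq \eta_2$. So I assume throughout that $\|\bs_k\|\geq \gamma_S \rho_k$ so that $r_k$ is actually computed.

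First I would observe that $m_k(\b{0}) = f(\bx_k)$ automatically, since the interpolation condition forces $\bem_k(\b{0})=\br(\bx_k)$ and then $m_k(\b{0})=\tfrac12\|\br(\bx_k)\|^2 = f(\bx_k)$. Consequently the ratio discrepancy reduces to
\[
|r_k - 1| \;=\; \frac{|f(\bx_k+\bs_k) - m_k(\bs_k)|}{m_k(\b{0}) - m_k(\bs_k)},
\]
and there is no contribution from an error term at $\bs=\b{0}$.

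Next I would bound the numerator using full linearity of $m_k$ in $B(\bx_k,\Delta_k)$, which gives $|f(\bx_k+\bs_k) - m_k(\bs_k)| \leq \kappa_{ef}\Delta_k^2$ since $\|\bs_k\|\leq\Delta_k$. For the denominator I invoke \assref{ass_cauchy_decrease}, namely
\[
m_k(\b{0}) - m_k(\bs_k) \;\geq\; c_1 \|\bg_k\|\min\!\left(\Delta_k, \frac{\|\bg_k\|}{\max(\|H_k\|,1)}\right).
\]
The second hypothesis in \eqref{eq_delta_lower_bound_dfo}, $\Delta_k \leq \|\bg_k\|/\max(\|H_k\|,1)$, collapses the minimum to $\Delta_k$, so the denominator is at least $c_1\|\bg_k\|\Delta_k$. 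Combining,
\[
|r_k - 1| \;\leq\; \frac{\kappa_{ef}\Delta_k^2}{c_1\|\bg_k\|\Delta_k} \;=\; \frac{\kappa_{ef}\Delta_k}{c_1\|\bg_k\|}.
\]

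Finally I would substitute the first half of \eqref{eq_delta_lower_bound_dfo}, $\Delta_k\leq c_1(1-\eta_2)\|\bg_k\|/(2\kappa_{ef})$, to conclude $|r_k-1|\leq (1-\eta_2)/2 < 1-\eta_2$, i.e.\ $r_k \geq \eta_2$, so the iteration is very successful. There is no real obstacle here; the only subtle point is remembering that $m_k(\b{0})=f(\bx_k)$ exactly (so only one error term survives in the numerator) and that the second condition in \eqref{eq_delta_lower_bound_dfo} is precisely what makes the Cauchy decrease lower bound linear in $\Delta_k$ rather than in $\|\bg_k\|^2/\|H_k\|$.
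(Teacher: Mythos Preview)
Your proof is correct and follows essentially the same approach as the paper: bound $|r_k-1|$ using full linearity for the numerator and the Cauchy decrease condition (collapsed to $c_1\|\bg_k\|\Delta_k$ via the second part of \eqref{eq_delta_lower_bound_dfo}) for the denominator, then apply the first part of \eqref{eq_delta_lower_bound_dfo}. The only difference is that the paper does not exploit the exact interpolation $m_k(\b{0})=f(\bx_k)$ and instead bounds both $|f(\bx_k+\bs_k)-m_k(\bs_k)|$ and $|f(\bx_k)-m_k(\b{0})|$ by $\kappa_{ef}\Delta_k^2$, obtaining $|r_k-1|\leq 2\kappa_{ef}\Delta_k/(c_1\|\bg_k\|)\leq 1-\eta_2$ rather than your sharper $(1-\eta_2)/2$.
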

\begin{proof}
	We compute
	\begin{align}
		|r_k-1| &= \left|\frac{(f(\bx_k)-f(\bx_k+\bs_k))-(m_k(\b{0})-m_k(\bs_k))}{m_k(\b{0})-m_k(\bs_k)}\right|, \\
		&\leq \frac{|f(\bx_k+\bs_k) - m_k(\bs_k)|}{|m_k(\b{0})-m_k(\bs_k)|} + \frac{|f(\bx_k)-m_k(\b{0})|}{|m_k(\b{0})-m_k(\bs_k)|}.
	\end{align}
	By assumption, $\Delta_k \leq \|\bg_k\|/\max(\|H_k\|,1)$. 
	Applying this to \eqref{eq_cauchy_decrease}, we have
	\be m_k(\b{0}) - m_k(\bs_k) \geq c_1\|\bg_k\|\Delta_k. \ee
	Using this and fully linearity \eqref{eq_fully_linear_scalar_f}, we get
	\be |r_k-1| \leq 2\left(\frac{\kappa_{ef}\Delta_k^2}{c_1 \|\bg_k\|\Delta_k}\right) \leq 1-\eta_2. \ee
	Thus $r_k\geq\eta_2$ and the iteration is very successful if $\|\bs_k\| \geq \gamma_S\rho_k$, otherwise the safety phase is called.
\end{proof}

The next result provides a lower bound on the size of the trust region step $\|\bs_k\|$, which we will later use to determine that the safety phase is not called when $\|\bg_k\|$ is bounded away from zero and $\Delta_k$ is sufficiently small.

\begin{lemma} \label{lem_cauchy_step_size}
	Suppose \assref{ass_cauchy_decrease} holds.
	Then the step $\bs_k$ satisfies
	\be \|\bs_k\| \geq \frac{2c_1}{1+\sqrt{1+2c_1}}\min\left(\Delta_k, \frac{\|\bg_k\|}{\max(\|H_k\|,1)}\right). \label{eq_trs_step_bound_fixed} \ee
\end{lemma}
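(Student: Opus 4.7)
The plan is to combine the Cauchy decrease condition of Assumption~\ref{ass_cauchy_decrease} with a direct upper bound on the model decrease $m_k(\b{0})-m_k(\bs_k)$ coming from the explicit quadratic form of $m_k$, and then solve the resulting quadratic inequality in the scalar $\|\bs_k\|$.

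First I would write, using \eqref{eq_gn_full_model_dfo},
\[
m_k(\b{0})-m_k(\bs_k) = -\bg_k^\top\bs_k - \tfrac{1}{2}\bs_k^\top H_k\bs_k \leq \|\bg_k\|\,\|\bs_k\| + \tfrac{1}{2}\max(\|H_k\|,1)\,\|\bs_k\|^2,
\]
by Cauchy--Schwarz and the trivial bound $\bs_k^\top H_k\bs_k \leq \|H_k\|\|\bs_k\|^2 \leq \max(\|H_k\|,1)\|\bs_k\|^2$. Combining with the Cauchy decrease bound \eqref{eq_cauchy_decrease} and abbreviating $M\defeq\max(\|H_k\|,1)$ and $\sigma\defeq\min(\Delta_k,\|\bg_k\|/M)$, this yields
\[
c_1\|\bg_k\|\sigma \;\leq\; \|\bg_k\|\,\|\bs_k\| + \tfrac{M}{2}\|\bs_k\|^2,
\]
which, viewed as a quadratic inequality in the unknown $u\defeq\|\bs_k\|\geq 0$, gives
\[
\tfrac{M}{2}u^2 + \|\bg_k\|u - c_1\|\bg_k\|\sigma \;\geq\; 0.
\]

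Next I would take the larger root of the corresponding quadratic equation to conclude
\[
\|\bs_k\| \;\geq\; \frac{-\|\bg_k\|+\sqrt{\|\bg_k\|^2+2c_1 M\|\bg_k\|\sigma}}{M} \;=\; \frac{2c_1\sigma}{1+\sqrt{1+2c_1 M\sigma/\|\bg_k\|}},
\]
where in the last step I rationalise using the identity $\sqrt{1+x}-1 = x/(\sqrt{1+x}+1)$. The final ingredient is the observation that, by definition of $\sigma$, we have $M\sigma/\|\bg_k\| \leq 1$, so $\sqrt{1+2c_1 M\sigma/\|\bg_k\|} \leq \sqrt{1+2c_1}$, and the denominator is bounded above by $1+\sqrt{1+2c_1}$. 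Substituting back reproduces the stated bound \eqref{eq_trs_step_bound_fixed}.

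There is no real obstacle here; the only minor subtlety is making sure one uses $\max(\|H_k\|,1)$ rather than $\|H_k\|$ in bounding the quadratic term (so that the case $H_k=0$ or small $\|H_k\|$ is handled uniformly and the ratio $M\sigma/\|\bg_k\|$ is controlled by the definition of $\sigma$), and then rationalising the square root so that the final constant appears in the clean form $2c_1/(1+\sqrt{1+2c_1})$ that matches the safety-step threshold $\gamma_S$ chosen in \algref{alg_dfogn}.
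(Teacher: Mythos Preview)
Your proof is correct and follows essentially the same approach as the paper: both combine the Cauchy decrease lower bound with the quadratic upper bound $m_k(\b{0})-m_k(\bs_k)\leq \|\bg_k\|\|\bs_k\|+\tfrac{1}{2}\max(\|H_k\|,1)\|\bs_k\|^2$, solve the resulting quadratic inequality in $\|\bs_k\|$, rationalise the root, and then use $M\sigma\leq\|\bg_k\|$ to simplify the denominator. The only cosmetic difference is that the paper passes through an absolute value to justify the upper bound on the model decrease, whereas you bound the two terms directly.
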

\begin{proof}
	For convenience of notation, let $h_k\defeq\max(\|H_k\|,1)\geq 1$.
	Since $m_k(\b{0})-m_k(\bs_k)\geq 0$ from \eqref{eq_cauchy_decrease}, we have
	\be m_k(\b{0}) - m_k(\bs_k) = |m_k(\b{0})-m_k(\bs_k)| = \left|\bg_k^{\top}\bs_k + \frac{1}{2}\bs_k^{\top}H_k\bs_k\right| \leq \|\bs_k\|\cdot\|\bg_k\| + \frac{h_k}{2}\|\bs_k\|^2.\ee
	Substituting this into \eqref{eq_cauchy_decrease}, we get
	\be \frac{1}{2}\|\bs_k\|^2 + \frac{\|\bg_k\|}{h_k}\cdot\|\bs_k\| - c_1\frac{\|\bg_k\|}{h_k}\min\left(\Delta_k, \frac{\|\bg_k\|}{h_k}\right) \geq 0. \ee
	For this to be satisfied, we require
	\begin{align}
		\|\bs_k\| &\geq \sqrt{\frac{\|\bg_k\|^2}{h_k^2} + 2c_1 \frac{\|\bg_k\|}{h_k}\min\left(\Delta_k, \frac{\|\bg_k\|}{h_k}\right)} - \frac{\|\bg_k\|}{h_k}, \\
		&= \frac{2c_1 \frac{\|\bg_k\|}{h_k}\min\left(\Delta_k, \frac{\|\bg_k\|}{h_k}\right)}{\sqrt{\frac{\|\bg_k\|^2}{h_k^2} + 2c_1 \frac{\|\bg_k\|}{h_k}\min\left(\Delta_k, \frac{\|\bg_k\|}{h_k}\right)} + \frac{\|\bg_k\|}{h_k}}, \\
		&\geq \frac{2c_1 \frac{\|\bg_k\|}{h_k}\min\left(\Delta_k, \frac{\|\bg_k\|}{h_k}\right)}{\sqrt{\frac{\|\bg_k\|^2}{h_k^2} + 2c_1 \frac{\|\bg_k\|}{h_k}\left(\frac{\|\bg_k\|}{h_k}\right)} + \frac{\|\bg_k\|}{h_k}},
	\end{align}
	from which we recover \eqref{eq_trs_step_bound_fixed}.
\end{proof}

\begin{lemma} \label{lem_true_model_gradient}
	In all iterations, $\|\bg_k\| \geq \min(\epsilon_C, \Delta_k/\mu)$.
	Also, if $\|\grad f(\bx_k)\|\geq\epsilon>0$ then 
	\be \|\bg_k\|\geq \epsilon_g \defeq \min\left(\epsilon_C, \frac{\epsilon}{1+\kappa_{eg}\mu}\right) > 0. \label{eq_normg_lower} \ee
\end{lemma}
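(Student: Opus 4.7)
The plan is to argue both bounds by a short case split, keyed to whether the criticality phase was triggered on iteration $k$.

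For the first inequality $\|\bg_k\| \geq \min(\epsilon_C, \Delta_k/\mu)$, I would separate two cases based on the test $\|\bg_k^{init}\|\leq \epsilon_C$ in \algref{alg_dfogn}. If the test fails, the criticality phase is skipped, so $\bg_k=\bg_k^{init}$, $\Delta_k=\Delta_k^{init}$, and $\|\bg_k\|>\epsilon_C\geq\min(\epsilon_C,\Delta_k/\mu)$. If the test passes, the criticality phase returns $(Y_k,\Delta_k)$ satisfying $\Delta_k\leq \mu\|\bg_k\|$ by construction (see line 5), i.e.\ $\|\bg_k\|\geq \Delta_k/\mu\geq \min(\epsilon_C,\Delta_k/\mu)$. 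Combining the two cases gives the claim.

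For the second inequality, assume $\|\grad f(\bx_k)\|\geq \epsilon$ and again split on whether the criticality phase was entered. If it was not, then $\|\bg_k\|=\|\bg_k^{init}\|>\epsilon_C\geq \epsilon_g$ and we are done. If it was, then by \remref{rem_geom} the returned set $Y_k$ is $\Lambda$-poised in $B(\bx_k,\Delta_k)$, so \lemref{lem_fully_linear} tells us $m_k$ is fully linear in $B(\bx_k,\Delta_k)$; in particular, evaluating \eqref{eq_fully_linear_scalar_g} at $\bs=\b{0}$ yields $\|\grad f(\bx_k)-\bg_k\|\leq \kappa_{eg}\Delta_k$. Using the criticality bound $\Delta_k\leq\mu\|\bg_k\|$ and the reverse triangle inequality,
\be
\epsilon \;\leq\; \|\grad f(\bx_k)\| \;\leq\; \|\bg_k\|+\kappa_{eg}\Delta_k \;\leq\; (1+\kappa_{eg}\mu)\,\|\bg_k\|,
\ee
so $\|\bg_k\|\geq \epsilon/(1+\kappa_{eg}\mu)\geq \epsilon_g$. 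Note positivity of $\epsilon_g$ is immediate from $\epsilon_C>0$ and $\epsilon>0$.

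The only real subtlety, and the place I would be careful, is to ensure that the criticality procedure really terminates (so that the returned $\Delta_k$ with $\Delta_k\leq \mu\|\bg_k\|$ exists): this is exactly the content of \remref{rem_geom}, which states termination under $\|\grad f(\bx_k)\|\neq 0$. Since the second claim already supposes $\|\grad f(\bx_k)\|\geq\epsilon>0$, the criticality phase is well-defined when invoked, and the argument above is valid. The first claim does not require $\|\grad f(\bx_k)\|>0$ a priori, but if the criticality phase is invoked, then the algorithm is presumed to produce a well-defined $(Y_k,\Delta_k)$ (otherwise DFO-GN would terminate); once it does, the inequality $\Delta_k\leq \mu\|\bg_k\|$ holds by construction and the argument goes through.
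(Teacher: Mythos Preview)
Your proof is correct and follows essentially the same approach as the paper: both argue by a case split on whether the criticality phase is entered, using $\|\bg_k\|>\epsilon_C$ in the non-triggered case and the post-criticality conditions $\Delta_k\leq\mu\|\bg_k\|$ and full linearity (giving $\|\grad f(\bx_k)-\bg_k\|\leq\kappa_{eg}\Delta_k$) in the triggered case. Your additional remarks on termination of the criticality phase are a reasonable point of care, though the paper handles that separately in \remref{rem_geom} and \appref{sec_criticality_geom} rather than within this proof.
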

\begin{proof}
	Firstly, if the criticality phase is not called, then we must have $\|\bg_k\|=\|\bg_k^{init}\|>\epsilon_C$.
	Otherwise, we have $\Delta_k\leq\mu\|\bg_k\|$.
	Hence $\|\bg_k\| \geq \min(\epsilon_C, \Delta_k/\mu)$.
	
	To show \eqref{eq_normg_lower}, first suppose $\|\bg_k^{init}\|\geq\epsilon_C$.
	Then $\bg_k=\bg_k^{init}$ and \eqref{eq_normg_lower} holds.
	Otherwise, the criticality phase is called and $m_k$ is fully linear in $B(\bx_k,\Delta_k)$ with $\Delta_k\leq\mu\|\bg_k\|$.
	In this case, we have
	\be \epsilon \leq \|\grad f(\bx_k)\| \leq \|\grad f(\bx_k) - \bg_k\| + \|\bg_k\| \leq \kappa_{eg}\mu\|\bg_k\| + \|\bg_k\|, \ee
	and so $\|\bg_k\| \geq \epsilon/(1 + \kappa_{eg}\mu)$ and \eqref{eq_normg_lower} holds.
\end{proof}

\begin{lemma} \label{lem_lower_delta_bound_dfo}
	Suppose Assumptions \ref{ass_cauchy_decrease}, \ref{ass_smoothness} and \ref{ass_bdd_hess} hold.
	If $\|\grad f(\bx_k)\| \geq \epsilon > 0$ for all $k$, then $\rho_k\geq \rho_{min} > 0$ for all $k$, where
	\be \rho_{min} \defeq \min\left(\Delta_0^{init}, \frac{\omega_C\epsilon}{\kappa_{eg}+1/\mu}, \: \frac{\alpha_1 \epsilon_g}{\kappa_H}, \: \alpha_1\left(\kappa_{eg} + \frac{2\kappa_{ef}}{c_1(1-\eta_2)}\right)^{-1}\epsilon\right). \label{eq_rho_min}\ee
\end{lemma}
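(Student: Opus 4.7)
The plan is to prove $\rho_k^{init}\geq\rho_{min}$ by induction on $k$, using a minimality/contradiction argument. The key structural fact is that, inspecting \algref{alg_dfogn}, the only mechanism that strictly decreases $\rho_k^{init}$ is the $\alpha_1$-reduction $(\rho_{k+1}^{init},\Delta_{k+1}^{init})=(\alpha_1\rho_k,\alpha_2\rho_k)$ triggered in the safety or unsuccessful phase when $\Delta_{k+1}^{init}$ has already been pushed down to $\rho_k$; in every other branch $\rho_{k+1}^{init}=\rho_k$. Within an iteration the criticality phase can bring $\rho_k$ below $\rho_k^{init}$, but only by a factor controlled via \eqref{eq_criticality_delta_bound_main_text}. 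The base case $\rho_0^{init}=\Delta_0^{init}\geq\rho_{min}$ combined with \eqref{eq_criticality_delta_bound_main_text} and $\|\grad f(\bx_0)\|\geq\epsilon$ then accounts for the first two terms of \eqref{eq_rho_min}.

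Let $k_0$ be the smallest index with $\rho_{k_0+1}^{init}<\rho_{min}$. By the above, the $\alpha_1$-reduction must fire at iteration $k_0$, so $\rho_{k_0+1}^{init}=\alpha_1\rho_{k_0}$, and by minimality $\rho_{k_0}\geq\rho_{min}$. Hence $\rho_{k_0}<\rho_{min}/\alpha_1$, and reading off the last two terms of \eqref{eq_rho_min} gives the two working bounds
\[
\rho_{k_0}<\epsilon_g/\kappa_H
\qquad\text{and}\qquad
\rho_{k_0}\bigl(\kappa_{eg}+2\kappa_{ef}/(c_1(1-\eta_2))\bigr)<\epsilon,
\]
with $\epsilon_g$ as in \lemref{lem_true_model_gradient}. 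I then rule out both possible triggers.

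For the safety trigger, \lemref{lem_true_model_gradient}, \assref{ass_bdd_hess} and the algorithmic invariant $\Delta_{k_0}\geq\rho_{k_0}$ give $\min(\Delta_{k_0},\|\bg_{k_0}\|/\max(\|H_{k_0}\|,1))\geq\rho_{k_0}$; \lemref{lem_cauchy_step_size} then forces $\|\bs_{k_0}\|\geq \tfrac{2c_1}{1+\sqrt{1+2c_1}}\rho_{k_0}>\gamma_S\rho_{k_0}$ by the parameter choice of $\gamma_S$, contradicting the safety trigger. For the unsuccessful trigger, $Y_{k_0}$ is $\Lambda$-poised so $m_{k_0}$ is fully linear (\lemref{lem_fully_linear}), the safety phase did not fire, and the $\alpha_1$-trigger in the unsuccessful branch forces $\Delta_{k_0}$ to be of order $\rho_{k_0}$ (i.e.\ $\min(\gamma_{dec}\Delta_{k_0},\|\bs_{k_0}\|)\leq\rho_{k_0}$). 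Combined with the working bounds and $\|\bg_{k_0}\|\geq\epsilon_g$, $\|H_{k_0}\|\leq\kappa_H$, this places $\Delta_{k_0}$ inside the regime \eqref{eq_delta_lower_bound_dfo} of \lemref{lem_min_delta_successful_dfo}, which forces the iteration to be very successful or the safety phase to fire -- a contradiction with being in the unsuccessful branch.

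The main obstacle is the bookkeeping in the unsuccessful case: one must pair each of the two arguments of the min in \eqref{eq_delta_lower_bound_dfo} with a term of \eqref{eq_rho_min} and verify that, with $\alpha_1$ placed on the outside of the last two terms of $\rho_{min}$ (and $\epsilon_g$, not $\epsilon$, appearing in the third term), the $\alpha_1$-reduction at $k_0$ still satisfies $\alpha_1\rho_{k_0}\geq\rho_{min}$, thus closing the induction. The precise shape of \eqref{eq_rho_min} is essentially reverse-engineered from this requirement; the remaining constants are routine.
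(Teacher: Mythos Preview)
Your approach is essentially the same as the paper's: both argue by contradiction at the first iteration where $\rho$ drops below $\rho_{min}$, observe that the drop can only come from the $\alpha_1$-reduction in a safety or unsuccessful phase, and then rule out both possibilities via \lemref{lem_cauchy_step_size} and \lemref{lem_min_delta_successful_dfo}. The paper organises the two cases slightly differently --- it unifies them by first bounding $\|\bs_{k(0)-1}\|$ from above in \emph{both} the safety and unsuccessful branches, then feeds this into \lemref{lem_cauchy_step_size} to obtain a single upper bound on $\Delta_{k(0)-1}$, and only afterwards shows the safety phase could not have fired --- whereas you dispose of the safety case first and then tackle the unsuccessful case separately.

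One point deserves more care than you give it. In the unsuccessful branch, the trigger condition $\min(\gamma_{dec}\Delta_{k_0},\|\bs_{k_0}\|)\leq\rho_{k_0}$ does not by itself place $\Delta_{k_0}$ inside the regime \eqref{eq_delta_lower_bound_dfo}: if $\|\bs_{k_0}\|\leq\rho_{k_0}$ achieves the min, you still need \lemref{lem_cauchy_step_size} to convert this into a bound on $\Delta_{k_0}$, and either route introduces a factor $\gamma_{dec}^{-1}$ or $c_2^{-1}$ that is \emph{not} present in \eqref{eq_rho_min}. The paper's way around this is to go through $\|\bs_{k(0)-1}\|$ and exploit the parameter choice $\gamma_S<2c_1/(1+\sqrt{1+2c_1})=c_2$, which is precisely what makes the extra factor disappear and yields $\Delta_{k(0)-1}<\rho_{min}/\alpha_1$ cleanly. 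Your last paragraph flags this bookkeeping as the main obstacle, but calling it ``routine'' understates it: the argument hinges on the specific algorithmic constraint on $\gamma_S$, not merely on the form of $\rho_{min}$.
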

\begin{proof}
	From \lemref{lem_true_model_gradient}, we also have $\|\bg_k\|\geq\epsilon_g>0$ for all $k$.
	To find a contradiction, let $k(0)$ be the first $k$ such that $\rho_k<\rho_{min}$.
	That is, we have
	\be \rho_0^{init} \geq \rho_0 \geq \rho_1^{init} \geq \rho_1 \geq \cdots \geq \rho_{k(0)-1}^{init} \geq \rho_{k(0)-1} \geq \rho_{min} \qquad \text{and} \qquad \rho_{k(0)} < \rho_{min}. \ee
	We first show that 	\be \rho_{k(0)}=\rho_{k(0)}^{init}<\rho_{min}. \label{eq_rho_reduced_in_k0m1} \ee
From \algref{alg_dfogn}, we know that either $\rho_{k(0)}=\rho_{k(0)}^{init}$ or $\rho_{k(0)}=\Delta_{k(0)}$.
	Hence we must either have $\rho_{k(0)}^{init}<\rho_{min}$ or $\Delta_{k(0)}<\rho_{min}$. In the former case, there is nothing to prove; in the latter, 
using \lemref{lem_criticality_complexity}, we have that 
	\be \rho_{min} > \Delta_{k(0)} \geq \min\left(\Delta_{k(0)}^{init}, \frac{\omega_C \epsilon}{\kappa_{eg}+1/\mu}\right) \geq \min\left(\rho_{k(0)}^{init}, \frac{\omega_C \epsilon}{\kappa_{eg}+1/\mu}\right). \label{eq_deltak_lower} \ee
	Since $\rho_{min} \leq \omega_C \epsilon / (\kappa_{eg}+1/\mu)$, we therefore conclude that \eqref{eq_rho_reduced_in_k0m1} holds.

	Since $\rho_{min}\leq\Delta_0^{init}=\rho_0^{init}$, we therefore have $k(0)>0$ and $\rho_{k(0)-1} \geq \rho_{min} > \rho_{k(0)}^{init}$. 
	This reduction in $\rho$ can only happen from a safety step or an unsuccessful step, and we must have $\rho_{k(0)}^{init}=\alpha_1\rho_{k(0)-1}$, so $\rho_{k(0)-1} \leq \rho_{min}/\alpha_1$.
	If we had a safety step, we know $\|\bs_{k(0)-1}\| \leq \gamma_S \rho_{k(0)-1}$, but if we had an unsuccessful step, we must have $\gamma_{dec}\|\bs_{k(0)-1}\| \leq \min(\gamma_{dec}\Delta_{k(0)-1}, \|\bs_{k(0)-1}\|) \leq \rho_{k(0)-1}$.
	Hence in either case, we have
	\be \|\bs_{k(0)-1}\| \leq \min(\gamma_S, \gamma_{dec}^{-1})\rho_{k(0)-1} \leq \frac{1}{\alpha_1}\min(\gamma_S, \gamma_{dec}^{-1})\rho_{min} = \frac{\gamma_S}{\alpha_1}\rho_{min}, \ee
	since $\gamma_S<1$ and $\gamma_{dec}<1$.
	Hence by \lemref{lem_cauchy_step_size} we have
	\be c_2 \min\left(\Delta_{k(0)-1}, \frac{\epsilon_g}{\kappa_H}\right) \leq \|\bs_{k(0)-1}\| \leq \frac{\gamma_S}{\alpha_1}\rho_{min}, \label{eq_delta_small_1}\ee
	where $c_2\defeq 2c_1/(1+\sqrt{1+2c_1})$.
	Note that $\rho_{min} \leq \alpha_1 \epsilon_g / \kappa_H < (\alpha_1 c_2 \epsilon_g)/(\gamma_S \kappa_H)$, where in the last inequality we used
the choice of $\gamma_S$ in  \algref{alg_dfogn}. This inequality and the choice of $\gamma_S$, together with \eqref{eq_delta_small_1}, also imply 
	\be \Delta_{k(0)-1} \leq \frac{\gamma_S \rho_{min}}{\alpha_1 c_2} < \frac{\rho_{min}}{\alpha_1} \leq \min\left(\frac{\epsilon_g}{\kappa_H}, \left(\kappa_{eg} + \frac{2\kappa_{ef}}{c_1(1-\eta_2)}\right)^{-1}\epsilon\right). \label{eq_delta_small} \ee
	Then since $\Delta_{k_{(0)}-1} \leq \epsilon_g/\kappa_H$, \lemref{lem_cauchy_step_size} gives us $\|\bs_{k_{(0)}-1}\| \geq c_2\Delta_{k_{(0)}-1} > \gamma_S\rho_{k_{(0)}-1}$ and the safety phase is not called.

	If $m_k$ is not fully linear, then we must have either a successful or model-improving iteration, so $\rho_{k_{(0)}}^{init}=\rho_{k_{(0)}-1}$, contradicting \eqref{eq_rho_reduced_in_k0m1}.
	Thus $m_k$ must be fully linear.
	Now suppose that
	\be \Delta_{k_{(0)}-1} > \frac{c_1(1-\eta_2)\|\bg_{k_{(0)}-1}\|}{2\kappa_{ef}}. \label{eq_delta_large_assum} \ee
	Then using full linearity, we have
	\be \epsilon \leq \|\grad f(\bx_{k_{(0)}-1})\| \leq \kappa_{eg}\Delta_{k(0)-1} + \|\bg_{k_{(0)}-1}\| < \left(\kappa_{eg} + \frac{2\kappa_{ef}}{c_1(1-\eta_2)}\right)\Delta_{k_{(0)}-1}. \ee
	contradicting \eqref{eq_delta_small}.
	That is, \eqref{eq_delta_large_assum} is false and so together with \eqref{eq_delta_small}, we have \eqref{eq_delta_lower_bound_dfo}.
	Hence \lemref{lem_min_delta_successful_dfo} implies iteration $(k_0-1)$ was very successful (as we have already established the safety phase was not called), so $\rho_{k_{(0)}}^{init}=\rho_{k_{(0)}-1}$, contradicting \eqref{eq_rho_reduced_in_k0m1}.
\end{proof}


Our first convergence result considers the case where we have finitely-many successful iterations.

\begin{lemma} \label{lem_finitely_many_conv}
	Suppose Assumptions \ref{ass_cauchy_decrease}, \ref{ass_smoothness} and \ref{ass_bdd_hess} hold.
	If there are finitely many successful iterations, then $\lim_{k\to\infty}\Delta_k=\lim_{k\to\infty}\rho_k=0$ and $\lim_{k\to\infty}\|\grad f(\bx_k)\|=0$.
\end{lemma}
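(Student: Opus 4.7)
Since the only iterations that can move the iterate are successful ones, the finitely-many-successful assumption gives $\bx_k=\bx^*$ for every $k\geq k_0$, where $k_0-1$ is the index of the last successful iteration. From \algref{alg_dfogn}, $\rho_{k+1}^{init}\in\{\rho_k,\alpha_1\rho_k\}$, so $\rho_k^{init}$ is nonincreasing and tends to a limit $\rho_*\geq 0$; combining with $\rho_{k+1}^{init}\leq\rho_k\leq\rho_k^{init}$ yields $\rho_k\to\rho_*$ as well. The heart of the argument is to show $\rho_*=0$; the conclusions $\Delta_k\to 0$ and $\|\grad f(\bx_k)\|\to 0$ will then follow with little extra work.

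To show $\rho_*=0$, I would argue by contradiction: suppose $\rho_*>0$. Then $\rho_k^{init}=\rho_k=\rho_*$ for every $k\geq k_1$ (with some $k_1\geq k_0$), so no $\rho$-reduction can be triggered in the tail. I would then examine the three possible iteration types. In a safety phase, the absence of $\rho$-reduction forces $\omega_S\Delta_k>\rho_*$ and hence $\Delta_{k+1}^{init}=\omega_S\Delta_k$; in an unsuccessful phase, $\min(\gamma_{dec}\Delta_k,\|\bs_k\|)>\rho_*$ and $\Delta_{k+1}^{init}\leq\gamma_{dec}\Delta_k$; in a model-improving phase, either the contraction $\Delta_{k+1}^{init}\leq\gamma_{dec}\Delta_k$ holds, or else $\Delta_{k+1}^{init}=\rho_*$ (the flooring case). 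The flooring case is the delicate one: since model-improving leaves $Y_{k+1}$ $\Lambda$-poised in $B(\bx_{k+1},\rho_*)$ and this property survives a subsequent criticality phase, iteration $k+1$ cannot itself be model-improving, and successful is ruled out, so it must be safety or unsuccessful; but then $\Delta_{k+1}=\rho_*$ forces $\Delta_{k+2}^{init}=\rho_{k+1}$, triggering a $\rho$-reduction and contradicting constancy of $\rho_k^{init}$. Consequently, $\Delta_{k+1}^{init}\leq c\,\Delta_k^{init}$ with $c:=\max(\omega_S,\gamma_{dec})<1$ for all $k\geq k_1$, so $\Delta_k^{init}\to 0$; but every update formula contains a $\max(\cdot,\rho_k)$, so $\Delta_k^{init}\geq\rho_{k-1}=\rho_*>0$ in the tail, a contradiction.

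With $\rho_k\to 0$ in hand, the uniform bound $\Delta_{k+1}^{init}\leq\max(c\,\Delta_k^{init},\rho_k)$ yields $\limsup_k\Delta_k^{init}\leq c\limsup_k\Delta_k^{init}$ and therefore $\Delta_k\leq\Delta_k^{init}\to 0$. Since $\bx_k\equiv\bx^*$ eventually, it suffices to show $\|\grad f(\bx^*)\|=0$; if instead $\|\grad f(\bx_k)\|\geq\epsilon>0$ for all $k\geq k_0$, re-running the argument of \lemref{lem_lower_delta_bound_dfo} starting from $k_0$ (with $\rho_{k_0}^{init}$ in place of $\Delta_0^{init}$) produces a uniform lower bound $\rho_k\geq\rho_{min}'>0$, contradicting $\rho_k\to 0$. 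The main obstacle in this plan is the flooring case of the model-improving phase: one must chain two consecutive iterations, using crucially that the $\Lambda$-poisedness produced by model-improving passes through the criticality phase unchanged, to conclude that the following iteration must trigger a $\rho$-reduction; once this is settled, the contractive estimates for safety and unsuccessful phases and the contrapositive of \lemref{lem_lower_delta_bound_dfo} finish the proof.
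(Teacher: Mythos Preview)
Your proof is correct, but it follows a different route from the paper's. The paper argues in the opposite order: it first shows $\Delta_k\to 0$ directly, then gets $\rho_k\to 0$ for free via $\rho_k\leq\Delta_k$. Its argument for $\Delta_k\to 0$ is short: after the last successful step, a model-improving or safety step always leaves $Y_{k+1}$ $\Lambda$-poised in $B(\bx_{k+1},\Delta_{k+1}^{init})$, so the very next iteration cannot be model-improving; hence safety or unsuccessful steps occur infinitely often, and each shrinks $\Delta_k$ by at least the fixed factor $\max(\gamma_{dec},\alpha_2,\omega_S)<1$. No contradiction argument or flooring-case chase is needed. For the gradient conclusion, the paper does \emph{not} invoke \lemref{lem_lower_delta_bound_dfo}; instead it works directly: for each large $k$ it picks a nearby iteration $j_k$ with $m_{j_k}$ fully linear, splits $\|\grad f(\bx_k)\|$ by the triangle inequality, and rules out $\|\bg_{j_k}\|\geq\epsilon$ along a subsequence using \lemref{lem_min_delta_successful_dfo} and \lemref{lem_cauchy_step_size} (a very successful step would occur, and the safety escape is blocked since $\|\bs_{j_k}\|\geq c_2\Delta_{j_k}>\gamma_S\rho_{j_k}$).

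What each approach buys: the paper's $\Delta_k\to 0$ argument is cleaner and avoids your two-step flooring analysis entirely, while your gradient conclusion is more economical since it simply recycles \lemref{lem_lower_delta_bound_dfo}. One small caution in your write-up: the phrase ``this property survives a subsequent criticality phase'' is not literally true ($\Lambda$-poisedness in $B(\bx_{k+1},\rho_*)$ need not persist in a strictly smaller ball). What actually happens is that if criticality is called at iteration $k+1$ and strictly reduces $\Delta_{k+1}$, then $\rho_{k+1}=\Delta_{k+1}<\rho_*$, already contradicting constancy of $\rho$; otherwise $\Delta_{k+1}=\rho_*$ and criticality (or the previous model-improvement) re-establishes $\Lambda$-poisedness in exactly $B(\bx_{k+1},\rho_*)$. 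The conclusion you draw is correct, but the mechanism should be stated this way.
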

\begin{proof}
	Let $k_0=\max(\mathcal{S})$ be the last successful iteration, after which $\Delta_k$ is never increased.
	For any $k>k_0$, we possibly call the criticality phase, and then have either a safety phase, model-improving phase, or an unsuccessful step.
	
	If the model is not fully linear, then either it is made fully linear by the criticality phase, or we have a safety or model-improving step.
	In the first case, the model is made fully linear at iteration $k$; in the second and third, it is fully linear at iteration $k+1$. 
	That is, there is at most 1 iteration until the model is fully linear again. Therefore there are infinitely many $k>k_0$ where $m_k$ is fully 
linear and we have either a safety phase or an unsuccessful step.
	In both of these cases, $\Delta_k$ is reduced by a factor of at least $\max(\gamma_{dec}, \alpha_2, \omega_S)<1$, so $\Delta_k\to 0$ as $k\to\infty$.
	Since $\rho_k\leq\Delta_k$ at all iterations, we must also have $\rho_k\to 0$.
	
	For each $k>k_0$, let $j_k$ be the first iteration after $k$ where the model is fully linear.
	Then from the above discussion we know $0\leq j_k-k\leq 1$, and hence $\|\bx_{j_k}-\bx_k\|\leq \Delta_k\to 0$.
		We now compute
	\be \|\grad f(\bx_k)\| \leq \|\grad f(\bx_k) - \grad f(\bx_{j_k})\| + \|\grad f(\bx_{j_k}) - \bg_{j_k}\| + \|\bg_{j_k}\|. \label{eq_full-lin}\ee
	As $k\to\infty$, the first term of the right-hand side of \eqref{eq_full-lin} is bounded by $L_{\grad f} \Delta_k \to 0$, while the
 second term  is bounded by $\kappa_{eg}\Delta_{j_k}\to 0$; thus it remains to show that the last term goes to zero.
	
	By contradiction, suppose there exists $\epsilon>0$ and a subsequence $k_i$ such that $\|\bg_{j_{k_i}}\|\geq \epsilon > 0$.
	Then \lemref{lem_min_delta_successful_dfo} implies that for sufficiently small $\Delta_{j_{k_i}}$ (valid since $\Delta_{j_{k_i}}\to 0$), we get a very successful iteration or a safety step.
	Since $j_{k_i}\geq k_i > k_0$, this must mean we get a safety step.
	However, \lemref{lem_cauchy_step_size} implies that for sufficiently large $i$, we have $\|\bs_{j_{k_i}}\| \geq c_2 \Delta_{j_{k_i}} > \gamma_S \rho_{j_{k_i}}$, so the safety step cannot be called, a contradiction.
\end{proof}

\begin{lemma} \label{lem_delta_conv_dfo}
	Suppose Assumptions \ref{ass_cauchy_decrease}, \ref{ass_smoothness} and \ref{ass_bdd_hess} hold.
	Then $\lim_{k\to\infty}\Delta_k=0$ and so $\lim_{k\to\infty}\rho_k=0$.
\end{lemma}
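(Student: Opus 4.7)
The plan is to split on the cardinality of $\mathcal{S}$. If $|\mathcal{S}|<\infty$ then \lemref{lem_finitely_many_conv} gives the claim directly, so I focus on the case $|\mathcal{S}|=\infty$.

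Since $f(\bx)=\frac{1}{2}\|\br(\bx)\|^2\geq 0$ and the sequence $\{f(\bx_k)\}$ is non-increasing (unchanged on non-successful steps and decreased by at least $\eta_1[m_k(\b{0})-m_k(\bs_k)]$ on successful ones), it converges, and so the total decrease is finite. Combining this with \assref{ass_cauchy_decrease} and \assref{ass_bdd_hess} (which, together with $\kappa_H\geq 1$, yields $\max(\|H_k\|,1)\leq \kappa_H$), I obtain
\[
\sum_{k\in\mathcal{S}}\|\bg_k\|\,\min\!\left(\Delta_k,\,\frac{\|\bg_k\|}{\kappa_H}\right)<\infty.
\]
I then apply \lemref{lem_true_model_gradient} to convert this mixed bound into information about $\Delta_k$. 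On successful iterations with $\Delta_k\geq\mu\epsilon_C$, one has $\|\bg_k\|\geq\epsilon_C$ and the summand is bounded below by $\epsilon_C\min(\mu\epsilon_C,\epsilon_C/\kappa_H)>0$, so this case occurs only finitely often. On successful iterations with $\Delta_k<\mu\epsilon_C$, one has $\|\bg_k\|\geq\Delta_k/\mu$, so the summand is at least a positive constant times $\Delta_k^2$. Together these force $\Delta_k\to 0$ along $\mathcal{S}$.

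To pass from $\mathcal{S}$ to all $k$, I use two monotonicity facts readable off \algref{alg_dfogn}: on any safety, model-improving, or unsuccessful step, $\Delta_{k+1}^{init}\leq\Delta_k$ (because $\omega_S,\gamma_{dec}<1$, $\|\bs_k\|\leq\Delta_k$, and $\rho_k\leq\Delta_k$); on any successful step, $\Delta_{k+1}^{init}\leq\overline{\gamma}_{inc}\Delta_k$ (since $\gamma_{inc}\leq\overline{\gamma}_{inc}$ and $\|\bs_k\|\leq\Delta_k$). The criticality phase can only decrease $\Delta_k$ further, $\Delta_k\leq\Delta_k^{init}$. Hence, for any consecutive pair $k_i<k_{i+1}$ in $\mathcal{S}$, a telescoping argument gives $\Delta_k\leq\overline{\gamma}_{inc}\Delta_{k_i}$ for all $k_i\leq k\leq k_{i+1}$. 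Since $\Delta_{k_i}\to 0$ along $\mathcal{S}$, this forces $\Delta_k\to 0$ for the full sequence. Finally, $\rho_k\leq\Delta_k$ yields $\rho_k\to 0$.

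The main obstacle is converting the summability $\sum\|\bg_k\|\min(\Delta_k,\|\bg_k\|/\kappa_H)<\infty$ into a pure statement about $\Delta_k$. This is exactly what the criticality-phase guarantee $\|\bg_k\|\geq\min(\epsilon_C,\Delta_k/\mu)$ delivers: it makes $\|\bg_k\|$ comparable to $\Delta_k$ in the small-$\Delta_k$ regime, so the mixed product bound becomes a clean square-summability bound on $\Delta_k$. Without the coupling provided by the criticality phase, one would only get a liminf-type conclusion on a subsequence, rather than the full limit asserted here.
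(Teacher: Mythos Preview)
Your proof is correct and follows essentially the same approach as the paper: split on $|\mathcal{S}|$, use the Cauchy decrease together with \lemref{lem_true_model_gradient} to force $\Delta_k\to 0$ along $\mathcal{S}$ via summability, then propagate to the whole sequence using $\Delta_{k+1}^{init}\leq\overline{\gamma}_{inc}\Delta_k$ on successful steps and $\Delta_{k+1}^{init}\leq\Delta_k$ otherwise. Your explicit dichotomy on $\Delta_k\gtrless\mu\epsilon_C$ makes the conversion from summability to $\Delta_k\to 0$ a bit more transparent than the paper's compressed version, but the substance is identical.
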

\begin{proof}
	If $|\mathcal{S}|<\infty$, the proof of \lemref{lem_finitely_many_conv} gives the result.
	Thus, suppose there are infinitely many successful iterations (i.e.~$|\mathcal{S}|=\infty$).
	
	For any $k\in\mathcal{S}$, we have
	\be f(\bx_k) - f(\bx_{k+1}) \geq \eta_1\left(m_k(\b{0}) - m_k(\bs_k)\right) \geq \eta_1 c_1 \|\bg_k\| \min\left(\frac{\|\bg_k\|}{\kappa_H}, \Delta_k\right) > 0. \ee
	But since $\|\bg_k\| \geq \min(\epsilon_C, \Delta_k/\mu)$ (see \lemref{lem_true_model_gradient}), this means that
	\be f(\bx_k) - f(\bx_{k+1}) \geq \eta_1 c_1 \min(\epsilon_C, \mu^{-1}\Delta_k) \min\left(\frac{\min(\epsilon_C, \mu^{-1}\Delta_k)}{\kappa_H}, \Delta_k\right) > 0. \ee
	If we were to sum over all $k\in\mathcal{S}$, the left-hand side must be finite as it is bounded above by $f(\bx_0)$, remembering that $f \geq 0$ for least-squares objectives.
	The right-hand side is only finite if $\lim_{k\in\mathcal{S}\to\infty}\Delta_k=0$.
	The only time $\Delta_k$ is increased is if $k\in\mathcal{S}$, when it is increased by a factor of at most $\overline{\gamma}_{inc}$.
	For any given $k\notin\mathcal{S}$, let $j_k\in\mathcal{S}$ be the last successful iteration before $k$ (which exists whenever $k$ is sufficiently large).
	Then $\Delta_k \leq \overline{\gamma}_{inc} \Delta_{j_k}\to 0$.
	Lastly, $\rho_k\to 0$ since $\rho_k\leq\Delta_k$ throughout the algorithm.
\end{proof}

\begin{theorem} \label{thm_liminf}
	Suppose Assumptions \ref{ass_cauchy_decrease}, \ref{ass_smoothness} and \ref{ass_bdd_hess} hold.
	Then 
	\be \liminf_{k\to\infty}\|\grad f(\bx_k)\|=0. \ee
\end{theorem}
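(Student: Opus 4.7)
The plan is to argue by contradiction, combining the trust-region lower bound of \lemref{lem_lower_delta_bound_dfo} with the radius decay of \lemref{lem_delta_conv_dfo}. Suppose, for contradiction, that $\liminf_{k\to\infty}\|\grad f(\bx_k)\| > 0$; then there exist $\epsilon>0$ and an index $K$ such that $\|\grad f(\bx_k)\|\geq\epsilon$ for every $k\geq K$.

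Next I would invoke \lemref{lem_lower_delta_bound_dfo}, applied to the tail of the iterate sequence starting from iteration $K$. The proof of that lemma only uses the bound $\|\grad f(\bx_k)\|\geq \epsilon$ to rule out reductions in $\rho_k$ below a certain threshold, so the same argument, re-indexed to begin at $K$ with initial radius $\rho_K^{init}$ in place of $\Delta_0^{init}$, yields a strictly positive lower bound $\rho_{min}'>0$ (of the same form as \eqref{eq_rho_min}, with $\Delta_0^{init}$ replaced by $\rho_K^{init}$) such that $\rho_k \geq \rho_{min}'$ for all $k\geq K$.

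On the other hand, \lemref{lem_delta_conv_dfo} gives $\rho_k\to 0$ as $k\to\infty$, so for all $k$ sufficiently large we have $\rho_k < \rho_{min}'$, a direct contradiction. Hence the assumption $\liminf_k \|\grad f(\bx_k)\|>0$ fails, and $\liminf_k\|\grad f(\bx_k)\|=0$.

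The only nontrivial point is the re-indexing step: one must be sure that \lemref{lem_lower_delta_bound_dfo} really only needs the gradient bound from some iteration onwards, rather than on all iterations. This is the case because the proof of that lemma is local, tracking the first iteration $k(0)\geq K$ at which $\rho_{k(0)}<\rho_{min}'$ and deriving a contradiction from the mechanism of the safety/unsuccessful step and \lemref{lem_min_delta_successful_dfo}; none of this requires information from iterations prior to $K$. Everything else is an immediate appeal to previously established lemmas.
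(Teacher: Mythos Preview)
Your proposal is correct and follows essentially the same contradiction argument as the paper, combining \lemref{lem_delta_conv_dfo} (so $\rho_k\to0$) with \lemref{lem_lower_delta_bound_dfo} (so $\rho_k\geq\rho_{min}>0$). The paper's proof differs only cosmetically: it first disposes of the case $|\mathcal{S}|<\infty$ by citing \lemref{lem_finitely_many_conv} directly, and then invokes the two lemmas without commenting on the re-indexing issue you carefully discuss; your treatment of that point is in fact more complete than the paper's.
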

\begin{proof}
	If $|\mathcal{S}|<\infty$, then this follows from \lemref{lem_finitely_many_conv}.
	Otherwise, it follows from \lemref{lem_delta_conv_dfo} and \lemref{lem_lower_delta_bound_dfo}.
\end{proof}

\begin{theorem} \label{thm_lim}
	Suppose Assumptions \ref{ass_cauchy_decrease}, \ref{ass_smoothness} and \ref{ass_bdd_hess} hold.
	Then $\lim_{k\to\infty}\|\grad f(\bx_k)\|=0$.
\end{theorem}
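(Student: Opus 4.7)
My plan is a standard contradiction argument, bootstrapping the $\liminf$ result of \thmref{thm_liminf} to a full limit using the Lipschitz continuity of $\grad f$ from \eqref{eq_ls_lipschitz}. Suppose for contradiction that there exist $\epsilon>0$ and an infinite set of indices along which $\|\grad f(\bx_k)\|\geq 2\epsilon$. Combined with \thmref{thm_liminf}, I can extract two interlacing subsequences $\{m_i\},\{\ell_i\}$ with $m_i<\ell_i$, $\|\grad f(\bx_{m_i})\|\geq 2\epsilon$, $\|\grad f(\bx_{\ell_i})\|<\epsilon$, and $\|\grad f(\bx_k)\|\geq\epsilon$ for all $m_i\leq k<\ell_i$. \lemref{lem_true_model_gradient} then gives $\|\bg_k\|\geq \epsilon_g\defeq\min(\epsilon_C,\epsilon/(1+\kappa_{eg}\mu))>0$ for every such $k$.

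Next, I bound the total distance travelled between $m_i$ and $\ell_i$. Only successful iterations move $\bx_k$, and on any $k\in\mathcal{S}$ the standard Cauchy decrease \assref{ass_cauchy_decrease} gives
\be f(\bx_k)-f(\bx_{k+1})\geq \eta_1 c_1 \|\bg_k\|\min\!\left(\Delta_k,\frac{\|\bg_k\|}{\kappa_H}\right). \ee
By \lemref{lem_delta_conv_dfo}, $\Delta_k\to 0$, so for $i$ large enough and all $k\in[m_i,\ell_i)\cap\mathcal{S}$ we have $\Delta_k\leq \epsilon_g/\kappa_H$, whence $f(\bx_k)-f(\bx_{k+1})\geq \eta_1 c_1 \epsilon_g \Delta_k\geq \eta_1 c_1\epsilon_g\|\bs_k\|$. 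Since $f(\bx_k)$ is non-increasing and bounded below (by $0$), the series $\sum_{k\in\mathcal{S}}(f(\bx_k)-f(\bx_{k+1}))$ converges, so
\be \|\bx_{\ell_i}-\bx_{m_i}\|\leq \sum_{\substack{k\in\mathcal{S}\\ m_i\leq k<\ell_i}}\|\bs_k\|\leq \frac{f(\bx_{m_i})-f(\bx_{\ell_i})}{\eta_1 c_1 \epsilon_g}\xrightarrow{i\to\infty} 0. \ee

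Finally, Lipschitz continuity of $\grad f$ on $\mathcal{B}$ (from \eqref{eq_ls_lipschitz}) yields
\be \|\grad f(\bx_{m_i})-\grad f(\bx_{\ell_i})\|\leq L_{\grad f}\|\bx_{m_i}-\bx_{\ell_i}\|\to 0, \ee
contradicting the reverse triangle inequality $\|\grad f(\bx_{m_i})-\grad f(\bx_{\ell_i})\|\geq \|\grad f(\bx_{m_i})\|-\|\grad f(\bx_{\ell_i})\|>\epsilon$.

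The main subtlety I anticipate is justifying that only successful iterations contribute to $\bx_{\ell_i}-\bx_{m_i}$: criticality, safety, model-improving, and unsuccessful phases all leave $\bx_k$ unchanged by construction of \algref{alg_dfogn}, so the telescoping through $\bs_k$ is valid. A secondary point is ensuring that $\Delta_k$ is eventually small enough on the relevant range so that the Cauchy decrease reduces to the linear-in-$\Delta_k$ form; this is exactly what \lemref{lem_delta_conv_dfo} provides, and it is why the earlier lemma was proved before this theorem.
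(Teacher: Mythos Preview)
Your proof is correct and follows essentially the same contradiction/telescoping strategy as the paper. A few minor differences are worth noting: the paper defines the interlacing subsequence endpoints via the \emph{model} gradient $\|\bg_k\|$ (then uses full linearity at $\ell_j$, guaranteed by the criticality phase since $\|\bg_{\ell_j}\|<\epsilon_C$, to close the contradiction with a three-term triangle inequality), whereas you work directly with $\|\grad f(\bx_k)\|$ and close with plain Lipschitz continuity \eqref{eq_ls_lipschitz}. The paper also argues that unsuccessful and safety steps eventually cannot occur on the relevant index set before telescoping, while you simply observe that only successful steps move $\bx_k$; your observation is sufficient and makes the argument a bit cleaner.
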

\begin{proof}
	If $|\mathcal{S}|<\infty$, then the result follows from \lemref{lem_finitely_many_conv}.
	Thus, suppose there are infinitely many successful iterations (i.e.~$|\mathcal{S}|=\infty$).
	
	To find a contradiction, suppose there is a subsequence of successful iterations $t_j$ with $\|\grad f(\bx_{t_j})\|\geq \epsilon_0$ for some $\epsilon_0>0$ (note: we do not consider any other iteration types as $\bx_k$ does not change for these).
	Hence by \lemref{lem_true_model_gradient}, we must have $\|\bg_{t_j}\|\geq\epsilon>0$ for some $\epsilon$, where without loss of generality we assume that
	\be \epsilon < \min\left(\epsilon_C, \frac{\epsilon_0}{2+\kappa_{eg}\mu}\right). \ee
	
	Let $\ell_j$ be the first iteration $\ell_j>t_j$ such that $\|\bg_{\ell_j}\|<\epsilon$, which is guaranteed to exist by \thmref{thm_liminf}. 
	That is, there exist subsequences $t_j<\ell_j$ satisfying
	\be \|\bg_k\| \geq \epsilon\quad \text{for $k=t_j,\ldots,\ell_j-1$, and} \quad \|\bg_{\ell_j}\| < \epsilon. \ee
	Now consider the iterations $\mathcal{K} \defeq \cup_{j\geq 0}\{t_j,\ldots,\ell_j-1\}$.
	
	Since $\|\bg_k\|\geq\epsilon$ and $\Delta_k\to 0$ (\lemref{lem_delta_conv_dfo}), \lemref{lem_min_delta_successful_dfo} implies that for sufficiently large $k\in\mathcal{K}$, there can be no unsuccessful steps. 
	That is, iteration $k$ is a safety step, or if not it must be successful or model-improving.
	By the same reasoning as in the proof of \lemref{lem_finitely_many_conv}, since $\|\bg_k\|\geq\epsilon$, for $k\in\mathcal{K}$ sufficiently large, \lemref{lem_cauchy_step_size} implies that $\|\bs_k\|\geq c_2\Delta_k>\gamma_S\rho_k$, so the safety step is never called.
	
	For each successful iteration $k\in \mathcal{K}\cap\mathcal{S}$, we have from \eqref{eq_cauchy_decrease}
	\be f(\bx_k) - f(\bx_{k+1}) \geq \eta_1 (m_k(\b{0}) - m_k(\bs_k)) \geq \eta_1 c_1\epsilon\min\left(\frac{\|\bg_k\|}{\kappa_H}, \Delta_k\right) > 0, \label{eq_monotone_lim} \ee
	and for $k$ sufficiently large (so that $\Delta_k \leq \epsilon/\kappa_H$), we get
	\be \Delta_k \leq \frac{f(\bx_k) - f(\bx_{k+1})}{\eta_1 c_1 \epsilon}. \ee
	
	Since for $k\in\mathcal{K}$ sufficiently large, we either have successful or model-improving steps, and of these $\bx_k$ is only changed on successful steps, we have (for $j$ sufficiently large)
	\be \|\bx_{\ell_j} - \bx_{t_j}\| \leq \sum_{k=t_j, k\in\mathcal{K}\cap\mathcal{S}}^{\ell_j-1} \|\bx_k - \bx_{k+1}\| \leq \sum_{k=t_j, k\in\mathcal{K}\cap\mathcal{S}}^{\ell_j-1} \Delta_k \leq \frac{f(\bx_{t_j}) - f(\bx_{\ell_j})}{\eta_1 c_1 \epsilon}. \ee
	Since $\{f(\bx_k) : k\in\mathcal{K}\}$ is a monotone decreasing sequence by \eqref{eq_monotone_lim}, and bounded below (as $f\geq0$ for least-squares problems), it must converge.
	Thus $f(\bx_{t_j})-f(\bx_{\ell_j})\to 0$, and hence $\|\bx_{\ell_j}-\bx_{t_j}\|\to 0$ as $j\to\infty$.
	
	Now, we compute
	\be \|\grad f(\bx_{t_j})\| \leq \|\grad f(\bx_{t_j}) - \grad f(\bx_{\ell_j})\| + \|\grad f(\bx_{\ell_j}) - \bg_{\ell_j}\| + \|\bg_{\ell_j}\|. \ee
	Similarly to \lemref{lem_finitely_many_conv}, the first term goes to zero as $j\to\infty$ since $\grad f$ is continuous and $\|\bx_{\ell_j}-\bx_{t_j}\|\to 0$.
	Since $\|\bg_{\ell_j}\|<\epsilon < \epsilon_C$, the criticality step is called for iteration $\ell_j$, so $m_{\ell_j}$ is fully linear on $B(\bx_{\ell_j}, \Delta_{\ell_j})$ for $\Delta_{\ell_j}\leq\mu\|\bg_{\ell_j}\|$.
	Hence the second term is bounded by $\kappa_{eg}\Delta_{\ell_j}\leq \kappa_{eg}\mu\epsilon$.
	Lastly, the third term is bounded by $\epsilon$ by definition of $\ell_j$.
	
	All together, this means that for sufficiently large $j$,
	\be \|\grad f(\bx_{t_j})\| \leq \epsilon + \kappa_{eg}\mu\epsilon + \epsilon = (2+\kappa_{eg}\mu)\epsilon < \epsilon_0, \ee
	and we have our contradiction.
\end{proof}

\subsection{Worst-Case Complexity}
Next, we bound the number of iterations and objective evaluations until $\|\grad f(\bx_k)\|<\epsilon$.
We know such a bound exists from \thmref{thm_liminf}.
Let $i_{\epsilon}$ be the last iteration before $\|\grad f(\bx_{i_{\epsilon}+1})\|<\epsilon$ for the first time.

\begin{lemma} \label{lem_num_successful_steps}
	Suppose Assumptions \ref{ass_cauchy_decrease}, \ref{ass_smoothness} and \ref{ass_bdd_hess} hold.
	Let $|\mathcal{S}_{i_{\epsilon}}|$ be the number of successful steps up to iteration $i_{\epsilon}$.
	Then
	\be |\mathcal{S}_{i_{\epsilon}}| \leq \frac{f(\bx_0)}{\eta_1 c_1} \max\left(\kappa_H \epsilon_g^{-2}, \epsilon_g^{-1} \rho_{min}^{-1}\right), \label{eq_num_successful_steps} \ee
where $\epsilon_g$ is defined in \eqref{eq_normg_lower}, and $\rho_{min}$ in \eqref{eq_rho_min}.
\end{lemma}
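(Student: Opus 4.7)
The plan is to use the Cauchy decrease condition on successful iterations, combined with the uniform lower bounds on $\|\bg_k\|$ and $\rho_k$ that are already established, and then telescope the resulting decrease in $f$.

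First, for any $k \leq i_\epsilon$, we have $\|\grad f(\bx_k)\| \geq \epsilon$ by definition of $i_\epsilon$, so \lemref{lem_true_model_gradient} gives $\|\bg_k\| \geq \epsilon_g$. Moreover, \assref{ass_bdd_hess} gives $\max(\|H_k\|,1)\leq \kappa_H$ (recall $\kappa_H\geq 1$), and \lemref{lem_lower_delta_bound_dfo} gives $\Delta_k \geq \rho_k \geq \rho_{min}$. Combining these with the Cauchy decrease condition \eqref{eq_cauchy_decrease}, every successful iteration $k\in\mathcal{S}_{i_\epsilon}$ satisfies
\be
f(\bx_k)-f(\bx_{k+1}) \geq \eta_1\bigl(m_k(\b{0})-m_k(\bs_k)\bigr) \geq \eta_1 c_1 \|\bg_k\|\min\!\left(\Delta_k,\frac{\|\bg_k\|}{\kappa_H}\right) \geq \eta_1 c_1 \epsilon_g \min\!\left(\rho_{min},\frac{\epsilon_g}{\kappa_H}\right).
\ee

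Next, I would telescope this bound across $\mathcal{S}_{i_\epsilon}$. On unsuccessful, safety, model-improving and criticality iterations, $\bx_k$ does not change, so $f$ is unchanged; on successful iterations $f$ strictly decreases by the amount above. Using $f\geq 0$ (since $f$ is a sum of squares), we obtain
\be
f(\bx_0) \geq f(\bx_0)-f(\bx_{i_\epsilon+1}) = \sum_{k\in\mathcal{S}_{i_\epsilon}} \bigl(f(\bx_k)-f(\bx_{k+1})\bigr) \geq |\mathcal{S}_{i_\epsilon}|\cdot \eta_1 c_1 \epsilon_g \min\!\left(\rho_{min},\frac{\epsilon_g}{\kappa_H}\right).
\ee
Rearranging and using $1/\min(a,b)=\max(1/a,1/b)$ for $a,b>0$ yields \eqref{eq_num_successful_steps}.

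There is no real obstacle here; the only subtlety is ensuring the lower bounds on $\|\bg_k\|$ and $\rho_k$ apply uniformly for all $k \leq i_\epsilon$, which is guaranteed because \lemref{lem_lower_delta_bound_dfo} and \lemref{lem_true_model_gradient} only require $\|\grad f(\bx_k)\|\geq\epsilon$ to hold on the relevant iterations, and this is exactly the defining property of $i_\epsilon$. The bound is stated in terms of $\epsilon_g$ rather than $\epsilon$ at this stage; the subsequent translation to $\epsilon$ via \eqref{eq_normg_lower} and the dependence of $\rho_{min}$ on $\epsilon$ (as per \eqref{eq_rho_min}) will give the final $\bigO(\epsilon^{-2})$ complexity in the iteration count.
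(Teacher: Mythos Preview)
Your proof is correct and essentially identical to the paper's own argument: both use the Cauchy decrease condition on successful iterations, invoke \lemref{lem_true_model_gradient} and \lemref{lem_lower_delta_bound_dfo} for the uniform lower bounds $\|\bg_k\|\geq\epsilon_g$ and $\Delta_k\geq\rho_{min}$, sum the resulting per-iteration decrease over $\mathcal{S}_{i_\epsilon}$ using $f\geq 0$, and rearrange via $1/\min(a,b)=\max(1/a,1/b)$.
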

\begin{proof}
	For all $k\in \mathcal{S}_{i_{\epsilon}}$, we have the sufficient decrease condition
	\be f(\bx_k) - f(\bx_{k+1}) \geq \eta_1\left(m_k(\b{0}) - m_k(\bs_k)\right) \geq \eta_1 c_1 \|\bg_k\| \min\left(\frac{\|\bg_k\|}{\kappa_H}, \Delta_k\right). \ee
	Since $\|\bg_k\|\geq\epsilon_g$ from \lemref{lem_true_model_gradient} and $\Delta_k\geq\rho_k\geq\rho_{min}$ from \lemref{lem_lower_delta_bound_dfo}, this means
	\be f(\bx_k) - f(\bx_{k+1}) \geq \eta_1 c_1 \epsilon_g \min\left(\frac{\epsilon_g}{\kappa_H}, \rho_{min}\right). \label{eq_succ_decrease} \ee
	Summing \eqref{eq_succ_decrease} over all $k\in \mathcal{S}_{i_{\epsilon}}$, and noting that $0\leq f(\bx_k)\leq f(\bx_0)$, we get
	\be f(\bx_0) \geq |\mathcal{S}_{i_{\epsilon}}| \eta_1 c_1 \epsilon_g \min\left(\frac{\epsilon_g}{\kappa_H}, \rho_{min}\right), \ee
	from which \eqref{eq_num_successful_steps} follows.
\end{proof}

We now need to count the number of iterations of \algref{alg_dfogn} which are not successful.
Following \cite{Garmanjani2016}, we count each iteration of the loop inside the criticality phase (\algref{alg_criticality}) as a separate iteration --- in effect, one `iteration' corresponds to one construction of the model $m_k$ \eqref{eq_gn_full_model_dfo}.
We also consider separately the number of criticality phases for which $\Delta_k$ is not reduced (i.e.~$\Delta_k=\Delta_k^{init}$). 
Counting until iteration $i_{\epsilon}$ (inclusive), we let
\begin{itemize}
	\item $\mathcal{C}^M_{i_{\epsilon}}$ be the set of criticality phase iterations $k\leq i_{\epsilon}$ for which $\Delta_k$ is not reduced (i.e.~the first iteration of every call of \algref{alg_criticality} --- see \remref{rem_geom} for further details);
	\item $\mathcal{C}^U_{i_{\epsilon}}$ be the set of criticality phase iterations $k\leq i_{\epsilon}$  where $\Delta_k$ is reduced (i.e.~all iterations except the first for every call of \algref{alg_criticality});
	\item $\mathcal{F}_{i_{\epsilon}}$ be the set of iterations  where the safety phase is called;
	\item $\mathcal{M}_{i_{\epsilon}}$ be the set of iterations where the model-improving phase is called; and
	\item $\mathcal{U}_{i_{\epsilon}}$ be the set of unsuccessful iterations\footnote{Note that the analysis in \cite{Grapiglia2016} bounds the number of outer iterations of \algref{alg_dfogn}; i.e.~excluding $\mathcal{C}^M_{i_{\epsilon}}$ and $\mathcal{C}^U_{i_{\epsilon}}$.
Instead, they prove that while $\|\grad f(\bx_k)\|\geq\epsilon$, the criticality phase requires at most $|\log\epsilon|$ iterations.
Thus their bound on the number of objective evaluations is a factor $|\log\epsilon|$ larger than in \cite{Garmanjani2016} and than here.}.
\end{itemize}

\begin{lemma} \label{lem_num_other_steps}
	Suppose Assumptions \ref{ass_cauchy_decrease}, \ref{ass_smoothness} and \ref{ass_bdd_hess} hold.
	Then we have the bounds
	\begin{align}
		|\mathcal{C}^U_{i_{\epsilon}}| + |\mathcal{F}_{i_{\epsilon}}| + |\mathcal{U}_{i_{\epsilon}}| &\leq |\mathcal{S}_{i_{\epsilon}}|\cdot\frac{\log \overline{\gamma}_{inc}}{|\log \alpha_3|} + \frac{1}{|\log \alpha_3|}\log\left(\frac {\Delta_0^{init}}{\rho_{min}}\right), \label{eq_delta_reduction_count} \\
		|\mathcal{C}^M_{i_{\epsilon}}| &\leq |\mathcal{F}_{i_{\epsilon}}| + |\mathcal{S}_{i_{\epsilon}}| + |\mathcal{U}_{i_{\epsilon}}|, \label{eq_crit_count} \\
		|\mathcal{M}_{i_{\epsilon}}| &\leq |\mathcal{C}^M_{i_{\epsilon}}| + |\mathcal{C}^U_{i_{\epsilon}}| + |\mathcal{F}_{i_{\epsilon}}| + |\mathcal{S}_{i_{\epsilon}}| + |\mathcal{U}_{i_{\epsilon}}|, \label{eq_model_improving_count}
	\end{align}
	where $\alpha_3\defeq\max(\omega_C, \omega_S, \gamma_{dec}, \alpha_2)<1$ and $\rho_{min}$ is defined in \eqref{eq_rho_min}.
\end{lemma}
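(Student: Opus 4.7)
The three bounds correspond to three different accounting arguments on the iteration types. Bounds 2 and 3 exploit the fact that certain iteration types leave $Y_k$ in a $\Lambda$-poised state, which forbids certain phases at the subsequent step. Bound 1 is a telescoping argument on $\log\Delta_k^{init}$, anchored below by the floor $\rho_k\geq\rho_{min}$ supplied by \lemref{lem_lower_delta_bound_dfo}.

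For \eqref{eq_delta_reduction_count}, my plan is to decompose the per-iteration change in $\log\Delta_k^{init}$ into two pieces: the shrinkage that occurs \emph{during} iteration $k$ inside the criticality phase, and the update from $\Delta_k$ to $\Delta_{k+1}^{init}$ depending on the outer iteration type. The first piece contributes $n_k^U\log\omega_C$, where $n_k^U$ is the number of $\mathcal{C}^U$ iterations inside the criticality call at outer iteration $k$; summed across $k\leq i_{\epsilon}$ this gives $|\mathcal{C}^U_{i_{\epsilon}}|\log\omega_C$. For the second piece, by inspecting the update rule in \algref{alg_dfogn} (and using $\rho_k\leq\Delta_k$), I would verify the case-by-case bounds
\[
\Delta_{k+1}^{init}\leq
\begin{cases}
\overline{\gamma}_{inc}\Delta_k, & k\in\mathcal{S}_{i_{\epsilon}},\\
\max(\omega_S,\alpha_2)\Delta_k\leq\alpha_3\Delta_k, & k\in\mathcal{F}_{i_{\epsilon}},\\
\max(\gamma_{dec},\alpha_2)\Delta_k\leq\alpha_3\Delta_k, & k\in\mathcal{U}_{i_{\epsilon}},\\
\Delta_k, & k\in\mathcal{M}_{i_{\epsilon}}.
\end{cases}
\]
Telescoping, dropping the non-positive model-improving contribution, and combining $\omega_C\leq\alpha_3$ then gives
\[
\log\Delta_{i_{\epsilon}+1}^{init}\;\leq\;\log\Delta_0^{init}\,+\,(|\mathcal{C}^U_{i_{\epsilon}}|+|\mathcal{F}_{i_{\epsilon}}|+|\mathcal{U}_{i_{\epsilon}}|)\log\alpha_3\,+\,|\mathcal{S}_{i_{\epsilon}}|\log\overline{\gamma}_{inc}.
\]
Using $\Delta_{i_{\epsilon}+1}^{init}\geq\rho_{i_{\epsilon}}\geq\rho_{min}$ from \lemref{lem_lower_delta_bound_dfo} and rearranging (noting $\log\alpha_3<0$) yields \eqref{eq_delta_reduction_count}.

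For \eqref{eq_crit_count}, the key observation is that the criticality phase (by its specification in line 5 of \algref{alg_dfogn}) terminates with $Y_k$ being $\Lambda$-poised in $B(\bx_k,\Delta_k)$. Hence the ``not $\Lambda$-poised'' test that triggers the model-improvement phase at the end of the same outer iteration cannot hold, so every outer iteration that invokes criticality belongs to $\mathcal{F}_{i_{\epsilon}}\cup\mathcal{S}_{i_{\epsilon}}\cup\mathcal{U}_{i_{\epsilon}}$; since there is exactly one criticality call per such iteration, \eqref{eq_crit_count} follows. For \eqref{eq_model_improving_count}, the argument is analogous at the next step: if iteration $k$ is model-improving, then $Y_{k+1}$ is constructed to be $\Lambda$-poised in $B(\bx_{k+1},\Delta_{k+1}^{init})$, and this is preserved at iteration $k+1$ either trivially (if no criticality is called, so $\Delta_{k+1}=\Delta_{k+1}^{init}$) or by the criticality phase itself (which outputs a $\Lambda$-poised $Y_{k+1}$ in $B(\bx_{k+1},\Delta_{k+1})$). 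In either case iteration $k+1$ is not in $\mathcal{M}_{i_{\epsilon}}$, so the map $k\mapsto k+1$ injects $\mathcal{M}_{i_{\epsilon}}$ into the remaining iteration types, giving even the sharper bound $|\mathcal{M}_{i_{\epsilon}}|\leq|\mathcal{F}_{i_{\epsilon}}|+|\mathcal{S}_{i_{\epsilon}}|+|\mathcal{U}_{i_{\epsilon}}|+1$, which in particular implies \eqref{eq_model_improving_count}. The main technical obstacle is the case analysis for \eqref{eq_delta_reduction_count}: one must handle the ``$\rho$-floor'' branch of the safety and unsuccessful updates carefully to recover a clean geometric decay factor, and ensure that the criticality-induced shrinkage is absorbed correctly into the $\alpha_3$ bound via $\omega_C\leq\alpha_3$.
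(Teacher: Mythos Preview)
Your proposal is correct and follows essentially the same route as the paper: a telescoping/product argument on the trust-region radius for \eqref{eq_delta_reduction_count}, and the ``poisedness is inherited, so the next step cannot be of the forbidden type'' counting arguments for \eqref{eq_crit_count} and \eqref{eq_model_improving_count}. One small technical point: the inequality $\Delta_{i_{\epsilon}+1}^{init}\geq\rho_{i_{\epsilon}}$ does not follow from \lemref{lem_lower_delta_bound_dfo} and can fail (in the safety and unsuccessful branches the algorithm may reset $\Delta_{k+1}^{init}=\alpha_2\rho_k<\rho_k$); the paper instead anchors the telescoping at $\Delta_{i_{\epsilon}}\geq\rho_{i_{\epsilon}}\geq\rho_{min}$, which avoids this, and your argument is easily repaired the same way.
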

\begin{proof}
	On each  iteration  $k\in\mathcal{C}^U_{i_{\epsilon}}$, we reduce $\Delta_k$ by a factor of $\omega_C$.
	Similarly, on each iteration  $k\in\mathcal{F}_{i_{\epsilon}}$ we reduce $\Delta_k$ by a factor of at least $\max(\omega_S, \alpha_2)$, and for iterations in $\mathcal{U}_{i_{\epsilon}}$ by a factor of at least $\max(\gamma_{dec},\alpha_2)$.
	On each successful iteration, we increase $\Delta_k$ by a factor of at most $\overline{\gamma}_{inc}$, and on all other iterations, $\Delta_k$ is either constant or reduced.
	Therefore, we must have
	\begin{align}
		\rho_{min} &\leq \Delta_{i_{\epsilon}} \leq \Delta_0^{init} \cdot \omega_C^{|\mathcal{C}^U_{i_{\epsilon}}|} \cdot \max(\omega_S, \alpha_2)^{|\mathcal{F}_{i_{\epsilon}}|} \cdot \max(\gamma_{dec}, \alpha_2)^{|\mathcal{U}_{i_{\epsilon}}|} \cdot \overline{\gamma}_{inc}^{|\mathcal{S}_{i_{\epsilon}}|}, \\
		&\leq \Delta_0^{init} \cdot \alpha_3^{|\mathcal{C}^U_{i_{\epsilon}}| + |\mathcal{F}_{i_{\epsilon}}| + |\mathcal{U}_{i_{\epsilon}}|} \cdot \overline{\gamma}_{inc}^{|\mathcal{S}_{i_{\epsilon}}|},
	\end{align}
	from which \eqref{eq_delta_reduction_count} follows.
	
	After every call of the criticality phase, we have either a safety, successful or unsuccessful step, giving us \eqref{eq_crit_count}.
	Similarly, after every model-improving phase, the next iteration cannot call a subsequent model-improving phase, giving us \eqref{eq_model_improving_count}.
\end{proof}

\begin{assumption} \label{ass_epsc_bound}
	The algorithm parameter $\epsilon_C \geq c_3\epsilon$ for some constant $c_3>0$.
\end{assumption}
Note that Assumption \ref{ass_epsc_bound} can be easily satisfied by appropriate parameter choices in \algref{alg_dfogn}.

\begin{theorem} \label{thm_iters_bound}
	Suppose Assumptions \ref{ass_cauchy_decrease}, \ref{ass_smoothness}, \ref{ass_bdd_hess} and \ref{ass_epsc_bound} hold.
	Then the number of iterations  $i_{\epsilon}$ (i.e.~the number of times a model $m_k$ \eqref{eq_gn_full_model_dfo} is built) until  $\|\grad f(\bx_{i_{\epsilon}+1})\|< \epsilon$ is at most
	\begin{align}
		\left\lfloor\frac{4f(\bx_0)}{\eta_1 c_1}\left(1 + \frac{\log \overline{\gamma}_{inc}}{|\log \alpha_3|}\right)\max\left(\kappa_H c_4^{-2}\epsilon^{-2}, c_4^{-1}c_5^{-1}\epsilon^{-2}, c_4^{-1}(\Delta_0^{init})^{-1}\epsilon^{-1}\right)\right. \nonumber \\
\left.\quad + \frac{4}{|\log \alpha_3|}\max\left(0,\log \left(\Delta_0^{init}c_5^{-1}\epsilon^{-1}\right)\right)\right\rfloor \label{eq_num_iters}
	\end{align}
	where $c_4 \defeq \min\left(c_3, (1 + \kappa_{eg}\mu)^{-1}\right)$ and
	\be c_5 \defeq \min\left(\frac{\omega_C}{\kappa_{eg}+1/\mu}, \frac{\alpha_1 c_4}{\kappa_H}, \alpha_1\left(\kappa_{eg} + \frac{2\kappa_{ef}}{c_1(1-\eta_2)}\right)^{-1}\right). \ee
\end{theorem}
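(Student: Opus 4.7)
The plan is to add up the five types of iterations counted in Lemma~\ref{lem_num_other_steps} together with the successful iterations from Lemma~\ref{lem_num_successful_steps}, and then convert the resulting bounds, which are stated in terms of $\epsilon_g$ and $\rho_{min}$, into the $\epsilon$-dependent expression in \eqref{eq_num_iters} via \assref{ass_epsc_bound}.

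The first step is to write $i_{\epsilon} = |\mathcal{S}_{i_{\epsilon}}| + |\mathcal{C}^M_{i_{\epsilon}}| + |\mathcal{C}^U_{i_{\epsilon}}| + |\mathcal{F}_{i_{\epsilon}}| + |\mathcal{M}_{i_{\epsilon}}| + |\mathcal{U}_{i_{\epsilon}}|$ (every iteration up to $i_\epsilon$ falls into exactly one of these categories, by the definitions preceding Lemma~\ref{lem_num_other_steps}). I then eliminate the two ``dependent'' counts using \eqref{eq_crit_count} and \eqref{eq_model_improving_count} from Lemma~\ref{lem_num_other_steps}, substituting \eqref{eq_crit_count} into \eqref{eq_model_improving_count} and collecting terms. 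A short accounting shows the clean upper bound
\be i_{\epsilon} \leq 4\bigl(|\mathcal{S}_{i_{\epsilon}}| + |\mathcal{C}^U_{i_{\epsilon}}| + |\mathcal{F}_{i_{\epsilon}}| + |\mathcal{U}_{i_{\epsilon}}|\bigr). \ee
Next, I feed \eqref{eq_delta_reduction_count} in to bound $|\mathcal{C}^U_{i_{\epsilon}}| + |\mathcal{F}_{i_{\epsilon}}| + |\mathcal{U}_{i_{\epsilon}}|$, obtaining
\be i_{\epsilon} \leq 4|\mathcal{S}_{i_{\epsilon}}|\left(1+\frac{\log\overline{\gamma}_{inc}}{|\log\alpha_3|}\right) + \frac{4}{|\log\alpha_3|}\log\!\left(\frac{\Delta_0^{init}}{\rho_{min}}\right). \ee

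The second step is to convert the $\epsilon_g$- and $\rho_{min}$-dependent constants into explicit $\epsilon$-dependent ones. By \lemref{lem_true_model_gradient}, $\epsilon_g = \min(\epsilon_C, \epsilon/(1+\kappa_{eg}\mu))$, so under \assref{ass_epsc_bound} we get $\epsilon_g \geq c_4\epsilon$ with $c_4 \defeq \min\bigl(c_3, (1+\kappa_{eg}\mu)^{-1}\bigr)$. Plugging this into the definition \eqref{eq_rho_min} of $\rho_{min}$, every non-$\Delta_0^{init}$ term scales linearly with $\epsilon$, yielding $\rho_{min} \geq \min(\Delta_0^{init}, c_5\epsilon)$ with $c_5$ as defined in the theorem. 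Combining these bounds with \eqref{eq_num_successful_steps} from Lemma~\ref{lem_num_successful_steps} converts
\[ \max\!\bigl(\kappa_H\epsilon_g^{-2},\, \epsilon_g^{-1}\rho_{min}^{-1}\bigr) \leq \max\!\bigl(\kappa_H c_4^{-2}\epsilon^{-2},\, c_4^{-1}c_5^{-1}\epsilon^{-2},\, c_4^{-1}(\Delta_0^{init})^{-1}\epsilon^{-1}\bigr), \]
using that $\rho_{min}^{-1} \leq \max((\Delta_0^{init})^{-1}, c_5^{-1}\epsilon^{-1})$. An analogous argument on the $\log(\Delta_0^{init}/\rho_{min})$ term gives $\log(\Delta_0^{init}/\rho_{min}) \leq \max\!\bigl(0,\log(\Delta_0^{init}c_5^{-1}\epsilon^{-1})\bigr)$, where the max with $0$ handles the case $\Delta_0^{init} \leq c_5\epsilon$.

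Assembling the two displays above and taking the floor (since $i_{\epsilon}$ is an integer) yields exactly \eqref{eq_num_iters}. There is no real analytic obstacle here; the whole result is essentially bookkeeping. The only mildly delicate point is keeping the constants straight in step two, in particular verifying that the three cases inside the $\max$ in \eqref{eq_num_iters} exhaust what can arise from $\max(\kappa_H\epsilon_g^{-2},\epsilon_g^{-1}\rho_{min}^{-1})$ once both $\epsilon_g^{-1}$ and $\rho_{min}^{-1}$ are expanded into their respective maxima, and that the factor of $4$ (rather than a larger constant) suffices after all phase counts are merged.
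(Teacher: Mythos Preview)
Your proposal is correct and follows essentially the same route as the paper's proof: both sum the six iteration categories, use the inequalities \eqref{eq_crit_count}--\eqref{eq_model_improving_count} and then \eqref{eq_delta_reduction_count} from \lemref{lem_num_other_steps} to reduce everything to a multiple of $|\mathcal{S}_{i_\epsilon}|$ plus a $\log(\Delta_0^{init}/\rho_{min})$ term, and finally substitute the $\epsilon$-scalings of $\epsilon_g$ and $\rho_{min}$ obtained from \assref{ass_epsc_bound}. If anything, you are slightly more careful than the paper in writing $\epsilon_g\geq c_4\epsilon$ and $\rho_{min}\geq\min(\Delta_0^{init},c_5\epsilon)$ as inequalities rather than equalities, which is the correct direction once \assref{ass_epsc_bound} is invoked.
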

\begin{proof}
	From \assref{ass_epsc_bound} and \lemref{lem_true_model_gradient}, we have $\epsilon_g = c_4\epsilon$.
	Similarly, from \lemref{lem_lower_delta_bound_dfo} we have $\rho_{min}=\min(\Delta_0^{init}, c_5\epsilon)$.
	Thus using \lemref{lem_num_other_steps}, we can bound the total number of iterations by
	\begin{align}
		&|\mathcal{C}^M_{i_{\epsilon}}| + |\mathcal{C}^U_{i_{\epsilon}}| + |\mathcal{F}_{i_{\epsilon}}| + |\mathcal{S}_{i_{\epsilon}}| + |\mathcal{M}_{i_{\epsilon}}| + |\mathcal{U}_{i_{\epsilon}}| \\
		&\qquad\qquad\qquad \leq 4|\mathcal{S}_{i_{\epsilon}}| + 4\left(|\mathcal{C}^U_{i_{\epsilon}}| + |\mathcal{F}_{i_{\epsilon}}| + |\mathcal{U}_{i_{\epsilon}}|\right), \\
		&\qquad\qquad\qquad \leq 4|\mathcal{S}_{i_{\epsilon}}|\left(1 + \frac{\log \overline{\gamma}_{inc}}{|\log \alpha_3|}\right) + \frac{4}{|\log \alpha_3|}\log\left(\frac {\Delta_0^{init}}{\rho_{min}}\right),
	\end{align}
	and so \eqref{eq_num_iters} follows from this and \lemref{lem_num_successful_steps}.
\end{proof}

We can summarize our results as follows:

\begin{corollary} \label{cor_complexity}
	Suppose Assumptions \ref{ass_cauchy_decrease}, \ref{ass_smoothness}, \ref{ass_bdd_hess} and \ref{ass_epsc_bound} hold.
	Then for $\epsilon\in(0,1]$, the number of iterations  $i_{\epsilon}$ (i.e.~the number of times a model $m_k$ \eqref{eq_gn_full_model_dfo} is built) 
until  $\|\grad f(\bx_{i_{\epsilon}+1})\|< \epsilon$
 is at most $\bigO(\kappa_H \kappa_d^2 \epsilon^{-2})$, and the number of objective evaluations until $i_{\epsilon}$ is at most $\bigO(\kappa_H \kappa_d^2 n \epsilon^{-2})$, where $\kappa_d\defeq\max(\kappa_{ef},\kappa_{eg})=\bigO(n L_J^2)$.
\end{corollary}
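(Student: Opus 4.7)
My plan is to derive Corollary \ref{cor_complexity} as a direct, qualitative consequence of \thmref{thm_iters_bound}, simply by extracting the dominant $\epsilon$-power in \eqref{eq_num_iters} and tracking the coarse dependence of the constants $c_4$ and $c_5$ on $\kappa_H$ and $\kappa_d$. First, since $\epsilon\in(0,1]$, among the three arguments of the maximum in \eqref{eq_num_iters} the first two scale like $\epsilon^{-2}$ while the third, $c_4^{-1}(\Delta_0^{init})^{-1}\epsilon^{-1}$, is of strictly lower order; likewise the additive logarithmic term grows only like $\log(1/\epsilon)$. So the dominant contribution to $i_\epsilon$ is $O\!\bigl(\max\bigl(\kappa_H c_4^{-2},\; c_4^{-1}c_5^{-1}\bigr)\epsilon^{-2}\bigr)$.

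Next I would translate $c_4$ and $c_5$ into the $(\kappa_H,\kappa_d)$ language. From $c_4 = \min(c_3,(1+\kappa_{eg}\mu)^{-1})$ we read off $c_4^{-1}=O(1+\kappa_{eg})=O(\kappa_d)$, so $\kappa_H c_4^{-2}=O(\kappa_H\kappa_d^2)$. The constant $c_5$ is the minimum of three terms; the first and third contribute only an $O(\kappa_d)$ factor to $c_5^{-1}$, but the middle term $\alpha_1 c_4/\kappa_H$ gives $c_5^{-1}=O(\kappa_H/c_4)=O(\kappa_H\kappa_d)$. Hence $c_4^{-1}c_5^{-1}=O(\kappa_d)\cdot O(\kappa_H\kappa_d)=O(\kappa_H\kappa_d^2)$ as well, so both dominant terms yield the same rate and $i_\epsilon=O(\kappa_H\kappa_d^2\epsilon^{-2})$.

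For the evaluation count, I would observe that each iteration in the sense of \thmref{thm_iters_bound} (one construction of $m_k$) is accompanied by at most $O(n)$ new evaluations of $\br$: an accepted or rejected trust-region step requires a single new evaluation, whereas the geometry-improving routines invoked in the safety, model-improvement and criticality phases each require at most a number of point replacements depending only on $\Lambda$ and $n$ (\remref{rem_geom}), and the initial set $Y_0$ contributes a one-off cost of $n+1$. Multiplying through gives $O(\kappa_H\kappa_d^2 n\,\epsilon^{-2})$ evaluations. Finally, the claim $\kappa_d=O(nL_J^2)$ follows by reading off \eqref{eq_fully_linear_vector_consts}--\eqref{eq_fully_linear_scalar_consts}: $\kappa_{eg}^r=O(\sqrt n\,L_J)$, so the squared term $(\kappa_{eg}^r\Delta_{max}+J_{max})^2$ appearing in both $\kappa_{ef}$ and $\kappa_{eg}$ contributes the leading $O(nL_J^2)$ behaviour.

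The proof is essentially bookkeeping, so I do not anticipate a genuine obstacle; the only subtle point to get right is that the $\kappa_H$ factor in the final bound enters through the middle term $\alpha_1 c_4/\kappa_H$ in the definition of $c_5$ rather than through $\kappa_H c_4^{-2}$ alone, and one should be careful not to double-count the $n$ factor (it appears multiplicatively once, through per-iteration geometry maintenance, and additionally inside $\kappa_d$ via \lemref{lem_fully_linear}).
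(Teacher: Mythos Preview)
Your proposal is correct and follows essentially the same approach as the paper's proof: identify the dominant $\epsilon^{-2}$ terms in \eqref{eq_num_iters}, translate $c_4^{-1}=\bigO(\kappa_d)$ and $c_5^{-1}=\bigO(\kappa_H\kappa_d)$, multiply by $n+1$ evaluations per iteration, and read off $\kappa_d=\bigO(nL_J^2)$ from \lemref{lem_fully_linear}. The paper's version is terser (it simply states $c_5^{-1}=\bigO(\max(\kappa_{eg},\kappa_H c_4^{-1},\kappa_{ef}+\kappa_{eg}))=\bigO(\kappa_H\kappa_d)$ and that each iteration changes at most $n+1$ points), but your more detailed accounting of which term in $c_5$ carries the $\kappa_H$ factor is a welcome clarification rather than a different argument.
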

\begin{proof}
	From \thmref{thm_iters_bound}, we have $c_4^{-1}=\bigO(\kappa_{eg})$ and so 
	\be c_5^{-1}=\bigO(\max(\kappa_{eg}, \kappa_H c_4^{-1}, \kappa_{ef}+\kappa_{eg}))=\bigO(\kappa_H \kappa_d). \ee
	To leading order, the number of iterations is 
	\be \bigO(\max(\kappa_H c_4^{-2}, c_4^{-1}c_5^{-1})\epsilon^{-2})=\bigO(\kappa_H \kappa_d^2 \epsilon^{-2}), \ee
	as required.
	In every type of iteration, we change at most $n+1$ points, and so require no more than $n+1$ evaluations.
	The result $\kappa_d=\bigO(n L_J^2)$ follows from \lemref{lem_fully_linear}.
\end{proof}

\begin{remark}
	\thmref{thm_iters_bound} gives us a possible termination criterion for \algref{alg_dfogn} --- we loop until $k$ exceeds the value \eqref{eq_num_iters} or until $\rho_k \leq \rho_{min}$.
	However, this would require us to know problem constants $\kappa_{ef}$, $\kappa_{eg}$ and $\kappa_H$ in advance, which is not usually the case. Moreover, \eqref{eq_num_iters} is a worst-case
bound and so unduly pessimistic. 
\end{remark}

\begin{remark}
	In \cite{Garmanjani2016}, the authors propose a different criterion to test whether the criticality phase should be entered: $\|\bg_k^{init}\| \leq \Delta_k/\mu$ rather than $\|\bg_k^{init}\|\leq \epsilon_C$ as found here and in \cite{Conn2009}.
	We are able to use our criterion because of \assref{ass_epsc_bound}.
	If this did not hold, we would have $\epsilon_g \ll \epsilon$ and so $\rho_{min}\ll \epsilon$, which would worsen the result in \thmref{thm_iters_bound}.
	In practice, \assref{ass_epsc_bound} is reasonable, as we would not expect a user to prescribe a criticality tolerance much smaller than their desired solution tolerance.
\end{remark}

The standard complexity bound for first-order methods is $\bigO(\kappa_H \kappa_d^2 \epsilon^{-2})$ iterations and $\bigO(\kappa_H \kappa_d^2 n \epsilon^{-2})$ evaluations \cite{Garmanjani2016}, where $\kappa_d=\bigO(\sqrt{n})$ and $\kappa_H=1$.
\corref{cor_complexity} gives us the same count of iterations and evaluations, but the worse bounds $\kappa_d=\bigO(n)$ and $\kappa_H=\bigO(\kappa_d)$, coming from the least-squares structure (\lemref{lem_fully_linear}).

However, our model \eqref{eq_gn_full_model_dfo} is better than a simple linear model for $f$, as it captures some of the curvature information in the objective via the term $J_k^TJ_k$.
This means that DFO-GN produces models which are between fully linear and fully quadratic \cite[Definition 10.4]{Conn2009}, which is the requirement for convergence of second-order methods.
It therefore makes sense to also compare the complexity of DFO-GN with the complexity of second-order methods.

Unsurprisingly, the standard bound for second-order methods is worse in general, than for first-order methods, namely, $\bigO(\max(\kappa_H \kappa_d^2, \kappa_d^3) \epsilon^{-3})$ iterations and $\bigO(\max(\kappa_H \kappa_d^2, \kappa_d^3) n^2 \epsilon^{-3})$ evaluations \cite{Judice2015}, where $\kappa_d = \bigO(n)$, to achieve second-order criticality for the given objective.
Note that here $\kappa_d\defeq\max(\kappa_{ef}, \kappa_{eg}, \kappa_{eh})$ for fully quadratic models.
If $\|\grad^2 f\|$ is uniformly bounded, then we would expect $\kappa_H=\bigO(\kappa_{eh})=\bigO(\kappa_d)$. 

Thus DFO-GN has the iteration and evaluation complexity of a first-order method, but the problem constants (i.e.~dependency on $n$) of a second-order method.
That is, assuming $\kappa_H=\bigO(\kappa_d)$ (as suggested by \lemref{lem_fully_linear}), DFO-GN requires $\bigO(n^3 \epsilon^{-2})$ iterations and $\bigO(n^4 \epsilon^{-2})$ evaluations, compared to $\bigO(n\epsilon^{-2})$ iterations and $\bigO(n^2\epsilon^{-2})$ evaluations for a first-order method, and $\bigO(n^3\epsilon^{-3})$ iterations and $\bigO(n^5\epsilon^{-3})$ evaluations for a second-order method. 

\begin{remark}
	In \lemref{lem_fully_linear}, we used the result $C=\bigO(\Lambda)$ whenever $Y_k$ is $\Lambda$-poised, and wrote $\kappa_{eg}$ in terms of $C$; see Appendix A for details on the provenance of $C$ with respect to the interpolation system \eqref{eq_interp_conditions}.
	Our approach here matches the presentation of the first- and second-order complexity bounds from \cite{Garmanjani2016,Judice2015}.
	However, \cite[Theorem 3.14]{Conn2009} shows that $C$ may also depend on $n$.
	Including this dependence, we have $C=\bigO(\sqrt{n}\:\Lambda)$ for DFO-GN and general first-order methods, and $C=\bigO(n^2 \Lambda)$ for general second-order methods (where $C$  is now adapted for quadratic interpolation).
	This would yield the alternative bounds $\kappa_d=\bigO(n)$ for first-order methods, $\bigO(n^2)$ for DFO-GN and $\bigO(n^3)$ for second-order methods\footnote{For second-order methods, the fully quadratic bound is $\kappa_d=\bigO(nC)$.}.
	Either way, we conclude that the complexity of DFO-GN lies between first- and second-order methods.
\end{remark}

\subsubsection{Discussion of \assref{ass_bdd_hess}} \label{sec_bdd_hess}
It is also important to note that when $m_k$ is fully linear, we have an explicit bound $\|H_k\|\leq \t{\kappa}_H=\bigO(\kappa_d)$ from \lemref{lem_fully_linear}.
This means that \assref{ass_bdd_hess}, which typically necessary for first-order convergence (e.g.~\cite{Conn2009,Garmanjani2016}), is not required for \thmref{thm_liminf} and our complexity analysis.
To remove the assumption, we need to change \algref{alg_dfogn} in two places:
\begin{enumerate}
	\item Replace the test for entering the criticality phase with
	\be \min\left(\|\bg_k^{init}\|, \frac{\|\bg_k^{init}\|}{\max(\|H_k^{init}\|,1)}\right) \leq \epsilon_C; \qquad \text{and}\ee 
	\item Require the criticality phase to output $m_k$ fully linear and $\Delta_k$ satisfying 
	\be \Delta_k \leq \mu \min\left(\|\bg_k\|, \frac{\|\bg_k\|}{\max(\|H_k\|,1)}\right). \ee
\end{enumerate}
With these changes, the criticality phase still terminates, but instead of \eqref{eq_criticality_delta_bound} we have
\be \min\left(\Delta_k^{init}, \frac{\omega_C\epsilon}{\kappa_{eg}+1/\mu}, \frac{\omega_C\epsilon}{\kappa_{eg}+\t{\kappa}_H/\mu}\right) \leq \Delta_k \leq \Delta_k^{init}. \ee
We can also augment \lemref{lem_true_model_gradient} with the following, which can be used to arrive at a new value for $\rho_{min}$.

\begin{lemma}
	In all iterations, $\|\bg_k\|/\max(\|H_k\|,1) \geq \min(\epsilon_C, \Delta_k/\mu)$.
	If $\|\grad f(\bx_k)\|\geq\epsilon>0$ then 
	\be \frac{\|\bg_k\|}{\max(\|H_k\|,1)} \geq \epsilon_H \defeq \min\left(\epsilon_C, \frac{\epsilon}{(1+\kappa_{eg}\mu)\t{\kappa}_H}\right) > 0.\ee
\end{lemma}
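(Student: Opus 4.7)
The plan is to mirror the proof of \lemref{lem_true_model_gradient} exactly, but using the modified criticality test and output conditions specified in items 1--2 of the preceding discussion. The essential new ingredient is that whenever $m_k$ is fully linear, \lemref{lem_fully_linear} gives the \emph{a priori} bound $\|H_k\|\leq\t{\kappa}_H$, which replaces \assref{ass_bdd_hess} in the estimate.

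For the first claim, I would split into two cases at iteration $k$. If the criticality phase is not entered, the modified entry test from item 1 forces $\|\bg_k^{init}\|/\max(\|H_k^{init}\|,1) > \epsilon_C$, and since in this branch $\bg_k=\bg_k^{init}$ and $H_k=H_k^{init}$, the ratio $\|\bg_k\|/\max(\|H_k\|,1)$ exceeds $\epsilon_C$. Otherwise, the modified output condition from item 2 gives $\Delta_k\leq \mu\,\|\bg_k\|/\max(\|H_k\|,1)$, which rearranges to $\|\bg_k\|/\max(\|H_k\|,1) \geq \Delta_k/\mu$. Taking the minimum yields the first claim.

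For the second claim, I would again split on whether the criticality phase is entered. In the non-entry case the ratio already exceeds $\epsilon_C\geq\epsilon_H$. In the entry case the phase outputs a fully linear $m_k$ on $B(\bx_k,\Delta_k)$ with $\Delta_k\leq\mu\,\|\bg_k\|/\max(\|H_k\|,1)$, and \lemref{lem_fully_linear} guarantees $\|H_k\|\leq\t{\kappa}_H$. The triangle inequality plus full linearity then gives
\be
\epsilon \leq \|\grad f(\bx_k)\| \leq \|\grad f(\bx_k)-\bg_k\| + \|\bg_k\| \leq \kappa_{eg}\Delta_k + \|\bg_k\|,
\ee
and substituting the upper bound on $\Delta_k$ together with $\|\bg_k\| \leq \t{\kappa}_H\,\|\bg_k\|/\max(\|H_k\|,1)$ (valid because $\max(\|H_k\|,1)\leq\t{\kappa}_H$ when $\t{\kappa}_H\geq 1$) produces
\be
\epsilon \leq \bigl(\kappa_{eg}\mu + \t{\kappa}_H\bigr)\frac{\|\bg_k\|}{\max(\|H_k\|,1)} \leq (1+\kappa_{eg}\mu)\,\t{\kappa}_H \cdot \frac{\|\bg_k\|}{\max(\|H_k\|,1)},
\ee
which rearranges to the required lower bound.

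The only subtle point is the very last inequality $\kappa_{eg}\mu + \t{\kappa}_H \leq (1+\kappa_{eg}\mu)\t{\kappa}_H$, which holds since $\t{\kappa}_H\geq 1$; this cosmetic step lets the final bound be written in the compact form $\epsilon/((1+\kappa_{eg}\mu)\t{\kappa}_H)$ used in the statement. There is no real obstacle beyond bookkeeping: the entire argument is a direct reprise of \lemref{lem_true_model_gradient} with the scalar $\|\bg_k\|$ replaced by $\|\bg_k\|/\max(\|H_k\|,1)$ throughout, exploiting that the modified algorithm guarantees this ratio (rather than $\|\bg_k\|$ alone) is the quantity controlled by the criticality phase.
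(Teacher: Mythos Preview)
Your proposal is correct and follows precisely the approach the paper intends: the lemma is stated without proof as a direct analogue of \lemref{lem_true_model_gradient}, and your argument mirrors that proof with $\|\bg_k\|$ replaced by $\|\bg_k\|/\max(\|H_k\|,1)$, using the modified criticality test and the bound $\|H_k\|\leq\t{\kappa}_H$ from \lemref{lem_fully_linear} in place of \assref{ass_bdd_hess}. The only point worth flagging explicitly is that you correctly rely on $\t{\kappa}_H\geq 1$ for the final cosmetic inequality, which is harmless since (as with $\kappa_H$ in \assref{ass_bdd_hess}) one may always take $\t{\kappa}_H\geq 1$ without loss of generality.
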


Ultimately, we arrive at complexity bounds which match \corref{cor_complexity}, but replacing $\kappa_H$ with $\t{\kappa}_H$.
However, \assref{ass_bdd_hess} is still necessary for \thmref{thm_lim} to hold.

\section{Implementation} \label{sec_implementation}
In this section, we describe the key differences between \algref{alg_dfogn} and its software implementation DFO-GN.
These differences largely come from Powell's implementation of BOBYQA \cite{Powell2009} and are also features of DFBOLS, the implementation of the algorithm from Zhang et al.~\cite{Zhang2010}.
We also obtain a unified approach for analysing and improving the geometry of the interpolation set  due to our particular choice of local Gauss-Newton-like models.

\subsection{Geometry-Improving Phases}
In practice, DFO algorithms are generally not run to very high tolerance levels, and so the asymptotic behaviour of such algorithms is less important than for other optimization methods.
To this end, DFO-GN, like BOBYQA and DFBOLS, does not implement a criticality phase; but the safety step is implemented to encourage convergence.

In the geometry phases of the algorithm, we check the $\Lambda$-poisedness of $Y_k$ by calculating all the Lagrange polynomials for $Y_k$ (which are linear), then maximizing the absolute value of each in $B(\bx_k,\Delta_k)$.
To modify $Y_k$ to make it $\Lambda$-poised, we can repeat the following procedure \cite[Algorithm 6.3]{Conn2009}:
\begin{enumerate}
	\item Select the point $\by_t\in Y_k$ ($\by_t\neq\bx_k$) for which $\max_{\by\in B(\bx_k,\Delta_k)} |\Lambda_t(\by)|$ is maximized (c.f.~\eqref{eq_lambda_poised_definition});
	\item Replace $\by_t$ in $Y_k$ with $\by^+$, where 
	\be \by^+ = \argmax_{\by\in B(\bx_k,\Delta_k)} |\Lambda_t(\by)|, \label{eq_geom_improvement}\ee
\end{enumerate}
until $Y_k$ is $\Lambda$-poised in $B(\bx_k,\Delta_k)$.
This procedure terminates after at most $N$ iterations, where $N$ depends only on $\Lambda$ and $n$ \cite[Theorem 6.3]{Conn2009}, and in particular does not depend on $\bx_k$, $Y_k$ or $\Delta_k$.

In DFO-GN, we follow BOBYQA and replace these geometry-checking and improvement algorithms (which are called in the safety and model-improvement phases of \algref{alg_dfogn}) with simplified calculations.
Firstly, instead of checking for the $\Lambda$-poisedness of $Y_k$, we instead check if all interpolation points are within some distance of $\bx_k$, typically a multiple of $\Delta_k$.
If any point is sufficiently far from $\bx_k$, the geometry of $Y_k$ is improved by selecting the point $\by_t$ furthest from $\bx_k$, and moving it to $\by^+$ satisfying \eqref{eq_geom_improvement}.
That is, we effectively perform one iteration of the full geometry-improving procedure.

\subsection{Model Updating} \label{sec_model_updating}
In \algref{alg_dfogn}, we only update $Y_{k+1}$, and hence $\bem_k$ and $m_k$, on successful steps.
However, in our implementation, we always try to incorporate new information when it becomes available, and so we update $Y_{k+1}=Y_k\cup\{\bx_k+\bs_k\}\setminus\{\by_t\}$ on all iterations except when the safety phase is called (since in the safety phase we never evaluate $\br(\bx_k+\bs_k)$).

Regardless of how often we update the model, we need some criterion for selecting the point $\by_t\in Y_k$ to replace with $\by^+\defeq \bx_k+\bs_k$.
There are three common reasons for choosing a particular point to remove from the interpolation set:
\begin{description}
	\item[Furthest Point:] It is the furthest away from $\bx_k$ (or $\bx_{k+1}$);
	\item[Optimal $\Lambda$-poisedness:] Replacing it with $\by^+$ would give the maximum improvement in the $\Lambda$-poisedness of $Y_k$.
	That is, choose the $t$ for which $|\Lambda_t(\by^+)|$ is maximized;
	\item[Stable Update:] Replacing it with $\bx_k+\bs_k$ would induce the most stable update to the interpolation system \eqref{eq_linear_interp_system_Jonly}. 
	As introduced by Powell \cite{Powell2004a} for quadratic models, moving $\by_t$ to $\by^+$ induces a low-rank update of the matrix $W \to W_{new}$ in the interpolation system, here \eqref{eq_linear_interp_system_Jonly}.
	From the Sherman-Morrison-Woodbury formula, this induces a low-rank update of $H=W^{-1}$, which has the form
	\be H_{new} \gets H + \frac{1}{\sigma_t}\left[A_t B_t^{\top}\right], \label{eq_inv_update} \ee
	for some $\sigma_t\neq 0$ and low rank $A_t B_t^{\top}$.
	Under this measure, we would want to replace a point in the interpolation set when the resulting $|\sigma_t|$ is maximal; i.e.~the update \eqref{eq_inv_update} is `stable'.
	In \cite{Powell2004a}, it is shown that for underdetermined quadratic interpolation, $\sigma_t\geq \Lambda_t(\by^+)^2$.
\end{description}
Two approaches for selecting $\by_t$ combine two of these reasons into a single criterion.
Firstly in BOBYQA, the point $t$ is chosen by combining the `furthest point' and `stable update' measures:
\be t = \argmax_{j=0,\ldots,n} \left\{|\sigma_j| \max\left(\frac{\|\by_j-\bx_k\|^4}{\Delta_k^4}, 1\right)\right\}. \label{eq_pt_update_bobyqa} \ee
Alternatively, Scheinberg and Toint \cite{Scheinberg2010} combine the `furthest point' and `optimal $\Lambda$-poisedness' measures:
\be t = \argmax_{j=0,\ldots,n} \left\{|\Lambda_j(\by^+)| \: \|\by_j-\bx_k\|^2\right\}. \ee
In DFO-GN, we use the BOBYQA criterion \eqref{eq_pt_update_bobyqa}.
However, as we now show, in DFO-GN, the two measures `optimal $\Lambda$-poisedness' and `stable update' coincide, meaning our framework allows a unification of the perspectives from \cite{Powell2009} and \cite{Scheinberg2010}, rather than having the indirect relationship via the bound $\sigma_t \geq \Lambda_t(\by^+)^2$.

To this end, define $W$ as the matrix in \eqref{eq_linear_interp_system_Jonly}, and let $H \defeq W^{-1}$.
The Lagrange polynomials for $Y_k$ can then be found by applying the interpolation conditions \eqref{eq_lagrange_defn}.
That is, we have
\be \Lambda_t(\by) = 1 + \bg_t^{\top}(\by-\by_t), \ee
where $\bg_t$ solves
\be W\bg_t = \begin{bmatrix} \Lambda_t(\by_1) - \Lambda_t(\bx_k) \\ \vdots \\ \Lambda_t(\by_n) - \Lambda_t(\bx_k) \end{bmatrix} = \begin{cases}\bee_t, & \text{if $\by_t\neq\bx_k$,} \\ -\bee, & \text{if $\by_t=\bx_k$,} \end{cases} \ee
where $\bee_t$ is the usual coordinate vector in $\R^n$ and $\bee \defeq [1 \: \cdots \: 1]^{\top}\in\R^n$.
This gives us the relations
\be \Lambda_t(\by^+) = \begin{cases}1 + (H\bee_t)^{\top}(\by^+-\by_t), & \text{if $\by_t\neq\bx_k$,} \\ 1 -(H\bee)^{\top}(\by^+-\bx_k), & \text{if $\by_t=\bx_k$.} \end{cases} \ee

Now, we consider the `stable update' measure.
We will update the point $\by_t$ to $\by^+$, which will give us a new matrix $W_{new}$ with inverse $H_{new}$.
This change induces a rank-1 update from $W$ to $W_{new}$, given by
\be W_{new} = W + \begin{cases}\bee_t(\by^+-\by_t)^{\top}, & \text{if $\by_t\neq\bx_k$,} \\ \bee(\bx_k-\by^+)^{\top}, & \text{if $\by_t=\bx_k$.} \end{cases} \ee
By the Sherman-Morrison formula, this induces a rank-1 update from $H$ to $H_{new}$, given by
\be H_{new} = H - \frac{1}{\sigma_t}\begin{cases}H\bee_t(\by^+-\by_t)^{\top}H, & \text{if $\by_t\neq\bx_k$,} \\ H\bee(\bx_k-\by^+)^{\top}H, & \text{if $\by_t=\bx_k$.} \end{cases} \ee
For a general rank-1 update $W_{new}=W+\b{u}\b{v}^{\top}$, the denominator is $\sigma=1+\b{v}^{\top}W^{-1}\b{u}$, and so here we have
\be \sigma_t = \begin{cases} 1 + (\by^+-\by_t)^{\top}H\bee_t, & \text{if $\by_t\neq\bx_k$,} \\ 1 + (\bx_k-\by^+)^{\top}H\bee, & \text{if $\by_t=\bx_k$,} \end{cases} \ee
and hence $\sigma_t=\Lambda_t(\by^+)$, as expected.

\subsection{Termination Criteria}
The specification in \algref{alg_dfogn} does not include any termination criteria.
In the implementation of DFO-GN, we use the same termination criteria as DFBOLS \cite{Zhang2010}, namely terminating whenever any of the following are satisfied:
\begin{itemize}
	\item Small objective value: since $f\geq 0$ for least-squares problems, we terminate when 
	\be f(\bx_k) \leq \max\{10^{-12}, 10^{-20}f(\bx_0)\}.\ee
	For nonzero residual problems (i.e.~where $f(\bx^*)>0$ at the true minimum $\bx^*$), it is unlikely that termination will occur by this criterion;
	\item Small trust region: $\rho_k$, which converges to zero as $k\to\infty$ from \lemref{lem_delta_conv_dfo}, falls below a user-specified threshold; and
	\item Computational budget: a (user-specified) maximum number of evaluations of $\br$ is reached.
\end{itemize}

\subsection{Other Implementation Differences} \label{sec_other_implementation_diffs}

\paragraph{Addition of bound constraints} This is allowed in the implementation of DFO-GN as it is important to practical applications.
That is, we solve \eqref{eq_ls_definition} subject to $\b{a} \leq \bx \leq \b{b}$ for given bounds $\b{a},\b{b}\in\R^n$.
This requires no change to the logic as specified in \algref{alg_dfogn}, but does require the addition of the same bound constraints in the algorithms for the trust region subproblem \eqref{eq_tr_subproblem} and calculating geometry-improving steps \eqref{eq_geom_improvement}.
For the trust-region subproblem, we use the routine from DFBOLS, which is itself a slight modification of the routine from BOBYQA (which was specifically designed to produce feasible iterates in the presence of bound constraints).
Calculating geometry-improving steps \eqref{eq_geom_improvement} is easier, since the Lagrange polynomials are linear rather than quadratic.
That is, we need to maximize a linear objective subject to Euclidean ball and bound constraints
In this case, we use our own routine, given in \appref{sec_geom_step_algo}, which handles the bound constraints via an active set method.

\paragraph{Internal representation of interpolation points} In the interpolation system \eqref{eq_linear_interp_system_Jonly}, we are required to calculate the vectors $\by_t-\bx_k$.
As the algorithm progresses, we expect $\|\by_t-\bx_k\| = \bigO(\Delta_k) \to 0$ as $k\to\infty$, and hence the evaluation $\by_t-\bx_k$ to be sensitive to rounding errors, especially if $\|\bx_k\|$ is large.
To reduce the impact of this, internally the points are stored with respect to a `base point' $\bx_b$, which is periodically updated to remain `close' to the points in $Y_k$.
That is, we actually maintain the set $Y_k = \bx_b + \{(\by_0-\bx_b), \ldots, (\by_n-\bx_b)\}$.
This means that in \eqref{eq_linear_interp_system_Jonly} we are computing $\|(\by_t-\bx_b) - (\bx_k-\bx_b)\|$, where $\|\by_t-\bx_b\|$ and $\|\bx_k-\bx_b\|$ are much smaller than $\|\by_t\|$ and $\|\bx_k\|$, so the calculation is less sensitive to cancellation.

Once we have determined $J_k$ by solving \eqref{eq_linear_interp_system_Jonly}, the model $\bem_k$ \eqref{eq_ls_definition} is internally represented as
\be \br(\by) \approx \t{\bem}_k(\by-\bx_b) \defeq \b{c}_k + J_k(\by-\b{x}_b), \ee
where $\b{c}_k = \b{r}(\b{x}_k) - J_k(\b{x}_k-\b{x}_b)$, to ensure that $\t{\bem}_k(\bx_k-\bx_b)=\br(\bx_k)$.
Note that we take the argument of $\t{\bem}_k$ to be $\by-\bx_b$, rather than $\by-\bx_k$ as in \eqref{eq_ls_definition}.

When we update $\bx_b$, say to $\bx_b+\Delta\b{b}$, we rewrite $\t{\bem}_k$ as
\be \t{\bem}_k(\by-\bx_b) = [\b{c}_k + J_k\Delta\b{b}] + J_k(\by-\bx_b-\Delta\b{b}), \ee
and so we update $\bx_b \gets \bx_b+\Delta\b{b}$ and $\b{c}_k\gets \b{c}_k+J_k\Delta\b{b}$.
Lastly, the interpolation points have their representation changed to $Y_k \gets (\bx_b+\Delta\b{b}) + \{(\by_0-\bx_b-\Delta\b{b}),\ldots, (\by_n-\bx_b-\Delta\b{b})\}$.

\paragraph{Other differences}
The following changes, which are from BOBYQA, are present in the implementation of DFO-GN:
\begin{itemize}
	\item We accept any step (i.e.~set $\bx_{k+1}=\bx_k+\bs_k$) where we see an objective reduction --- that is, when $r_k>0$. In fact, we always update $\bx_k$ to be the best value found so far, even if that point came from a geometry-improving phase rather than a trust region step;
	\item The reduction of $\rho_k$ in an unsuccessful step (line \ref{ln_rho_redn}) only occurs when $r_k<0$;
	\item Since we update the model on every iteration, we only reduce $\rho_k$ after 3 consecutive unsuccessful iterations; i.e.~we only reduce $\rho_k$ when $\Delta_k$ is small and after the model has been updated several times (reducing the likelihood of the unsuccessful steps being from a bad interpolating set);
	\item The method for reducing $\rho_k$ is usually given by $\rho_{k+1}=\alpha_1\rho_k$, but it changed when $\rho_k$ approaches $\rho_{end}$:
	\be \rho_{k+1} = \begin{cases} \alpha_1 \rho_k, & \text{if $\rho_k > 250\rho_{end}$,} \\ \sqrt{\rho_k \rho_{end}}, & \text{if $16\rho_{end} < \rho_k \leq 250 \rho_{end}$, } \\ \rho_{end}, & \text{if $\rho_k \leq 16 \rho_{end}$.} \end{cases} \ee
	\item In some calls of the safety phase, we only reduce $\rho_k$ and $\Delta_k$, without improving the geometry of $Y_k$.
\end{itemize}

\subsection{Comparison to DFBOLS}
As has been discussed at length, there are many similarities between DFO-GN and DFBOLS from Zhang et al.~\cite{Zhang2010}.
Although the algorithm described in \cite{Zhang2010} allows Gauss-Newton type models in principle, in practice DFO-GN is simpler in several respects:
\begin{itemize}
	\item The use of linear models for each residual \eqref{eq_ls_definition} means we require only $n+1$ interpolation points. In DFBOLS, quadratic models for each $r_i$ are used, which requires between $n+2$ and $(n+1)(n+2)/2$ points, where the remaining degrees of freedom are taken up by minimizing the change in the model Hessian. This results in both a more complicated interpolation problem compared to our system \eqref{eq_linear_interp_system_Jonly}, and a larger startup cost (where an initial $Y_0$ of the correct size is constructed, and $\br$ evaluated at each of these points);
	\item As a result of using linear models, there is no ambiguity in how to construct the full model $m_k$ \eqref{eq_gn_full_model_dfo}. In DFBOLS, simply taking a sum of squares of each residual's model gives a quartic.
	The authors drop the cubic and quartic terms, and choose the quadratic term from one of three possibilities, depending on the sizes of $\|\bg_k\|$ and $f(\bx_k)$.
	This requires the introduction of three new algorithm parameters, each of which may require calibration.
	\item DFO-GN's method for choosing a point to replace when doing model updating, as discussed in \secref{sec_model_updating}, yields a unification of the geometric (`optimal $\Lambda$-poisedness') and algebraic (`stable update') perspectives on this update.
	In DFBOLS, the connection exists but is less direct, as it uses the same method as BOBYQA \eqref{eq_pt_update_bobyqa} with $\sigma_t \geq \Lambda_t(\by^+)^2$.
	As discussed in \cite{Powell2009}, this bound may sometimes be violated as a result of rounding errors, and thus requires an extra geometry-improving routine to `rescue' the algorithm from this problem.
	DFO-GN does not need or have this routine.
\end{itemize}
The first of these points also applies to Wild's POUNDERS \cite{Wild2017}, which builds quadratic models for each residual, and constructs a model for the full objective which is equivalent to a full Newton model (i.e.~taking all available second-order information).

It is also important to note that neither \cite{Zhang2010} nor \cite{Wild2017} test the use of DFO-GN type linear models for each residual in practice.

\section{Numerical Results} \label{sec_numerics}
Now we compare the practical performance of DFO-GN to two versions of DFBOLS \cite{Zhang2010}, and show that DFO-GN has comparable budget performance and significantly faster runtime.
The two versions of DFBOLS are the original Fortran implementation from \cite{Zhang2010}, and the other is our own Python implementation (which we will call `Py-DFBOLS'), designed to be as similar as possible to the implementation of DFO-GN.
Both DFO-GN and Py-DFBOLS use Python 3.5.2\footnote{With NumPy 1.12.1. Linear solves use LAPACK's LU decomposition routines, wrapped by SciPy 0.19.0.}.
We also compare with the general-objective solver BOBYQA \cite{Powell2009} and POUNDERS \cite{Wild2017}, another least-squares DFO code which uses quadratic interpolation models for each residual.

The parameter values used for DFO-GN are: $\Delta_{max}=10^{10}$, $\gamma_{dec}=0.5$, $\gamma_{inc}=2$, $\overline{\gamma}_{inc}=4$, $\eta_1=0.1$, $\eta_2=0.7$, $\alpha_1=0.1$, $\alpha_2=0.5$, $\omega_S=0.1$ and $\gamma_S=0.5$.
For all solvers, we use an initial trust region radius of $\rho_0=\Delta_0=0.1\max(\|\bx\|_{\infty},1)$ and final trust region radius $\rho_{end}=10^{-10}$ where possible\footnote{POUNDERS does not have this as a user input. Instead we set all gradient tolerances to zero.}, to avoid this being the termination condition as often as possible.

We tested BOBYQA and (Py-)DFBOLS with $n+2$, $2n+1$ and $(n+1)(n+2)/2$ interpolation points.
In the results which follow, for simplicity we show the $n+2$ and $2n+1$ cases for DFBOLS and the $(n+1)(n+2)/2$ case for Py-DFBOLS.
These were chosen because Py-DFBOLS performs very similarly to DFBOLS in the case of $n+2$ and $2n+1$ points, and outperforms DFBOLS in the $(n+1)(n+2)/2$ case.
Similarly, we show the best-performing $2n+1$ and $(n+1)(n+2)/2$ cases for BOBYQA.

\subsection{Test Problems and Methodology}
We tested the solvers on the test suite from Mor\'e and Wild \cite{More2009}, a collection of 53 unconstrained nonlinear least-squares problems with dimension $2\leq n\leq 12$ and $2\leq m \leq 65$.
For each problem, we optionally allowed evaluations of the residuals $r_i$ to have stochastic noise.
Specifically, we allowed the following noise models:
\begin{itemize}
	\item Smooth (noiseless) function evaluations;
	\item Multiplicative unbiased Gaussian noise: we evaluate $\t{r}_i(\bx)=r_i(\bx)(1+\epsilon)$, where $\epsilon\sim N(0,\sigma^2)$ i.i.d.~for each $i$ and $\bx$;
	\item Additive unbiased Gaussian noise: we evaluate $\t{r}_i(\bx)=r_i(\bx)+\epsilon$, where $\epsilon\sim N(0,\sigma^2)$ i.i.d.~for each $i$ and $\bx$; and
	\item Additive $\chi^2$ noise: we evaluate $\t{r}_i(\bx)=\sqrt{r_i(\bx)^2+\epsilon^2}$, where $\epsilon\sim N(0,\sigma^2)$ i.i.d.~for each $i$ and $\bx$.
\end{itemize}

To compare solvers, we use data and performance profiles \cite{More2009}.
First, for each solver $\mathcal{S}$, each problem $p$ and for an accuracy level $\tau\in(0,1)$, we determine the number of function evaluations $N_p(\mathcal{S};\tau)$ required for a problem to be `solved':
\be N_p(\mathcal{S}; \tau) \defeq \text{\# objective evals required to get $f(\b{x}_k) \leq \mathbb{E}[f^* + \tau(f(\b{x}_0) - f^*)]$,} \label{eq_solved_threshold} \ee
where $f^*$ is an estimate of the true minimum\footnote{Note that in \cite{More2009}, and subsequent other papers such as \cite{Zhang2010}, the value $f^*$ is usually taken to be the smallest objective value achieved by any of the solvers under consideration within a fixed budget. The main motivation in \cite{More2009} for this choice is for when $f$ is expensive, and so we have small computational budgets and it is possible that no solver converges. In our setting, this is not the case, so we use our (stronger) choice of $f^*$, which comes from \cite{More1981} or the results of running these and other (derivative-based) solvers.} $f(\b{x}^*)$.
A full list of the values used is provided in \appref{sec_mw_problems}. 
We define $N_p(\mathcal{S}; \tau)=\infty$ if this was not achieved in the maximum computational budget allowed.

We can then compare solvers by looking at the proportion of test problems solved for a given computational budget.
For \emph{data profiles}, we normalize the computational effort by problem dimension, and plot (for solver $\mathcal{S}$, accuracy level $\tau\in(0,1)$ and problem suite $\mathcal{P}$)
\be d_{\mathcal{S}, \tau}(\alpha) \defeq \frac{|\{p\in\mathcal{P} : N_p(\mathcal{S};\tau) \leq \alpha(n_p+1)\}|}{|\mathcal{P}|}, \qquad \text{for $\alpha\in[0,N_g]$,} \ee
where $N_g$ is the maximum computational budget, measured in simplex gradients (i.e.~$N_g(n_p+1)$ objective evaluations are allowed for problem $p$).

For \emph{performance profiles}, we normalize the computational effort by the minimum effort needed by any solver (i.e.~by problem difficulty).
That is, we plot
\be \pi_{\mathcal{S},\tau}(\alpha) \defeq \frac{|\{p\in\mathcal{P} : N_p(\mathcal{S};\tau) \leq \alpha N_p^*(\tau)\}|}{|\mathcal{P}|}, \qquad \text{for $\alpha\geq 1$,} \ee
where $N_p^*(\tau) \defeq \min_{\mathcal{S}} N_p(\mathcal{S};\tau)$ is the minimum budget required by any solver.

For test runs where we added stochastic noise, we took average data and performance profiles over multiple runs of each solver; that is, for each $\alpha$ we take an average of $d_{\mathcal{S},\tau}(\alpha)$ and $\pi_{\mathcal{S},\tau}(\alpha)$.
When plotting performance profiles, we took $N_p^*(\tau)$ to be the minimum budget required by any solver in any run.

\subsection{Test Results} \label{sec_results}
For our testing, we used a budget of $N_g=200$ gradients (i.e.~$200(n+1)$ objective evaluations) for each problem, noise level $\sigma=10^{-2}$, and took 10 runs of each solver\footnote{Scheduled using \cite{Tange2011}.}.
Most results use an accuracy level of $\tau=10^{-5}$ in \eqref{eq_solved_threshold}.

\begin{figure}
	\centering
	\begin{subfigure}[b]{0.48\textwidth}
		\includegraphics[width=\textwidth]{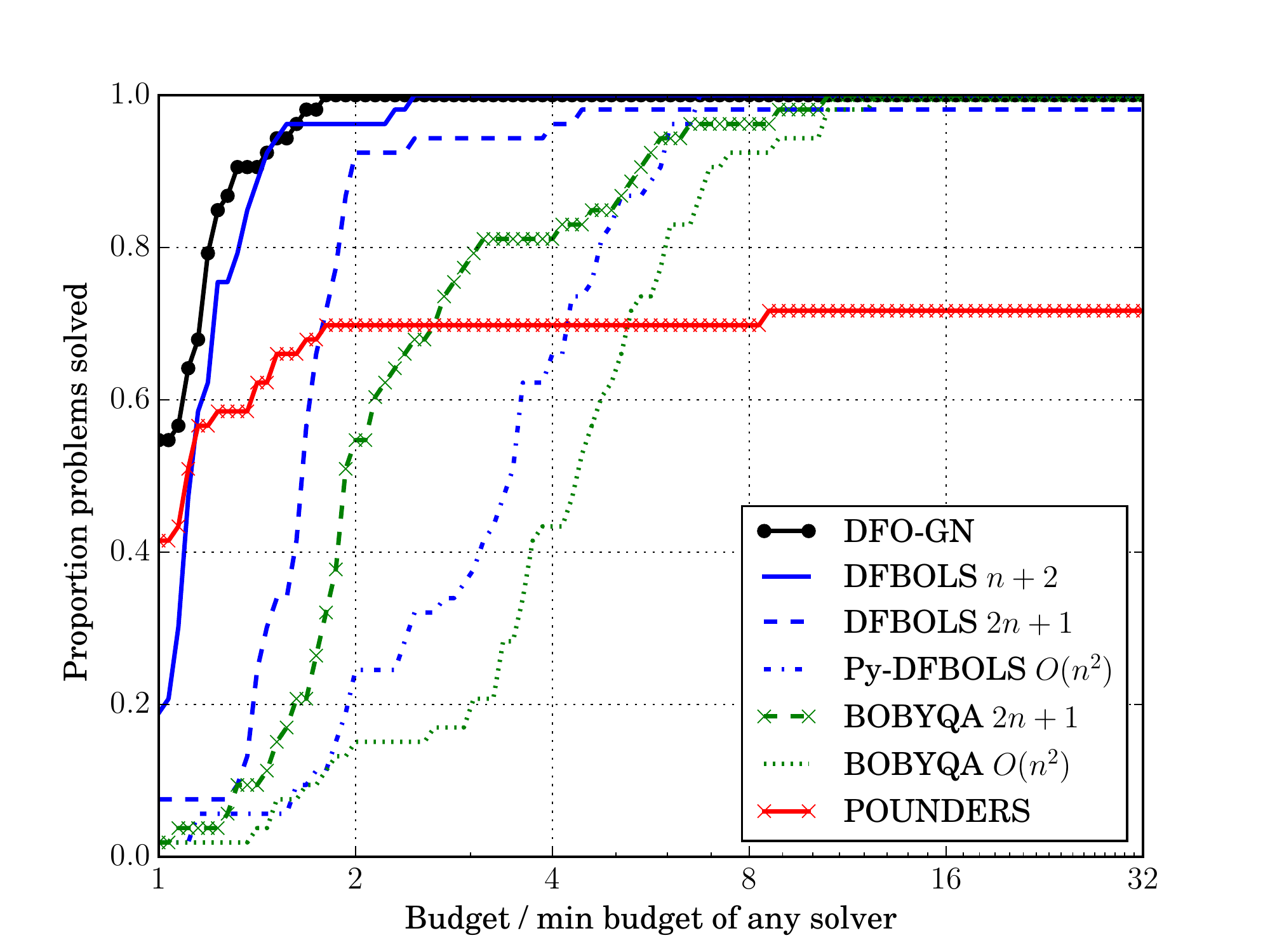}
		\caption{Smooth objective}
		\label{fig_tau1_smooth}
	\end{subfigure}
	~
	\begin{subfigure}[b]{0.48\textwidth}
		\includegraphics[width=\textwidth]{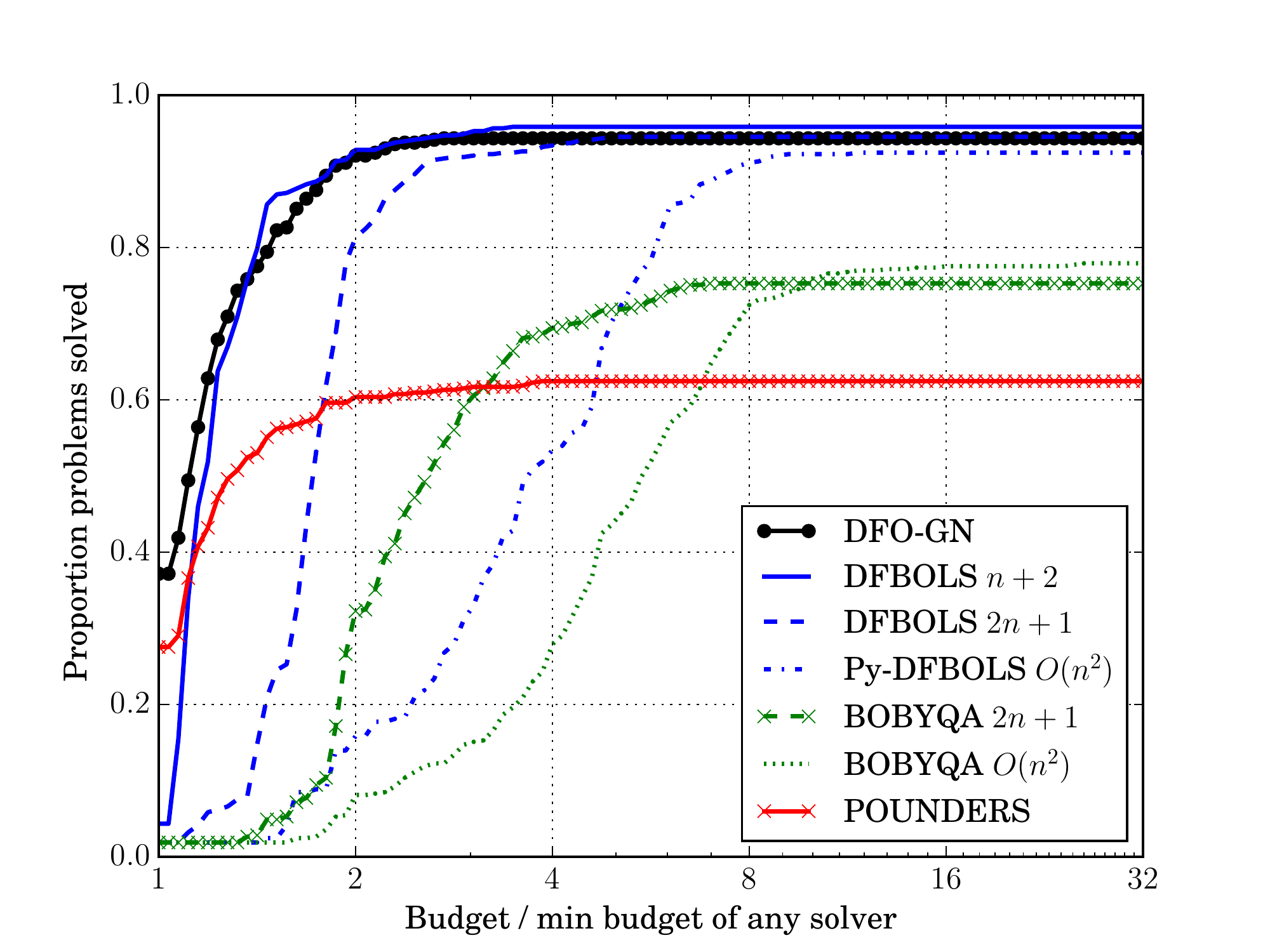}
		\caption{Mult.~Gaussian noise $\sigma=10^{-2}$}
		\label{fig_tau1_ubgsn}
	\end{subfigure}
	\caption{Performance profile comparison of DFO-GN with BOBYQA, DFBOLS and POUNDERS for low accuracy $\tau=10^{-1}$. For the BOBYQA and DFBOLS runs, $n+2$, $2n+1$ and $\bigO(n^2)=(n+1)(n+2)/2$ are the number of interpolation points. For \subfigref{fig_tau1_ubgsn}, results shown are an average of 10 runs for each solver.}
	\label{fig_tau1}
\end{figure}

Firstly, \figref{fig_tau1} shows two performance profiles under the low accuracy requirement $\tau=10^{-1}$. 
Here we see an important benefit of DFO-GN compared to BOBYQA, DFBOLS and POUNDERS --- the smaller interpolation set means that it can begin the main iteration and make progress sooner.
This is reflected in \figref{fig_tau1}, where DFO-GN is the fastest solver more frequently than any other, both with smooth and noisy objective evaluations.
We also note that POUNDERS is the faster solver more frequently than DFBOLS at this accuracy level.
However, after the full budget of 200 gradients, POUNDERS is unable to solve a high proportion of problems to this accuracy --- this holds both here and for the remainder of the results.
We believe this is because it uses a built-in termination condition for sufficiently small trust region radius, which is set too high to achieve \eqref{eq_solved_threshold} for many problems.
In line with the results from \cite{Zhang2010}, BOBYQA does not perform as well as DFBOLS or DFO-GN, as it does not exploit the least-squares problem structure.

Next, \figref{fig_smooth} shows results for accuracy $\tau=10^{-5}$ and smooth objective evaluations.
It is important to note is that our simplification from quadratic to linear residual models has not led to a loss of performance at obtaining high accuracy solutions, and produces essentially identical long-budget performance.
At this level, the advantage from the smaller startup cost is no longer seen, but particularly in the performance profiles, we can still see the substantially higher startup cost of using $(n+1)(n+2)/2$ interpolation points.

\begin{figure}
	\centering
	\begin{subfigure}[b]{0.48\textwidth}
		\includegraphics[width=\textwidth]{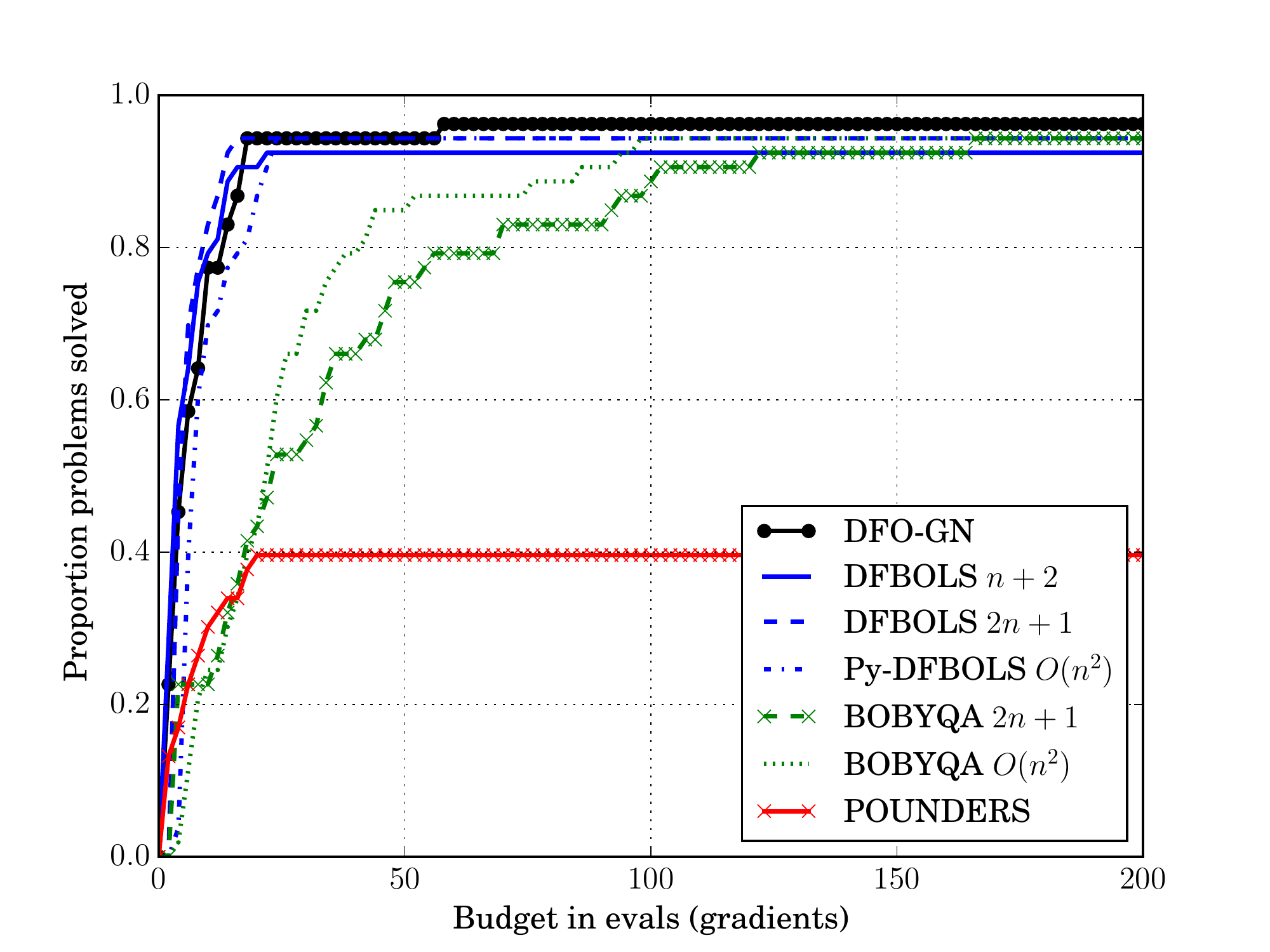}
		\caption{Data Profile}
		\label{fig_smooth_data}
	\end{subfigure}
	~
	\begin{subfigure}[b]{0.48\textwidth}
		\includegraphics[width=\textwidth]{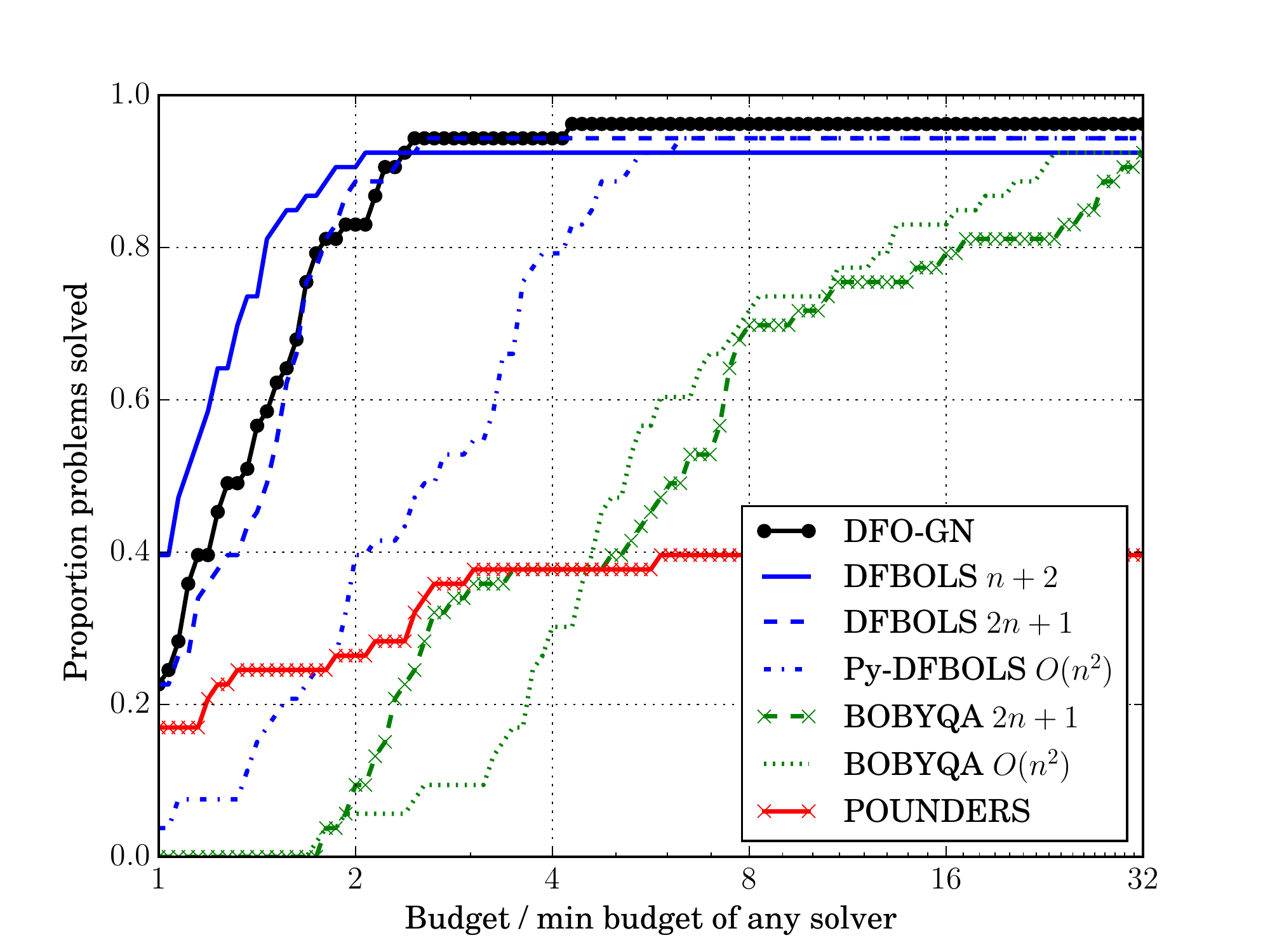}
		\caption{Performance Profile}
		\label{fig_smooth_perf}
	\end{subfigure}
	\caption{Comparison of DFO-GN with BOBYQA, DFBOLS and POUNDERS for smooth objectives, to accuracy $\tau=10^{-5}$. For the BOBYQA and DFBOLS runs, $n+2$, $2n+1$ and $\bigO(n^2)=(n+1)(n+2)/2$ are the number of interpolation points.}
	\label{fig_smooth}
\end{figure}

Similarly, \figref{fig_noisy} shows the same plots but for noisy problems (multiplicative Gaussian, additive Gaussian and additive $\chi^2$ respectively).
Here, DFO-GN suffers a small performance penalty (of approximately 5-10\%) compared to DFBOLS, particularly when using $2n+1$ and $(n+1)(n+2)/2$ interpolation points, suggesting that the extra curvature and evaluation information in DFBOLS has some benefit for noisy problems.
Also, the performance penalty is larger in the case of additive noise than multiplicative (approximately 10\% vs.~5\%).
Note that additive noise makes all our test problems nonzero residual (i.e.~$f(\bx^*)>0$ for the true minimum $\bx^*$).
Thus the worse performance of DFO-GN compared to DFBOLS for additive noise is similar to the derivative-based case, where for nonzero residual problems the Gauss-Newton method has a lower asymptotic convergence rate than Newton's method \cite{Nocedal2006}.

Also of note is that although BOBYQA suffers a substantial performance penalty when moving from smooth to noisy problems, this penalty (compared to DFO-GN and DFBOLS) is much less for additive $\chi^2$ noise.
This is likely because this noise model makes each residual function nonsmooth by taking square roots, but the change to the full objective is relatively benign --- simply adding $\chi^2$ random variables.

\begin{figure}
	\centering
	\begin{subfigure}[b]{0.48\textwidth}
		\includegraphics[width=\textwidth]{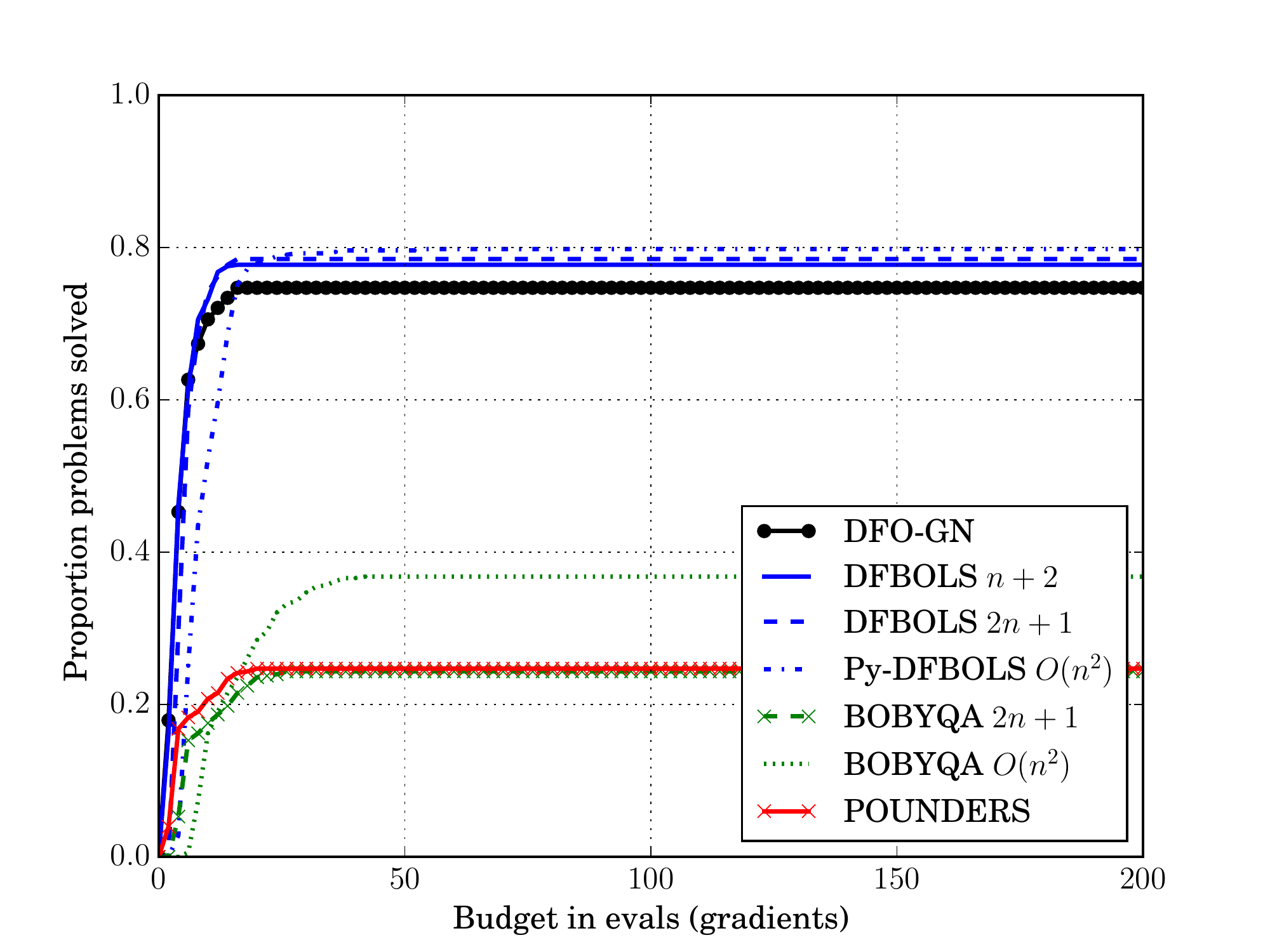}
		\caption{Mult.~Gaussian, data profile}
		\label{fig_ubgsn_data}
	\end{subfigure}
	~
	\begin{subfigure}[b]{0.48\textwidth}
		\includegraphics[width=\textwidth]{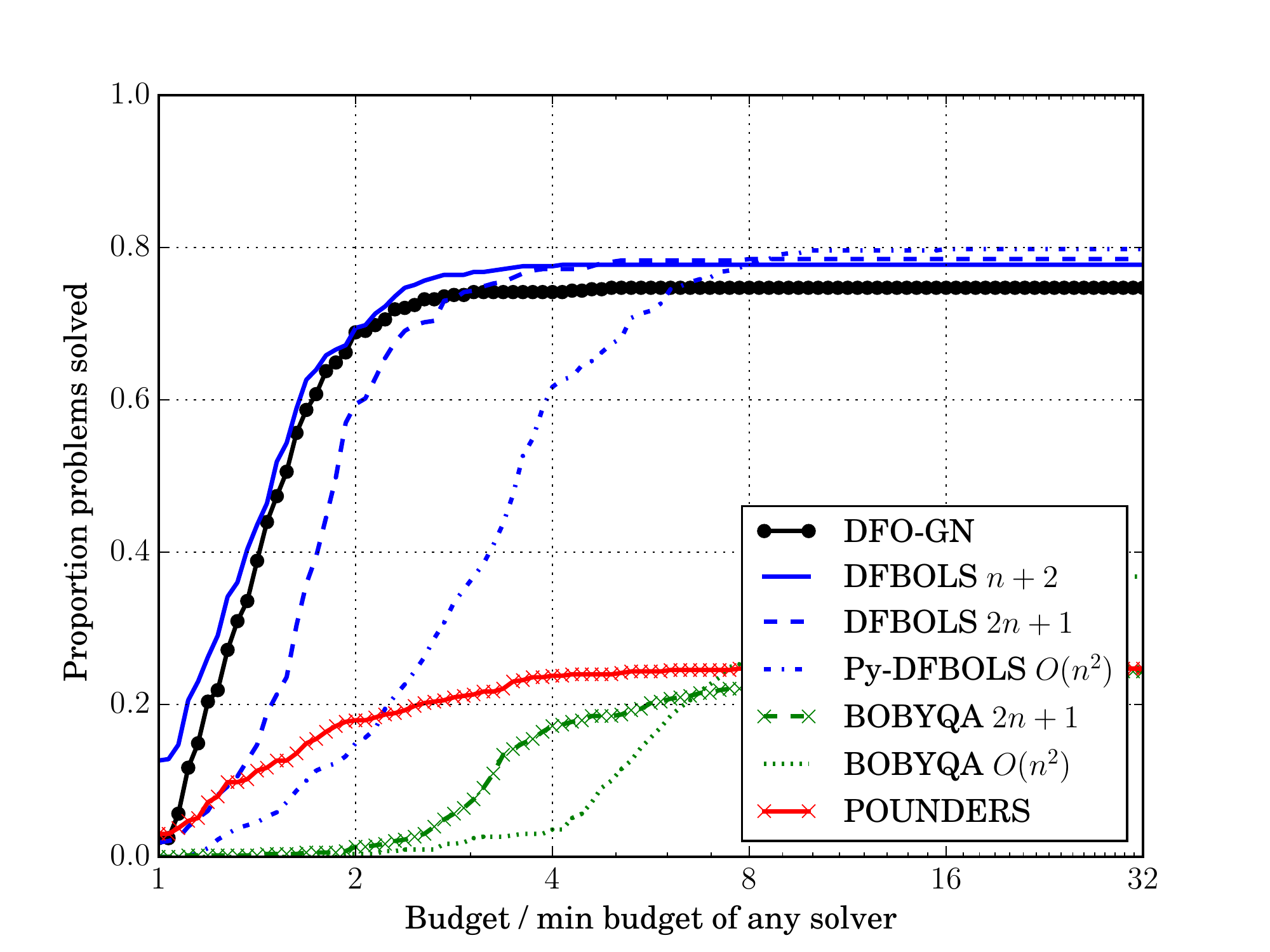}
		\caption{Mult.~Gaussian, performance profile}
		\label{fig_ubgsn_perf}
	\end{subfigure}
	\\
	\begin{subfigure}[b]{0.48\textwidth}
		\includegraphics[width=\textwidth]{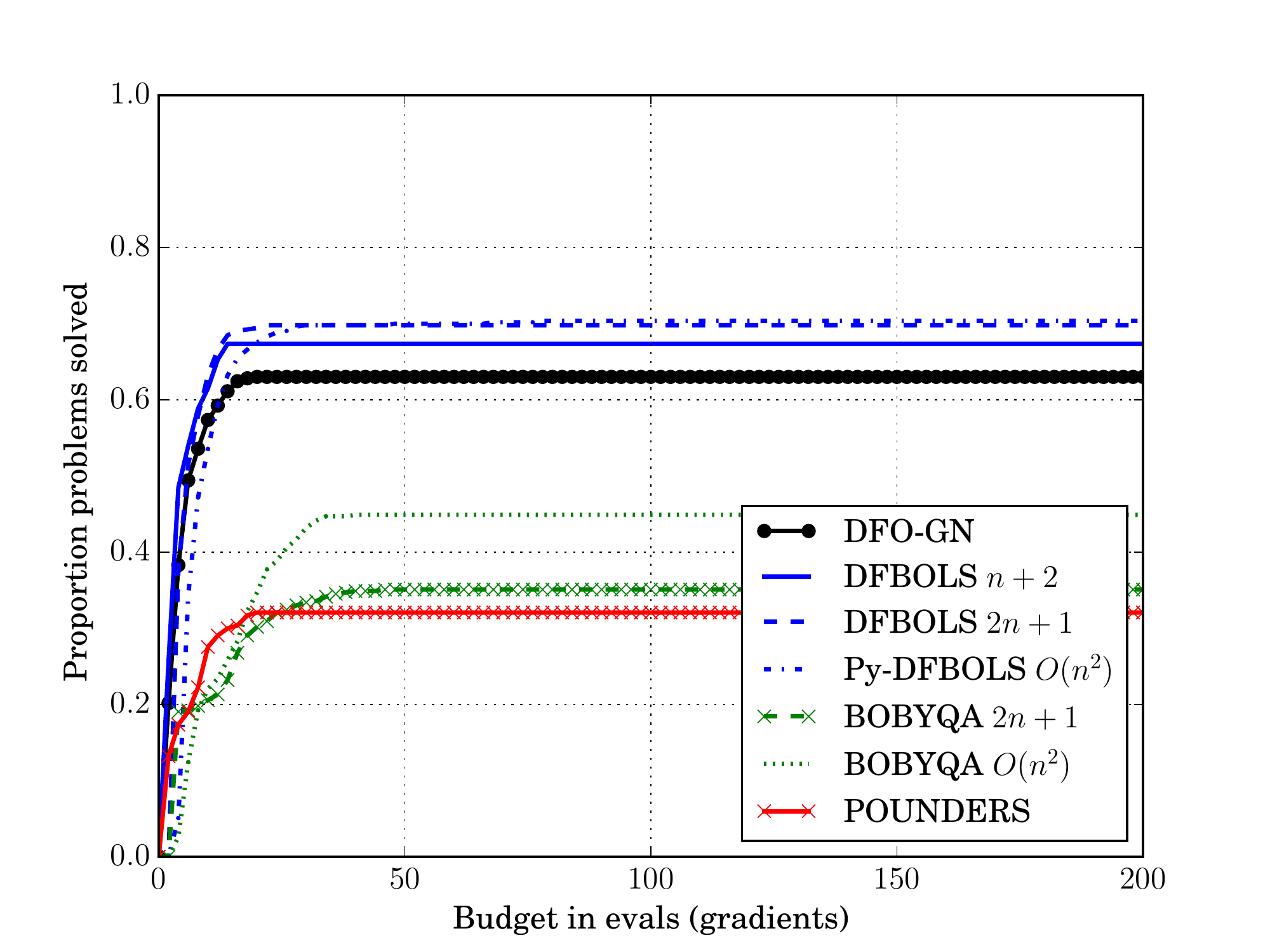}
		\caption{Add.~Gaussian, data profile}
		\label{fig_addgsn_data}
	\end{subfigure}
	~
	\begin{subfigure}[b]{0.48\textwidth}
		\includegraphics[width=\textwidth]{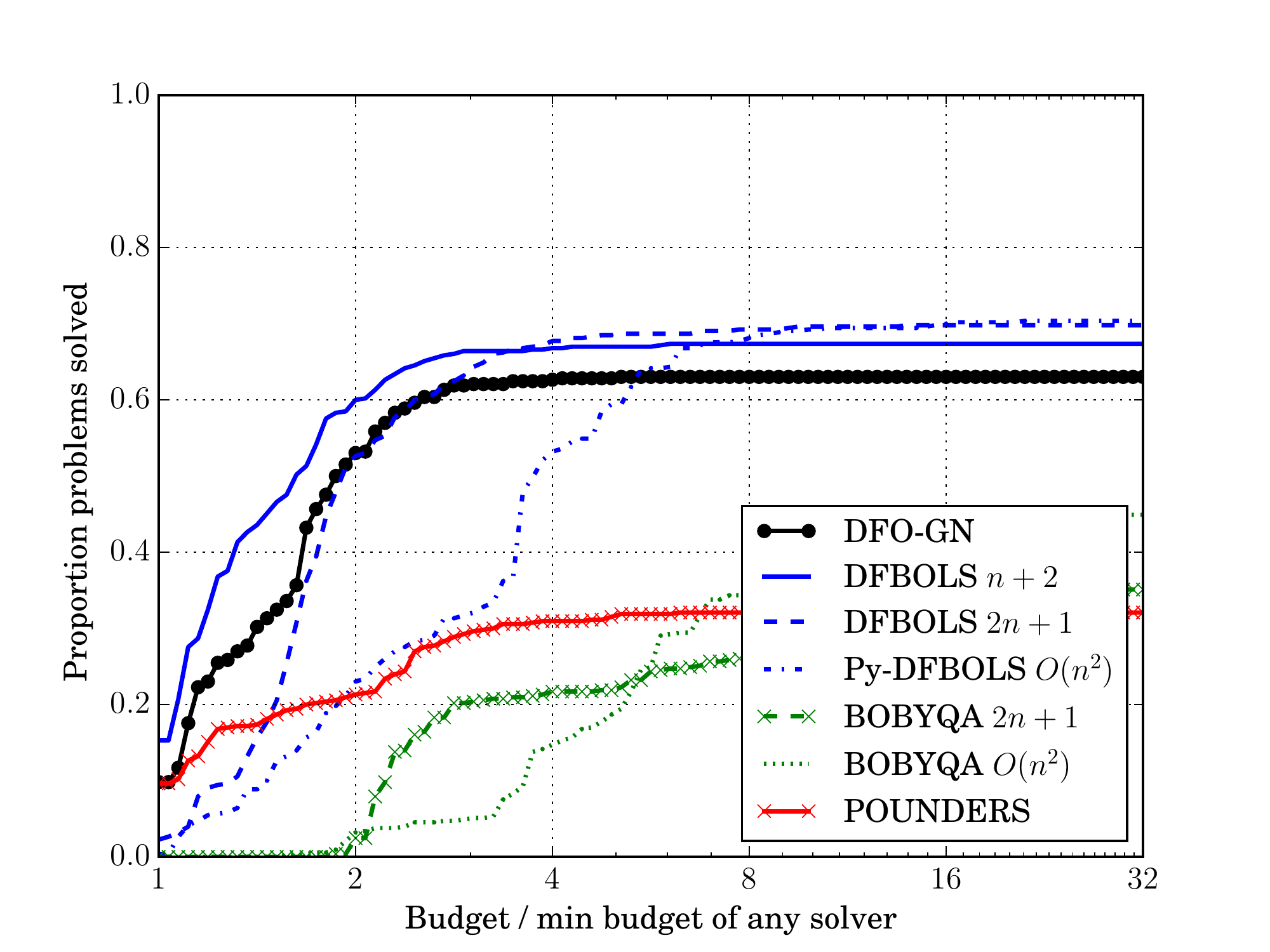}
		\caption{Add.~Gaussian, performance profile}
		\label{fig_addgsn_perf}
	\end{subfigure}
	\\
	\begin{subfigure}[b]{0.48\textwidth}
		\includegraphics[width=\textwidth]{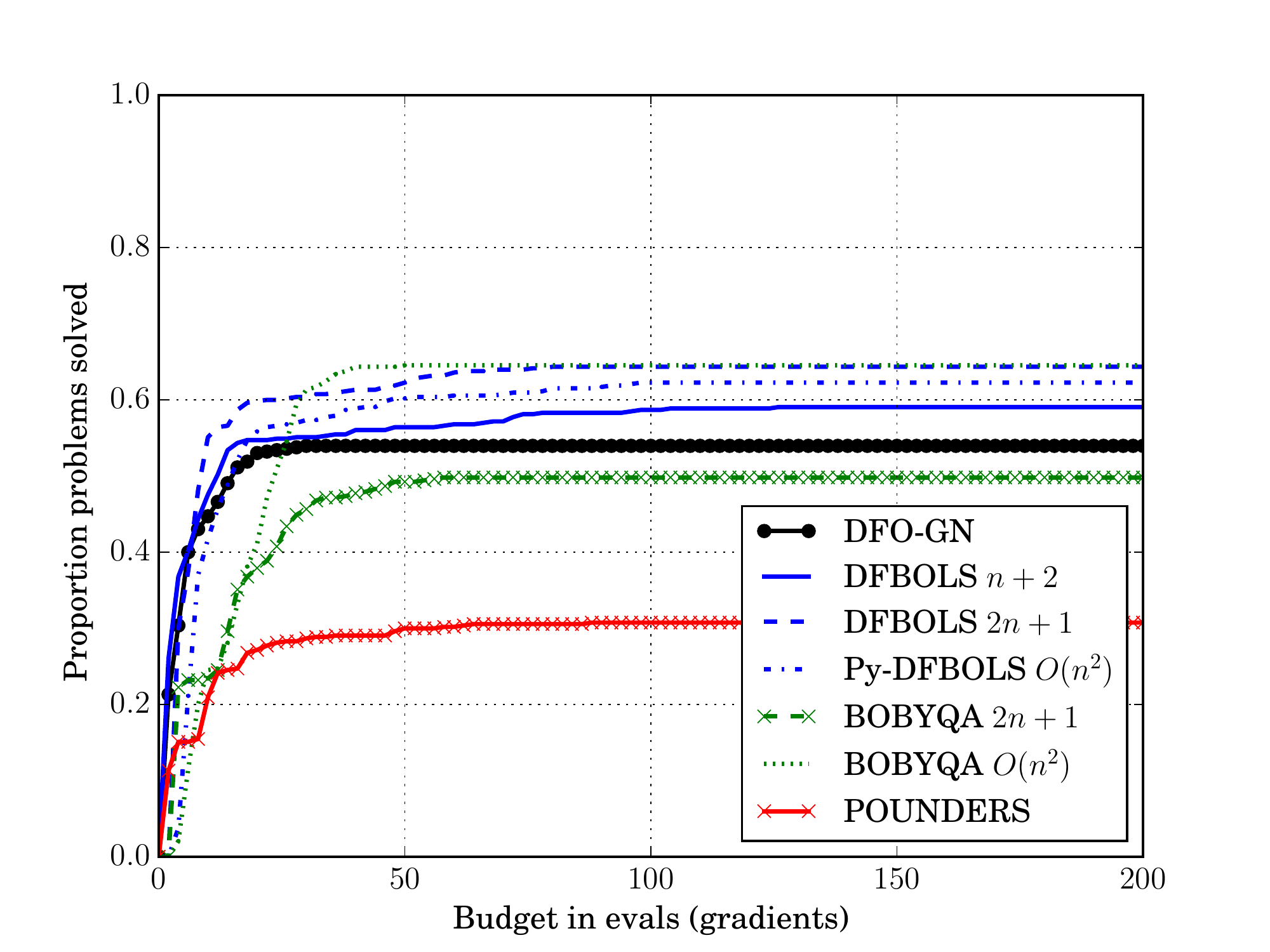}
		\caption{Add.~$\chi^2$, data profile}
		\label{fig_addchisq_data}
	\end{subfigure}
	~
	\begin{subfigure}[b]{0.48\textwidth}
		\includegraphics[width=\textwidth]{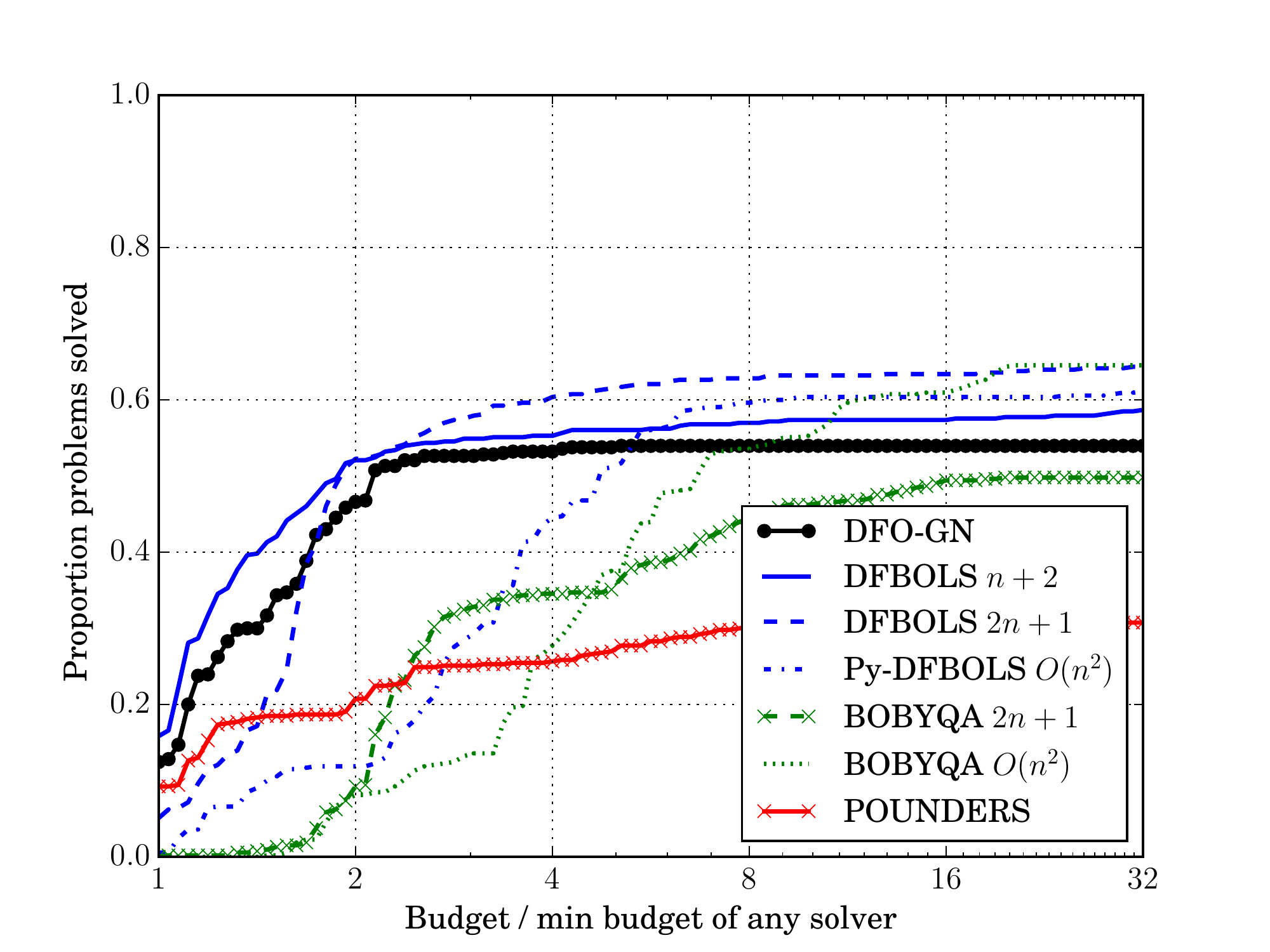}
		\caption{Add.~$\chi^2$, performance profile}
		\label{fig_addchisq_perf}
	\end{subfigure}
	\caption{Comparison of DFO-GN with BOBYQA, DFBOLS and POUNDERS for objectives with multiplicative Gaussian, additive Gaussian and additive $\chi^2$ noise with $\sigma=10^{-2}$, to accuracy $\tau=10^{-5}$ (average of 10 runs for each solver). For the BOBYQA and DFBOLS runs, $n+2$, $2n+1$ and $\bigO(n^2)=(n+1)(n+2)/2$ are the number of interpolation points.}
	\label{fig_noisy}
\end{figure}

\paragraph{Nonzero Residual Problems}
We saw above that DFO-GN suffered a higher --- but still small ---  loss of performance, compared to DFBOLS, for problems with additive noise, where all problems become nonzero residual.
We investigate this further by extracting the performance of the nonzero residual problems only from the test set results we already presented; 
 \figref{fig_nonzero} shows the resulting performance profiles for accuracy $\tau=10^{-5}$, for smooth objectives and multiplicative Gaussian noise ($\sigma=10^{-2}$).
We notice that in the smooth case, DFBOLS with $2n+1$ points is now the fastest solver on 30\% of problems versus 20\% for DFO-GN, compared to looking at all problems (seen in \subfigref{fig_smooth_perf});
but  DFO-GN is comparably robust and we do not see the same loss of performance as in the additive noise case.
In the case of multiplicative Gaussian noise, DFO-GN, and in fact (Py-)DFBOLS, see a loss of performance compared to looking at all problems; also, the difference between DFO-GN and DFBOLS with increasing numbers of interpolation points is slightly more clear for nonzero residual problems (8\% vs.~5\% for all problems). 

\paragraph{Conclusions to evaluation comparisons}  The numerical results in this section show that DFO-GN performs comparably to DFBOLS in terms of evaluation counts, 
and outperforms BOBYQA and POUNDERS, in both smooth and noisy settings, and for low and high accuracy. 
DFO-GN exhibits a slight performance loss compared to DFBOLS for additive noisy problems and for noisy non-zero residual problems. We note that we also tested other noise models --- such as multiplicative uniform noise and also biased variants of the Gaussian noise;
all these performed either better (such as in the case of uniform noise) or essentially indistinguishable to the results already presented above. We also tried other noise variance levels, smaller than $\sigma=10^{-2}$, for which the performance of both DFO-GN and DFBOLS solvers vary similarly/comparably.
We may explain the similar performance of DFO-GN
to DFBOLS, despite the higher order models used by the latter, as being due to the general effectiveness of Gauss-Newton-like frameworks for nonlinear least-squares, especially
for zero-residual problems; and furthermore, by the usual
remit of DFO algorithms in which the asymptotic regimes are not or cannot really be observed or targeted
by the accuracy at which the problems are solved.

Though an accuracy level $\tau=10^{-5}$ is common and considered reasonably high in DFO, 
to ensure that our results are robust, we also performed the same tests for higher accuracy levels $\tau\in\{10^{-7},10^{-9},10^{-11}\}$. The resulting profiles are given in
\appref{sec_numerics_high_accuracy}.
For smooth problems, DFO-GN is still able to solve essentially the same proportion of problems as (Py-)DFBOLS.
For noisy problems, the results are more mixed: on average, DFO-GN does slightly worse for Gaussian noise, but slightly better for $\chi^2$ noise.
These results are the same when looking at all problems, or just nonzero residual problems.
This reinforces our previous conclusions, and gives us confidence that a Gauss-Newton framework for DFO is a suitable choice, and is robust to the level of accuracy required for a given problem.



\begin{figure}
	\centering
	\begin{subfigure}[b]{0.48\textwidth}
		\includegraphics[width=\textwidth]{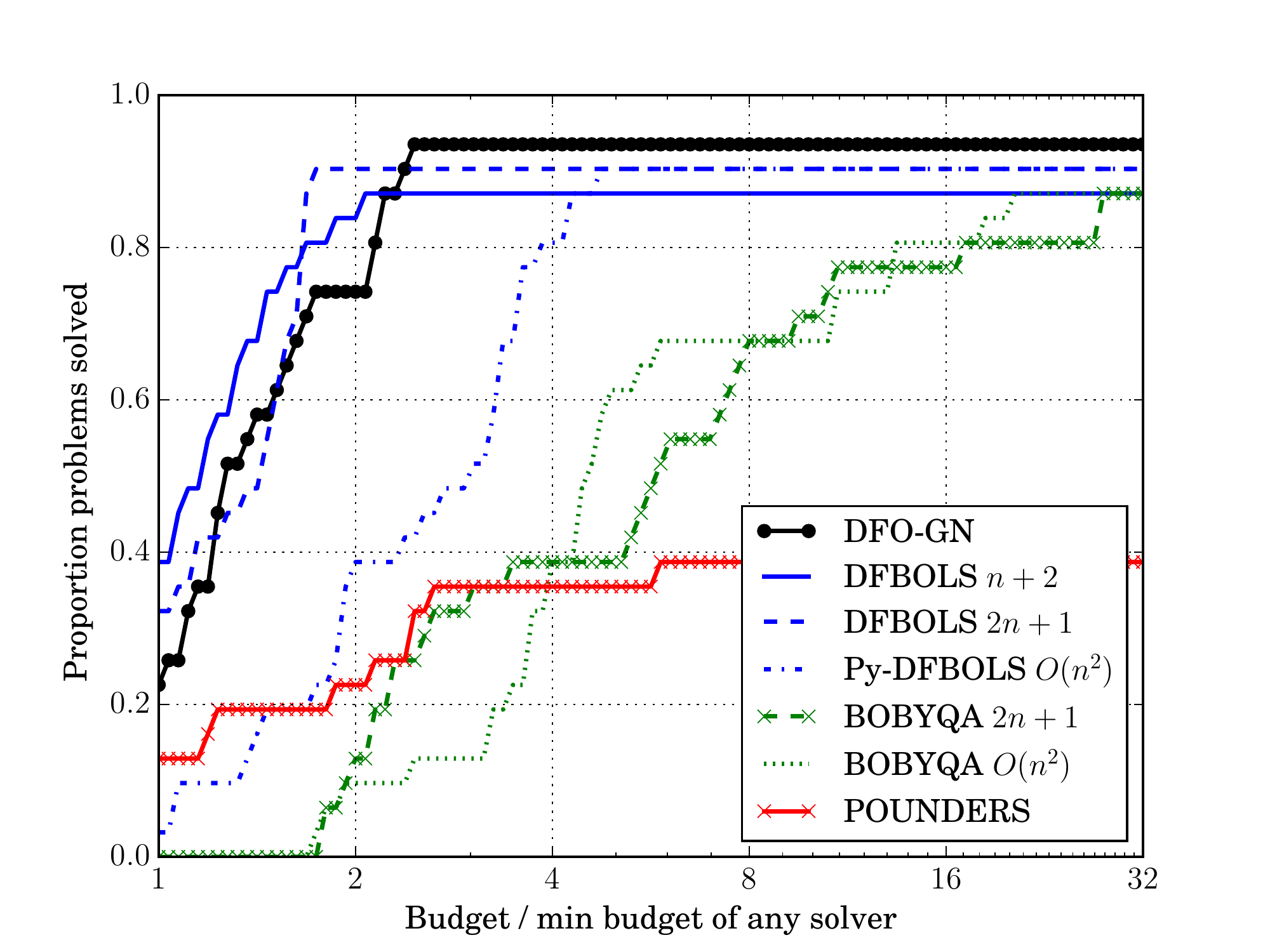}
		\caption{Smooth objective}
		\label{fig_nonzero_smooth}
	\end{subfigure}
	~
	\begin{subfigure}[b]{0.48\textwidth}
		\includegraphics[width=\textwidth]{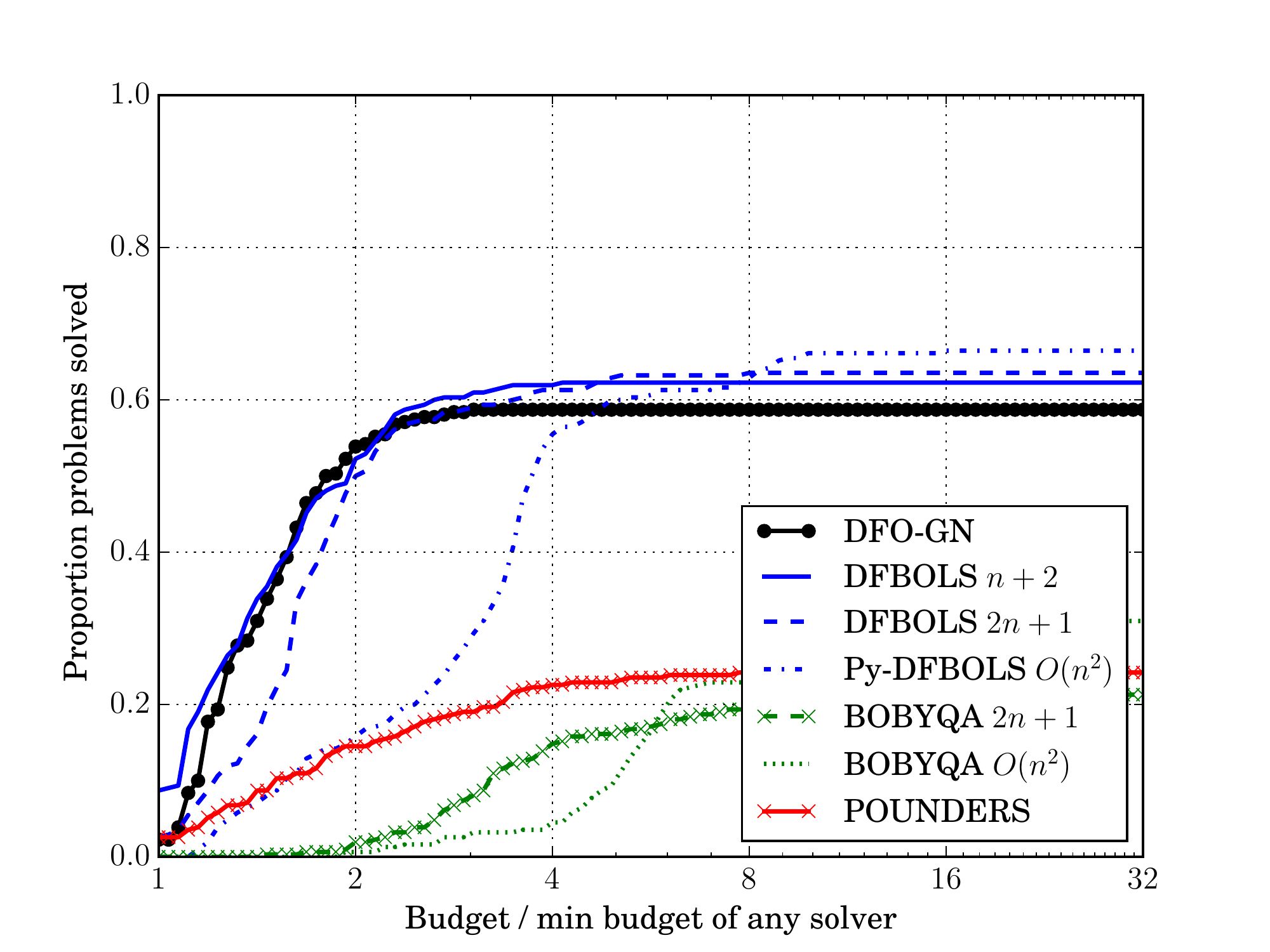}
		\caption{Mult.~Gaussian noise $\sigma=10^{-2}$}
		\label{fig_nonzero_ubgsn}
	\end{subfigure}
	\caption{Performance profile comparison of DFO-GN with BOBYQA, DFBOLS and POUNDERS for nonzero residual problems only, to accuracy $\tau=10^{-5}$. For the BOBYQA and DFBOLS runs, $n+2$, $2n+1$ and $\bigO(n^2)=(n+1)(n+2)/2$ are the number of interpolation points. For \subfigref{fig_nonzero_ubgsn}, results shown are an average of 10 runs for each solver.}
	\label{fig_nonzero}
\end{figure}

\subsection{Runtime Comparison} \label{sec_timing}
The use of linear models in DFO-GN also leads to reduced linear algebra cost. 
The interpolation system for DFO-GN \eqref{eq_linear_interp_system_Jonly} is of size $n$. 
By comparison, for underdetermined quadratic interpolation, such as in (Py-)DFBOLS, the interpolation system has size $p+n+1$ when we have $p\geq n+2$ interpolation points.
Hence the DFBOLS system is at least twice the size of the DFO-GN system, so we would expect the computational cost of solving this system to be at least 8 times larger than for DFO-GN.
To verify this, in this section we compare the runtime of DFO-GN with Py-DFBOLS.
The other solvers are implemented in lower-level languages (Fortran and C), and so a runtime comparison against DFO-GN does not provide a fair comparison.

\begin{table}
	\centering
	\small{
	\begin{tabular}{lllll}  
		\hline\noalign{\smallskip}
		Solver & Smooth & Mult.~Gaussian & Add.~Gaussian & Add.~$\chi^2$ \\ \noalign{\smallskip}\hline\noalign{\smallskip} 
		DFO-GN & 51s [1x] & 8s [1x] & 8s [1x] & 11s [1x] \\
		Py-DFBOLS $n+2$ & 426s [8.4x] & 67s [8.5x] & 59s [7.6x] & 94s [8.8x] \\
		Py-DFBOLS $2n+1$ & 600s [11.8x] & 192s [24.3x] & 174s [22.4x] & 219s [20.4x] \\
		Py-DFBOLS $\bigO(n^2)$ & 2157s [42.3x] & 3052s [386.5x] & 2930s [377.4x] & 3563s [331.1x] \\
		\noalign{\smallskip}\hline
	\end{tabular}
	}
	\caption{Runtimes for DFO-GN and Py-DFBOLS. Noisy results are an average of 10 runs with $\sigma=10^{-2}$, and all runs used a budget of $200(n+1)$ objective evaluations. Values are raw (in seconds) and ratio compared to the DFO-GN runtime. For Py-DFBOLS, $\bigO(n^2)=(n+1)(n+2)/2$ interpolation points.}
	\label{tab_runtime}
\end{table}

The wall time required by each solver to run the above testing (with a budget of $200(n+1)$ objective evaluations) on a Lenovo ThinkCentre M900 (with one 64-bit Intel i5 processor, 8GB of RAM), is shown in \tabref{tab_runtime} for both smooth and noisy evaluations.
We find that DFO-GN is 7-9x faster than Py-DFBOLS with $n+2$ points, 11-25x faster than Py-DFBOLS with $2n+1$ points, and 42-387x faster than Py-DFBOLS with $(n+1)(n+2)/2$ points.
In all cases, this is a substantial improvement, particularly given the small difference in performance (measured in function evaluations) between DFO-GN and (Py-)DFBOLS described in \secref{sec_results}.

We note that these runtimes include objective evaluations.
However, all solvers used the same Python implementation of the objective functions and the same total budget, so the inclusion of objective evaluations in the runtime will not materially affect the results.

\subsection{Scalability Features}
We saw in \secref{sec_timing} that DFO-GN runs faster due to the lower cost of solving the interpolation linear system.
Another important benefit is that storing the interpolation models for each residual requires only $\bigO(mn)$ memory, rather than $\bigO(mn^2)$ for quadratic models.
These two observations together suggest that DFO-GN should scale to large problems better than DFBOLS --- in this section we demonstrate this.
We consider Problem 29 from Mor\'e, Garbow \& Hillstrom \cite{More1981} (which is Problem 33 ({\sc integreq}) in the CUTEst set in \tabref{tab_cutest_problems}).
This is a zero-residual least-squares problem with $m=n$ variable for solving a one-dimensional integral equation, using an $n$-point discretization of $(0,1)$.
Both DFO-GN and DFBOLS\footnote{We used $|Y_k|=n+2$ for DFBOLS, as it has the smallest memory usage and runtime of all possible values.} solve this problem --- terminating on small objective value $f(\bx)\leq10^{-12}$ --- quickly, in no more than 20 iterations, regardless of $n$.

\begin{figure}
	\centering
	\begin{subfigure}[b]{0.48\textwidth}
		\includegraphics[width=\textwidth]{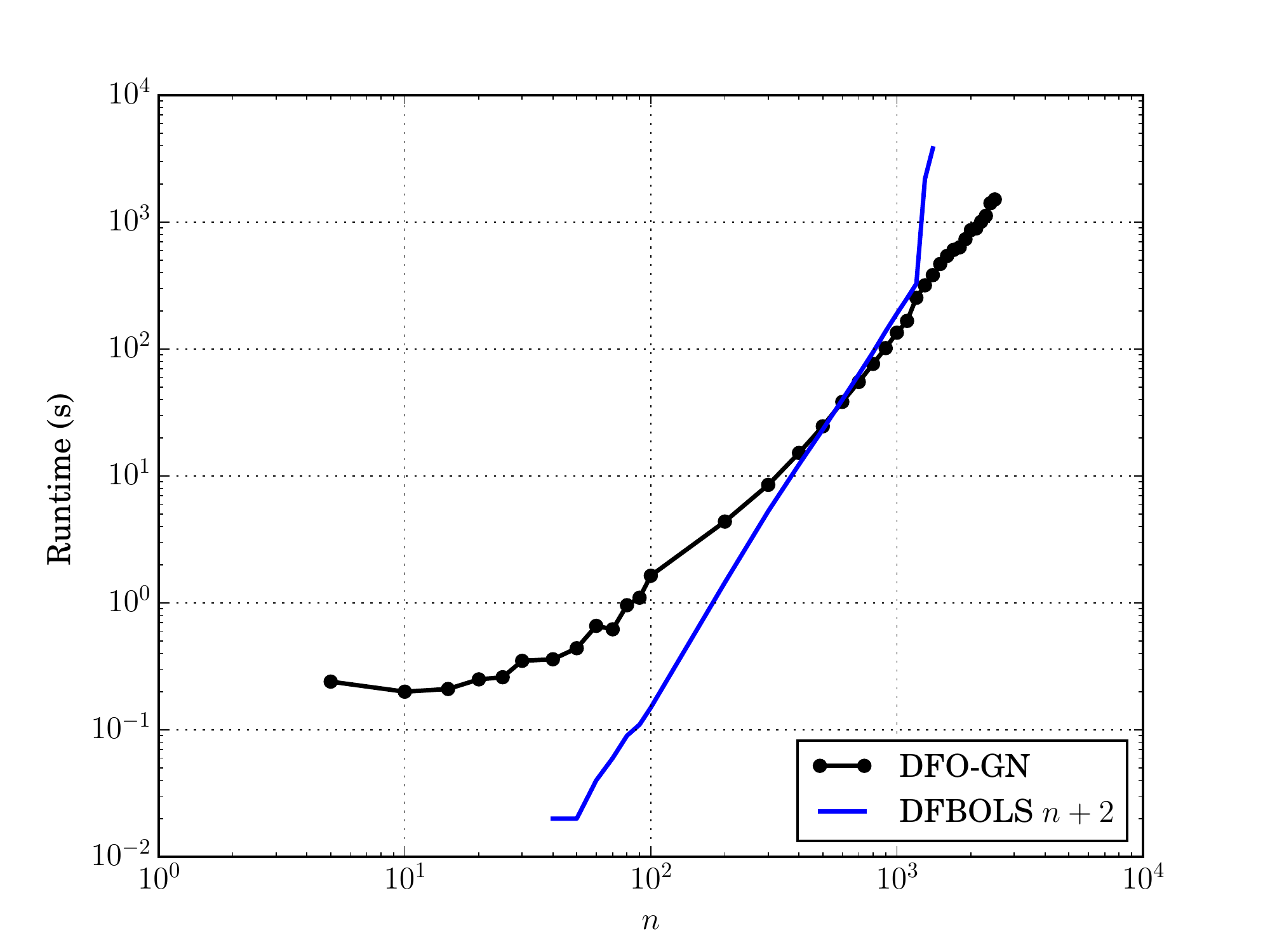}
		\caption{Runtime}
		\label{fig_scaling_runtime}
	\end{subfigure}
	~
	\begin{subfigure}[b]{0.48\textwidth}
		\includegraphics[width=\textwidth]{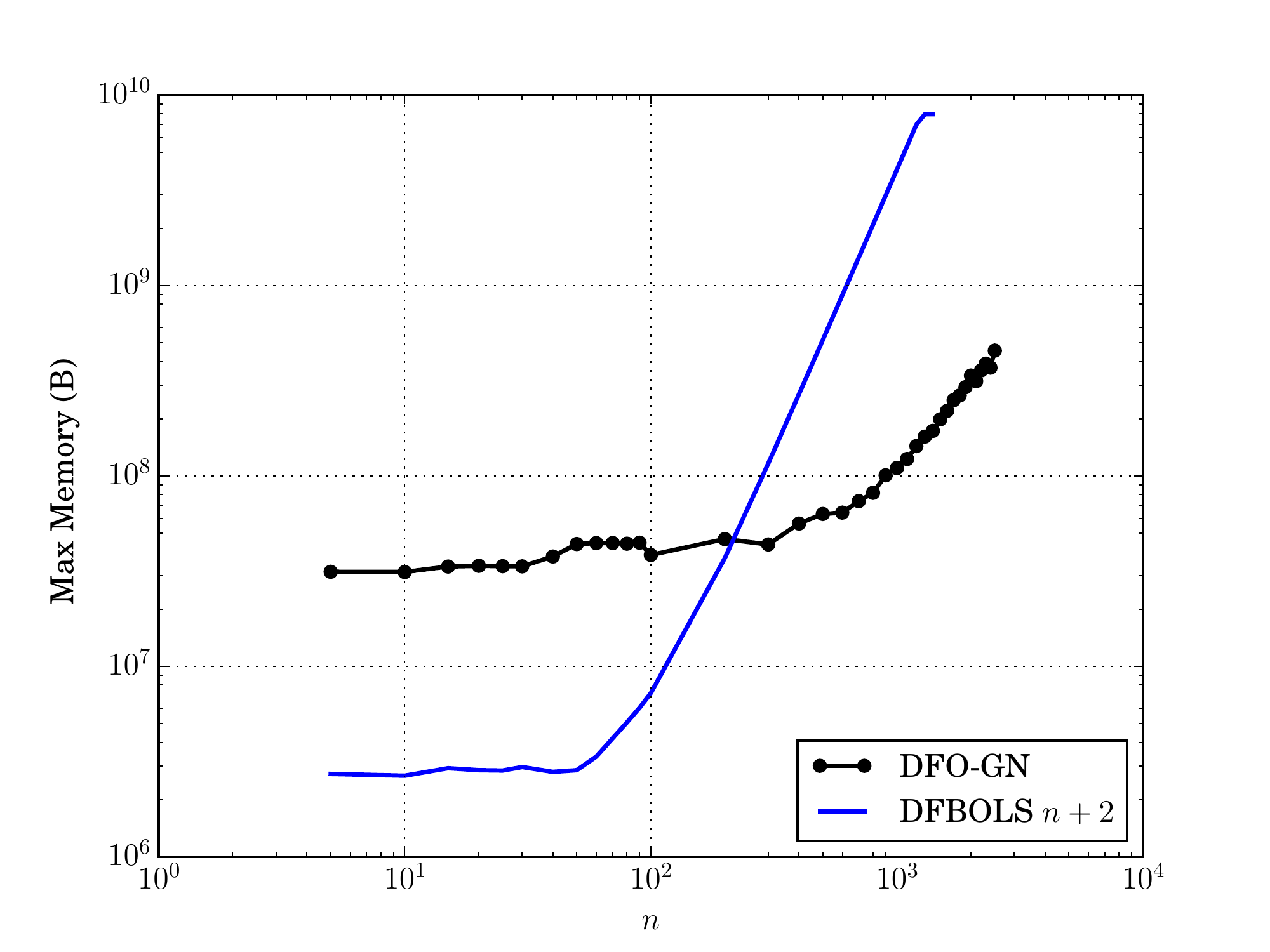}
		\caption{Peak memory usage}
		\label{fig_scaling_memory}
	\end{subfigure}
	\caption{Comparison of runtime and peak memory usage of DFBOLS (original Fortran implementation with $n+2$ interpolation points) and DFO-GN for solving the discretized integral equation, as problem dimension $n$ increases. The largest values tested were $n=1400$ for DFBOLS and $n=2500$ for DFO-GN.}
	\label{fig_scaling}
\end{figure}

In \figref{fig_scaling} we compare the runtime and peak memory usage of DFBOLS and DFO-GN as $n$ increases.
Note that we are comparing DFO-GN (implemented in Python) against DFBOLS (implemented in Fortran) rather than Py-DFBOLS (as used in \secref{sec_timing}), to put ourselves at a substantial disadvantage.
We see that for small $n$, DFBOLS has significantly lower runtime and memory requirements than DFO-GN (which is unsurprising, since it is implemented in Fortran rather than Python).
However, as expected, both the runtime and memory usage increases much faster for DFBOLS than for DFO-GN as $n$ is increased.
In fact, for $n>500$, DFO-GN actually runs faster than DFBOLS.
For $n>1200$, DFBOLS exceeds the memory capacity of the system.
At this point, it has to store data on disk, and as a result the runtime increases very quickly.
DFO-GN does not suffer from this issue, and can continue solving problems quickly for substantially larger $n$.
For instance, DFO-GN solves the $n=2500$ problem over 2.5 times faster than DFBOLS solves the much smaller $n=1400$ problem.

Similarly to before, it is important to gain an understanding of whether this improved scalability comes at the cost of performance.
To assess this, we consider a set of 60 medium-sized problems ($25\leq n\leq 120$ and $25\leq m \leq 400$, with $n\approx 100$ for most problems) from the CUTEst test set \cite{Gould2015}.
The full list of problems is given in \appref{sec_cutest_problems}.
For these problems, we compare DFO-GN with DFBOLS using a smaller budget of $50(n+1)$ evaluations, commensurate with the greater cost of objective evaluation.
Given this small budget, we only test DFBOLS with $n+2$ and $2n+1$ interpolation points; using $(n+1)(n+2)/2$ interpolation points would mean in most cases the full budget is entirely used building the initial sample set.

In \figref{fig_smooth_cutest}, we show data and performance profiles for accuracy $\tau=10^{-5}$.
As before, we see that DFO-GN has very similar performance to DFBOLS, and although we have gained improved scalability, we have not lost in terms of performance on medium-sized test problems.

\begin{figure}
	\centering
	\begin{subfigure}[b]{0.48\textwidth}
		\includegraphics[width=\textwidth]{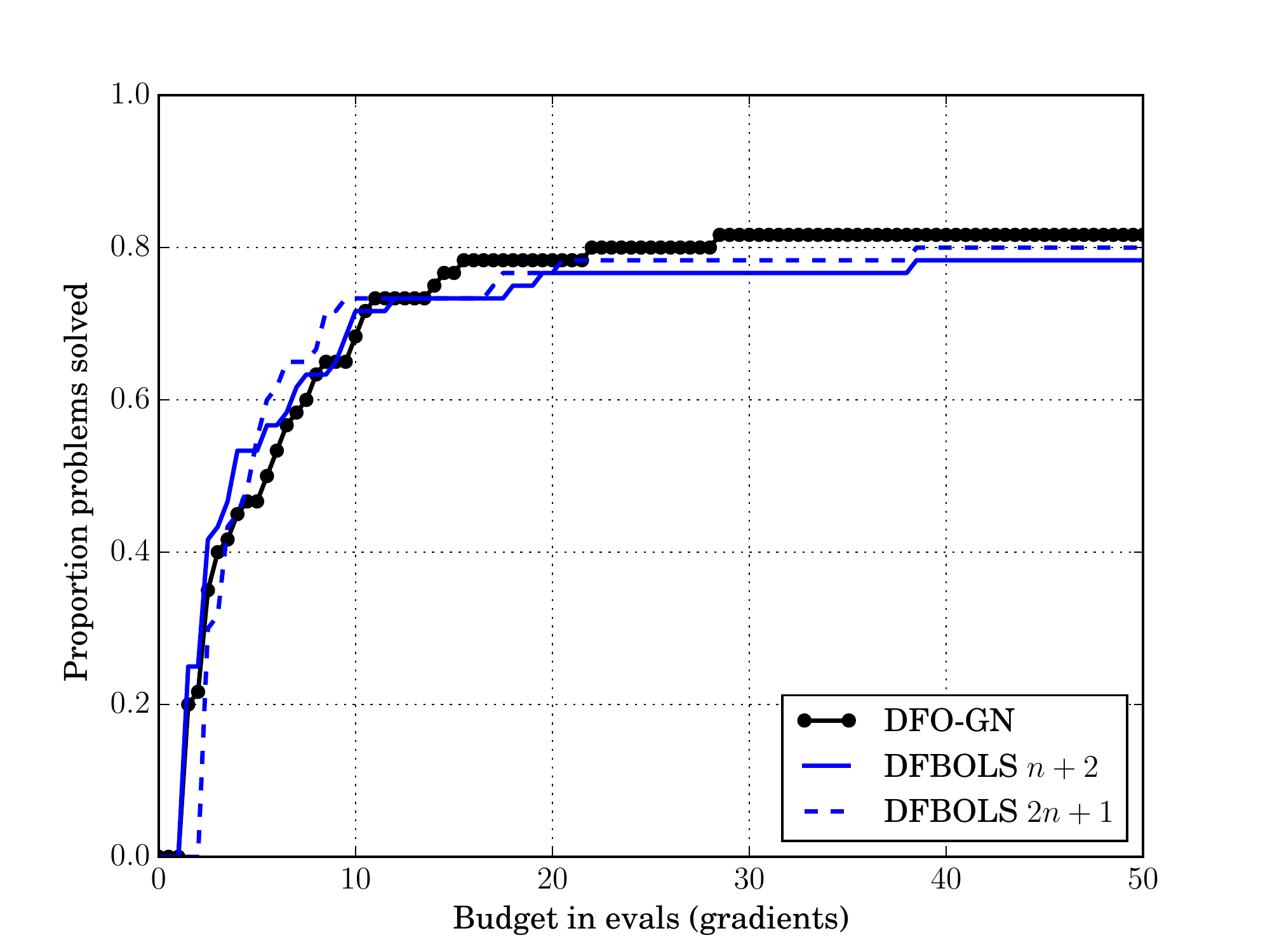}
		\caption{Data Profile}
		\label{fig_smooth_data_cutest}
	\end{subfigure}
	~
	\begin{subfigure}[b]{0.48\textwidth}
		\includegraphics[width=\textwidth]{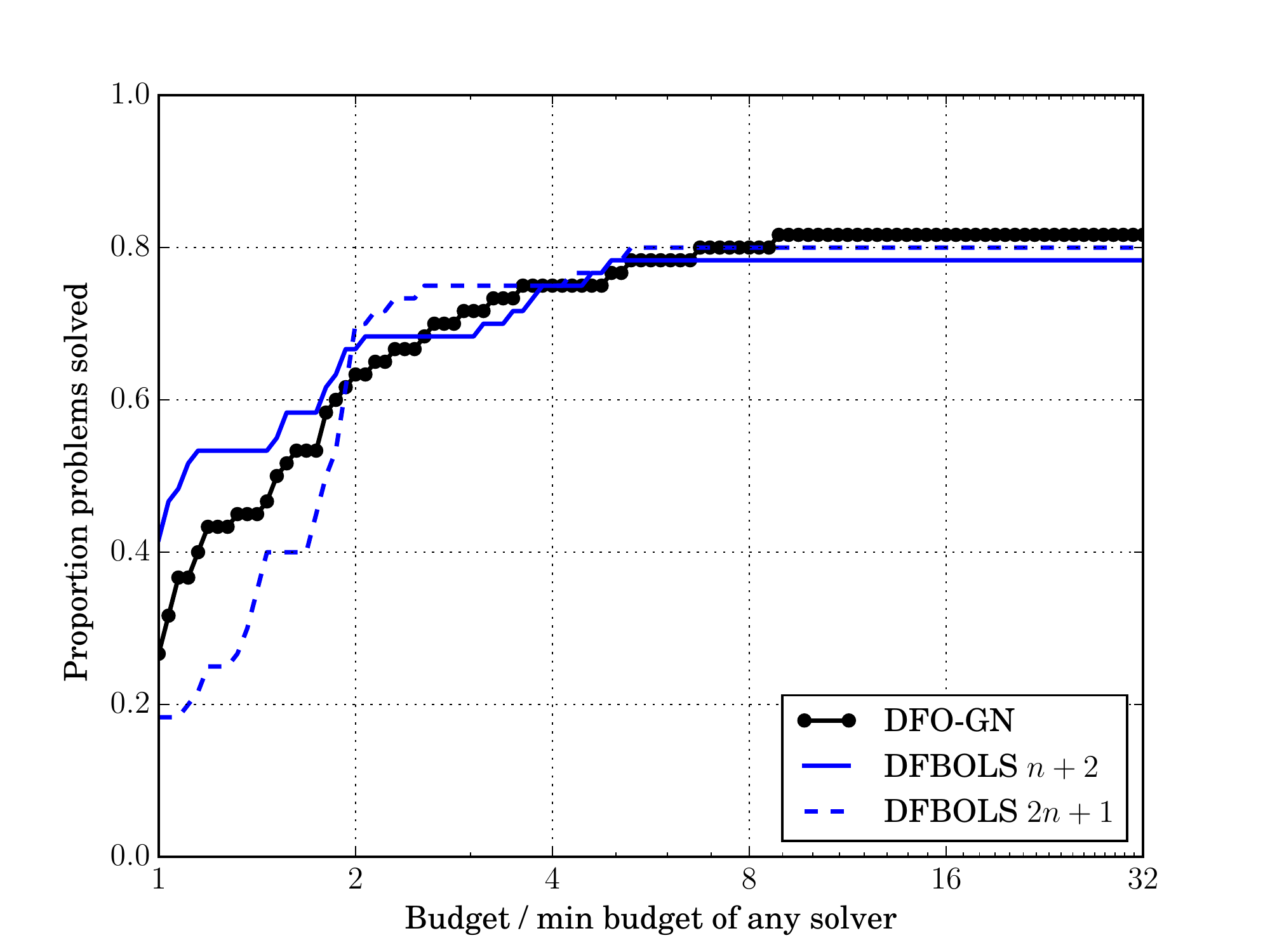}
		\caption{Performance Profile}
		\label{fig_smooth_perf_cutest}
	\end{subfigure}
	\caption{Comparison of DFO-GN with DFBOLS for smooth objectives from the set of medium-sized CUTEst problems, to accuracy $\tau=10^{-5}$. For the DFBOLS runs, $n+2$, $2n+1$ are the number of interpolation points.}
	\label{fig_smooth_cutest}
\end{figure}

\section{Concluding Remarks} \label{sec_conclusion}
It is well-known that, for nonlinear least-squares problems, using only linear models for each residual is sufficient to approximate the objective well, especially for zero-residual problems. This forms the basis of the derivative-based Gauss-Newton and Levenberg-Marquardt methods, and has motivated our derivative-free, model-based trust-region variant  here, called DFO-GN.

In \cite{Zhang2010,Wild2017}, quadratic local models are constructed by interpolation for each residual, and these are aggregated to produce a quadratic model for the objective.
By contrast, in DFO-GN we build linear models for each residual, which retains first-order convergence and worst-case complexity, and 
reduces both the computational cost of solving the interpolation problem (leading to a runtime reduction of at least a factor of $7$) and the memory cost of storing the models (from $\bigO(mn^2)$ to $\bigO(mn)$).
These savings result in a substantially faster runtime and improved scalability of DFO-GN compared to DFBOLS, the implementation from \cite{Zhang2010}.
Furthermore, the simpler local models do not adversely affect the algorithm's performance numerically, in terms of evaluation counts:
DFO-GN performs as well as DFBOLS and better than POUNDERS, the implementation from \cite{Wild2017}, on smooth test problems from the Mor\'e \& Wild and CUTEst collections.
When the objective has noise, DFO-GN suffers a small performance penalty compared to DFBOLS (but not to POUNDERS), which is larger for additive  than multiplicative noise 
as all problems become nonzero residual. Nonetheless, this, together with 
 the substantial improvements in runtime and scalability, make DFO-GN  an appealing choice for both zero and nonzero residuals, and in the presence of noise.
We delegate to future work showing local quadratic rate of convergence for DFO-GN when applied to nondegenerate zero-residual problems, and generally
improving the performance of DFO methods in the presence of noise. 


\addcontentsline{toc}{section}{References} 
\bibliographystyle{siam}
\bibliography{dfogn_refs} 
\appendix

\section{Proof of \lemref{lem_fully_linear}} \label{sec_fully_linear_pf}
This proof is similar to \cite[Lemma 4.3]{Zhang2010} and \cite[Theorems 2.11 and 2.12]{Conn2009}.
Define $B\defeq B(\bx_k, \Delta_k)$ for convenience.
We recall the standard bound \cite[Appendix A]{Nocedal2006} 
\be \|\br(\by) - \br(\bx_k) - J(\bx_k)(\by-\bx_k)\| \leq \frac{1}{2}L_J\|\by-\bx_k\|^2. \label{eq_lipschitz_bd} \ee
From the interpolation conditions \eqref{eq_interp_conditions}, we have
\be J_k(\by_t - \bx_k) = \br(\by_t) - \br(\bx_k), \qquad \text{for $t=1,\ldots,n$.} \ee
Using \eqref{eq_lipschitz_bd}, we compute for any $t=1,\ldots,n$,
\be \|[J_k-J(\bx_k)](\by_t-\bx_k)/\Delta_k\| = \Delta_k^{-1}\|\br(\by_t) - \br(\bx_k) - J(\bx_k)(\by_t-\bx_k)\| \leq \frac{1}{2}L_J \Delta_k. \ee
Let $\hat{W}_k$ be the interpolation matrix of the system
\eqref{eq_linear_interp_system_Jonly} scaled by $\Delta_k^{-1}$. 
Considering the matrix $[J_k-J(\bx_k)]\hat{W}^{\top}_k$, with columns $[J_k-J(\bx_k)](\by_t-\bx_k)/\Delta_k$, we have
\be \|[J_k-J(\bx_k)]\hat{W}^{\top}_k\|^2 \leq \|[J_k-J(\bx_k)]\hat{W}^{\top}_k\|_F^2 = \sum_{t=1}^{n}\|[J_k-J(\bx_k)](\by_t-\bx_k)/\Delta_k\|^2, \ee
and so using the identity $\|\hat{W}^{-1}_k\|=\|\hat{W}^{-\top}_k\|$, we get
\be \|J_k-J(\bx_k)\| \leq \|[J_k-J(\bx_k)]\hat{W}^{\top}_k\|\cdot\|\hat{W}^{-\top}_k\| \leq \frac{1}{2}L_J \sqrt{n} \|\hat{W}^{-1}_k\| \Delta_k. \ee
Thus we conclude that for any $\by\in B$
\be \|J_k-J(\by)\| \leq \|J_k-J(\bx_k)\| + \|J(\by)-J(\bx_k)\| \leq
L_J \left(1 +
  \frac{1}{2}\sqrt{n}\|\hat{W}^{-1}_k\|\right)\Delta_k. \ee
Since $Y_k$
is $\Lambda$-poised in $B$, we have $\|\hat{W}^{-1}_k\|=\bigO(\Lambda)$
from \cite[Theorem 3.14]{Conn2009}. Thus \eqref{eq_fully_linear_vector_g} holds with $\kappa_{eg}^r \defeq
L_J \left(1 + \frac{1}{2}\sqrt{n}C\right)$, where
$C=\bigO(\Lambda)$.
Next, we prove \eqref{eq_fully_linear_vector_f} by computing
\begin{align}
	\|\bem_k(\by-\bx_k) - \br(\by)\| &= \|\br(\by) - \br(\bx_k) - J_k(\by-\bx_k)\|, \\
	&\leq \|\br(\by) - \br(\bx_k) - J(\bx_k)(\by-\bx_k)\| + \|J(\bx_k)-J_k\|\cdot\|\by-\bx_k\|, \\
	&\leq \left(\frac{L_J}{2} + \kappa_{eg}^r\right)\Delta_k^2,
\end{align}
where we use \eqref{eq_fully_linear_vector_g} and \eqref{eq_lipschitz_bd}.
Hence we have \eqref{eq_fully_linear_vector_f} with $\kappa_{ef}^r = \kappa_{eg}^r + L_J/2$, as required.

Since $\bem_k$ is fully linear, we also get from \eqref{eq_fully_linear_vector_g} the bound
\be \|J_k\| \leq \|J(\bx_k) - J_k\| + \|J(\bx_k)\| \leq \kappa_{eg}^r\Delta_{max} + J_{max}, \ee
so $\|J_k\|$ is uniformly bounded for all $k$.
Since $H_k = J_k^{\top}J_k$, this means that $\|H_k\| = \|J_k\|^2$ is uniformly bounded for all $k$.

To prove full linearity of $m_k$, we first compute
\begin{align}
	\|\grad m_k(\by-\bx_k) - \grad f(\by)\| &= \|\grad f(\by) - J_k^{\top}\br(\bx_k) - J_k^{\top}J_k(\by-\bx_k)\|, \\
	&\leq \|\grad f(\by) - \grad f(\bx_k)\| + \|(J(\bx_k)-J_k)^{\top}\br(\bx_k)\| \nonumber \\
	&\qquad\qquad + \|J_k^{\top}J_k\|\cdot\|\by-\bx_k\|, \\
	&\leq L_{\grad f} \Delta_k + \kappa_{eg}^r r_{max}\Delta_k + \left(\kappa_{eg}^r\Delta_{max}+J_{max}\right)^2\Delta_k,
\end{align}
recovering \eqref{eq_fully_linear_scalar_g} with $\kappa_{eg}=L_{\grad f} + \kappa_{eg}^r r_{max} + (\kappa_{eg}^r\Delta_{max}+J_{max})^2$, as required.

Lastly, to show \eqref{eq_fully_linear_scalar_f}, we recall the scalar version of \eqref{eq_lipschitz_bd}
\be |f(\by)-f(\bx_k)-\grad f(\bx_k)^{\top}(\by-\bx_k)| \leq \frac{1}{2}L_{\grad f}\|\by-\bx_k\|^2. \ee
We use this and \eqref{eq_fully_linear_scalar_g} to compute
\begin{align}
	|m_k(\by-\bx_k) - f(\by)| &= \left|f(\by) - f(\bx_k) - \bg_k^{\top}(\by-\bx_k) - \frac{1}{2}(\by-\bx_k)^{\top}H_k(\by-\bx_k)\right|, \\
	&\leq \left|f(\by) - f(\bx_k) - \grad f(\bx_k)^{\top}(\by-\bx_k)\right| \nonumber \\ 
	&\qquad\qquad + \left\|\grad f(\bx_k) - \bg_k - \frac{1}{2}H_k(\by-\bx_k)\right\|\cdot\|\by-\bx_k\|, \\
	&\leq \frac{1}{2}L_{\grad f} \Delta_k^2 + \left[\|\grad f(\bx_k) - \grad m_k(\by-\bx_k)\| + \frac{1}{2}\|H_k\|\cdot\|\by-\bx_k\|\right]\cdot \Delta_k, \\
	&\leq \frac{1}{2}L_{\grad f} \Delta_k^2 + \left[\kappa_{eg}\Delta_k + \frac{1}{2}(\kappa_{eg}^r\Delta_{max}+J_{max})^2\Delta_k\right]\Delta_k,
\end{align}
and so we have \eqref{eq_fully_linear_scalar_f} with $\kappa_{ef} = \kappa_{eg} + L_{\grad f}/2 + (\kappa_{eg}^r\Delta_{max} + J_{max})^2/2$.\smartqed


\section{Geometry Improvement in Criticality Phase} \label{sec_criticality_geom}
Here, we describe the geometry-improvement step performed in the criticality phase of \algref{alg_dfogn}, and prove its convergence.
The proof of \lemref{lem_criticality_complexity} can be derived from the proof of \cite[Lemma 10.5]{Conn2009}.

\begin{algorithm}[H]\small{
	\begin{algorithmic}[1]
		\Require Iterate $\bx_k$, initial set $Y_k$ and trust region radius $\Delta_k^{init}$. 
		\Statex Parameters are $\mu>0$, $\omega_C\in(0,1)$ and poisedness constant $\Lambda>0$.
		\State Set $Y_k^{(0)}=Y_k$.
		\For{$i=1,2,\ldots$}
			\State Form $Y_k^{(i)}$ by modifying $Y_k^{(i-1)}$ until it is $\Lambda$-poised in $B(\bx_k, \omega_C^{i-1}\Delta_k^{init})$. 
			\State Solve the interpolation system for $Y_k^{(i)}$ and form $m_k^{(i)}$ \eqref{eq_gn_full_model_dfo}. 
			\If{$\omega_C^{i-1}\Delta_k^{init} \leq \mu\|\bg_k^{(i)}\|$}
				\State \textbf{return} $Y_k^{(i)}$, $m_k^{(i)}$, $\Delta_k\gets\omega_C^{i-1}\Delta_k^{init}$.
			\EndIf
		\EndFor
	\end{algorithmic}}
	\caption{Geometry-Improvement for Criticality Phase}
	\label{alg_criticality}
\end{algorithm}

We recall from \lemref{lem_fully_linear} that if $Y_k^{(i)}$ is $\Lambda$-poised in $B(\bx_k,\omega_C^{i-1}\Delta_k^{init})$, then $m_k^{(i)}$ is fully linear in the sense of \defref{def_fully_linear_scalar}, with associated constants $\kappa_{ef}$ and $\kappa_{eg}$ in \eqref{eq_fully_linear_scalar_f} and \eqref{eq_fully_linear_scalar_g} respectively given by \eqref{eq_fully_linear_scalar_consts}.

\begin{lemma} \label{lem_criticality_complexity}
	Suppose $\|\grad f(\bx_k)\|\geq\epsilon>0$. 
	Then for any $\mu>0$ and $\omega_C\in(0,1)$, \algref{alg_criticality} terminates in finite time with $Y_k$ $\Lambda$-poised in $B(\bx_k,\Delta_k)$ and $\Delta_k\leq\mu\|\bg_k\|$ for any $\mu>0$ and $\omega_C\in(0,1)$.
	We also have the bound
	\be \min\left(\Delta_k^{init}, \frac{\omega_C\epsilon}{\kappa_{eg}+1/\mu}\right) \leq \Delta_k \leq \Delta_k^{init}. \label{eq_criticality_delta_bound} \ee
\end{lemma}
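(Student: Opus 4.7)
The plan is to argue by contradiction for termination, and then read off the bound \eqref{eq_criticality_delta_bound} from whichever iteration of the loop finally triggers the stopping rule. The key observation is that if iteration $i$ of \algref{alg_criticality} fails to terminate, then by construction $Y_k^{(i)}$ is $\Lambda$-poised in $B(\bx_k,\omega_C^{i-1}\Delta_k^{init})$, so by \lemref{lem_fully_linear} the model $m_k^{(i)}$ is fully linear on that ball with constant $\kappa_{eg}$ (given in \eqref{eq_fully_linear_scalar_consts}, independent of $i$), and the failure of the stopping test gives $\|\bg_k^{(i)}\| < \omega_C^{i-1}\Delta_k^{init}/\mu$.

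First I would combine these two facts with the hypothesis $\|\grad f(\bx_k)\|\geq\epsilon$ via the triangle inequality:
\be \epsilon \;\leq\; \|\grad f(\bx_k)\| \;\leq\; \|\grad f(\bx_k) - \bg_k^{(i)}\| + \|\bg_k^{(i)}\| \;<\; \left(\kappa_{eg} + \frac{1}{\mu}\right)\omega_C^{i-1}\Delta_k^{init}, \ee
so any non-terminating iteration $i$ must satisfy $\omega_C^{i-1}\Delta_k^{init} > \epsilon/(\kappa_{eg}+1/\mu)$. Since $\omega_C\in(0,1)$, the left-hand side tends to $0$ as $i\to\infty$, giving a contradiction; hence the loop must stop at some finite index $i^\ast\geq 1$, and the returned radius is $\Delta_k=\omega_C^{i^\ast-1}\Delta_k^{init}$, automatically satisfying $\Delta_k\leq\Delta_k^{init}$ and $\Delta_k\leq\mu\|\bg_k\|$ with $Y_k$ $\Lambda$-poised in $B(\bx_k,\Delta_k)$ (the required output properties).

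For the lower bound in \eqref{eq_criticality_delta_bound}, I would split on $i^\ast$: if $i^\ast=1$ the loop terminates immediately, so $\Delta_k=\Delta_k^{init}$ and the bound holds trivially. If $i^\ast\geq 2$, iteration $i^\ast-1$ did not terminate, so applying the displayed inequality above at $i=i^\ast-1$ gives $\omega_C^{i^\ast-2}\Delta_k^{init}\geq \epsilon/(\kappa_{eg}+1/\mu)$; multiplying by $\omega_C$ yields $\Delta_k=\omega_C^{i^\ast-1}\Delta_k^{init}\geq \omega_C\epsilon/(\kappa_{eg}+1/\mu)$. Taking the minimum across the two cases gives \eqref{eq_criticality_delta_bound}.

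I do not expect a substantive obstacle; the one thing to be careful about is that \lemref{lem_fully_linear} delivers a single pair of constants $(\kappa_{ef},\kappa_{eg})$ that does not depend on the index $i$ inside the loop (they depend on $\Delta_{max}$, $L_J$, $J_{max}$, $r_{max}$ and the $\Lambda$-poisedness constant), so applying it uniformly over $i$ is legitimate. With that in hand, the argument is self-contained given \lemref{lem_fully_linear} and requires no new estimates.
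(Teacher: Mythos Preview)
Your proposal is correct and follows essentially the same argument as the paper: both use full linearity of $m_k^{(i)}$ together with the non-termination condition $\omega_C^{i-1}\Delta_k^{init}>\mu\|\bg_k^{(i)}\|$ and the triangle inequality to force $\omega_C^{i-1}\Delta_k^{init}\geq\epsilon/(\kappa_{eg}+1/\mu)$ at any non-terminating iteration, then read off finite termination and the lower bound on $\Delta_k$. Your explicit case split on $i^\ast=1$ versus $i^\ast\geq2$ and your remark that $\kappa_{eg}$ is independent of $i$ are minor clarifications rather than substantive differences.
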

\begin{proof}
	First, suppose \algref{alg_criticality} terminates on the first iteration.
	Then $\Delta_k=\Delta_k^{init}$, and the result holds.
	
	Otherwise, consider some iteration $i$ where \algref{alg_criticality} does not terminate; that is, where $\omega_C^{i-1}\Delta_k^{init} > \mu\|\bg_k^{(i)}\|$.
	Then since $m_k^{(i)}$ is fully linear in $B(\bx_k, \omega_C^{i-1}\Delta_k^{init})$, we have
	\be \epsilon \leq \|\grad f(\bx_k)\| \leq \|\grad f(\bx_k) - \bg_k^{(i)}\| + \|\bg_k^{(i)}\| \leq \left(\kappa_{eg} + \frac{1}{\mu}\right)\omega_C^{i-1}\Delta_k^{init}, \ee
	or equivalently
	\be \omega_C^{i-1} \geq \frac{\epsilon}{(\kappa_{eg}+1/\mu)\Delta_k^{init}}. \label{eq_criticality_tmp1} \ee
	That is, if termination does not occur on iteration $i$, we must have
	\be i \leq 1 + \frac{1}{|\log\omega_C|}\log\left(\frac{(\kappa_{eg}+1/\mu)\Delta_k^{init}}{\epsilon}\right), \ee
	so \algref{alg_criticality} terminates in finite time.
	We also have
	\be \omega_C^{i-1}\Delta_k^{init} \geq \frac{\epsilon}{\kappa_{eg}+1/\mu}, \ee
	from which \eqref{eq_criticality_delta_bound} follows.
\end{proof}


\section{Calculating Geometry-Improving Steps} \label{sec_geom_step_algo}
Here, we provide \algref{alg_geom_step}, a method for solving one subproblem for calculating a geometry-improving step \eqref{eq_geom_improvement} subject to bound constraints.
That is, we solve the problem
\be \by^+ \defeq \argmax_{\by\in B(\bx_k,\Delta_k)} \bg^{\top}\by \quad \text{subject to $\b{a} \leq \by \leq \b{b}$.} \label{eq_geom_constrained} \ee

\begin{algorithm}\small{
	\begin{algorithmic}[1]
		\State Initialize $\by_0\defeq \bx_k$, step direction $\bs_0=-\bg$ and inactive set for bounds $\mathcal{I}=\{1,\ldots,n\}$.
		\For{$j=0,1,2,\ldots,n-1$}
			\If{$\|\bs_j\|=0$}
				\State \textbf{return} $\by_j$. \label{ln_zero_quit}
			\EndIf
			\State Find tentative step length $\alpha_j\geq 0$ by finding the largest solution to $\|\by_j+\alpha_j\bs_j\|^2=\Delta_k^2$.
			\If{$\b{a} \leq \by_j + \alpha_j\bs_j \leq \b{b}$}
				\State \textbf{return} $\by_{j+1} = \by_j + \alpha_j\bs_j$. \label{ln_uncon_quit}
			\Else
				\State Let $i\in\mathcal{I}$ be any index such that $y_{j,i}+\alpha_j s_{j,i} \notin[a_i, b_i]$.
				\State Let $\beta_j \leq \alpha_j$ be the largest value such that $y_{j,i}+\beta_j s_{j,i} \in[a_i, b_i]$
				\State Set $\by_{j+1} = \by_j + \beta_j\bs_j$ and remove $i$ from $\mathcal{I}$.
				\State Set $\bs_{j+1}=\bs_j$, except for setting $s_{j+1,i}=0$.
			\EndIf
		\EndFor
		\State \textbf{return} $\by_n$. \label{ln_box_quit}
	\end{algorithmic}}
	\caption{Solve geometry-improving subproblem \eqref{eq_geom_constrained}.}
	\label{alg_geom_step}
\end{algorithm}

\begin{lemma}
	Suppose $\b{a}\leq \bx_k \leq \b{b}$ and $\b{a}<\b{b}$. Then \algref{alg_geom_step} returns a minimizer of \eqref{eq_geom_constrained}.
\end{lemma}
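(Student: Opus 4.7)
The subproblem \eqref{eq_geom_constrained} is a linear program over the intersection of the closed ball $B(\bx_k,\Delta_k)$ and the box $[\b{a},\b{b}]$. This feasible region is compact, convex, and nonempty (as $\bx_k$ itself is feasible by assumption), so an optimizer exists; since the objective is linear and the region convex, the Karush--Kuhn--Tucker (KKT) conditions are both necessary and sufficient for optimality. The plan is to verify KKT at the point returned by \algref{alg_geom_step}. With the initial direction $\bs_0=-\bg$, the algorithm performs an active-set, projected steepest-descent sweep, so I verify KKT with the sign convention consistent with that descent direction: at a candidate $\by^{*}$ I seek a ball multiplier $\lambda\geq 0$ and box multipliers $\mu_i^{+},\mu_i^{-}\geq 0$ satisfying, componentwise,
\[
-g_i \;=\; 2\lambda\,(y_i^{*}-x_{k,i}) \;-\; \mu_i^{+} \;+\; \mu_i^{-},
\]
together with standard complementary slackness on each constraint.

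Proceeding by induction on $j$, I establish four invariants that persist up to termination: (i) $\by_j\in B(\bx_k,\Delta_k)\cap[\b{a},\b{b}]$; (ii) each index $i\notin\mathcal{I}$ has its coordinate pinned to $a_i$ or $b_i$, with $s_{j,i}=0$; (iii) for $i\in\mathcal{I}$, $s_{j,i}=-g_i$, so that $\bs_j$ is simply $-\bg$ with the bound-active coordinates zeroed; (iv) if index $i$ left $\mathcal{I}$ with $y_{\cdot,i}$ set to $a_i$, then $g_i\geq 0$, while if it was set to $b_i$, then $g_i\leq 0$. Items (i)--(iii) follow mechanically from the algorithmic update rules; item (iv) captures the sign information delivered by $s_{\cdot,i}=-g_i$ at the moment of activation, which is precisely what is needed to assign signed KKT multipliers on the active bounds.

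The KKT identity is then checked at each of the three exit points. At line \ref{ln_uncon_quit}, telescoping $\by^{*}-\bx_k=\sum_{l\leq j}\delta_l\bs_l$ (with $\delta_l=\beta_l$ for $l<j$ and $\delta_j=\alpha_j$), together with invariant (iii), yields $(\by^{*}-\bx_k)_i=-g_i\,S$ for every $i\in\mathcal{I}$, where $S=\sum_{l\leq j}\delta_l>0$. Setting $\lambda=1/(2S)>0$ and $\mu_i^{\pm}=0$ for $i\in\mathcal{I}$ matches stationarity on the free coordinates; on the fixed coordinates, invariant (iv) supplies a nonnegative $\mu_i^{+}$ or $\mu_i^{-}$ that closes the identity. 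At line \ref{ln_zero_quit}, $\bs_j=0$ forces $g_i=0$ for every $i\in\mathcal{I}$, so one may take $\lambda=0$ with multipliers on the fixed coordinates supplied by (iv). At line \ref{ln_box_quit}, the returned point is a box vertex and the same $\lambda=0$ choice works.

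The main obstacle is the line \ref{ln_uncon_quit} case: one must verify both that $\alpha_j$ is well-defined as the largest root of $\|\by_j+\alpha\bs_j\|^2=\Delta_k^2$ in $\alpha$, and that the telescoping scalar $S$ is strictly positive. The former reduces to the observation that this is a quadratic in $\alpha$ with leading coefficient $\|\bs_j\|^2>0$ (else we would have exited at line \ref{ln_zero_quit}) and nonpositive constant term $\|\by_j-\bx_k\|^2-\Delta_k^2$ by invariant (i), so a nonnegative real root exists. The latter holds because each $\delta_l\geq 0$ and $\delta_j=\alpha_j>0$ on any nontrivial exit at line \ref{ln_uncon_quit}, since the algorithm takes a strictly positive step to reach the ball boundary. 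With these in place, the KKT identity holds at the returned point and, by sufficiency of KKT for this convex linear program, the point is an optimizer of \eqref{eq_geom_constrained}.
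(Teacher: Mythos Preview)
Your overall strategy---verify KKT at the returned point, case-split on the three exits---is the same as the paper's, and your treatment of the exits at lines~\ref{ln_zero_quit} and~\ref{ln_box_quit} is fine (there $\lambda=0$, and invariant~(iv) alone does give the sign of each box multiplier).  The gap is at the line~\ref{ln_uncon_quit} exit.  You write that ``on the fixed coordinates, invariant~(iv) supplies a nonnegative $\mu_i^{+}$ or $\mu_i^{-}$ that closes the identity,'' but invariant~(iv) only gives the sign of $g_i$, not the sign of the multiplier.  Concretely: suppose coordinate $i$ was pinned to $a_i$ at iteration $l<j$.  Your telescoping gives $a_i-x_{k,i}=-g_i\sum_{l'\le l}\beta_{l'}=-g_i\gamma_l$, and with $\lambda=1/(2S)$ stationarity forces the lower-bound multiplier to be
\[
\mu_i^{-}=g_i+2\lambda(a_i-x_{k,i})=g_i\Bigl(1-\frac{\gamma_l}{S}\Bigr).
\]
Invariant~(iv) gives $g_i\ge 0$, but for $\mu_i^{-}\ge 0$ you also need $\gamma_l\le S$, which you never argue.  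This, not the well-definedness of $\alpha_j$ or $S>0$, is the real obstacle at this exit.

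The fix is short and in fact uses ingredients you already have: since each $\beta_{l'}\ge 0$ and $\alpha_j\ge 0$ (both of which you establish in your last paragraph), $\gamma_l=\sum_{l'\le l}\beta_{l'}\le\sum_{l'\le j-1}\beta_{l'}+\alpha_j=S$.  Interestingly, this route is cleaner than the paper's: the paper reaches the same inequality $\gamma_l\le S$ via an intermediate monotonicity step $\widetilde{\alpha}_{l-1}\le\widetilde{\alpha}_l$ (where $\widetilde{\alpha}_l=\gamma_{l-1}+\alpha_l$), proved by comparing successive ball-boundary equations, whereas your telescoping plus nonnegativity of increments gives it in one line.  So once you make this step explicit, your argument is both correct and slightly simpler than the paper's.
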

\begin{proof}
	Without loss of generality, we may assume that $\bx_k=\b{0}$, as otherwise we can simply shift $\b{a}\to\b{a}-\bx_k$, and similarly for $\b{b}$.	
	Since \eqref{eq_geom_constrained} is convex, so it suffices to show convergence to a KKT point.
	
	Suppose $\by$ is a KKT point where $y_i\in\{a_i, b_i\}$ for some $i\in\{1,\ldots,n\}$. 
	If $y_i=a_i$, then if we took $\t{\by}=\by$ except for $\t{y}_i=b_i$, then 
	\be \bg^{\top}\t{\by} = \bg^{\top}\by + g_i(b_i-a_i) \geq \bg^{\top}\by, \ee
	since $\by$ is a global minimizer.
	Since $b_i-a_i>0$ by assumption, we must have $g_i\geq0$.
	Similarly, if $y_i=b_i$ is to hold at a KKT point, we require $g_i\leq0$.
	
	If $g_i=0$, any value $y_i\in[a_i,b_i]$ gives the same objective value, so we can take $y_i=0$.
	Given this, which \algref{alg_geom_step} provides, the KKT conditions reduce to: there exist $\mu$ and $\lambda_i$ for $i=1,\ldots,n$ such that (for all $i=1,\ldots,n$, where relevant)
	\begin{subequations}
	\begin{align}
		\mu &\geq 0, \qquad \|\by\|^2 \leq \Delta^2, \\
		\lambda_i &\geq 0, \qquad a_i \leq y_i \leq b_i, \\
		0 &= \mu(\Delta^2-\|\by\|^2), \label{eq_geom_tr_complementarity}\\
		0 &= \begin{cases} \lambda_i(b_i-x_i), & g_i > 0, \\ \lambda_i(x_i-a_i), & g_i < 0,\end{cases} \\
		\bg &= -\mu\by + \sum_{i=1}^{n}\lambda_i \begin{cases}\bee_i, & g_i > 0, \\ -\bee_i, & g_i < 0.\end{cases} \label{eq_geom_kkt_grad}
	\end{align}
	\end{subequations}
	Since $y_i=0$ whenever $g_i=0$, and we always write $\bg$ as a sum of $\pm\bee_i$, we only need to check $\mu\geq0$ and $\lambda_i\geq 0$. 
	For convenience, let $\gamma_j\defeq\sum_{l=0}^{j}\beta_l$.
	
	Consider the start of some iteration $j$, where currently $y_{j,i} = -\gamma_{j-1} g_i$ for all $i\in\mathcal{I}$.
	The equation for $\alpha_j$ is then
	\be \sum_{i\notin\mathcal{I}}y_{j,i}^2 + \sum_{i\in\mathcal{I}}[(-\gamma_{j-1}-\alpha_j)g_i]^2 = \Delta^2,  \ee
	for which the largest solution is
	\be \alpha_j = -\gamma_{j-1} + \sqrt{\frac{\Delta^2 - \sum_{i\notin\mathcal{I}}y_{j,i}^2}{\sum_{i\in\mathcal{I}}g_i^2}}. \label{eq_talpha_relation} \ee
	Hence $y_{j,i} = -\t{\alpha}_j g_i$ for all $i\in\mathcal{I}$, where $\t{\alpha}_j\defeq\alpha_j+\gamma_{j-1}\geq 0$.
	
	If \algref{alg_geom_step} terminates at line \ref{ln_uncon_quit} for this iteration $j$, then we have $\mu=1/\t{\alpha}_j>0$, and $y_i\in\{a_i, b_i\}$ for all $i\notin\mathcal{I}$.
	Thus the $i$-th component of \eqref{eq_geom_kkt_grad} is satisfied for all $i\in\mathcal{I}$ with $\lambda_i=0$.
	
	Now consider $i\notin\mathcal{I}$, and suppose $g_i>0$.
	Then at some iteration $l<j$, we had $-\t{\alpha}_l g_i < a_i \leq 0$, and set $y_{l+1,i}=a_i$, by choosing $\beta_l$ so that $-\gamma_l g_i=a_i$. 
	Thus, we have $\gamma_l\geq 0$.
	Similarly, if $g_i<0$, we had $-\t{\alpha}_l g_i > b_i \geq 0$, and needed $\beta_l$ so that $-\gamma_l g_i = b_i \geq 0$, so we also need $\gamma_l\geq 0$ for this $l$.
	Thus $0 \leq \gamma_j\leq \t{\alpha}_j$ for all $j$.
	
	For termination at line \ref{ln_uncon_quit} of \algref{alg_geom_step} and for \eqref{eq_geom_kkt_grad} to hold with $\lambda_i\geq 0$, we need $-\t{\alpha}_j g_i \leq a_i$ or $-\t{\alpha}_j g_i \geq b_i$, depending on the sign of $g_i$.
	Equivalently, we need $\t{\alpha}_j\geq \gamma_l$ for all $l<j$.
	Since $\gamma_l \leq \t{\alpha}_l$, it suffices to show that $\t{\alpha}_{j-1}\leq \t{\alpha}_j$ for all $j$.
	
	To show this, let $\t{\by}_{j-2}$ be the vector with the fixed components of $\by_{j-2}$ (i.e.~those $i\notin\mathcal{I}$ at iteration $j-2$), and zeros otherwise. 
	Then if $i$ is the index removed from $\mathcal{I}$ at the end of iteration $j-1$, the equation for $\t{\alpha}_{j-1}$ is
	\begin{align}
		\|\by_{j-1}+\alpha_{j-1}\bs_{j-1}\|^2 &= \Delta^2, \\
		\|\t{\by}_{j-2}\|^2 + \t{\alpha}_{j-1}^2 g_i^2 + \t{\alpha}_{j-1}^2\|\bs_j\|^2 &= \Delta^2. \label{eq_talpha_jm1}
	\end{align}
	Similarly, the equation for $\t{\alpha}_j$ is
	\be \|\t{\by}_{j-2}\|^2 + c_i^2 + \t{\alpha}_{j-1}^2\|\bs_j\|^2 = \Delta^2, \label{eq_talpha_j} \ee
	where $c_i\in\{a_i, b_i\}$ is whichever value we fixed $y_{j-1,i}$ to be.
	Equating \eqref{eq_talpha_jm1} and \eqref{eq_talpha_j} and using $|\t{\alpha}_{j-1}g_i| \geq |c_i|$ (from above), we get
	\be \t{\alpha}_{j-1}^2 g_i^2 + \t{\alpha}_{j-1}^2\|\bs_j\|^2 = c_i^2 + \t{\alpha}_j^2\|\bs_j\|^2 \leq \t{\alpha}_{j-1}^2 g_i^2 + \t{\alpha}_j^2\|\bs_j\|^2, \ee
	and so using $\|\bs_j\|>0$ (from line \ref{ln_zero_quit} of \algref{alg_geom_step}) and $\t{\alpha}_j\geq0$ for all $j$ (shown above), we get $\t{\alpha}_{j-1}\leq\t{\alpha}_j$.
	This finishes the proof for when we terminate at line \ref{ln_uncon_quit} of \algref{alg_geom_step}.
	
	Now suppose we terminate at line \ref{ln_box_quit} of \algref{alg_geom_step}.
	Then we have fixed $y_i\in\{a_i,b_i\}$ for all $i$ (depending on the sign of $g_i$), with $\|\by\|\leq\Delta$ still valid.
	Hence we can have $\mu=0$ (satisfying \eqref{eq_geom_tr_complementarity}) and $\lambda_i=|g_i|$, and we have a KKT point.
\end{proof}


\section{Test Problems} \label{sec_mw_problems}
\begin{table}[H]
	\centering
	\footnotesize{
	\begin{tabular}{llcccc}
		\hline\noalign{\smallskip}
		\# & Objective Function & $n$ & $m$ & $2f(\b{x}_0)$ & $2f^*$ \\ \noalign{\smallskip}\hline\noalign{\smallskip} 
		1 & Linear (full rank) & 9 & 45 & 72 & 36 \\
		2 & Linear (full rank) & 9 & 45 & 1125 & 36 \\
		3 & Linear (rank 1) & 7 & 35 & $1.165420\times 10^7$ & 8.380282 \\
		4 & Linear (rank 1) & 7 & 35 & $1.168591\times 10^9$ & 8.380282 \\
		5 & Linear (rank 1 with zero row \& column) & 7 & 35 & $4.989195\times 10^6$ & 9.880597 \\
		6 & Linear (rank 1 with zero row \& column) & 7 & 35 & $5.009356\times 10^8$ & 9.880597 \\
		7 & Rosenbrock & 2 & 2 & 24.2 & 0 \\
		8 & Rosenbrock & 2 & 2 & $1.795769\times 10^6$ & 0 \\
		9 & Helical Valley & 3 & 3 & 2500 & 0 \\
		10 & Helical Valley & 3 & 3 & 10600 & 0 \\
		11 & Powell Singular & 4 & 4 & 215 & 0 \\
		12 & Powell Singular & 4 & 4 & $1.615400\times 10^6$ & 0 \\
		13 & Freudenstein \& Roth & 2 & 2 & 400.5 & 48.98425 \\
		14 & Freudenstein \& Roth & 2 & 2 & $1.545754\times 10^8$ & 48.98425 \\
		15 & Bard & 3 & 15 & 41.68170 & $8.214877\times 10^{-3}$ \\
		16 & Bard & 3 & 15 & 1306.234 & $8.214877\times 10^{-3}$ \\
		17 & Kowalik \& Osborne & 4 & 11 & $5.313172\times 10^{-3}$ & $3.075056\times 10^{-4}$ \\
		18 & Meyer & 3 & 16 & $1.693608\times 10^9$ & 87.94586 \\
		19 & Watson & 6 & 31 & 16.43083 & $2.287670\times 10^{-3}$ \\
		20 & Watson & 6 & 31 & $2.323367\times 10^6$ & $2.287670\times 10^{-3}$ \\
		21 & Watson & 9 & 31 & 26.90417 & $1.399760\times 10^{-6}$ \\
		22 & Watson & 9 & 31 & $8.158877\times 10^6$ & $1.399760\times 10^{-6}$ \\
		23 & Watson & 12 & 31 & 73.67821 & $4.722381\times 10^{-10}$ \\
		24 & Watson & 12 & 31 & $2.059384\times 10^7$ & $4.722381\times 10^{-10}$ \\
		25 & Box 3d & 3 & 10 & 1031.154 & 0 \\
		26 & Jennrich \& Sampson & 2 & 10 & 4171.306 & 124.3622 \\
		27 & Brown \& Dennis & 4 & 20 & $7.926693\times 10^6$ & $8.582220\times 10^4$ \\
		28 & Brown \& Dennis & 4 & 20 & $3.081064\times 10^{11}$ & $8.582220\times 10^4$ \\
		29 & Chebyquad & 6 & 6 & $4.642817\times 10^{-2}$ & 0 \\
		30 & Chebyquad & 7 & 7 & $3.377064\times 10^{-2}$ & 0 \\
		31 & Chebyquad & 8 & 8 & $3.861770\times 10^{-2}$ & $3.516874\times 10^{-3}$ \\
		32 & Chebyquad & 9 & 9 & $2.888298\times 10^{-2}$ & 0 \\
		33 & Chebyquad & 10 & 10 & $3.376327\times 10^{-2}$ & $4.772714\times 10^{-3}$ \\
		34 & Chebyquad & 11 & 11 & $2.674060\times 10^{-2}$ & $2.799762\times 10^{-3}$ \\
		35 & Brown almost-linear & 10 & 10 & 273.2480 & 0 \\
		36 & Osborne 1 & 5 & 33 & 16.17411 & $5.464895\times 10^{-5}$ \\
		37 & Osborne 2 & 11 & 65 & 2.093420 & $4.013774\times 10^{-2}$ \\
		38 & Osborne 2 & 11 & 65 & 199.6847 & $4.013774\times 10^{-2}$ \\
		39 & bdqrtic & 8 & 8 & 904 & 10.23897 \\
		40 & bdqrtic & 10 & 12 & 1356 & 18.28116 \\
		41 & bdqrtic & 11 & 14 & 1582 & 22.26059 \\
		42 & bdqrtic & 12 & 16 & 1808 & 26.27277 \\
		43 & Cube & 5 & 5 & 56.5 & 0 \\
		44 & Cube & 6 & 6 & 70.5625 & 0 \\
		45 & Cube & 8 & 8 & 98.6875 & 0 \\
		46 & Mancino & 5 & 5 & $2.539084\times 10^9$ & 0 \\
		47 & Mancino & 5 & 5 & $6.873795\times 10^{12}$ & 0 \\
		48 & Mancino & 8 & 8 & $3.367961\times 10^9$ & 0 \\
		49 & Mancino & 10 & 10 & $3.735127\times 10^9$ & 0 \\
		50 & Mancino & 12 & 12 & $3.991072\times 10^9$ & 0 \\
		51 & Mancino & 12 & 12 & $1.130015\times 10^{13}$ & 0 \\
		52 & Heart8ls & 8 & 8 & 9.385672 & 0 \\
		53 & Heart8ls & 8 & 8 & $3.365815\times 10^{10}$ & 0 \\
		\noalign{\smallskip}\hline
	\end{tabular}}
	\caption{Details of test problems from \cite{More2009}, including the value of $f^*$ used in \eqref{eq_solved_threshold} for each problem. Note that, in line with the implementation of DFO-GN, we show $2f(\b{x}_0)$ and $2f^*$; i.e.~excluding the $1/2$ factor in \eqref{eq_ls_definition}.}
	\label{tab_more_wild_values}
\end{table}

\section{Medium-Scale Test Problems} \label{sec_cutest_problems}
\begin{table}[H]
	\centering
	\scriptsize{
	\begin{tabular}{rlccccc} 
		\hline\noalign{\smallskip}
		\# & Problem & $n$ & $m$ & $2f(\b{x}_0)$ & $2f^*$ & Parameters \\ \noalign{\smallskip}\hline\noalign{\smallskip} 
		1 & ARGLALE & 100 & 400 & 700 & 300 & $N=100$ \\
		2 & ARGLBLE & 100 & 400 & $5.460944\times 10^{14}$ & 99.62547 & $N=100$ \\
		3 & ARGTRIG & 100 & 100 & 32.99641 & 0 & $N=100$ \\
		4 & ARTIF & 100 & 100 & 36.59115 & 0 & $N=100$ \\
		5 & ARWHDNE & 100 & 198 & 495 & 27.66203 & $N=100$ \\
		6 & BDVALUES & 100 & 100 & $1.943417\times 10^{7}$ & 0 & $NDP=102$ \\
		7 & BRATU2D & 64 & 64 & 0.1560738 & 0 & $P=10$ \\
		8 & BRATU2DT & 64 & 64 & 0.4521311 & $1.853474\times 10^{-5}$ & $P=10$ \\
		9 & BRATU3D & 27 & 27 & 4.888529 & 0 & $P=5$ \\
		10 & BROWNALE & 100 & 100 & $2.524757\times 10^{5}$ & 0 & $N=100$ \\
		11 & BROYDN3D & 100 & 100 & 111 & 0 & $N=100$ \\
		12 & BROYDNBD & 100 & 100 & 2404 & 0 & $N=100$ \\
		13 & CBRATU2D & 50 & 50 & 0.4822531 & 0 & $P=7$ \\
		14 & CHANDHEQ* & 100 & 100 & 6.923365 & 0 & $N=100$ \\
		15 & CHEMRCTA* & 100 & 100 & 3.0935 & 0 & $N=50$ \\
		16 & CHEMRCTB* & 100 & 100 & 1.446513 & $1.404424\times 10^{-3}$ & $N=100$ \\
		17 & CHNRSBNE & 50 & 98 & 7635.84 & 0 & $N=50$ \\
		18 & DRCAVTY1 & 100 & 100 & 0.4513889 & 0 & $M=10$ \\
		19 & DRCAVTY2 & 100 & 100 & 0.4513889 & $5.449602\times 10^{-3}$ & $M=10$ \\
		20 & DRCAVTY3 & 100 & 100 & 0.4513889 & 0 & $M=10$ \\
		21 & EIGENA* & 110 & 110 & 285 & 0 & $N=10$ \\
		22 & EIGENB & 110 & 110 & 19 & 0 & $N=10$ \\
		23 & FLOSP2HH & 59 & 59 & 519 & 0.3333333 & $M=2$ \\
		24 & FLOSP2HL & 59 & 59 & 519 & 0.3333333 & $M=2$ \\
		25 & FLOSP2HM & 59 & 59 & 519 & 0.3333333 & $M=2$ \\
		26 & FLOSP2TH & 59 & 59 & 516 & 0 & $M=2$ \\
		27 & FLOSP2TL & 59 & 59 & 516 & 0 & $M=2$ \\
		28 & FLOSP2TM & 59 & 59 & 516 & 0 & $M=2$ \\
		29 & FREURONE & 100 & 198 & $9.95565\times 10^{4}$ & $1.196458\times 10^{4}$ & $N=100$ \\
		30 & HATFLDG & 25 & 25 & 27 & 0 & --- \\
		31 & HYDCAR20 & 99 & 99 & 1341.663 & 0 & --- \\
		32 & HYDCAR6 & 29 & 29 & 704.1073 & 0 & --- \\
		33 & INTEGREQ & 100 & 100 & 0.5730503 & 0 & $N=100$ \\
		34 & METHANB8 & 31 & 31 & 1.043105 & 0 & --- \\
		35 & METHANL8 & 31 & 31 & 4345.100 & 0 & --- \\
		36 & MOREBVNE & 100 & 100 & $3.633100\times 10^{-4}$ & 0 & $N=100$ \\
		37 & MSQRTA & 100 & 100 & 212.7162 & 0 & $P=10$ \\
		38 & MSQRTB & 100 & 100 & 205.0846 & 0 & $P=10$ \\
		39 & OSCIGRNE & 100 & 100 & $6.120720\times 10^{8}$ & 0 & $N=100$ \\
		40 & PENLT1NE & 100 & 101 & $1.144806\times 10^{11}$ & $9.025000\times 10^{-9}$ & $N=100$ \\
		41 & PENLT2NE & 100 & 200 & $1.591383\times 10^{6}$ & 0.9809377 & $N=100$ \\
		42 & POWELLSE & 100 & 100 & 41875 & 0 & $N=100$ \\
		43 & QR3D* & 40 & 40 & 1.2 & 0 & $M=5$ \\
		44 & QR3DBD* & 37 & 40 & 1.2 & 0 & $M=5$ \\
		45 & SEMICN2U & 100 & 100 & $2.025037\times 10^{4}$ & 0 & $(N,LN) = (100, 90)$ \\
		46 & SEMICON2* & 100 & 100 & $2.025037\times 10^{4}$ & 0 & $(N,LN) = (100, 90)$ \\
		47 & SPMSQRT & 100 & 164 & 74.33542 & 0 & $M=34$ \\
		48 & VARDIMNE & 100 & 102 & $1.310584\times 10^{14}$ & 0 & $N=100$ \\
		49 & WATSONNE & 31 & 31 & 30 & 0 & $N=31$ \\
		50 & YATP1SQ & 120 & 120 & $2.073643\times 10^{6}$ & 0 & $N=10$ \\
		51 & YATP2SQ & 120 & 120 & $1.831687\times 10^{5}$ & 0 & $N=10$ \\
		52 & LUKSAN11 & 100 & 198 & 626.0640 & 0 & --- \\
		53 & LUKSAN12 & 98 & 192 & $3.2160\times 10^{4}$ & 4292.197 & --- \\
		54 & LUKSAN13 & 98 & 224 & $6.4352\times 10^{4}$ & $2.518886\times 10^{4}$ & --- \\
		55 & LUKSAN14 & 98 & 224 & $2.6880\times 10^{4}$ & 123.9235 & --- \\
		56 & LUKSAN15 & 100 & 196 & $2.701585\times 10^{4}$ & 3.569697 & --- \\
		57 & LUKSAN16 & 100 & 196 & $1.306848\times 10^{4}$ & 3.569697 & --- \\
		58 & LUKSAN17 & 100 & 196 & $1.687370\times 10^{6}$ & 0.4931613 & --- \\
		59 & LUKSAN21 & 100 & 100 & 99.98751 & 0 & --- \\
		60 & LUKSAN22 & 100 & 198 & $2.487686\times 10^{4}$ & 872.9230 & --- \\
		\noalign{\smallskip}\hline
	\end{tabular}
	} 
	\caption{Details of medium-scale test problems from the CUTEst test set (showing $2f(\b{x}_0)$ and $2f^*$, as per \tabref{tab_more_wild_values}). The set of problems are taken primarily from \cite{Gould2012,More1981,Luksan1996}. Some problems are variable-dimensional; the relevant parameters yielding the given $(n,m)$ are provided. Problems marked * have box constraints. The value of $n$ shown excludes fixed variables.}
	\label{tab_cutest_problems}
\end{table}

\clearpage

\section{Numerical Results for Higher Accuracy Levels} \label{sec_numerics_high_accuracy}
To supplement the results provided in \secref{sec_numerics}, we provide here a number of the same comparisons of solvers, but for higher accuracy levels $\tau\in\{10^{-7},10^{-9},10^{-11}\}$, compared to $\tau=10^{-5}$ in the main text.
For each set of plots below, the title indicates the content of the plots and the figure in the main text with the corresponding results for $\tau=10^{-5}$.

\subsection{Mor\'e \& Wild test set --- smooth objectives (compare to \figref{fig_smooth})}
\begin{figure}[H]
	\centering
	\begin{subfigure}[b]{0.48\textwidth}
		\includegraphics[width=\textwidth]{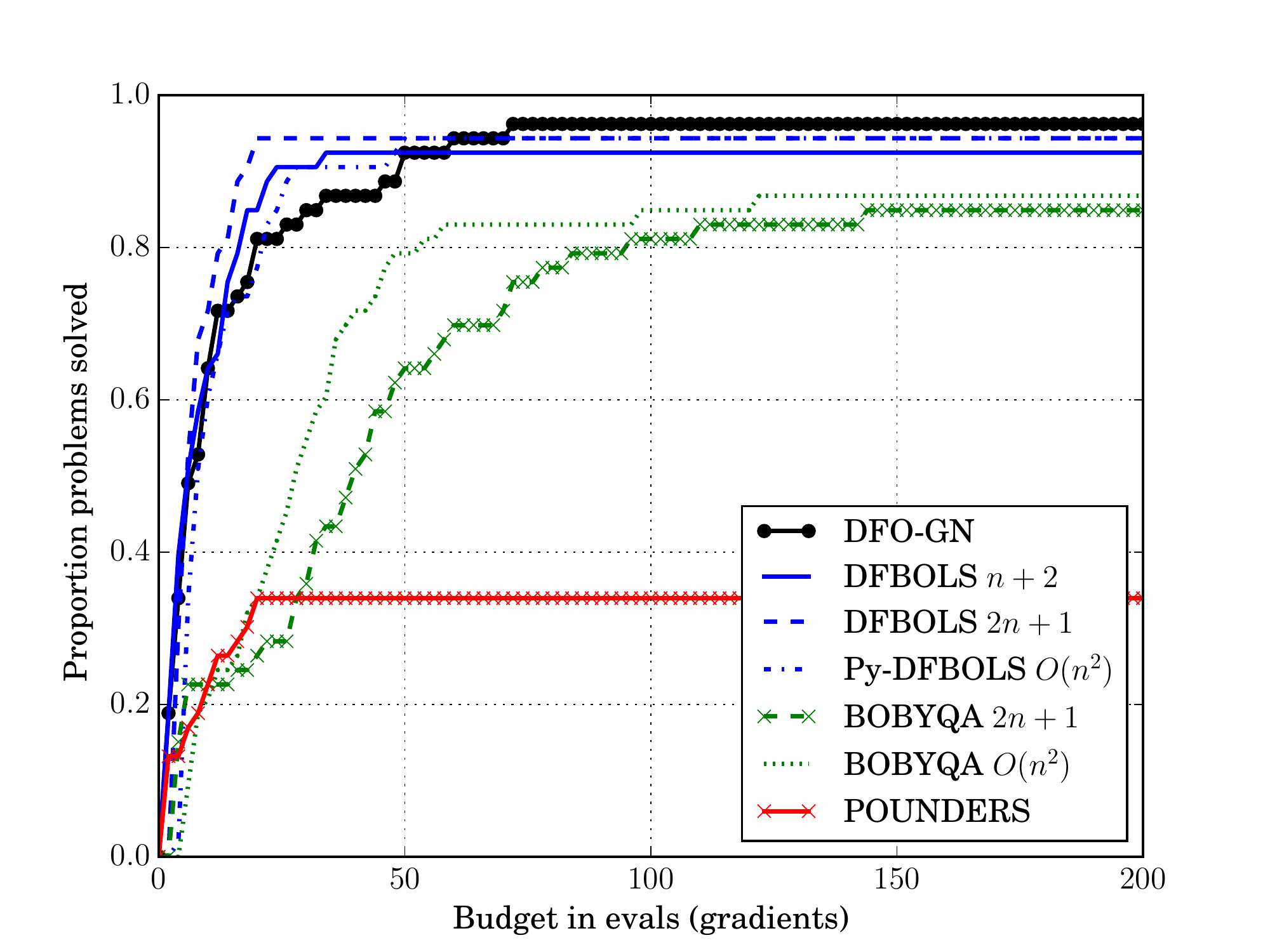}
		\caption{Data profile, $\tau=10^{-7}$}
	\end{subfigure}
	~
	\begin{subfigure}[b]{0.48\textwidth}
		\includegraphics[width=\textwidth]{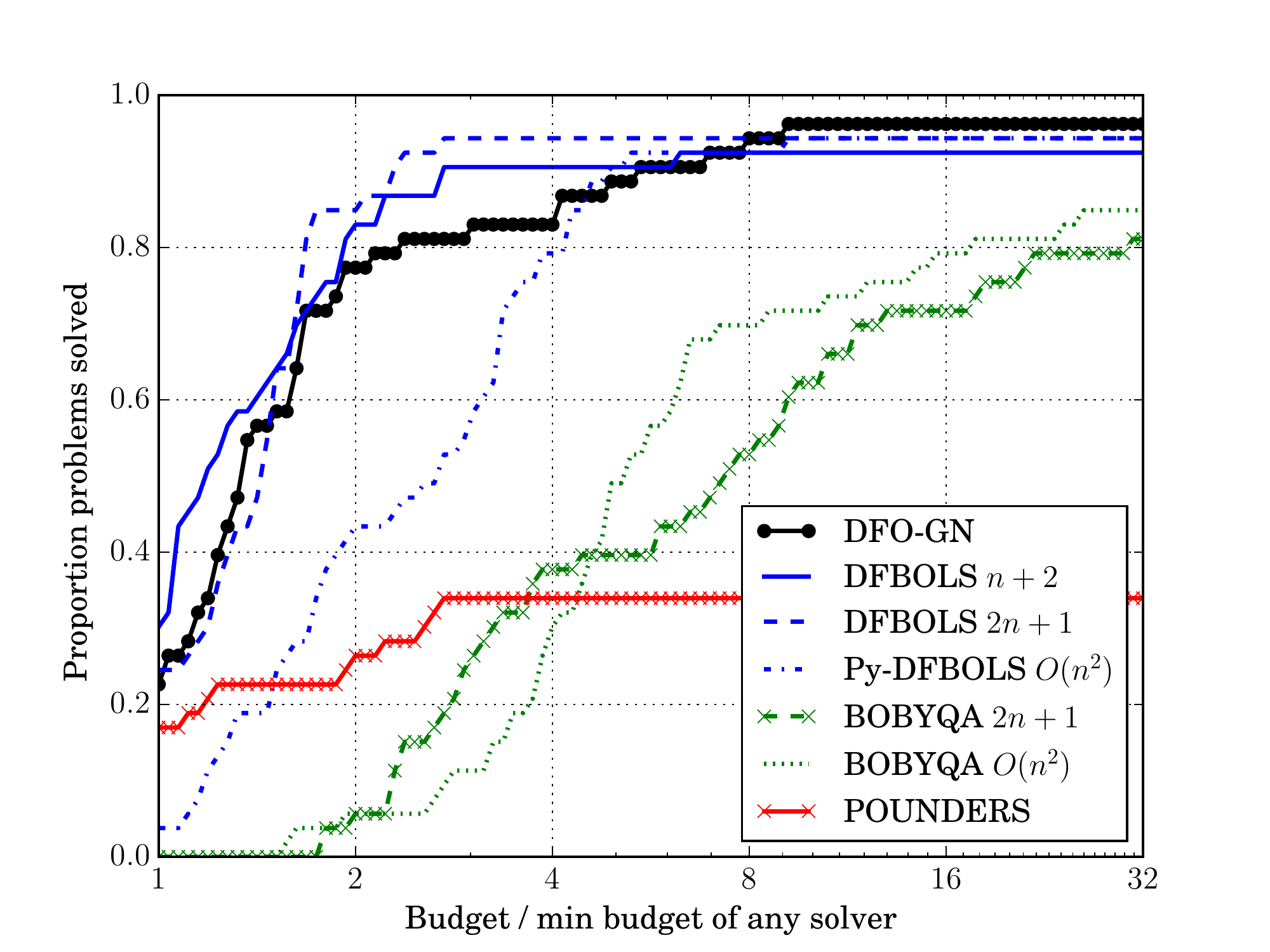}
		\caption{Perf profile, $\tau=10^{-7}$}
	\end{subfigure}
	\\
	\begin{subfigure}[b]{0.48\textwidth}
		\includegraphics[width=\textwidth]{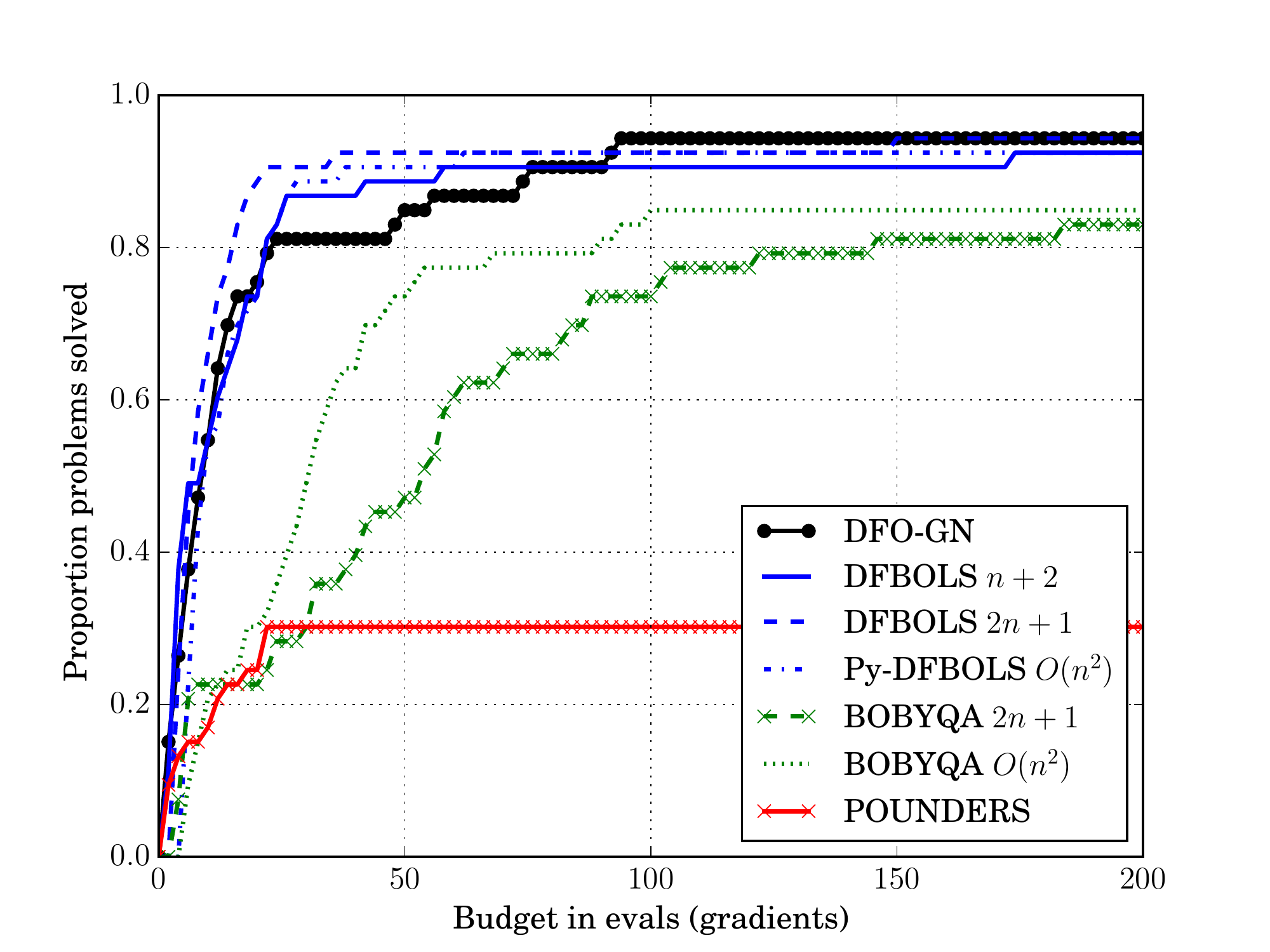}
		\caption{Data profile, $\tau=10^{-9}$}
	\end{subfigure}
	~
	\begin{subfigure}[b]{0.48\textwidth}
		\includegraphics[width=\textwidth]{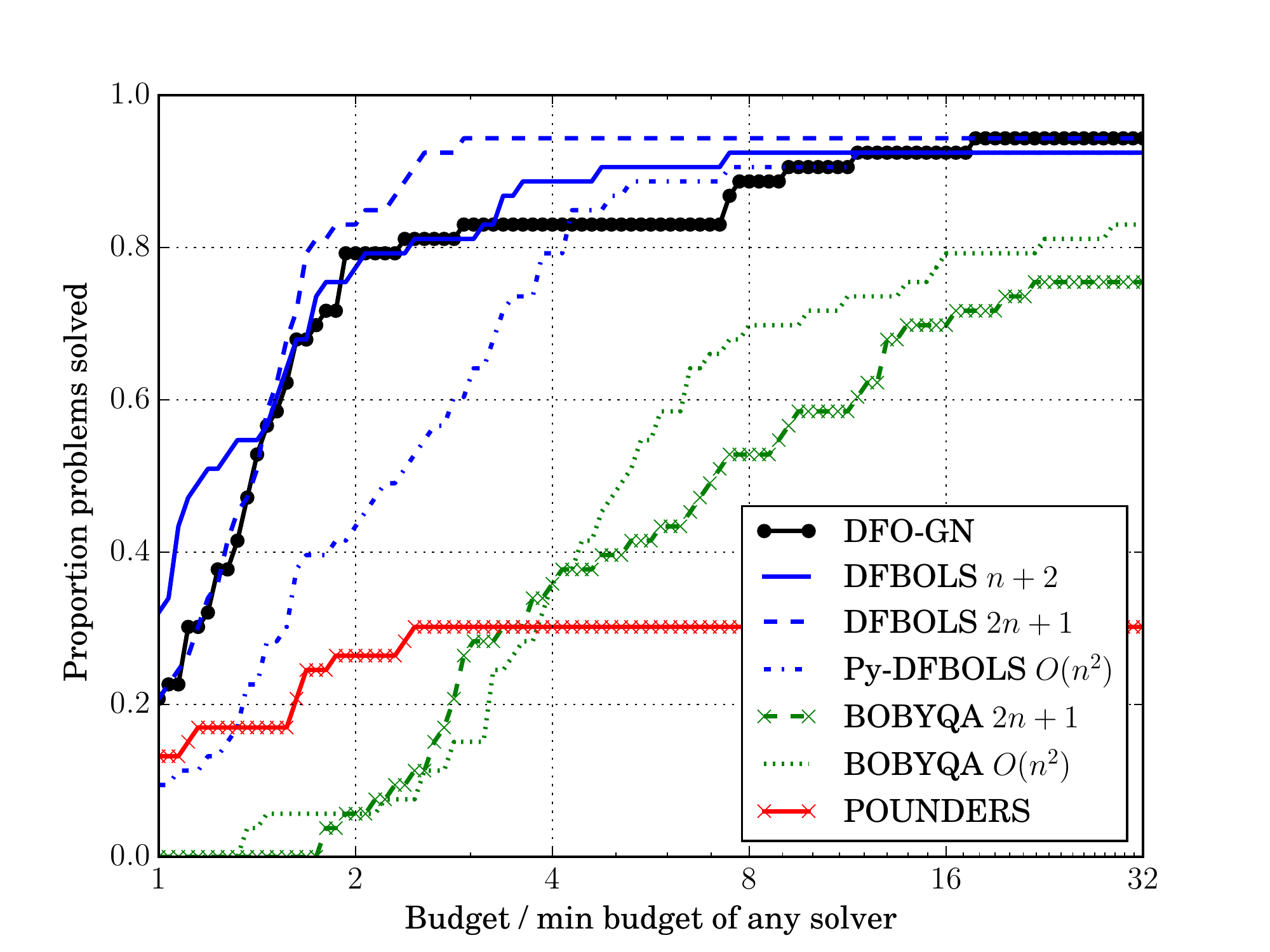}
		\caption{Perf profile, $\tau=10^{-9}$}
	\end{subfigure}
	\\
	\begin{subfigure}[b]{0.48\textwidth}
		\includegraphics[width=\textwidth]{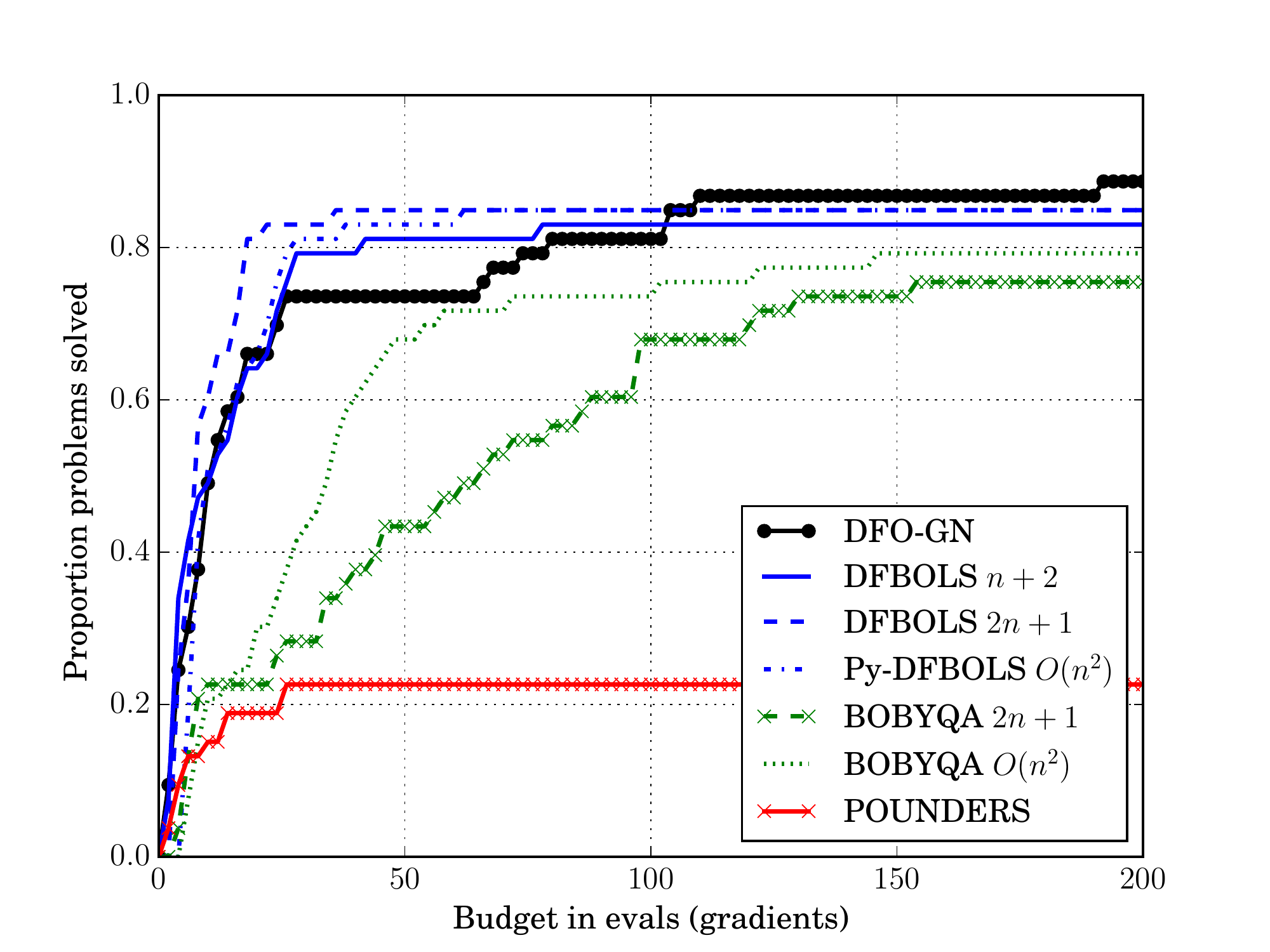}
		\caption{Data profile, $\tau=10^{-11}$}
	\end{subfigure}
	~
	\begin{subfigure}[b]{0.48\textwidth}
		\includegraphics[width=\textwidth]{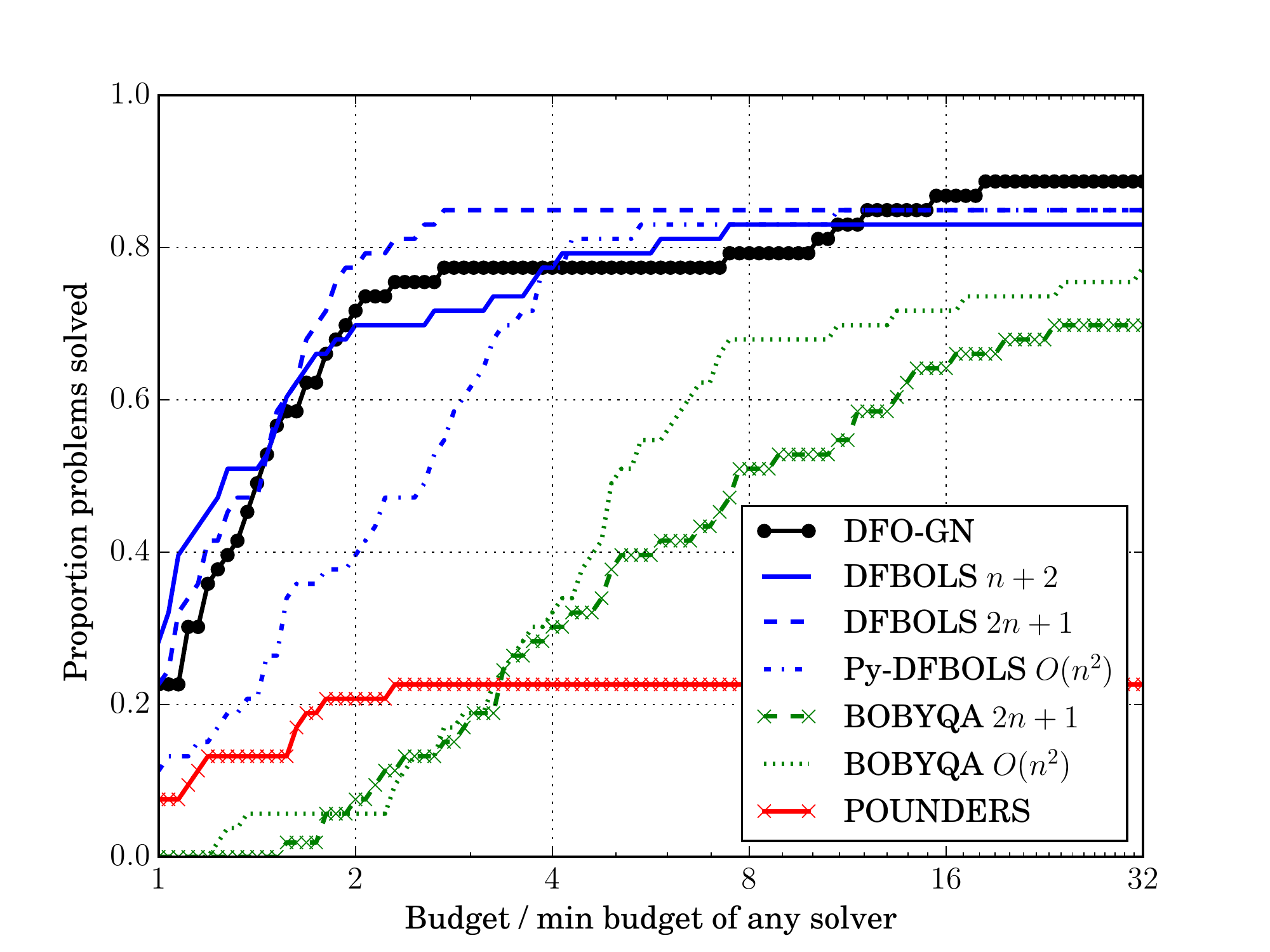}
		\caption{Perf profile, $\tau=10^{-11}$}
	\end{subfigure}
	\caption{Comparison of DFO-GN with BOBYQA, DFBOLS and POUNDERS for smooth objectives, to accuracy $\tau\in\{10^{-7},10^{-9},10^{-11}\}$. For the BOBYQA and DFBOLS runs, $n+2$, $2n+1$ and $\bigO(n^2)=(n+1)(n+2)/2$ are the number of interpolation points.}
	\label{fig_smooth_high_accuracy}
\end{figure}

\subsection{Mor\'e \& Wild test set --- noisy objectives for $\tau=10^{-7}$ (compare to \figref{fig_noisy})}
\begin{figure}[H]
	\centering
	\begin{subfigure}[b]{0.48\textwidth}
		\includegraphics[width=\textwidth]{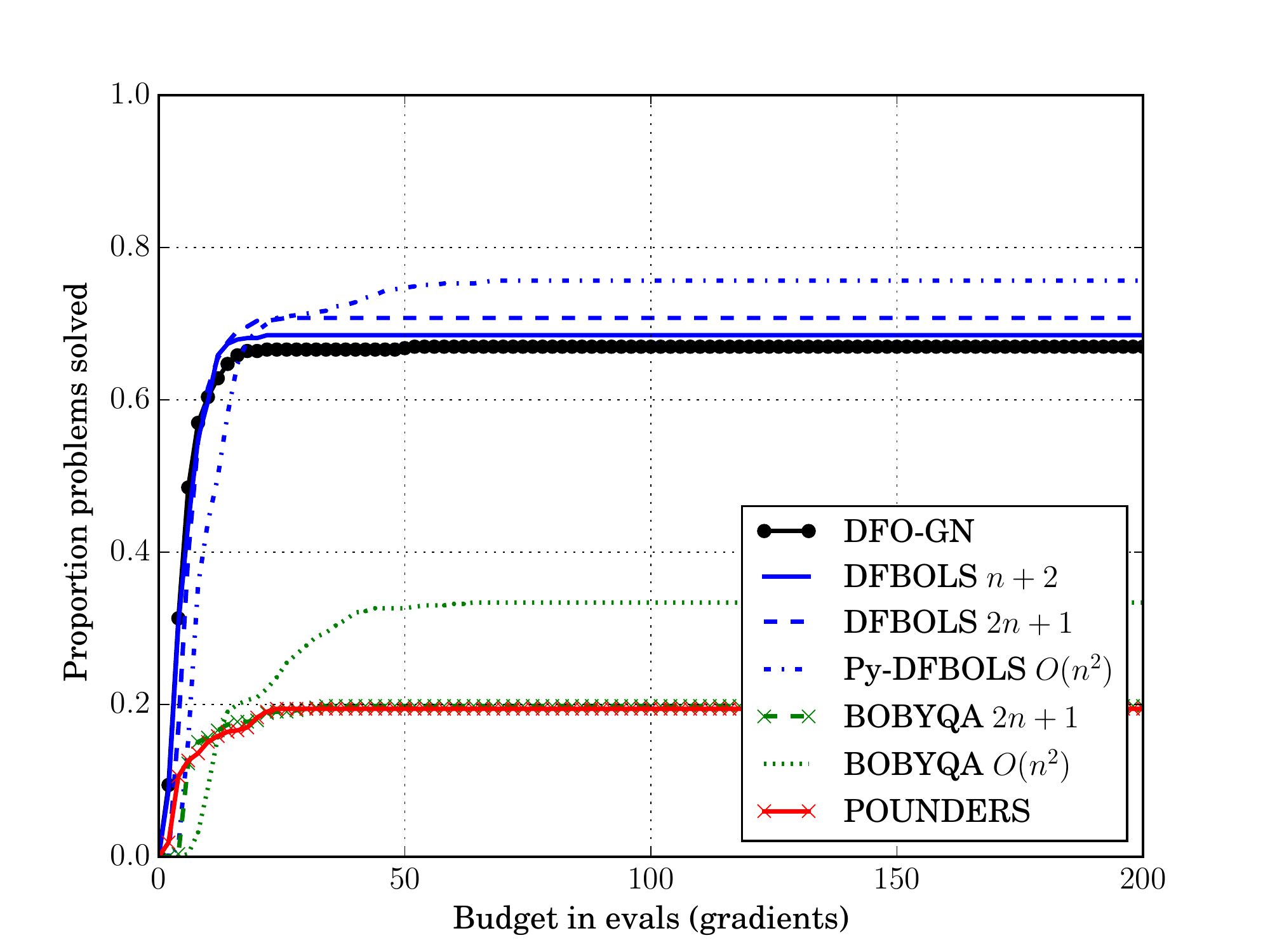}
		\caption{Mult.~Gaussian, data profile}
	\end{subfigure}
	~
	\begin{subfigure}[b]{0.48\textwidth}
		\includegraphics[width=\textwidth]{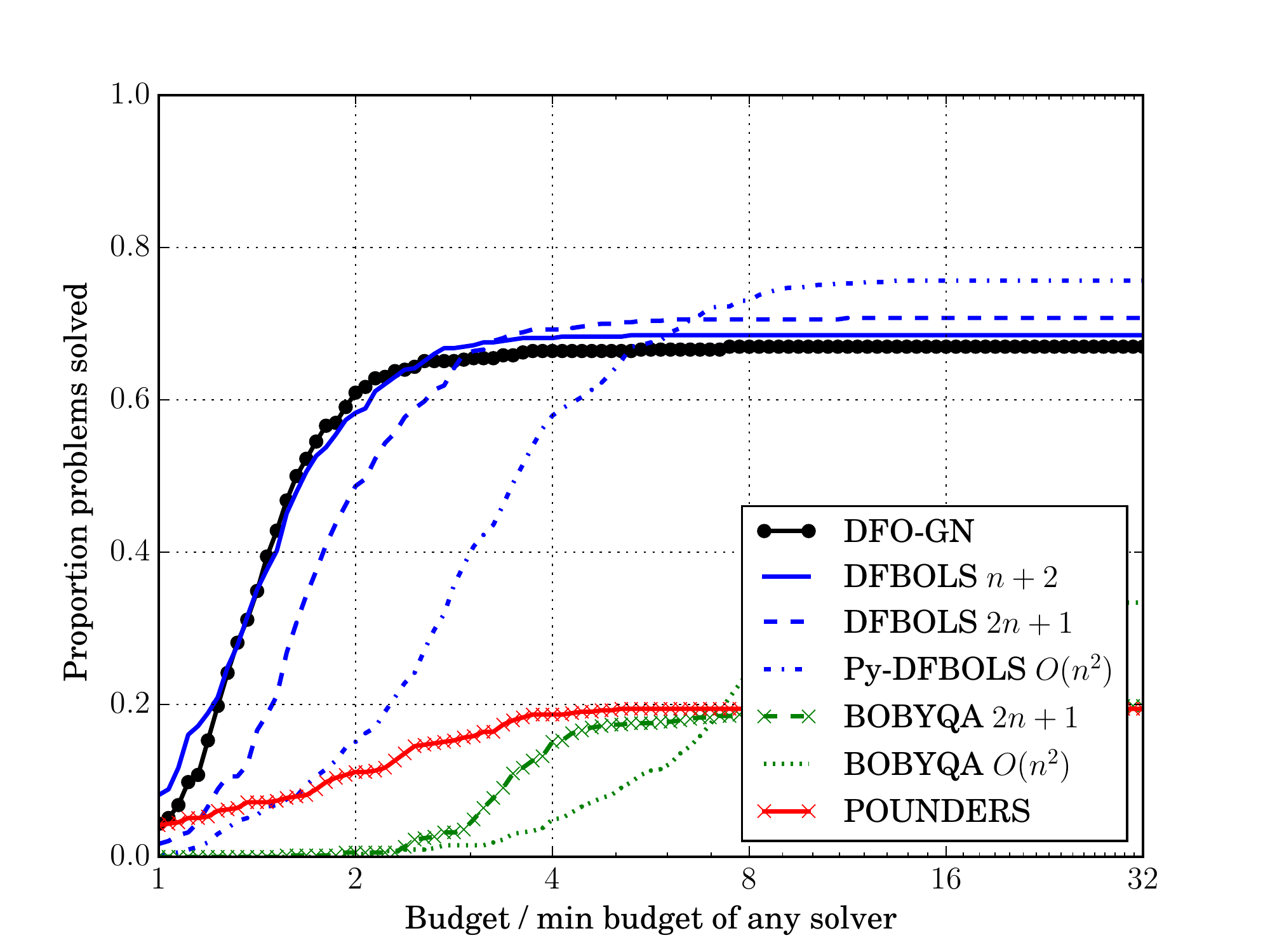}
		\caption{Mult.~Gaussian, performance profile}
	\end{subfigure}
	\\
	\begin{subfigure}[b]{0.48\textwidth}
		\includegraphics[width=\textwidth]{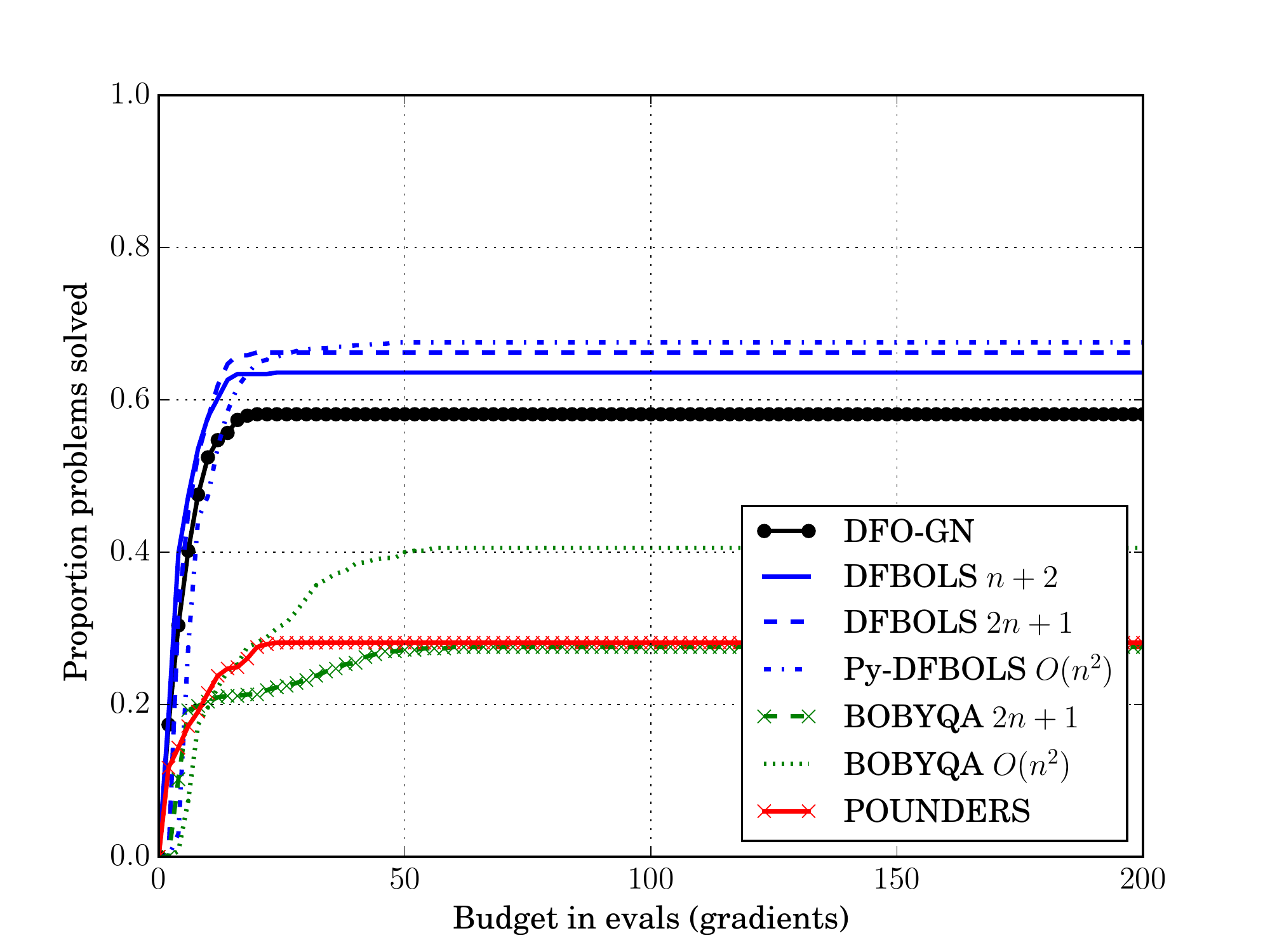}
		\caption{Add.~Gaussian, data profile}
	\end{subfigure}
	~
	\begin{subfigure}[b]{0.48\textwidth}
		\includegraphics[width=\textwidth]{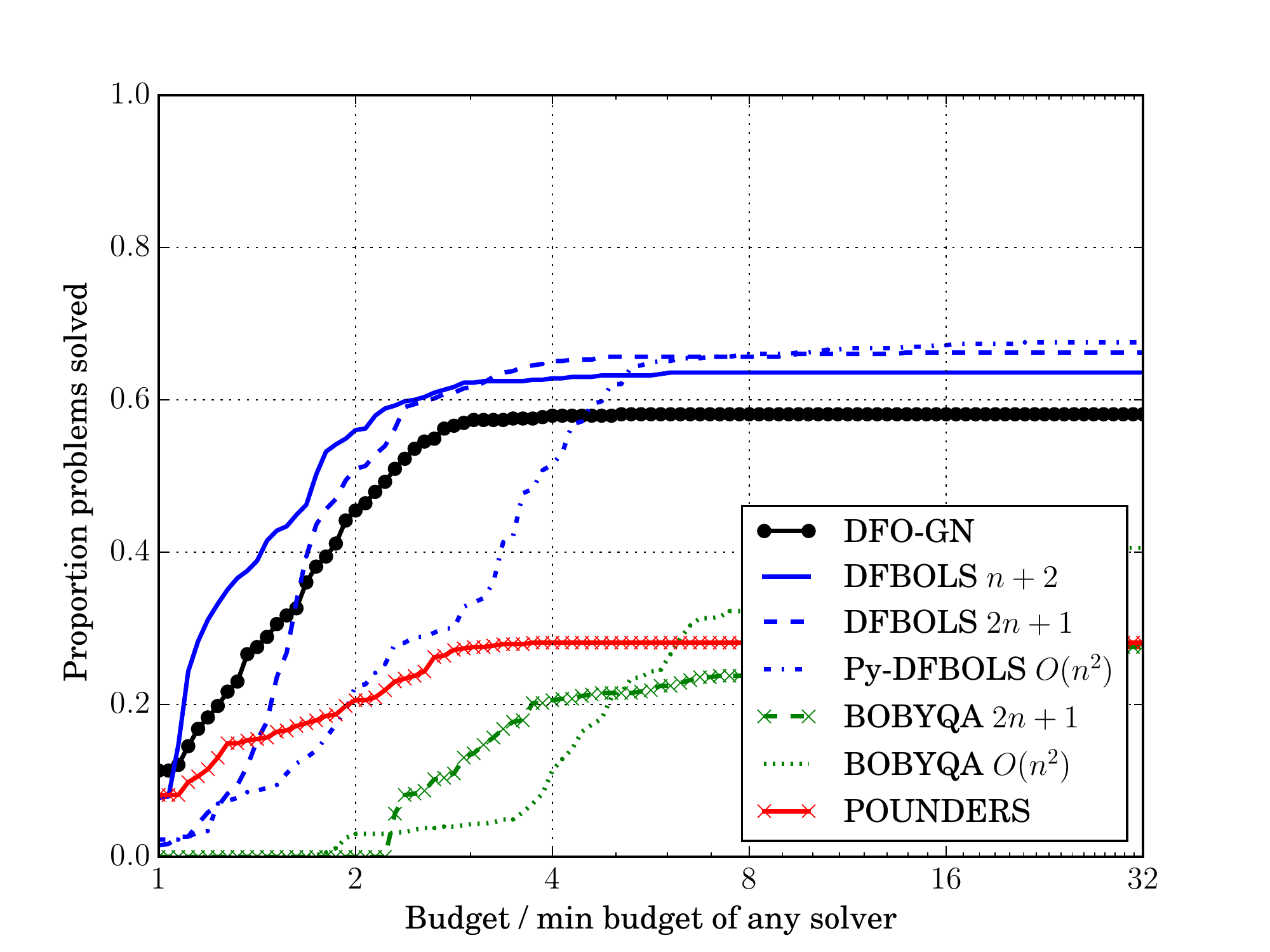}
		\caption{Add.~Gaussian, performance profile}
	\end{subfigure}
	\\
	\begin{subfigure}[b]{0.48\textwidth}
		\includegraphics[width=\textwidth]{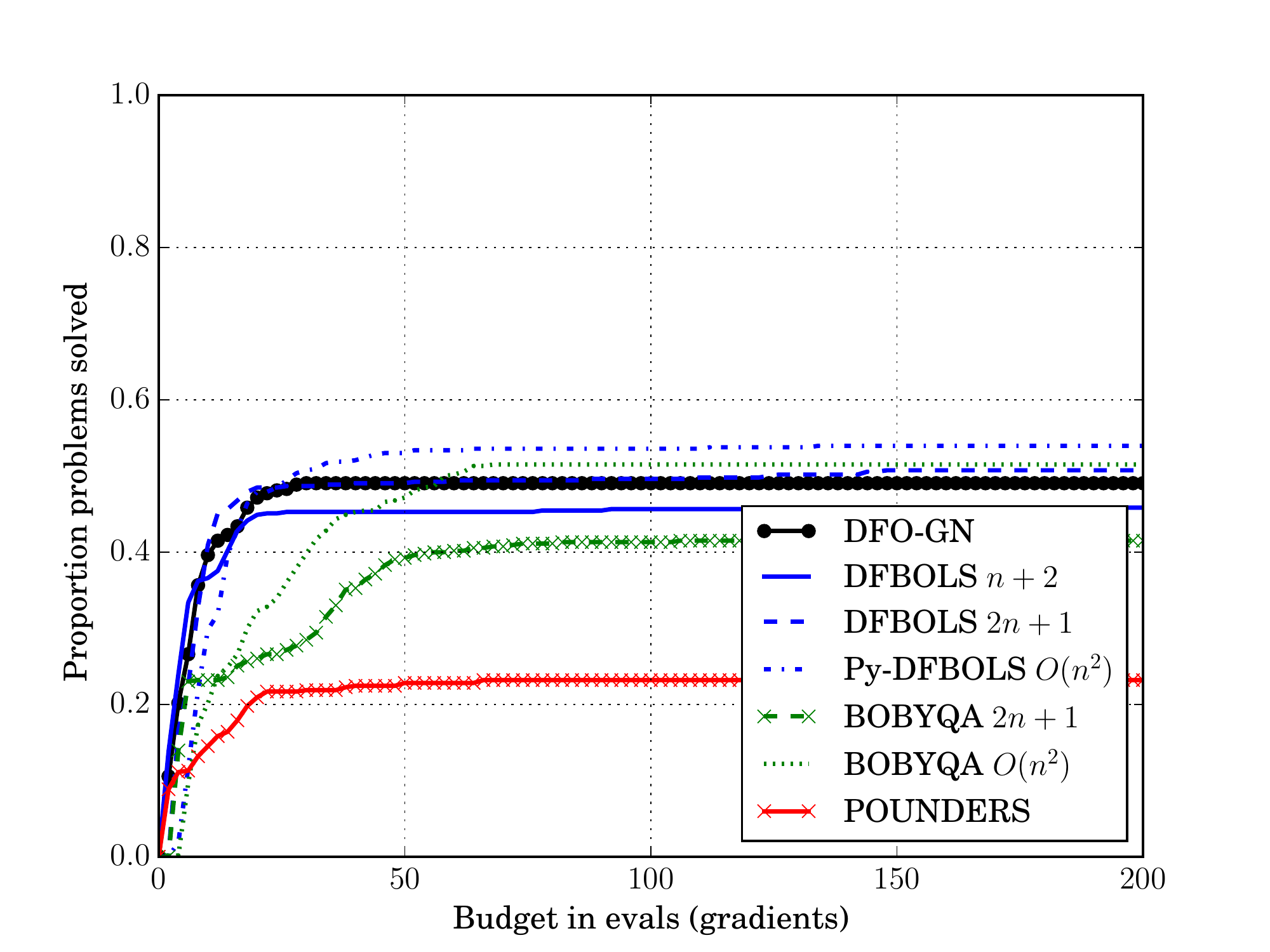}
		\caption{Add.~$\chi^2$, data profile}
	\end{subfigure}
	~
	\begin{subfigure}[b]{0.48\textwidth}
		\includegraphics[width=\textwidth]{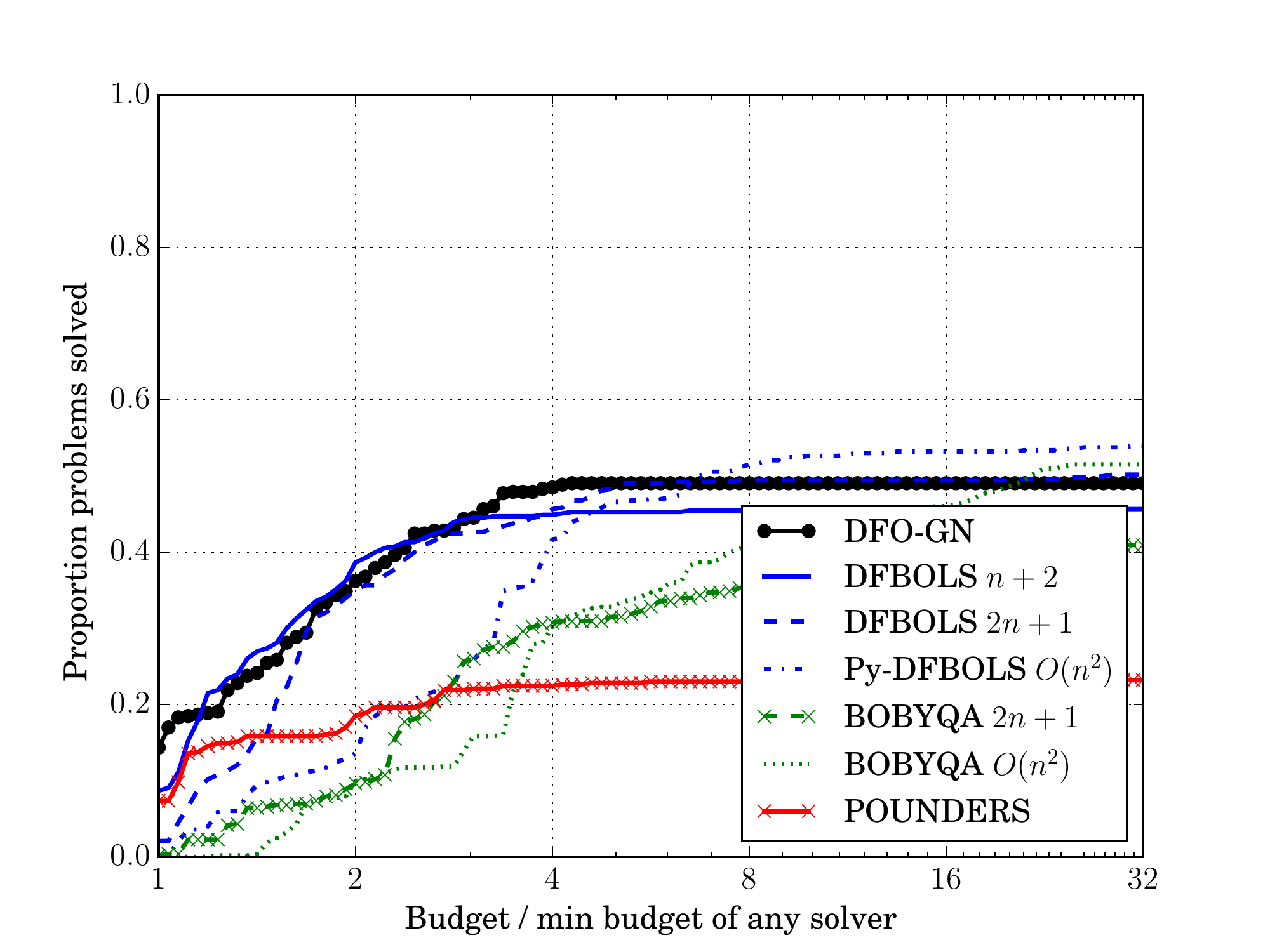}
		\caption{Add.~$\chi^2$, performance profile}
	\end{subfigure}
	\caption{Comparison of DFO-GN with BOBYQA, DFBOLS and POUNDERS for objectives with multiplicative Gaussian, additive Gaussian and additive $\chi^2$ noise with $\sigma=10^{-2}$, to accuracy $\tau=10^{-7}$ (average of 10 runs for each solver). For the BOBYQA and DFBOLS runs, $n+2$, $2n+1$ and $\bigO(n^2)=(n+1)(n+2)/2$ are the number of interpolation points.}
	\label{fig_noisy_high_accuracy_tau7}
\end{figure}

\subsection{Mor\'e \& Wild test set --- noisy objectives for $\tau=10^{-9}$ (compare to \figref{fig_noisy})}
\begin{figure}[H]
	\centering
	\begin{subfigure}[b]{0.48\textwidth}
		\includegraphics[width=\textwidth]{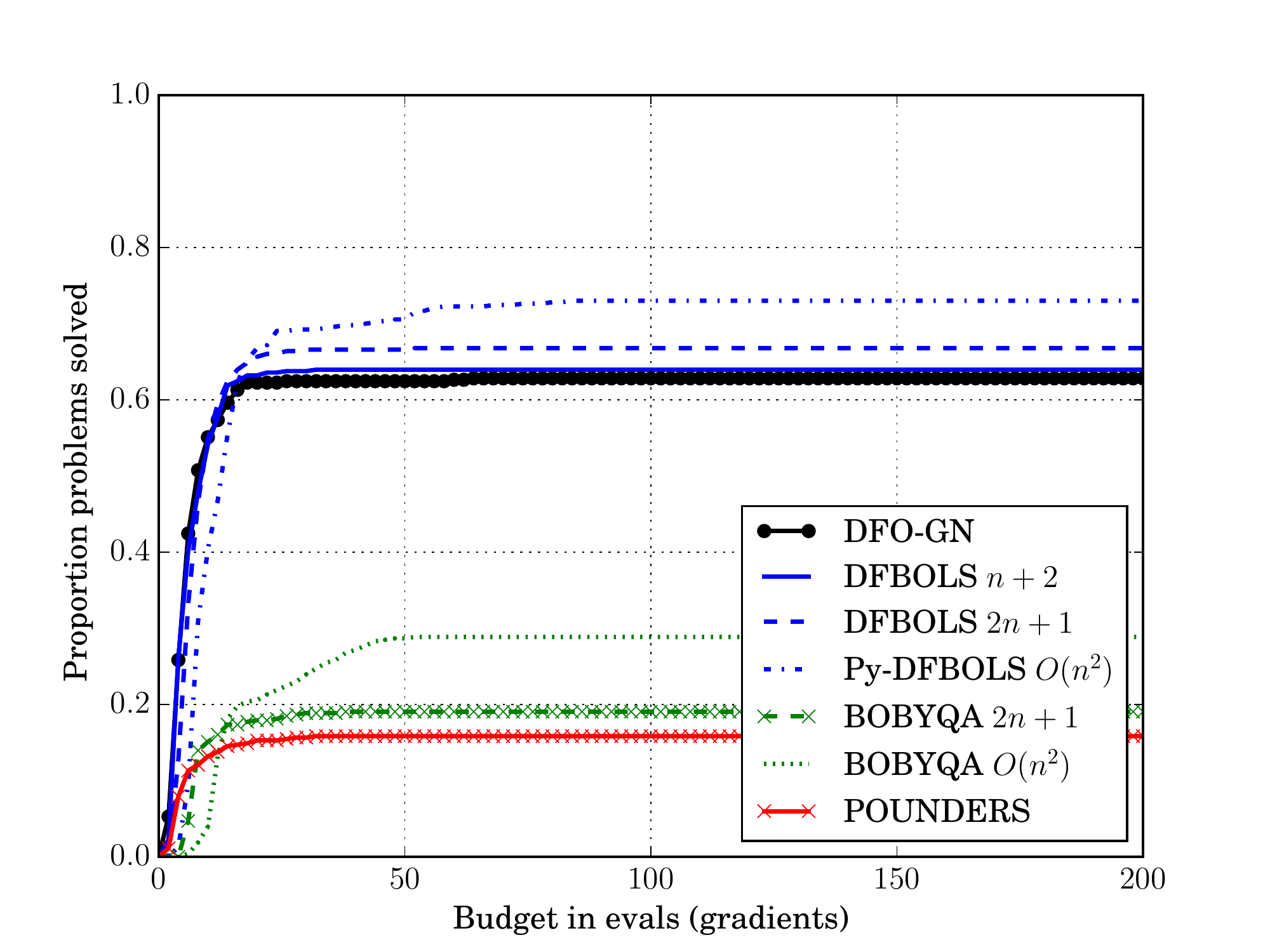}
		\caption{Mult.~Gaussian, data profile}
	\end{subfigure}
	~
	\begin{subfigure}[b]{0.48\textwidth}
		\includegraphics[width=\textwidth]{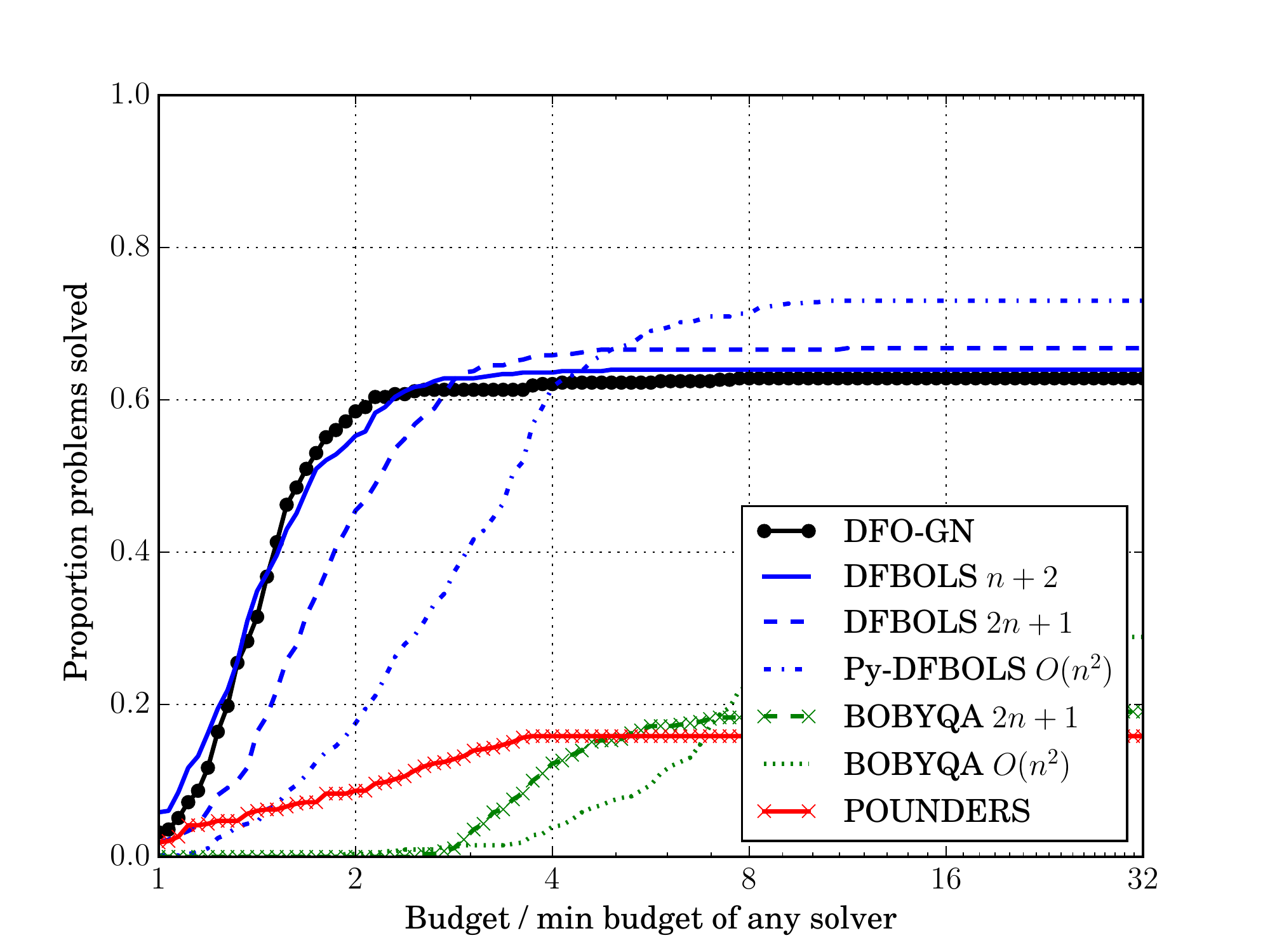}
		\caption{Mult.~Gaussian, performance profile}
	\end{subfigure}
	\\
	\begin{subfigure}[b]{0.48\textwidth}
		\includegraphics[width=\textwidth]{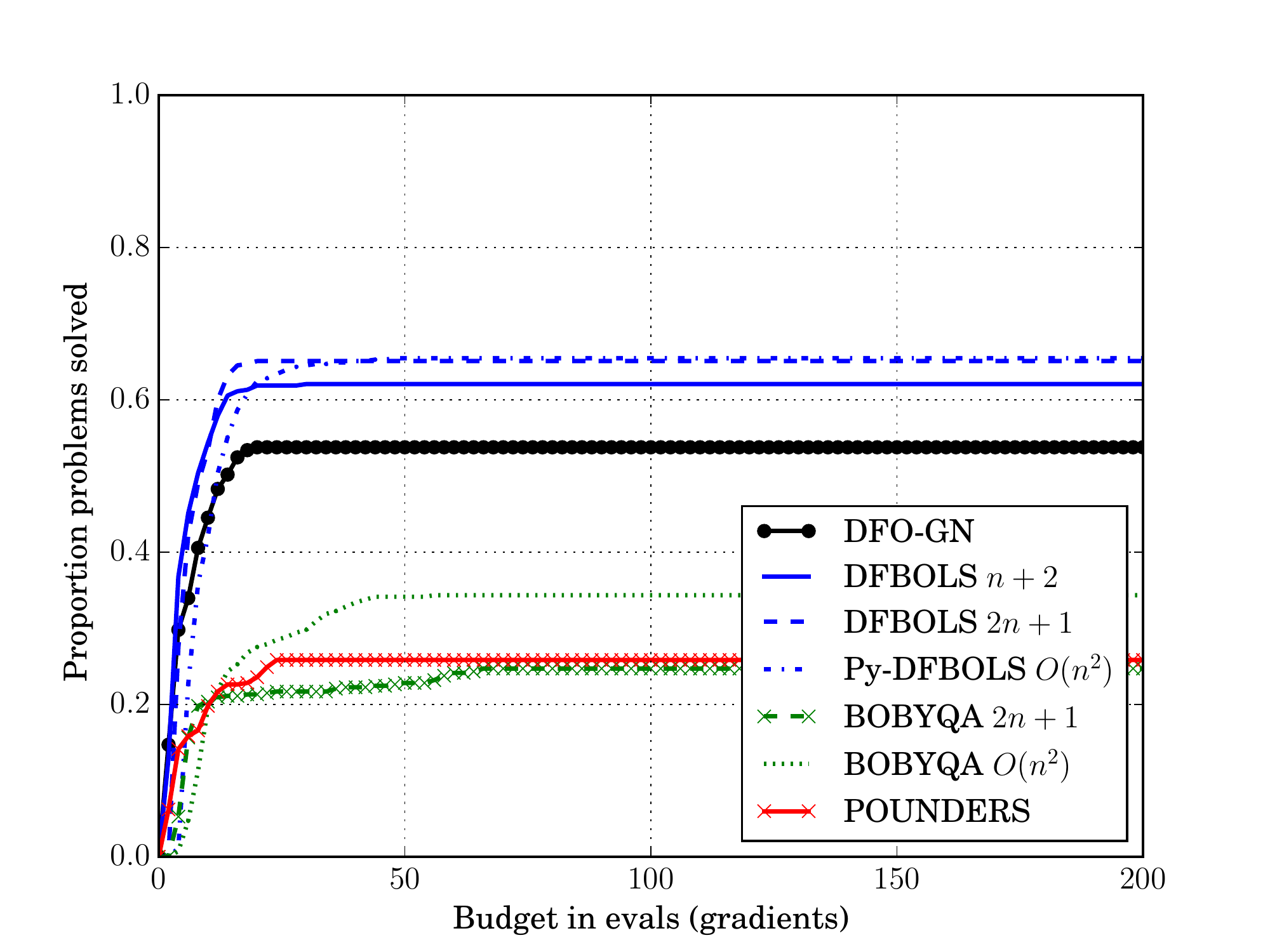}
		\caption{Add.~Gaussian, data profile}
	\end{subfigure}
	~
	\begin{subfigure}[b]{0.48\textwidth}
		\includegraphics[width=\textwidth]{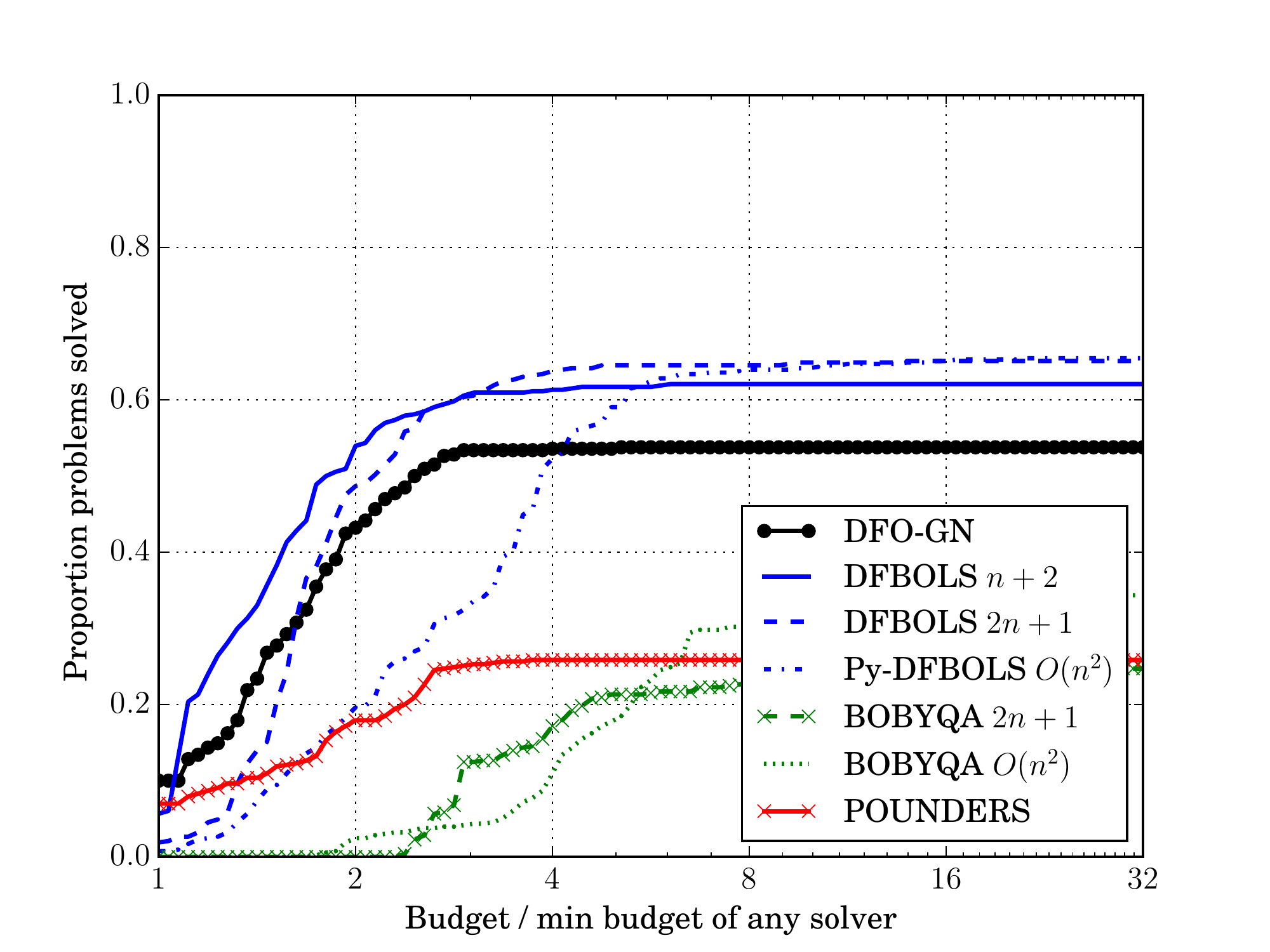}
		\caption{Add.~Gaussian, performance profile}
	\end{subfigure}
	\\
	\begin{subfigure}[b]{0.48\textwidth}
		\includegraphics[width=\textwidth]{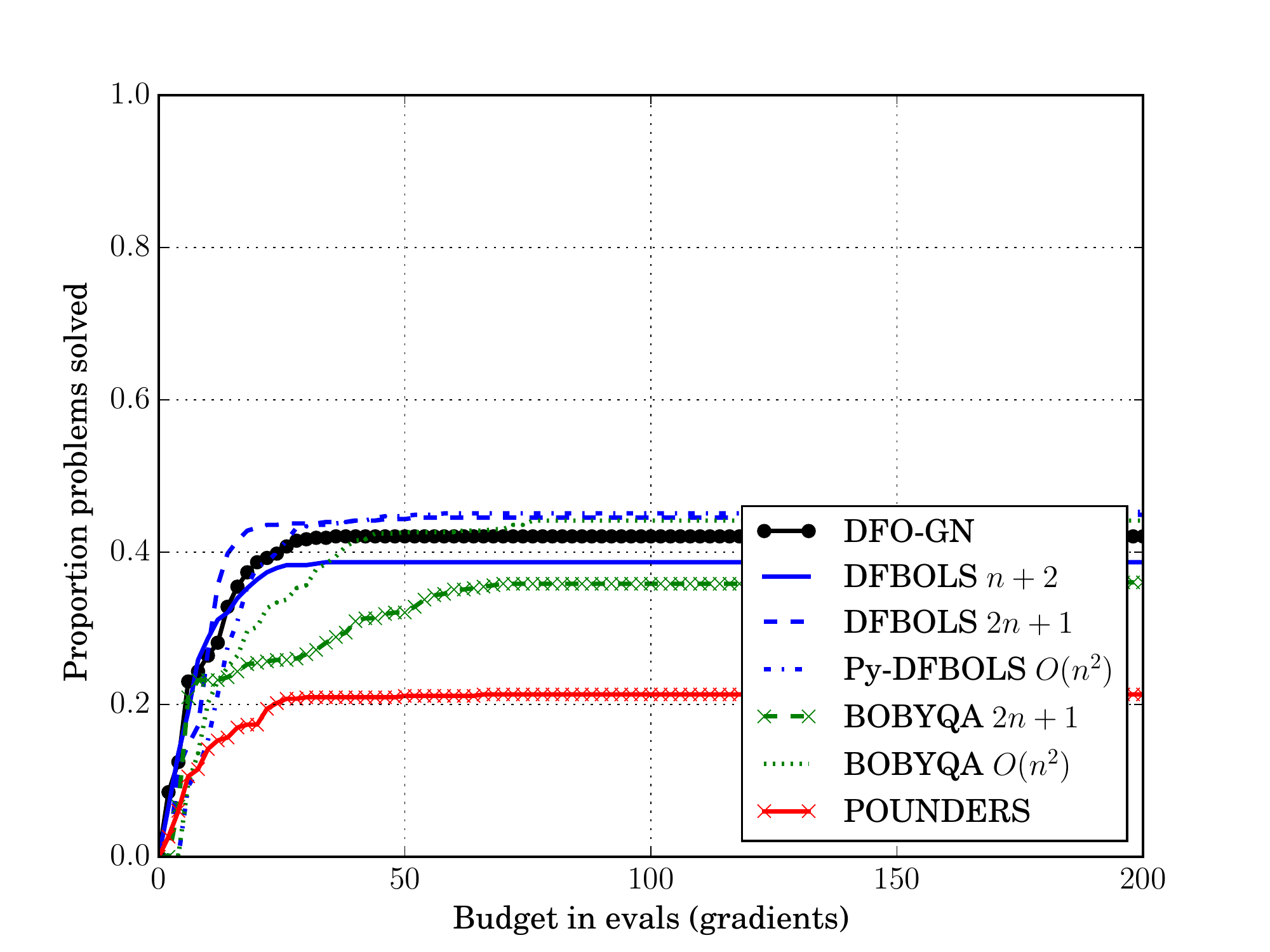}
		\caption{Add.~$\chi^2$, data profile}
	\end{subfigure}
	~
	\begin{subfigure}[b]{0.48\textwidth}
		\includegraphics[width=\textwidth]{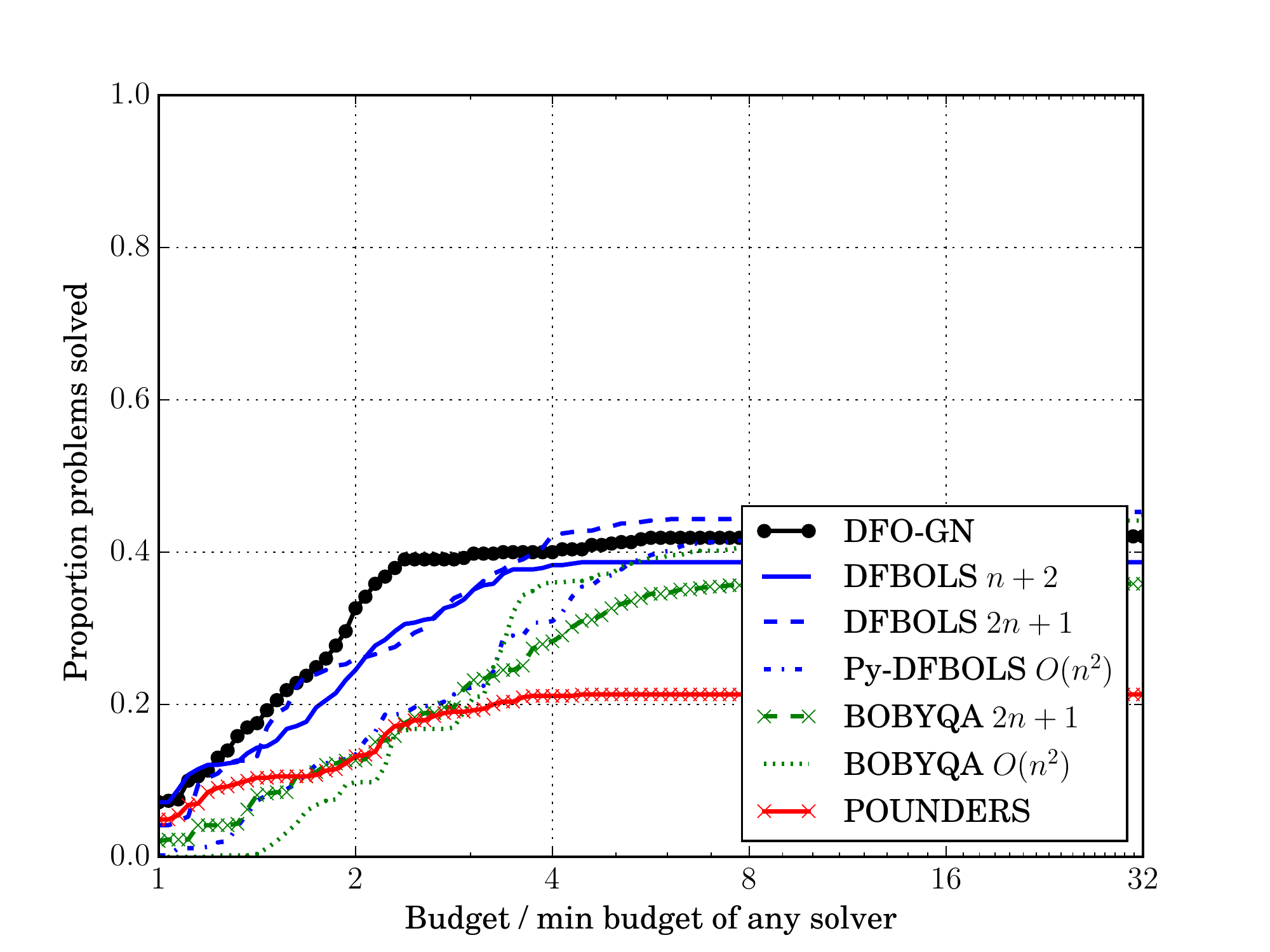}
		\caption{Add.~$\chi^2$, performance profile}
	\end{subfigure}
	\caption{Comparison of DFO-GN with BOBYQA, DFBOLS and POUNDERS for objectives with multiplicative Gaussian, additive Gaussian and additive $\chi^2$ noise with $\sigma=10^{-2}$, to accuracy $\tau=10^{-9}$ (average of 10 runs for each solver). For the BOBYQA and DFBOLS runs, $n+2$, $2n+1$ and $\bigO(n^2)=(n+1)(n+2)/2$ are the number of interpolation points.}
	\label{fig_noisy_high_accuracy_tau9}
\end{figure}

\subsection{Mor\'e \& Wild test set --- noisy objectives for $\tau=10^{-11}$ (compare to \figref{fig_noisy})}
\begin{figure}[H]
	\centering
	\begin{subfigure}[b]{0.48\textwidth}
		\includegraphics[width=\textwidth]{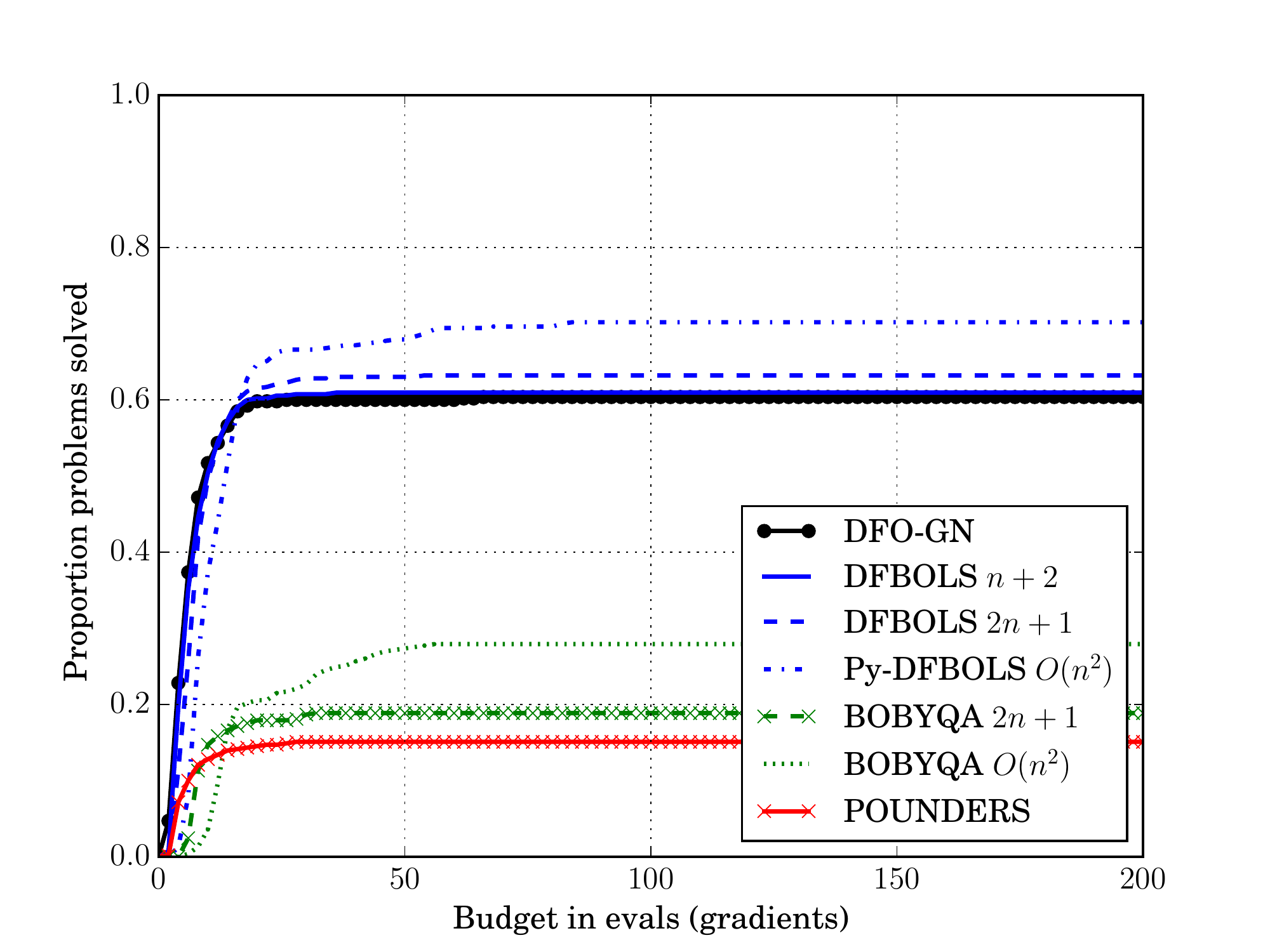}
		\caption{Mult.~Gaussian, data profile}
	\end{subfigure}
	~
	\begin{subfigure}[b]{0.48\textwidth}
		\includegraphics[width=\textwidth]{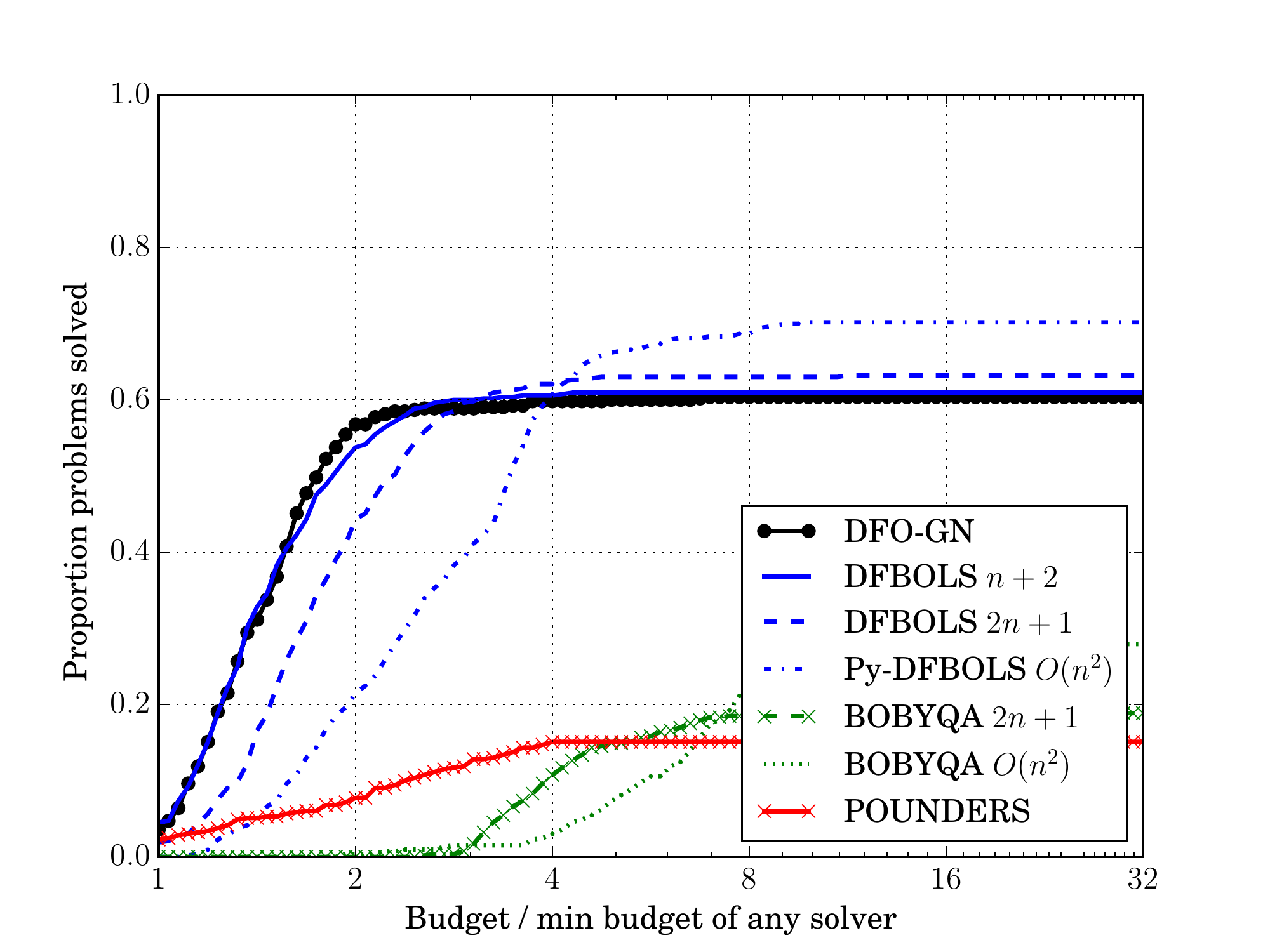}
		\caption{Mult.~Gaussian, performance profile}
	\end{subfigure}
	\\
	\begin{subfigure}[b]{0.48\textwidth}
		\includegraphics[width=\textwidth]{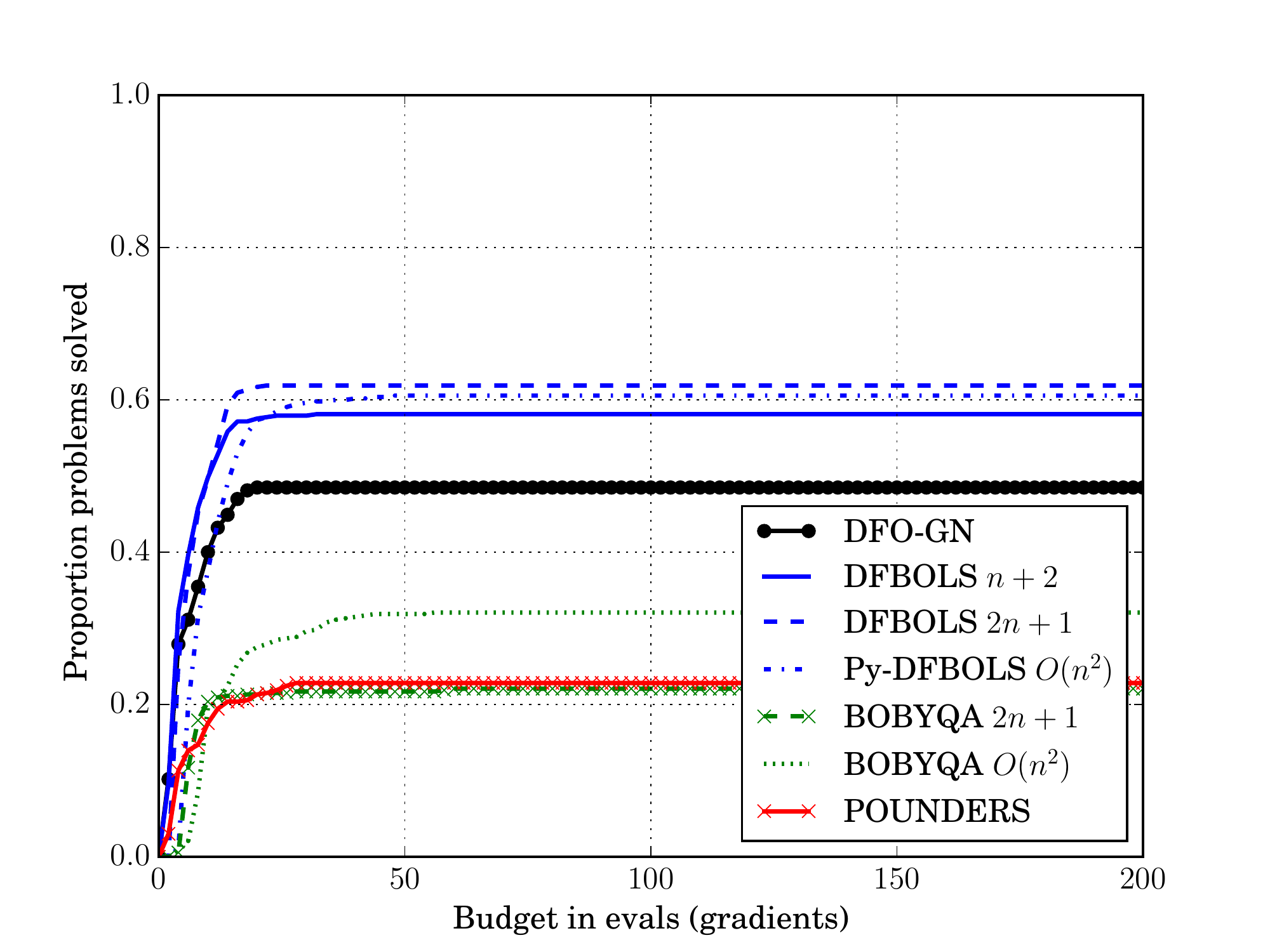}
		\caption{Add.~Gaussian, data profile}
	\end{subfigure}
	~
	\begin{subfigure}[b]{0.48\textwidth}
		\includegraphics[width=\textwidth]{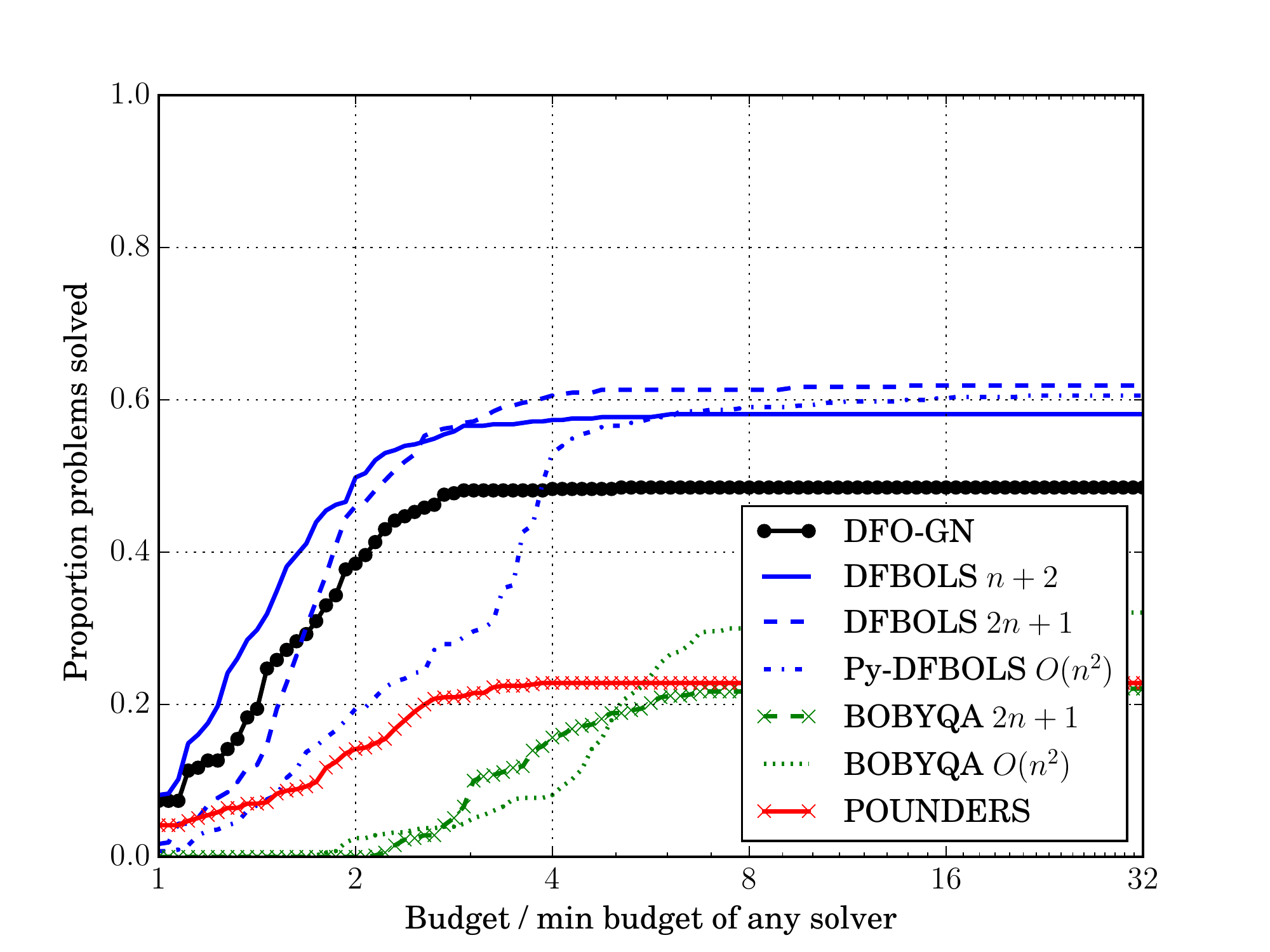}
		\caption{Add.~Gaussian, performance profile}
	\end{subfigure}
	\\
	\begin{subfigure}[b]{0.48\textwidth}
		\includegraphics[width=\textwidth]{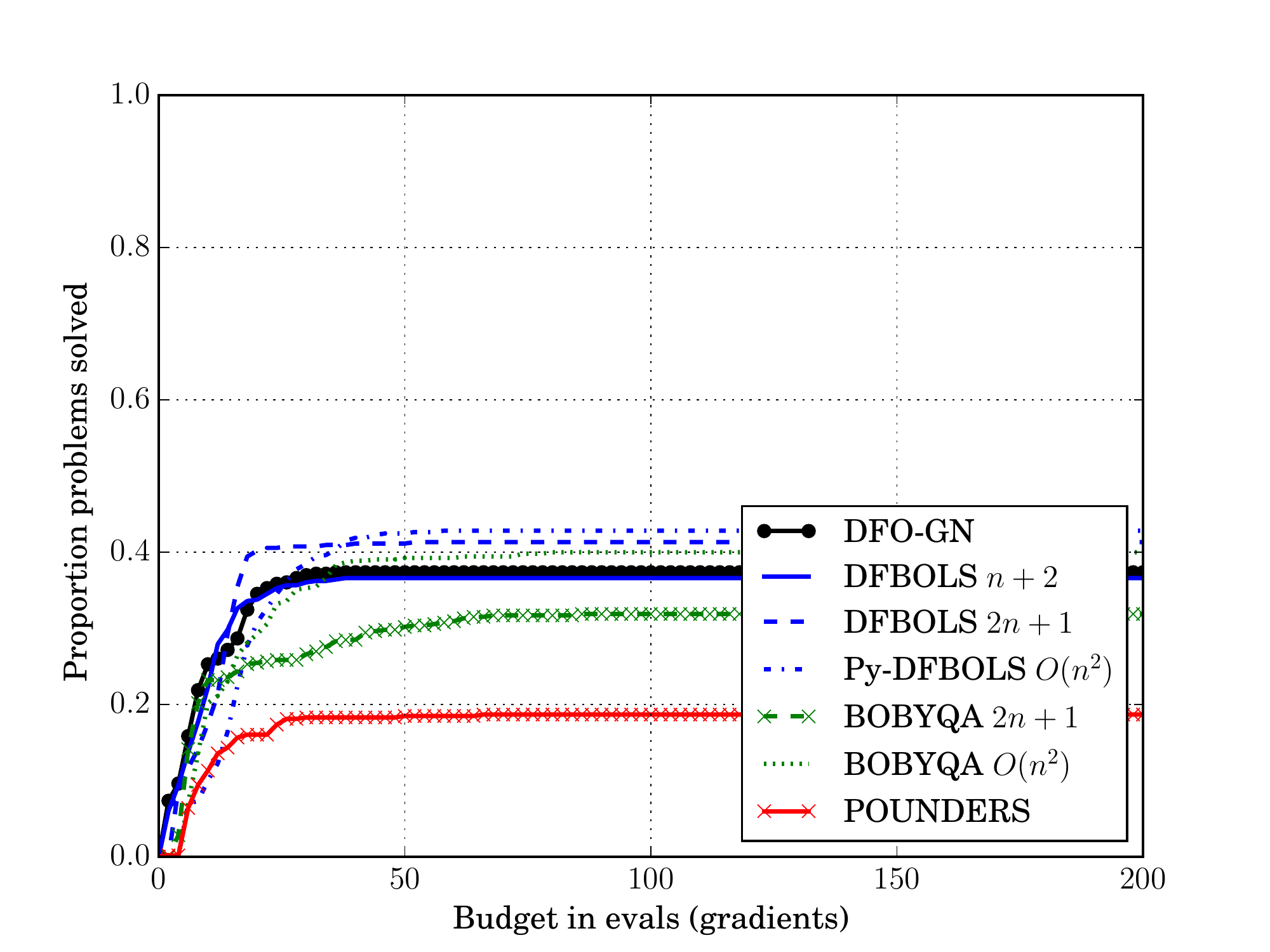}
		\caption{Add.~$\chi^2$, data profile}
	\end{subfigure}
	~
	\begin{subfigure}[b]{0.48\textwidth}
		\includegraphics[width=\textwidth]{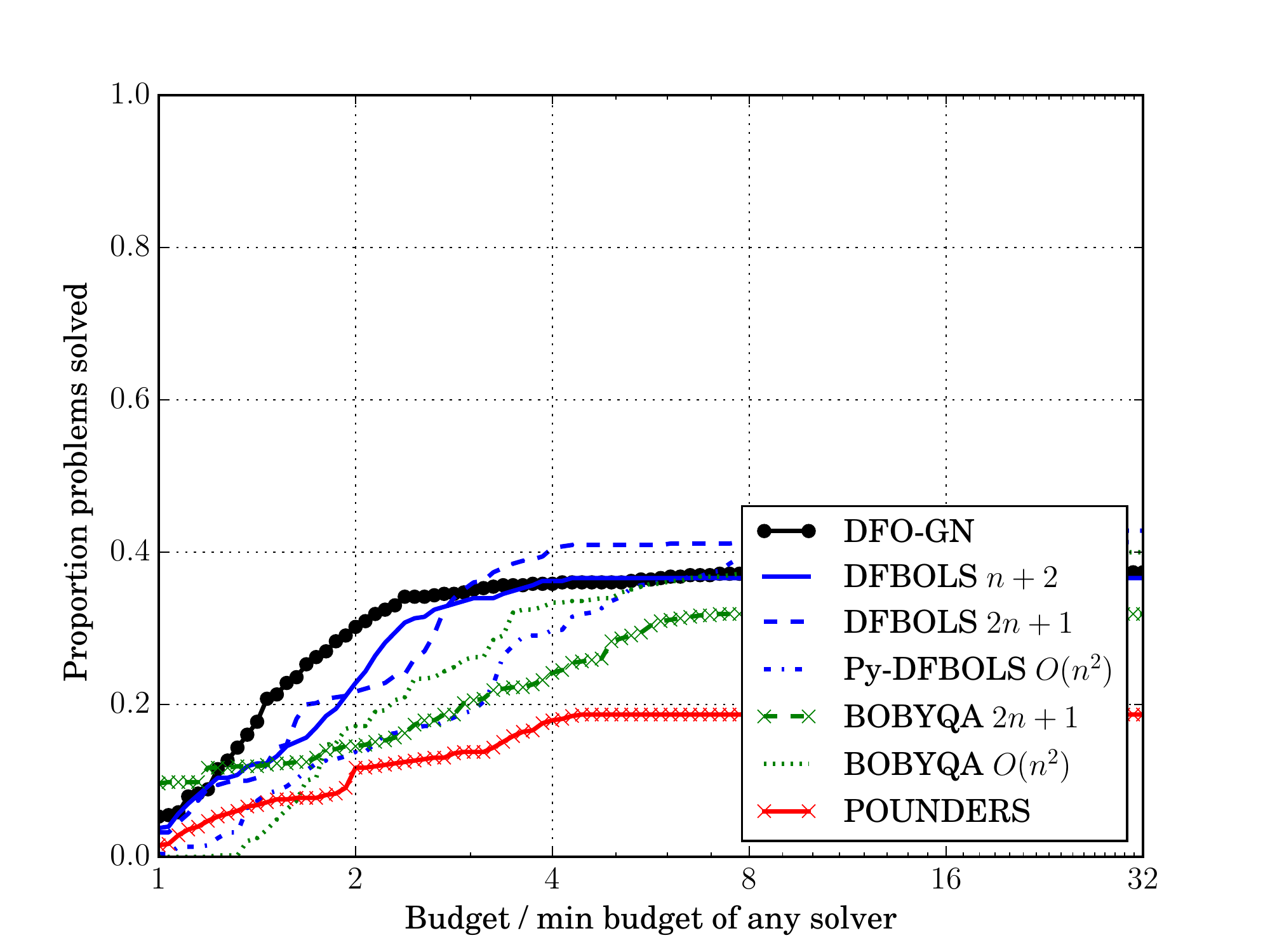}
		\caption{Add.~$\chi^2$, performance profile}
	\end{subfigure}
	\caption{Comparison of DFO-GN with BOBYQA, DFBOLS and POUNDERS for objectives with multiplicative Gaussian, additive Gaussian and additive $\chi^2$ noise with $\sigma=10^{-2}$, to accuracy $\tau=10^{-11}$ (average of 10 runs for each solver). For the BOBYQA and DFBOLS runs, $n+2$, $2n+1$ and $\bigO(n^2)=(n+1)(n+2)/2$ are the number of interpolation points.}
	\label{fig_noisy_high_accuracy_tau11}
\end{figure}

\subsection{Mor\'e \& Wild test set --- nonzero residual problems only (compare to \figref{fig_nonzero})}
\begin{figure}[H]
	\centering
	\begin{subfigure}[b]{0.48\textwidth}
		\includegraphics[width=\textwidth]{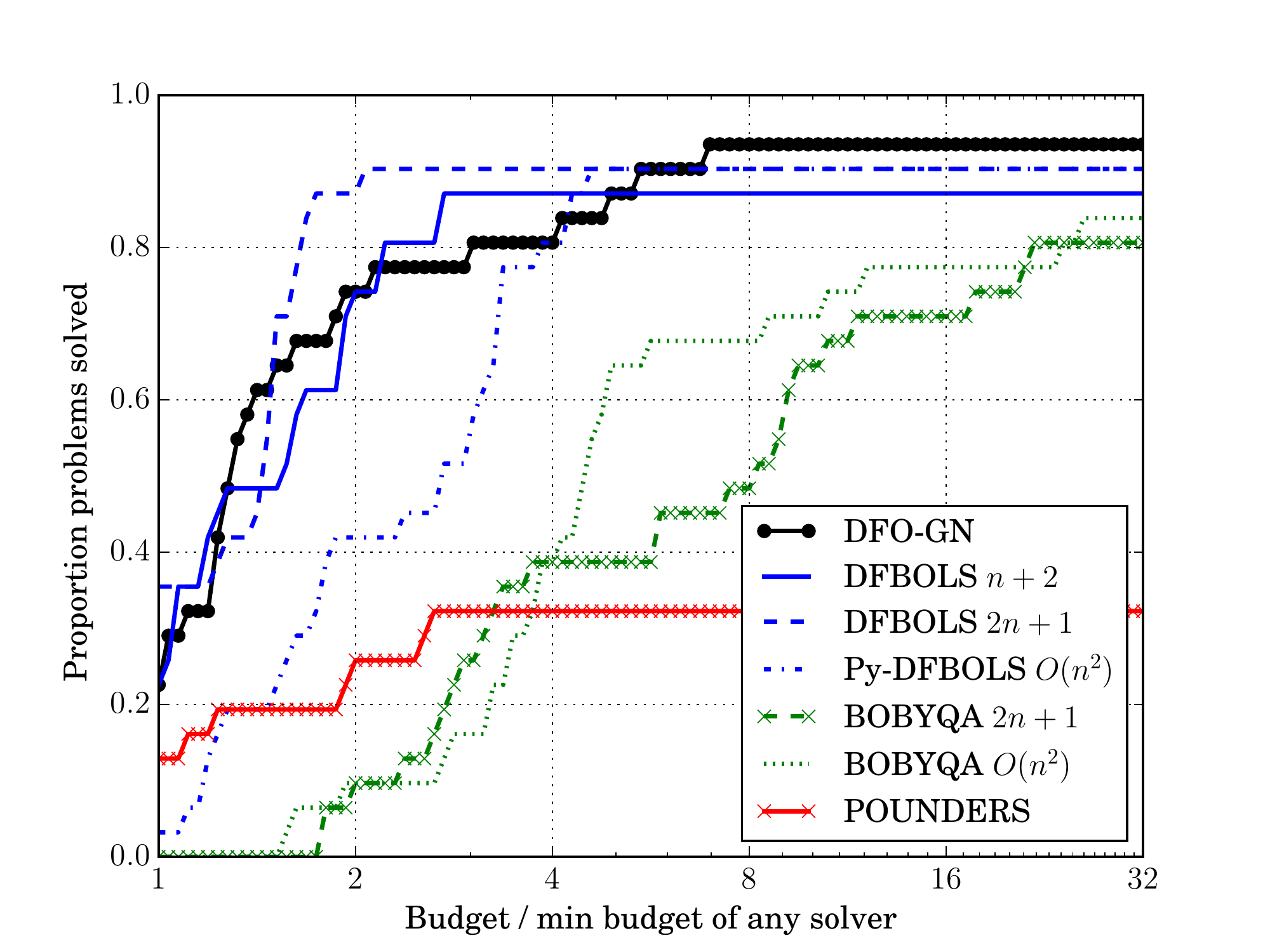}
		\caption{Smooth objective, $\tau=10^{-7}$}
	\end{subfigure}
	~
	\begin{subfigure}[b]{0.48\textwidth}
		\includegraphics[width=\textwidth]{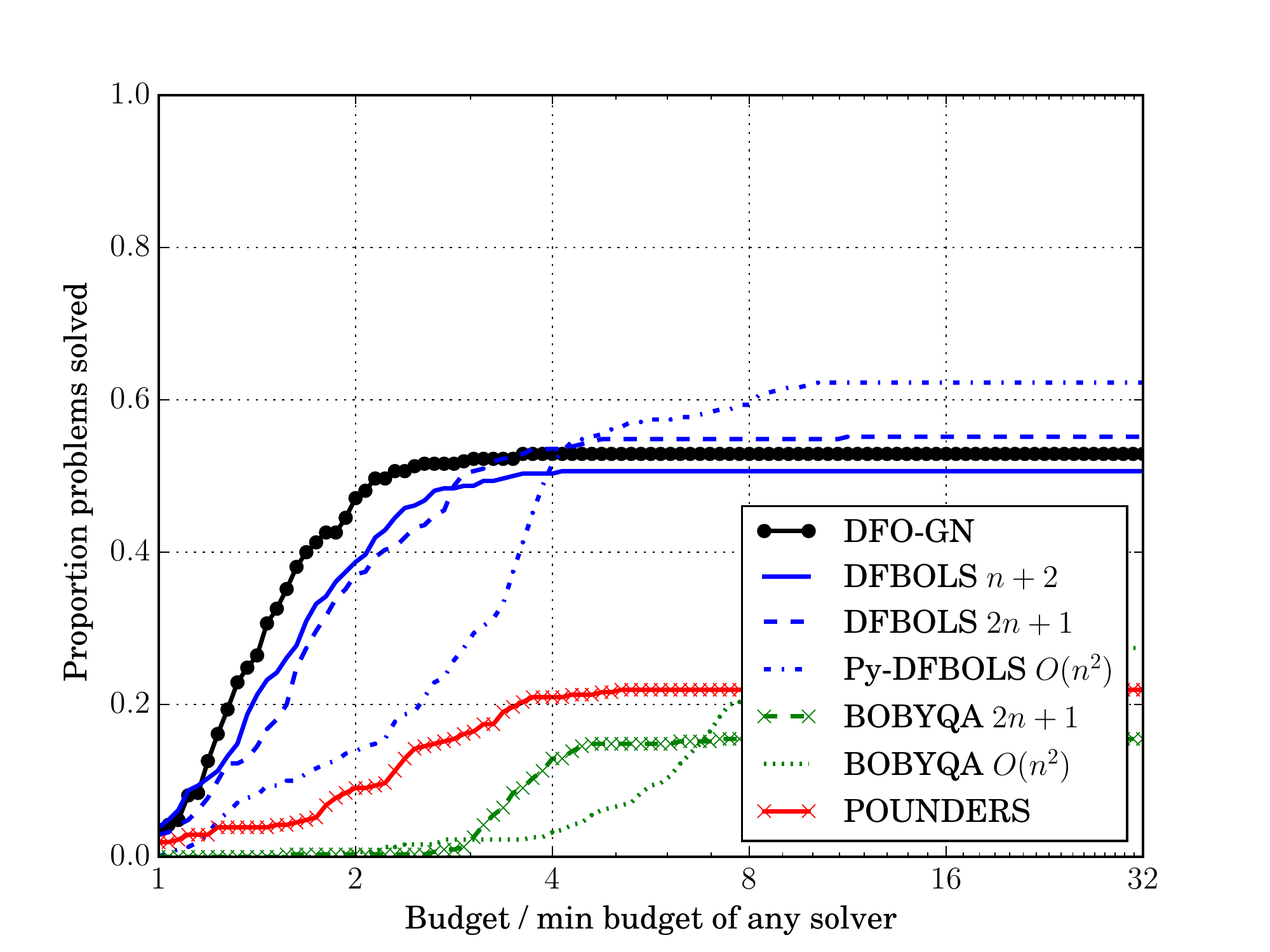}
		\caption{Mult.~Gaussian, $\tau=10^{-7}$}
	\end{subfigure}
	\\
	\begin{subfigure}[b]{0.48\textwidth}
		\includegraphics[width=\textwidth]{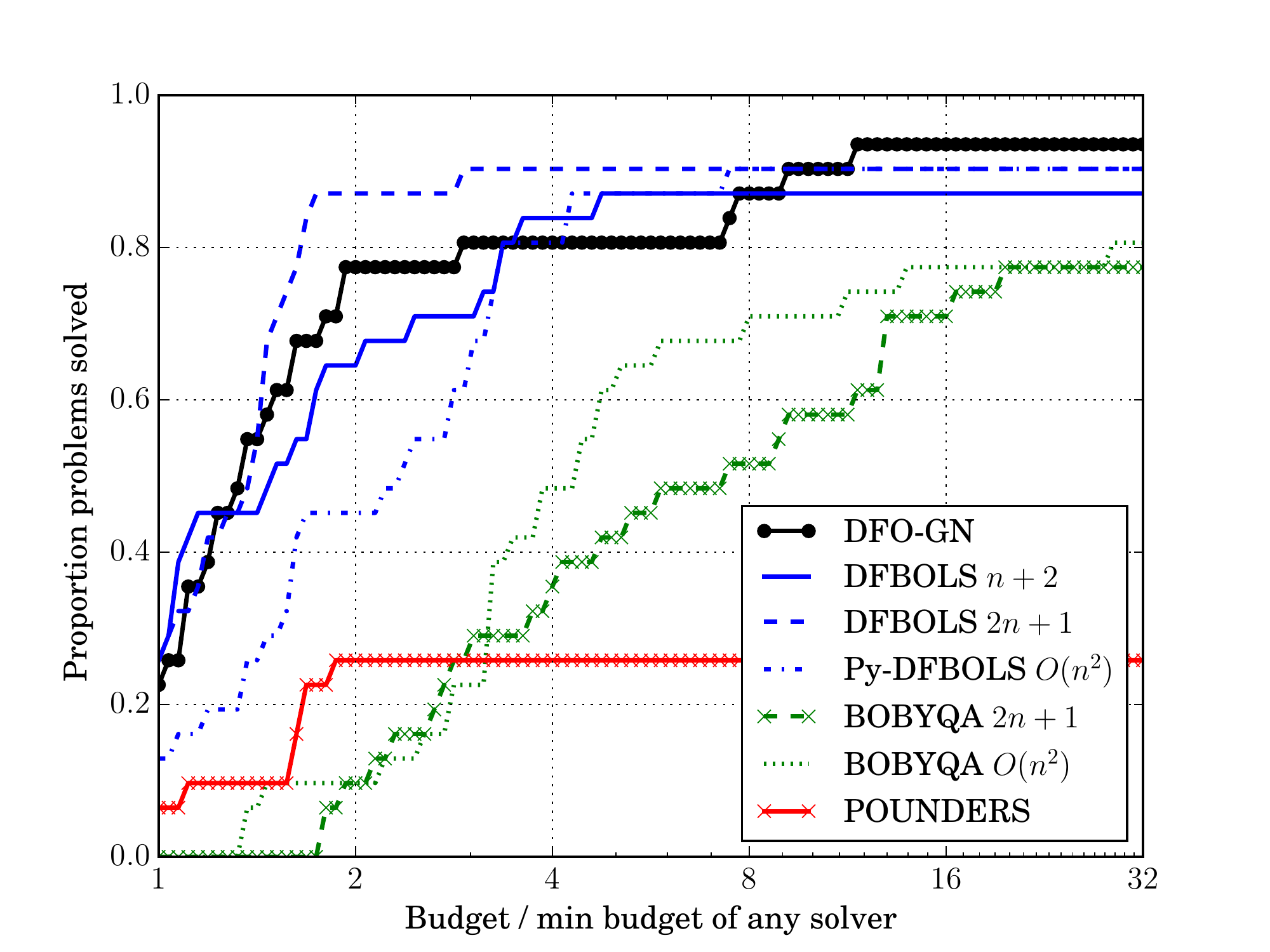}
		\caption{Smooth objective, $\tau=10^{-9}$}
	\end{subfigure}
	~
	\begin{subfigure}[b]{0.48\textwidth}
		\includegraphics[width=\textwidth]{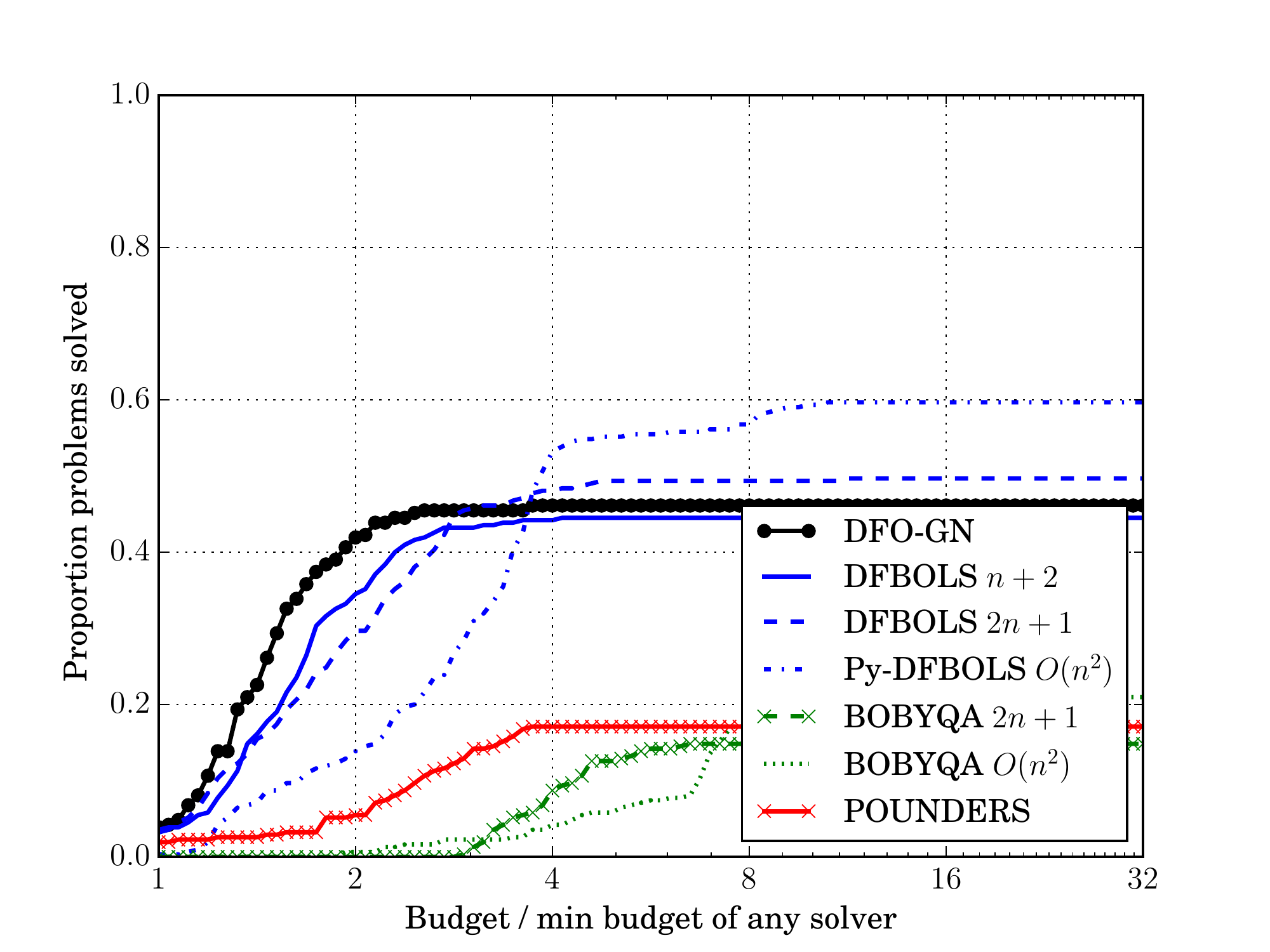}
		\caption{Mult.~Gaussian, $\tau=10^{-9}$}
	\end{subfigure}
	\\
	\begin{subfigure}[b]{0.48\textwidth}
		\includegraphics[width=\textwidth]{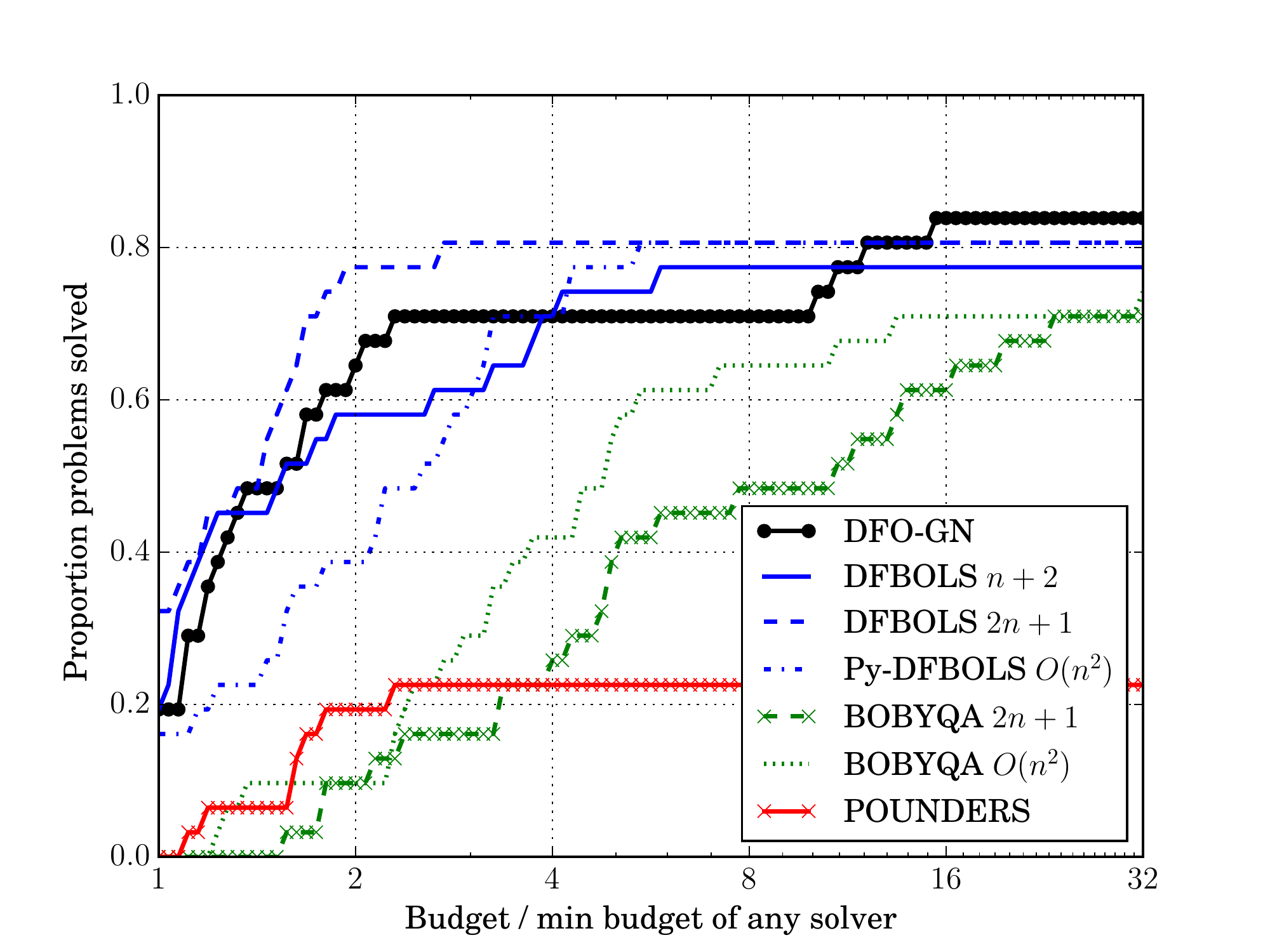}
		\caption{Smooth objective, $\tau=10^{-11}$}
	\end{subfigure}
	~
	\begin{subfigure}[b]{0.48\textwidth}
		\includegraphics[width=\textwidth]{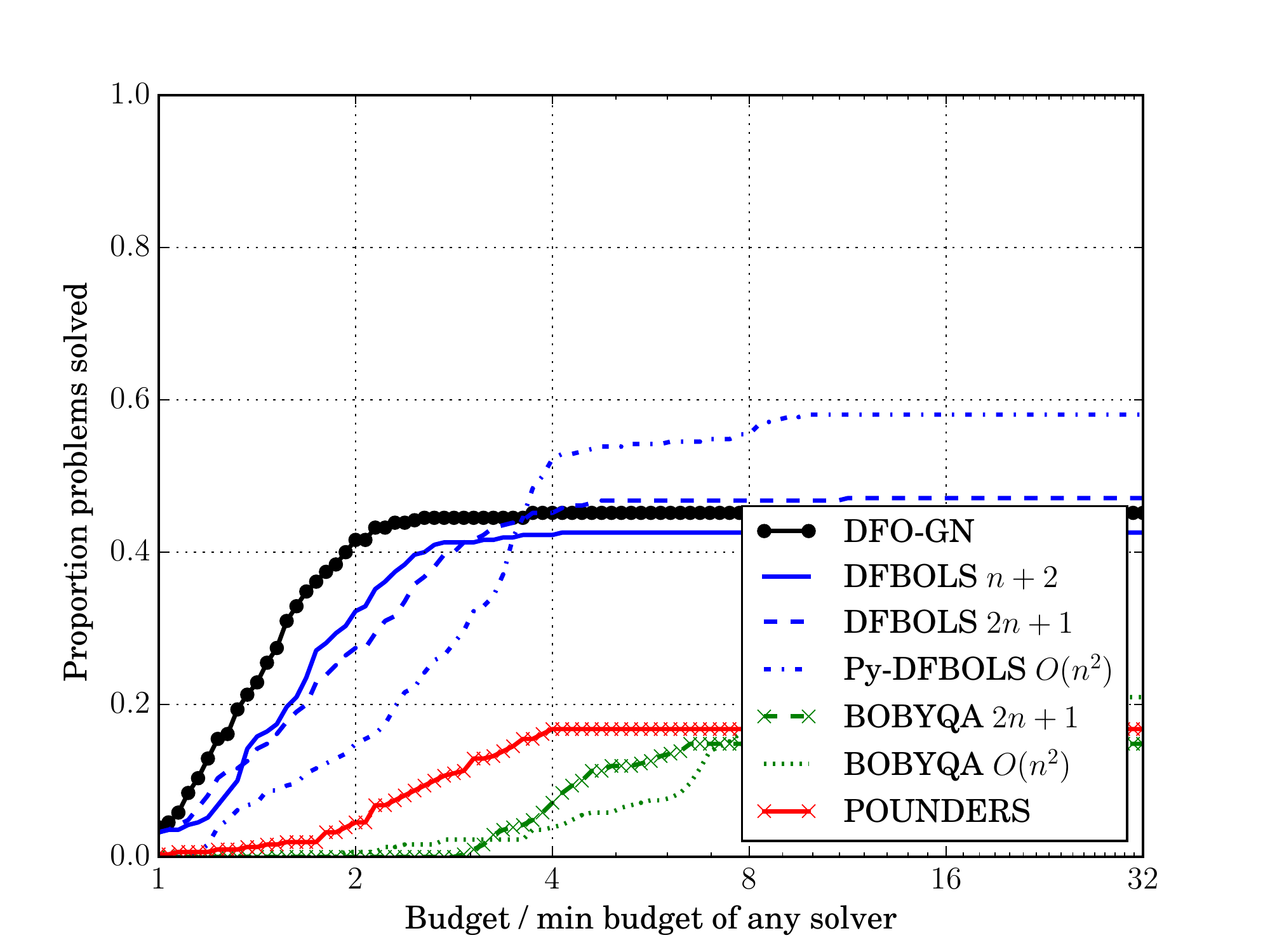}
		\caption{Mult.~Gaussian, $\tau=10^{-11}$}
	\end{subfigure}
	\caption{Performance profile comparison of DFO-GN with BOBYQA, DFBOLS and POUNDERS for nonzero residual problems only, to accuracy $\tau\in\{10^{-5},10^{-9},10^{-11}\}$. For the BOBYQA and DFBOLS runs, $n+2$, $2n+1$ and $\bigO(n^2)=(n+1)(n+2)/2$ are the number of interpolation points. For noisy objectives, results shown are an average of 10 runs for each solver.}
	\label{fig_nonzero_high_accuracy}
\end{figure}

\subsection{CUTEst test problems (compare to \figref{fig_smooth_cutest})}
\begin{figure}[H]
	\centering
	\begin{subfigure}[b]{0.48\textwidth}
		\includegraphics[width=\textwidth]{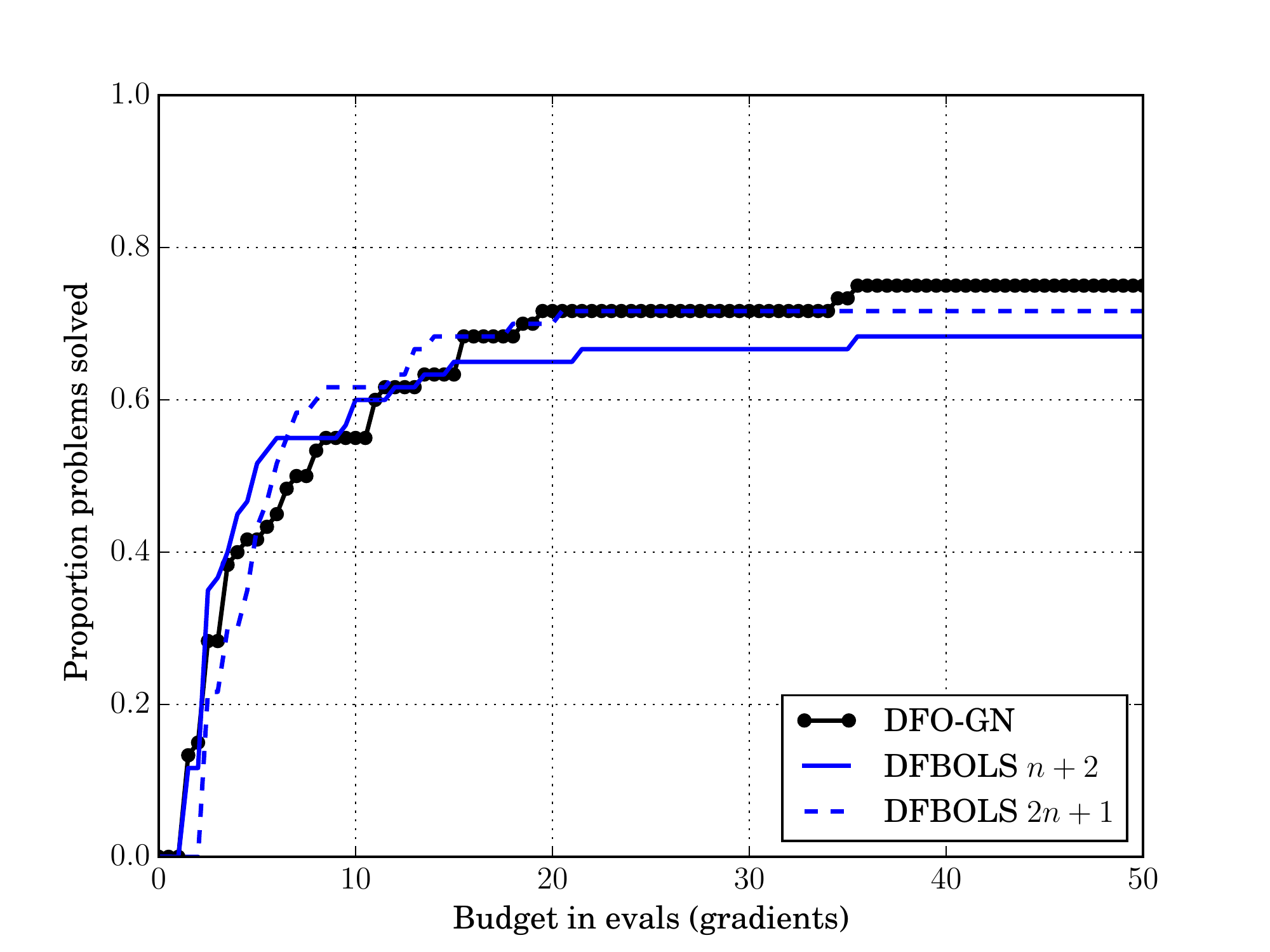}
		\caption{Data profile, $\tau=10^{-7}$}
	\end{subfigure}
	~
	\begin{subfigure}[b]{0.48\textwidth}
		\includegraphics[width=\textwidth]{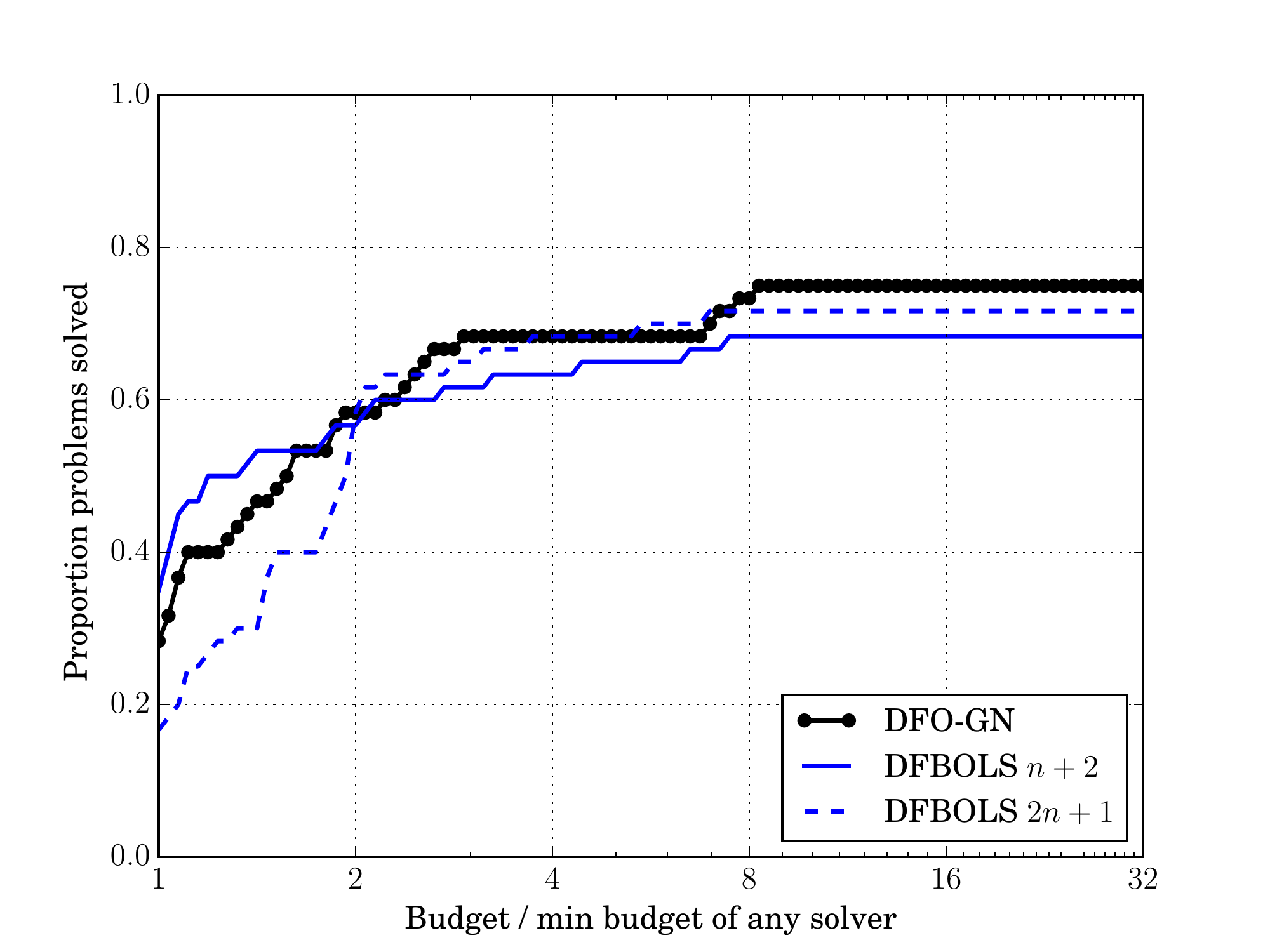}
		\caption{Perf profile, $\tau=10^{-7}$}
	\end{subfigure}
	\\
	\begin{subfigure}[b]{0.48\textwidth}
		\includegraphics[width=\textwidth]{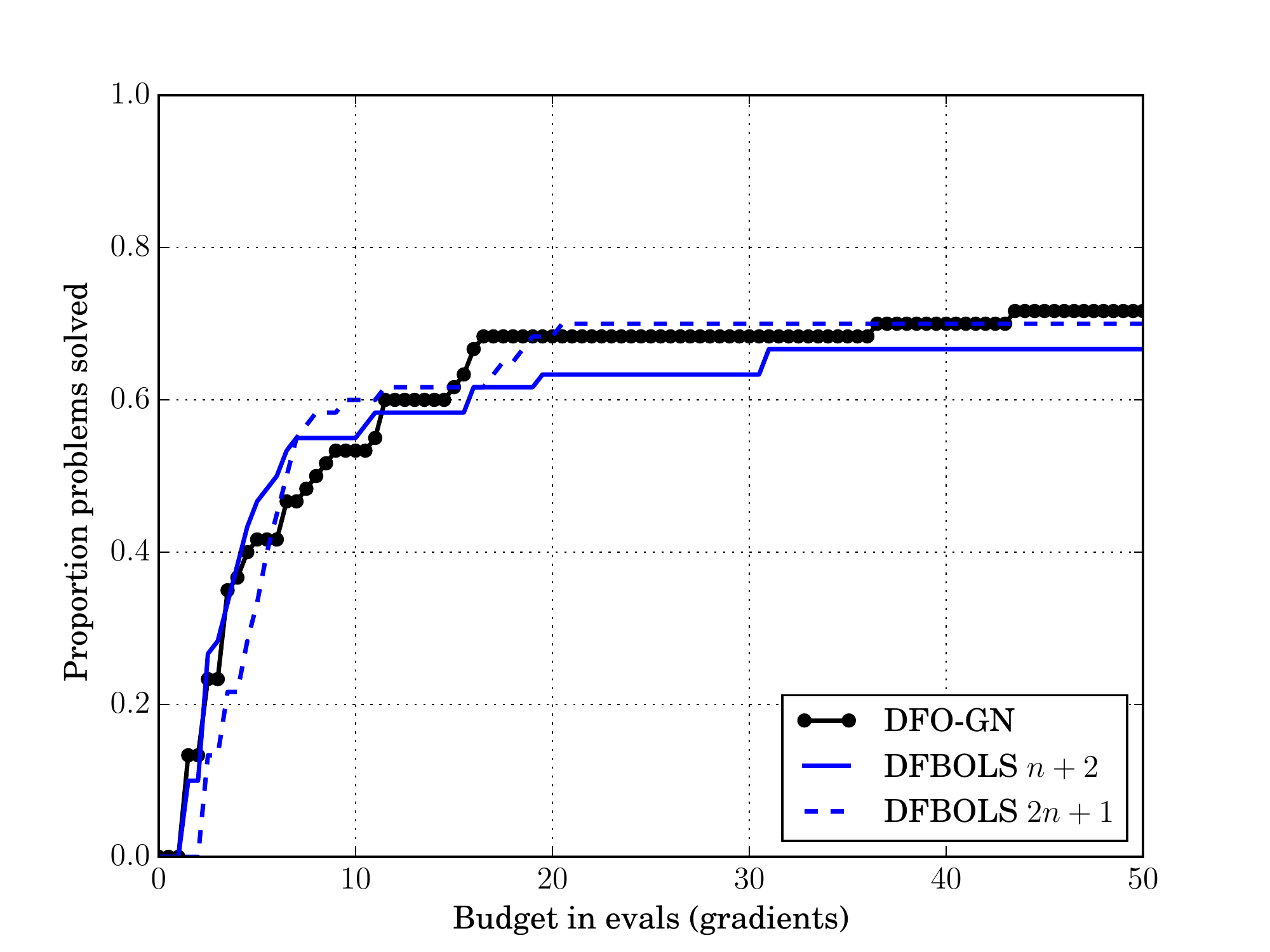}
		\caption{Data profile, $\tau=10^{-9}$}
	\end{subfigure}
	~
	\begin{subfigure}[b]{0.48\textwidth}
		\includegraphics[width=\textwidth]{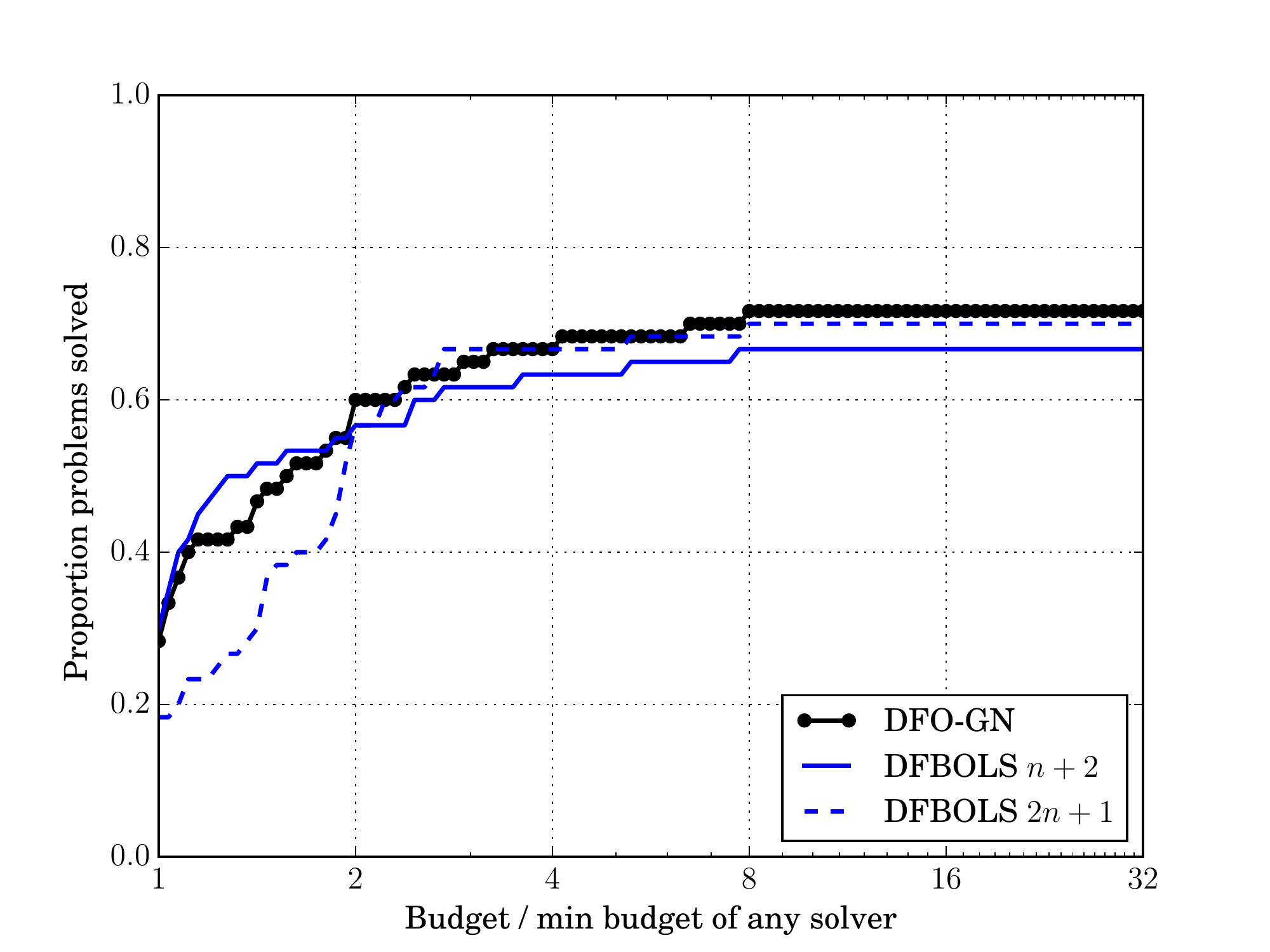}
		\caption{Perf profile, $\tau=10^{-9}$}
	\end{subfigure}
	\\
	\begin{subfigure}[b]{0.48\textwidth}
		\includegraphics[width=\textwidth]{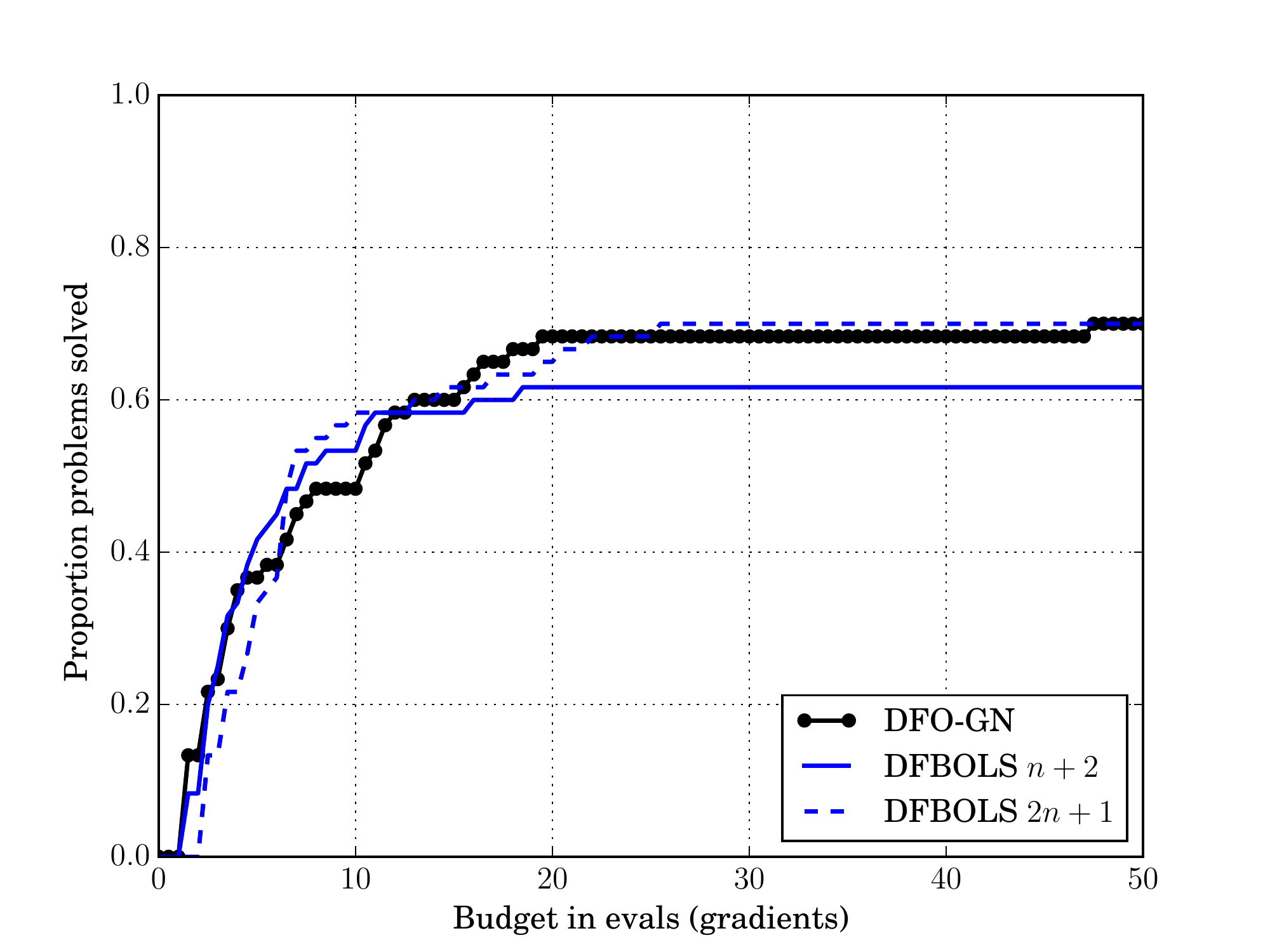}
		\caption{Data profile, $\tau=10^{-11}$}
	\end{subfigure}
	~
	\begin{subfigure}[b]{0.48\textwidth}
		\includegraphics[width=\textwidth]{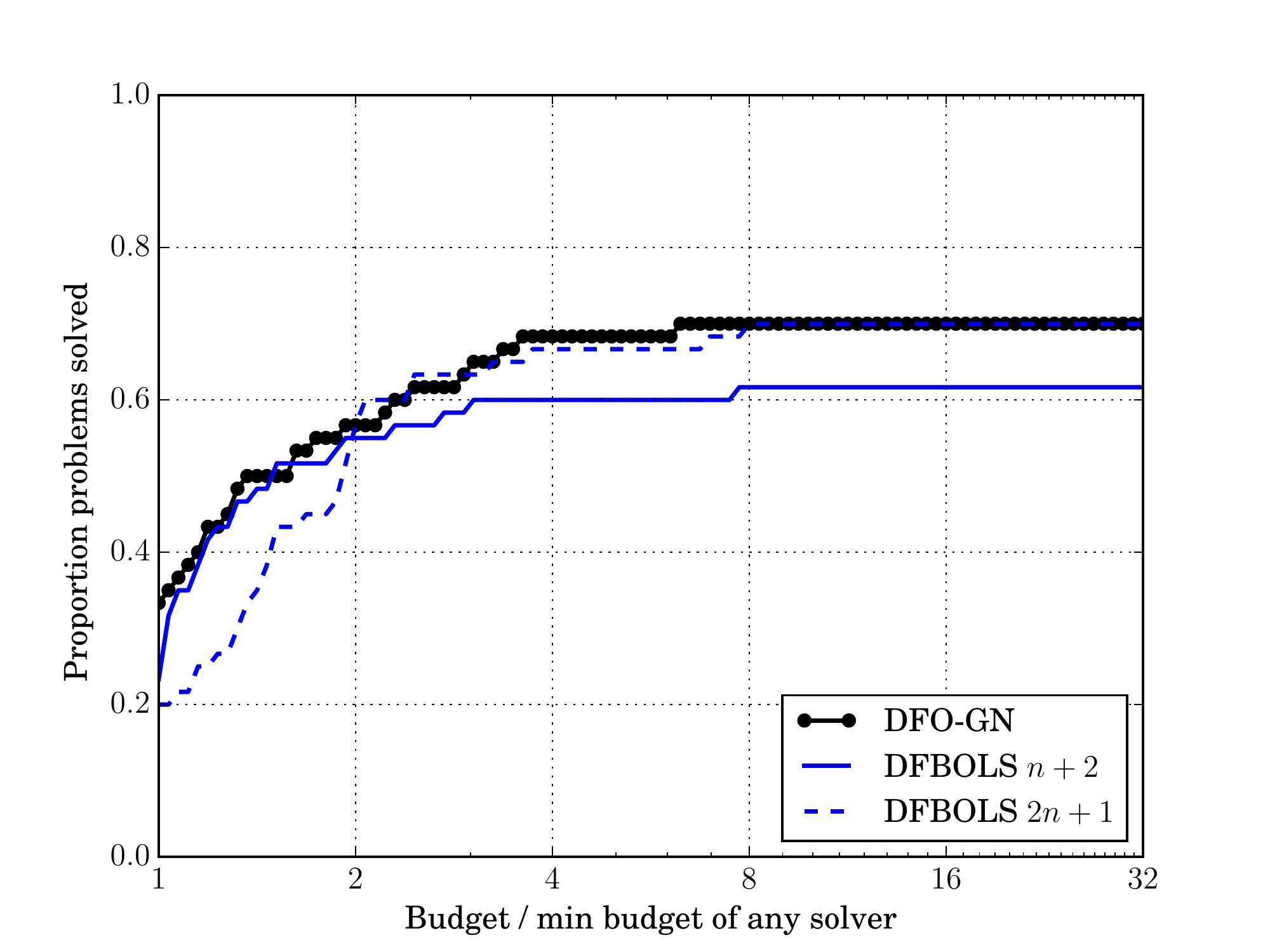}
		\caption{Perf profile, $\tau=10^{-11}$}
	\end{subfigure}
	\caption{Comparison of DFO-GN with DFBOLS for smooth objectives from the set of medium-sized CUTEst problems, to accuracy $\tau\in\{10^{-7},10^{-9},10^{-11}\}$. For the DFBOLS runs, $n+2$, $2n+1$ are the number of interpolation points.}
	\label{fig_smooth_cutest_high_accuracy}
\end{figure}

\end{document}